\theoremstyle{plain}
\newtheorem{thm}{Theorem}[section]
\newtheorem{lem}[thm]{Lemma}
\newtheorem{prop}[thm]{Proposition}
\newtheorem{cor}[thm]{Corollary}
\newtheorem{fact}[thm]{Fact}
\newtheorem{ques}[thm]{Question}
\newtheorem*{thmA}{Theorem A}
\newtheorem*{thmB}{Theorem B}
\newtheorem*{corC}{Corollary C}
\newtheorem*{thmE}{Theorem E}
\newtheorem*{corF}{Corollary F}
\newtheorem*{thmG}{Theorem G}
\newtheorem*{quesD}{Question D}
\theoremstyle{definition}
\newtheorem*{settWLIT}{Setting [WLIT]}
\newtheorem*{settWLIT+}{Setting [WLIT+]}
\newtheorem*{settP}{Setting [(P)-cl]}
\newtheorem*{settP+}{Setting [(P)-cl+]}
\newtheorem*{settG}{Setting [$\Gamma$]}
\newtheorem{conv}{Convention}[thm]
\theoremstyle{remark}
\newtheorem{rem}{Remark}[thm]
\numberwithin{equation}{section}
\newcommand{\Mat}{\mathrm{Mat}}
\newcommand{\SL}{\mathrm{SL}}
\newcommand{\PSL}{\mathrm{PSL}}
\newcommand{\caF}{\mathcal{F}}
\newcommand{\Aut}{\mathrm{Aut}}
\newcommand{\Sym}{\mathrm{Sym}}
\newcommand{\C}{\mathbb{C}}
\newcommand{\F}{\mathbb{F}}
\newcommand{\Z}{\mathbb{Z}}
\newcommand{\Q}{\mathbb{Q}}
\newcommand{\R}{\mathbb{R}}
\newcommand{\caG}{\mathcal{G}}
\newcommand{\caU}{\mathcal{U}}
\newcommand{\caW}{\mathcal{W}}
\newcommand{\caY}{\mathcal{Y}}
\newcommand{\caE}{\mathcal{E}}
\newcommand{\caO}{\mathcal{O}}
\newcommand{\calP}{\mathcal{P}}
\newcommand{\caV}{\mathcal{V}}
\newcommand{\euE}{\mathscr{E}}
\newcommand{\caL}{\mathcal{L}}
\newcommand{\bbT}{\mathbb{T}}
\newcommand{\Geod}{\mathrm{Geod}}
\newcommand{\calPl}{\calP^{\mathrm{lift}}}
\newcommand{\Nv}{N_{\mathrm{vert}}}
\newcommand{\Ne}{N_{\mathrm{edg}}}
\newcommand{\frp}{\mathfrak{p}}
\newcommand{\frq}{\mathfrak{q}}
\newcommand{\lad}{\Delta=(\Gamma, (X_a), (G(c)))}
\newcommand{\caWr}{\mathcal{W}_{\mathrm{rev}}}
\newcommand{\tchi}{\widetilde{\chi}}
\newcommand{\caZ}{\mathcal{Z}}
\newcommand{\Real}{\mathrm{Re}}
\newcommand{\argu}{\hbox to 1.5ex{\hrulefill}}  
\newcommand{\QG}{\Q[G]}
\newcommand{\til}{~}
\theoremstyle{definition}
\newtheorem{defn}[thm]{Definition}
\newtheorem{ex}[thm]{Example}
\newtheorem{notat}[thm]{Notation}
\subjclass[2010]{05C05, 05E18, 20J05, 20J06, 22D05, 30B50}
\keywords{trees, graphs, group actions, zeta functions, t.d.l.c.~groups, Euler--Poincaré characteristic, local--to--global approach}
\begin{document}
	
\begin{abstract}
We study the double-coset zeta functions for groups acting on trees, focusing on weakly locally $\infty$-transitive or (P)-closed actions. After giving a geometric characterisation of convergence for the defining series, we provide explicit determinant formulae for the relevant zeta functions in terms of local data of the action.
Moreover, we prove that the evaluation of these zeta functions at~$-1$ satisfies the expected identity with the Euler--Poincaré characteristic of the group. 
 Our framework suggests the definition of a new zeta function counting weighted paths in graphs, which recovers some double-coset zeta functions considered in this paper as particular cases.
The behaviour at~$-1$ of the latter zeta function sheds light on a connection with the Ihara zeta function of a weighted graph introduced by A.~Deitmar. 
\end{abstract}

\title[Double-coset zeta functions for groups acting on trees]{Double-coset zeta functions\\ for groups acting on trees}
\author[B.~Marchionna]{Bianca Marchionna}  
\address{ Université catholique de Louvain\\
Research Institute in Mathematics and Physics\\
Chemin du Cyclotron 2, 1348 Louvain-la-Neuve, Belgium}
\email{bianca.marchionna@uclouvain.be}
\date{\today}
	
\maketitle

\section{Introduction}\label{intro}
\subsection*{Background and motivation}
Double cosets play a prominent role in multiple aspects of group theory and beyond. For instance, they are the building blocks of Hecke algebras. Regarded as collections of cosets, they describe spheres with respect to the Weyl distance in the building associated with a Bruhat decomposition of a group~\cite{apv:build}. Strictly related objects, namely the suborbits, are also widely studied in permutation group theory. 
It is common to arrange the suborbit sizes -- provided they are all finite -- in a non-decreasing sequence $(a_n)_{n\geq 1}$ and estimate, for example, the growth of~$a_n$ as a function of $n$ (see, for instance,~\cite{infperm} or~\cite[\S5]{sm:subsiz}). 

The present paper focuses on an alternative approach to the suborbit (or double-coset) growth, recently introduced by I.~Castellano, G.~Chinello and T.~Weigel~\cite{ccw:zeta}. We briefly outline it here in the slightly more general framework we employ. Let $G$ be a group with subgroups $H,K\leq G$ satisfying $|HgK/K|<\infty$ for every $g\in G$. Note that $|HgK/K|$ is the size of the $H$-orbit of $gK$ in the coset space $G/K$. Such a triple $(G,H,K)$ has the \emph{double-coset property} if, for every $n\geq 1$,
\begin{equation}\label{eq:an}
   a_n(G,H,K):=|\{HgK\in H\backslash G/K\,:\,|HgK/K|=n\}|<\infty.
\end{equation} 
If there is some $\alpha\in \R_{\geq 0}$ such that~$a_n(G,H,K)=O(n^\alpha)$ for every~$n$, we say that $(G,H,K)$ has \emph{polynomial double-coset growth}. 

If $(G,H,K)$ has polynomial double-coset growth, then an indirect (but often more convenient) way to study the sequence $(a_n(G,H,K))_{n\geq 1}$ is through the Dirichlet series the sequence generates, that is,
\begin{equation}\label{eq:DCzeta}
    \zeta_{G,H,K}(s):=\sum_{n=1}^{\infty}a_n(G,H,K)\cdot n^{-s}=\sum_{HgK\in H\backslash G/K}|HgK/K|^{-s},
\end{equation}
where $s$ is a complex variable. By~\cite[\S I.1, Theorem~3]{hari}, this series converges in some half-plane $\{s\in \C\mid \Real(s)>\alpha\}$ and the function it determines is called the \emph{double-coset zeta function of $(G,H,K)$}. 

Zeta functions initially arose in number theory, although nowadays they are also an established tool in studying groups, rings and algebras (see M.~du~Sautoy's survey~\cite{mds:intro} for motivation). 
In group theory, the seminal work of F.~Grunewald, D.~Segal, G.~Smith~\cite{gss:zeta} initiated the study of numerous zeta functions associated to finitely generated nilpotent groups or profinite groups. Double-coset zeta functions have been one of the first instances of zeta functions in the class of totally disconnected locally compact (= t.d.l.c.) groups which are possibly neither discrete nor profinite. Their introduction was motivated by an interesting behaviour at $s=-1$. I.~Castellano, G.~Chinello and T.~Weigel~\cite{ccw:zeta} have provided examples of unimodular t.d.l.c.~groups~$G$ with a compact open subgroup $K\leq G$ for which the meromorphic continuation of $\zeta_{G,K,K}(s)$, evaluated at $s=-1$, recovers the Euler--Poincaré characteristic $\widetilde{\chi}_G$ of~$G$ (in the sense of~\cite[\S5]{ccw:zeta}). Namely, one has
\begin{equation}\label{eq:chi-1}
\widetilde{\chi}_G=\zeta_{G,K,K}(-1)^{-1}\mu_K,
\end{equation}
where $\mu_K$ denotes the left Haar measure on~$G$ normalised with respect to~$K$. A pair $(G,K)$ for which~\eqref{eq:chi-1} holds is said to satisfy the \emph{Euler--Poincaré identity}.

The connection between growth series and Euler--Poincaré characteristics is not an isolated phenomenon, and usually it gives rise to surprising connections between two apparently unrelated objects. The reader is referred, for instance, to~\cite{dym:thin,gri:lang,smy:eul} or the introduction of~\cite{ccw:zeta} for some remarkable examples.
One of the main goals of the present paper is to prove that the Euler--Poincaré identity holds in two relevant classes of unimodular t.d.l.c.~groups acting on trees. 
More generally, we present a systematic study of the double-coset zeta functions for groups (not necessarily t.d.l.c.)~acting on trees, including convergence criteria and explicit formulae. 

\begin{conv}\label{conv:intro}
In what follows, every graph $\Gamma=V\Gamma\sqcup E\Gamma$ is meant in the sense of J-P.~Serre~\cite{ser:trees}, and~$V\Gamma$ and~$E\Gamma$ denote the set of vertices and the set of edges of~$\Gamma$, respectively (cf.~Section~\ref{sus:graph}). 

Moreover, we assume that every group action on a tree is \emph{without edge inversions} and the tree has at least one edge (cf.~Section~\ref{sus:actions}). 
Finally, the stabilisers of adjacent vertices are assumed to be \emph{incomparable} with respect to the inclusion.
\end{conv}

\subsection*{Two relevant properties for group actions on trees}\label{act}
In the present paper, we mainly consider group actions on trees with one of the following two properties. 

The first property is \emph{weak local $\infty$-transitivity}. A group action on a tree~$(G,T)$ is weakly locally $\infty$-transitive if, for every $v\in VT$ and every path $\frp$ in the quotient graph $G\backslash T$, the stabiliser of $v$ acts transitively on the set of all geodesics in~$T$ starting at $v$ and lifting $\frp$. This condition, which we introduce in Section~\ref{sus:wlit}, generalises the well-known notion of \emph{local $\infty$-transitivity} (or \emph{boundary $2$-transitivity}). 
The study of local transitivity properties of groups acting on graphs goes back to W.~Tutte~\cite{tutte}. The work of M.~Burger and S.~Mozes~\cite{bumo, bumo2} brought a specific attention to locally $\infty$-transitive actions on trees: indeed, they exploited this condition to construct lattices in products of trees. Later on, N.~Radu~\cite{radu} provided a broad classification result for locally $\infty$-transitive actions on trees, and further insights on those actions have been provided more recently by C.~Reid~\cite{reid}. Our notion of weak local $\infty$-transitivity extends the one of local $\infty$-transitivity to non-arc transitive group actions on trees (cf.~Lemma~\ref{lem:(W)LIT}). 

The second property we consider is \emph{(P)-closedness}, a property that stems from the more general concept of actions on trees with Tits' property~(P). The latter property has been introduced by J.~Tits~\cite[\S4.2]{tit:tree} and is recalled in Section~\ref{sus:Pcl}. A group action on a tree~$(G,T)$ is \emph{(P)-closed} if it has Tits' property~(P) and $G\leq \Aut(T)$ is closed with respect to the permutation topology. Tits' property~(P) play a central role in the theory of groups acting on trees and even beyond. For instance, one may use a (P)-closed action to produce and study properties of simple groups acting on trees (cf.~for instance~\cite[Théorème~4.5]{tit:tree}, \cite[Theorem~7.3]{bew:simp},~\cite[Theorem~1.8]{resm:lad}, and~\cite{cadm}).

We thank one of the anonymous referees to have brought the attention to the following clarification.
\begin{rem}
    Another (equivalent) way to define (P)-closed actions on trees is to require that the group~$G\leq \Aut(T)$ of the relevant action~$(G,T)$ equals its \emph{(P)-closure} (or (P${}_1$)-\emph{closure}). The latter is a suitably \emph{closed} subgroup of~$\Aut(T)$ that has been first introduced by C.~Banks, M.~Elder, and G.~Willis~\cite[\S3]{bew:simp} for locally finite trees under the initial name of \emph{$1$-closure}. The term "$1$-closure" is problematic (cf.~the footnote in~\cite[\S 2.3]{resm:lad}), so it has been successively replaced by "(P)-closure" (or "(P${}_1$)-closure", as there is a more general notion of "(P${}_k$)-closure" for every $k\geq 1$).
    
    In~\cite[\S 2.3]{resm:lad}, C.~Reid and S.~Smith extend the definition of (P)-closure to group actions on arbitrary trees. Their notion agrees with the one in~\cite{bew:simp} if the tree is locally finite, and is equivalent to the definition that we use in the present paper by~\cite[Theorem~2.18]{resm:lad}.
\end{rem}

Remarkably, C.~Reid and S.~Smith~\cite{resm:lad} provide a complete classification of (P)-closed actions on trees by their \emph{local action diagram}, which is a local datum attached to each group action on a tree. In~Section~\ref{sus:lads} we briefly recall it. With this local description, one may study several global properties of the group action through more accessible features of the associated local action diagram. In the present paper, we exploit this approach more than once. 

The reader may find it convenient to keep the following example in mind, as it represents a group action on a tree with both the two properties mentioned before.
\begin{ex}\label{exintro}
  Let $\Gamma$ be a connected graph with a function $\omega\colon E\Gamma\to \Z_{\geq 1}$ (called \emph{edge weight}). Following~\cite[\S 3.5]{car}, the pair $(\Gamma,\omega)$ admits an essentially unique universal cover $(T,\pi)$, which consists of a tree $T$ and a graph epimorphism $\pi\colon T\to \Gamma$ with the following property: for all $a\in E\Gamma$ and $v\in VT$ with $\pi(v)=o(a)$, the number of edges $e\in ET$ with origin~$v$ and $\pi(e)=a$ is exactly $\omega(a)$.
  The \emph{group of deck transformations} of $(\Gamma,\omega)$ is
  \begin{equation*}\label{eq:AutpT}
      \Aut_\pi(T):=\{\varphi\in \Aut(T)\mid \pi\circ\varphi=\pi\},
  \end{equation*}
  where~$\Aut(T)$ denotes the group of automorphisms of $T$. 
  The group~$\Aut_\pi(T)$ is t.d.l.c.~with respect to the permutation topology. Moreover, the $\Aut_\pi(T)$-action on~$T$ is both weakly locally $\infty$-transitive and (P)-closed (cf.~Example~\ref{ex:WLIT}(i) and~\cite[\S4.2]{tit:tree}). 
\end{ex}

\subsection*{Polynomial double-coset growth}\label{mares}
The first step towards the study of double-coset zeta functions is to determine under which conditions a triple $(G,H,K)$ has the double-coset property or polynomial double-coset growth. Here we only consider the case of~$G$ acting on a locally finite tree and we choose~$H,K$ among the stabilisers of vertices or edges of~$T$ (written $H=G_{t_1}$ and $K=G_{t_2}$, for $t_1,t_2\in T=VT\sqcup ET$).
In this setting, we provide the following characterisation.

\begin{thmA}[\protect{cf.~Theorems~\ref{thm:convWLIT} and~\ref{thm:convP}}]\label{thmA}
Let $G$ be a group that acts on a locally finite tree~$T$ as in Convention~\ref{conv:intro} and with finite quotient graph. Assume that the action~$(G,T)$ is weakly locally $\infty$-transitive or (P)-closed. Then the following are equivalent, for all $t_1,t_2\in T$:
\begin{itemize}
    \item[(i)] $(G, G_{t_1}, G_{t_2})$ has the double-coset property;
    \item[(ii)] $(G, G_{t_1}, G_{t_2})$ has polynomial double-coset growth;
    \item[(iii)] for every $e\in ET$, $G_e$ is not contained in any end stabiliser.
\end{itemize}
\end{thmA}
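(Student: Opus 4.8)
The plan is to translate the three conditions into statements about geodesics in $T$ counted up to the $G$-action, and ultimately into a combinatorial count in the finite quotient graph $G\backslash T$. For $g\in G$ one has $|G_{t_1}gG_{t_2}/G_{t_2}|=|G_{t_1}\cdot gt_2|$, and since $T$ is a tree the $G_{t_1}$-stabiliser of a vertex or edge $v$ in $G\cdot t_2$ equals the pointwise stabiliser $\Fix([t_1,v])$ of the geodesic segment joining $t_1$ to $v$; local finiteness of $T$ makes the index $[G_{t_1}:\Fix([t_1,v])]$ finite, so that $a_n(G,G_{t_1},G_{t_2})$ is the number of $G_{t_1}$-orbits on $G\cdot t_2$ of size $n$. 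Using that the action is weakly locally $\infty$-transitive (or (P)-closed) --- which is precisely where Theorems~\ref{thm:convWLIT} and~\ref{thm:convP} enter --- one identifies these orbits with certain walks $\frp$ in the finite graph $G\backslash T$ from the image of $t_1$ to the image of $t_2$, the size of the orbit being the number $\ell(\frp)$ of geodesic lifts of $\frp$ based at $t_1$. The structural fact to isolate first is the monotonicity of $\ell$: extending $\frp$ by one edge multiplies $\ell(\frp)$ by the index of the pointwise stabiliser of the extended lifted geodesic in that of its predecessor, so $\ell$ never decreases along extensions.

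With this dictionary, two of the implications are short. The implication (ii)$\Rightarrow$(i) is trivial. For (iii)$\Rightarrow$(ii), fix the integer $k$ from (iii): a geodesic from $t_1$ of length $l\geq 1$ has, after at most $k$ further edges, its pointwise stabiliser dropping to a proper subgroup, so the monotonicity gives $\ell(\frp)\geq 2^{\lfloor (l-1)/k\rfloor}$ for every relevant walk $\frp$ of length $l$. Hence a walk with $\ell(\frp)=n$ has length $O(\log n)$, and since $G\backslash T$ is finite there are only polynomially many such walks; therefore $a_n(G,G_{t_1},G_{t_2})$ is polynomially bounded, for every choice of $t_1,t_2$.

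The remaining implication (i)$\Rightarrow$(iii) I would prove contrapositively: assuming (iii) fails, I produce an $n$ with $a_n(G,G_{t_1},G_{t_2})=\infty$. The failure of (iii) gives, for every $k$, a geodesic of length $\geq 1$ whose pointwise stabiliser fixes an extension $k$ edges longer. The first and main step is to upgrade this $k$-by-$k$ rigidity to a uniform one: there exist a geodesic ray $\rho=(v_0,v_1,\dots)$ in $T$ and an index $m\geq 1$ with $\Fix([v_0,v_m])$ fixing $\rho$ pointwise; equivalently, $\ell$ is eventually constant along the initial segments of the image of $\rho$. I would get this by projecting the rigid segments to $G\backslash T$, noting that the ``rigid'' combinatorial patterns appearing there must recur, and applying K\"onig's lemma and local finiteness of $T$ to a recurrent pattern. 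The second step harvests infinitely many double cosets from $\rho$: translating $\rho$ suitably and using that $G\backslash T$ is connected, one builds for infinitely many $u\in G\cdot t_2$ lying at pairwise distinct distances from $t_1$ a geodesic $[t_1,u]$ that runs first to a fixed translate of an edge of $\rho$, then along a translate of $\rho$, then along a bounded detour ending in $G\cdot t_2$; a short index computation then bounds $\ell([t_1,u])$ by a single constant independent of $u$. As the $u$ lie in pairwise distinct $G_{t_1}$-orbits, the pigeonhole principle forces infinitely many of the corresponding double cosets to have one and the same size $n$, so $a_n(G,G_{t_1},G_{t_2})=\infty$.

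I expect the genuine obstacle to be the first step of (i)$\Rightarrow$(iii). A bare application of K\"onig's lemma does not suffice, since the ``rigid windows'' produced by the negation of (iii) may slide off to infinity; it is the finiteness of $G\backslash T$ that forces the rigid configurations to be essentially periodic and therefore to persist. One must also dispose separately of the degenerate case in which the ray $\rho$ lies in a branchless part of $T$: there the detour into $G\cdot t_2$ in the second step may be unavailable and one argues directly, essentially reducing to an action on a line, where the standing hypothesis that stabilisers of adjacent vertices are incomparable is exactly what prevents the relevant orbits from collapsing. Finally, the (P)-closed case of the whole argument runs in parallel with the weakly locally $\infty$-transitive one once the orbit/walk dictionary is installed via the local action diagram of Reid and Smith, which is the content of Theorem~\ref{thm:convP}.
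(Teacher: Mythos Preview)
Your outline is essentially correct and matches the paper's strategy for (ii)$\Rightarrow$(i) and (iii)$\Rightarrow$(ii); the estimate $\ell(\frp)\geq 2^{\lfloor (l-1)/k\rfloor}$ is exactly Lemma~\ref{lem:preconv}(ii), and the polynomial bound then follows as in Proposition~\ref{prop:conv*}.

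For (i)$\Rightarrow$(iii) your idea is right but the route is more circuitous than the paper's. The paper never invokes K\"onig's lemma. Instead it first observes that in both settings the ``rigidity'' of a geodesic window is a purely \emph{local} condition on consecutive pairs of edges: in the weakly locally $\infty$-transitive case, $|G_{(e_1,\ldots,e_l)}\cdot(e_{l+1},\ldots,e_{l+k})|=\Ne(a_l,\ldots,a_{l+k})$ depends only on the images in $\Gamma$ (Proposition~\ref{prop:prop*W}); in the (P)-closed case, Proposition~\ref{prop:propPbis} reduces to the last edge $e_l$, and the orbit size equals $\caW$ of the corresponding reduced path in $(\Delta,\iota)$ (Proposition~\ref{prop:prop*P}). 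This already dissolves your ``sliding window'' worry: one may replace the long rigid segment by its tail $(e_l,\ldots,e_{l+k})$, which is rigid from its first edge. With this reduction in hand, the paper applies a one-line pigeonhole: in the weakly locally $\infty$-transitive case the failure of $(\ast_k)$ for all $k$ forces the existence of an edge $a$ with $\omega(a)=\omega(\bar a)=2$, and the periodic block is simply $(a,\bar a)$; in the (P)-closed case one takes a rigid path of length exceeding $|X|^2+1$ and pigeonholes on the pair $(x_i,x_{i+1})$ to extract a cycle $\eta$ with $\caW(\eta^d)=1$. Your K\"onig's-lemma argument would also succeed once the locality reduction is made, but it produces a ray whose periodicity you must then argue separately, whereas the paper's pigeonhole yields the period directly.

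Two further remarks. First, the degenerate ``branchless'' case you flag does not arise: the standing hypothesis that stabilisers of adjacent vertices are incomparable is exactly $\omega(E\Gamma)\subseteq\Z_{\geq 2}$, which guarantees that every path in the local data lifts to a geodesic (Remarks~\ref{rem:Nint} and~\ref{rem:lift}) and in particular that the detours $\frq_1,\frq_2$ (respectively $\xi_1,\xi_2$) from $\pi(t_1)$ to the periodic block and from the block to $\pi(t_2)$ always exist. Second, your claim that the (P)-closed case ``runs in parallel'' once the dictionary is in place is correct in spirit but the dictionary is different: there one enumerates \emph{cosets} $gG_{t_2}$ (equivalently $b_n$ rather than $a_n$, cf.~\eqref{eq:bnan}) via reduced paths in $(\Delta,\iota)$, not double cosets via walks in $\Gamma$. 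The paper handles this by working throughout with $b_n$ via \eqref{eq:bngeo}.
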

By Theorem~\hyperref[thmA]{A}, both the double-coset property and the polynomial double-coset growth are independent of the choice of $t_1$ and $t_2$. A similar independence has been shown in~\cite[Proposition~6.2]{ccw:zeta}. 

In Theorem~\hyperref[thmA]{A}, if $(G,T)$ is (P)-closed then~(iii) is equivalent to the condition that no end stabilisers are open or, equivalently, that no ends of~$T$ are \emph{locally invariant} (cf.~\cite[Definition~4.12, Proposition~4.23]{resm:lad}). 

A partial version of Theorem~\hyperref[thmA]{A} with more general hypotheses on the action holds and can be found in~Proposition~\ref{prop:conv*}.

\subsection*{Explicit determinant formulae}\label{mero}
The main motivation for considering weakly locally $\infty$-transitive or (P)-closed actions on locally finite trees is the following: provided $H,K$ are either vertex or edge stabilisers, we can count the $(H,K)$-double-cosets (or the $K$-cosets) and compute each size $|HgK/K|$ in terms of convenient \emph{local data} of the action (cf.~Sections~\ref{s:suborb}). 

In the weakly locally $\infty$-transitive case, counting the $(H,K)$-double cosets is rephrased in a more accessible counting of certain paths in the quotient graph $\Gamma$ with a suitably defined weight (cf.~Section~\ref{sus:Gammaw}). This also suggests the definition of a more general Dirichlet series $\caZ_{\Gamma,u_1\to u_2}(s)$ associated to an arbitrary graph~$\Gamma$ with edge weight $\omega\colon E\Gamma\to \Z_{\geq 1}$ and to $u_1,u_2\in \Gamma$ (cf.~Definition~\ref{defn:Zu}). The series $\caZ_{\Gamma,u_1\to u_2}(s)$ recovers $\zeta_{G,G_{t_1},G_{t_2}}(s)$ whenever~$G$ is a group acting weakly locally $\infty$-transitively on a locally finite tree $T$ with quotient graph~$\Gamma$ and standard edge weight $\omega$ (cf.~Section~\ref{sus:Gammaw}), and~$t_1,t_2\in T$ satisfy~$G\cdot t_1=u_1$ and~$G\cdot t_2=u_2$.

In the (P)-closed case, we proceed in a similar manner, except that we count $K$-cosets instead of $(H,K)$-double cosets. This has only a minor impact of $\zeta_{G,H,K}(s)$, as discussed in~Section~\ref{sus:coset}. In contrast to $(H,K)$-double-cosets, the $K$-cosets can be enumerated using suitable weighted paths in the local action diagram associated to the action (cf.~Definitions~\ref{defn:pathD}~and~\ref{defn:W}).

In both cases, we can adapt ideas and techniques from graph theory (e.g., counting paths in graphs by their weight, see for instance~\cite{deit:prim} or~\cite{deika:ihara}) and provide explicit formulae for $\zeta_{G,G_{t_1},G_{t_2}}(s)$, for all $t_1,t_2\in T$.

In the following, we label $\zeta_{G,G_{t_1}, G_{t_2}}(s)$ with a superscript $\bullet\in\{(w),(p)\}$ to distinguish whether $(G,T)$ is weakly locally $\infty$-transitive ($\bullet=(w)$) or (P)-closed ($\bullet=(p)$).
\begin{thmB}[\protect{cf.~Theorems~\ref{thm:meroWLIT}~and~\ref{thm:meroP}}]\label{thmB}
  Let $(G,T)$ be a group action on a locally finite tree as in Convention~\ref{conv:intro} that is weakly locally $\infty$-transitive or (P)-closed. Let $t_1,t_2\in T$ be such that $(G,G_{t_1},G_{t_2})$ has polynomial double-coset growth. Then
  \begin{equation*}
      \zeta_{G,G_{t_1}, G_{t_2}}^\bullet(s)=\frac{\det(I^\bullet-\caE^\bullet(s)+\caU^\bullet_{t_1,t_2}(s))}{\det(I-\caE^\bullet(s))}+\epsilon^\bullet_{t_1}(t_2),
  \end{equation*}
  for explicitly defined square matrices $\caE^{\bullet}(s)$ and $\caU^{\bullet}_{t_1,t_2}(s)$ whose entries are entire functions in~$s\in \C$, and for a determined integer $\epsilon^\bullet_{t_1}(t_2)$. Here $I^\bullet$ denotes the identity matrix of the same dimension as $\caE^\bullet(s)$.
  In particular, $\zeta^\bullet_{G,G_{t_1},G_{t_2}}(s)$ extends to a meromorphic function on $\C$.
\end{thmB}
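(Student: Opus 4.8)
The plan is to express $\zeta^\bullet_{G,G_{t_1},G_{t_2}}(s)$ as a generating function enumerating weighted non-backtracking paths in a fixed finite graph, to package that enumeration by a transfer matrix, and to read off the closed form from a rank-one determinant identity; the meromorphic continuation is then immediate. First I would invoke the combinatorial description of Section~\ref{s:suborb}. In the weakly locally $\infty$-transitive case the $(G_{t_1},G_{t_2})$-double cosets are in bijection with the set of reduced paths $\frp$ in the quotient graph $\Gamma$ from the image of $t_1$ to the image of $t_2$, and weak local $\infty$-transitivity forces the orbit size $|G_{t_1}gG_{t_2}/G_{t_2}|$ to equal the weight $\omega(\frp)$ obtained by multiplying the prescribed local indices along the edges of $\frp$; in the (P)-closed case the analogous role is played by weighted paths in the local action diagram (Definitions~\ref{defn:pathD} and~\ref{defn:W}), where one enumerates $G_{t_2}$-cosets, which affects $\zeta^\bullet_{G,G_{t_1},G_{t_2}}(s)$ only in a controlled way (cf.~Section~\ref{sus:coset}). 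In either case this produces, on the half-plane of convergence (non-empty because $(G,G_{t_1},G_{t_2})$ has polynomial double-coset growth, cf.~\cite[\S I.1, Theorem~3]{hari}), an identity
\begin{equation*}
  \zeta^\bullet_{G,G_{t_1},G_{t_2}}(s)=\delta^\bullet_{t_1}(t_2)+\sum_{\frp}\omega(\frp)^{-s},
\end{equation*}
where $\frp$ runs over the admissible reduced paths of positive length and $\delta^\bullet_{t_1}(t_2)\in\{0,1\}$ accounts for the degenerate length-zero path, which occurs precisely when $G\cdot t_1=G\cdot t_2$ and then contributes weight $1$.

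Next I would index the coordinates of $\caE^\bullet(s)$ by the oriented edges of $\Gamma$ (respectively by the arrows of the local action diagram, together with their colours), with $(e,e')$-entry the sum, over the legal ways of continuing a reduced path by $e'$ after $e$, of the factor $m^{-s}$, where $m\in\Z_{\geq1}$ is the associated local index. Each entry is then a finite $\Z_{\geq0}$-linear combination of functions $m^{-s}$ with $m\in\Z_{\geq1}$ (up to a fixed shift of $s$ in the (P)-closed case), hence entire in $s$; the same holds for the endpoint vectors $r_{t_1}(s)$ and $c_{t_2}(s)$ recording the admissible first and last edge of a path through the orbits of $t_1$ and $t_2$, for a suitable distribution of the total weight among start vector, transfer matrix and end vector. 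By construction the weighted number of admissible reduced paths of length $k+1$ equals $r_{t_1}(s)^{\top}\caE^\bullet(s)^{k}c_{t_2}(s)$, so on the half-plane of convergence — where the geometric series $\sum_{k\geq0}\caE^\bullet(s)^{k}$ converges, by a comparison with the Dirichlet series carried out exactly as in the proof of Theorem~\hyperref[thmA]{A}, which also shows $\det(I^\bullet-\caE^\bullet(s))\not\equiv 0$ — we obtain
\begin{equation*}
  \sum_{\frp}\omega(\frp)^{-s}=\sum_{k\geq0}r_{t_1}(s)^{\top}\caE^\bullet(s)^{k}c_{t_2}(s)=r_{t_1}(s)^{\top}\bigl(I^\bullet-\caE^\bullet(s)\bigr)^{-1}c_{t_2}(s).
\end{equation*}

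Now set $\caU^\bullet_{t_1,t_2}(s):=c_{t_2}(s)\,r_{t_1}(s)^{\top}$, a rank-one matrix with entire entries, and apply the determinant identity $\det(A+cr^{\top})=\det(A)\,\bigl(1+r^{\top}A^{-1}c\bigr)$ with $A=I^\bullet-\caE^\bullet(s)$: the previous display becomes
\begin{equation*}
  1+\sum_{\frp}\omega(\frp)^{-s}=\frac{\det\bigl(I^\bullet-\caE^\bullet(s)+\caU^\bullet_{t_1,t_2}(s)\bigr)}{\det\bigl(I^\bullet-\caE^\bullet(s)\bigr)}.
\end{equation*}
Writing $\epsilon^\bullet_{t_1}(t_2):=\delta^\bullet_{t_1}(t_2)-1\in\{-1,0\}$ then yields the asserted formula on the half-plane of convergence. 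Finally, both determinants are polynomial expressions in the entire entries of the matrices involved, hence entire functions of $s$, and the denominator is not identically zero; thus the right-hand side is meromorphic on all of $\C$, and by uniqueness of meromorphic continuation it is the meromorphic continuation of $\zeta^\bullet_{G,G_{t_1},G_{t_2}}(s)$, which is the final assertion.

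I expect essentially all the difficulty to be concentrated in the first step: establishing that weak local $\infty$-transitivity (respectively (P)-closedness) makes $|HgK/K|$ genuinely multiplicative along the geodesic and that the correspondence with reduced paths is an exact bijection — the geometric heart of the matter, much of which is the content of the earlier sections — and then arranging the combinatorial bookkeeping (edge orientations, the non-backtracking constraint, the vertex-versus-edge alternatives for $t_1$ and $t_2$, and, in the (P)-closed case, the passage from $K$-cosets back to $(G_{t_1},G_{t_2})$-double cosets) so that the whole boundary contribution packages into a single rank-one matrix $\caU^\bullet_{t_1,t_2}(s)$ together with a single integer $\epsilon^\bullet_{t_1}(t_2)$. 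Everything after that is the formal manipulation above.
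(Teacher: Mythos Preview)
Your proposal is correct and follows essentially the same route as the paper: express the zeta function as a weighted path sum (Propositions~\ref{prop:zetaWLIT} and~\ref{prop:zetageo}), recognise this as $r^{\top}(I-\caE^\bullet(s))^{-1}c$ via a geometric series in the transfer matrix (Remarks~\ref{rem:Es} and~\ref{rem:Fs}), and apply the Matrix Determinant Lemma (Fact~\ref{fact:mdl}) to obtain the rank-one determinant formula. One small slip: in the weakly locally $\infty$-transitive case the sum is over \emph{all} paths in $\Gamma$, not just reduced ones --- the entry $\caE(s)(a,\bar a)=(\omega(\bar a)-1)^{-s}$ is nonzero --- so there is no non-backtracking constraint in the $\Gamma$-model; the ``reduction'' happens upstairs in $T$ and is encoded by the weight $\Ne$. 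This does not affect your argument, only the description of what the transfer matrix indexes.
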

In Theorem~\hyperref[thmB]{B}, the matrix $\caE^\bullet(s)$ can be interpreted as a weighted adjacency matrix of the local structure in which we count the paths (cf.~Definitions~\ref{defn:Es}~and~\ref{defn:Fs}). The matrix $\caU_{t_1,t_2}^\bullet(s)$ and the integer $\epsilon^{\bullet}_{t_1}(t_2)$ can be regarded as "perturbation data" given by the choice of $t_1$ and $t_2$.

The explicit formulae in~Theorem~\hyperref[thmB]{B} and the fact that $\epsilon_{t_1}(t_1)=0$, for every $t_1\in T$, yield the following:
\begin{corC}\label{corC}
    Under the hypotheses of Theorem~\hyperref[thmB]{B}, let $t\in T$. Then the poles (resp.~zeros) of $\zeta_{G,G_t,G_t}^\bullet(s)$ are all those $s\in \C$ such that~$1$ is an eigenvalue of $\caE^\bullet(s)$ (resp.~$\caE^\bullet(s)-\caU_{t,t}^\bullet(s)$) but not an eigenvalue of $\caE^\bullet(s)-\caU_{t,t}^\bullet(s)$ (resp.~$\caE^\bullet(s)$).
\end{corC}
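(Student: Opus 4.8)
The plan is to feed the explicit formula of Theorem~\hyperref[thmB]{B} into the elementary zero/pole calculus for a quotient of entire functions. Taking $t_1=t_2=t$ and using $\epsilon^\bullet_{t}(t)=0$, the additive integer in Theorem~\hyperref[thmB]{B} drops out and one is left with
\[
  \zeta^\bullet_{G,G_t,G_t}(s)=\frac{N(s)}{D(s)},\qquad
  N(s):=\det\!\bigl(I^\bullet-\caE^\bullet(s)+\caU^\bullet_{t,t}(s)\bigr),\qquad
  D(s):=\det\!\bigl(I-\caE^\bullet(s)\bigr),
\]
where $N$ and $D$ are entire on $\C$ (the entries of $\caE^\bullet(s)$ and $\caU^\bullet_{t,t}(s)$ are, by Theorem~\hyperref[thmB]{B}), and neither is identically zero since $D(s)\to1$ as $\Real(s)\to+\infty$. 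Because $D(s_0)=0$ precisely when $1$ is an eigenvalue of $\caE^\bullet(s_0)$, and $N(s_0)=0$ precisely when $1$ is an eigenvalue of $\caE^\bullet(s_0)-\caU^\bullet_{t,t}(s_0)$, the eigenvalue conditions in the statement merely name the zero loci of $D$ and of $N$. Thus the corollary becomes the claim that the poles of $\zeta^\bullet_{G,G_t,G_t}$ are exactly $\{D=0\}\setminus\{N=0\}$ and its zeros exactly $\{N=0\}\setminus\{D=0\}$.

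Two of the four inclusions are free: if $D(s_0)=0$ and $N(s_0)\neq0$ then $s_0$ is a pole, of order $\mathrm{ord}_{s_0}(D)\ge1$; and if $N(s_0)=0$ and $D(s_0)\neq0$ then $s_0$ is a zero. The whole content sits in the common zeros of $N$ and $D$: one must show that such an $s_0$ is neither a pole nor a zero, i.e.\ that $N$ and $D$ vanish there to the same order, so that the singularity is removable and $\zeta^\bullet_{G,G_t,G_t}(s_0)\neq0$. Here I would use the explicit, "localised" shape of the perturbation $\caU^\bullet_{t,t}(s)$ coming out of the proof of Theorem~\hyperref[thmB]{B}: it alters only the boundary data attached to $t$ --- the admissible initial and terminal edges of the closed paths based at $G\cdot t$ in the weakly locally $\infty$-transitive case, and their analogue in the local action diagram in the (P)-closed case --- so it is a low-rank update $\caU^\bullet_{t,t}(s)=P(s)Q(s)^{\mathsf T}$ with $P,Q$ having entire entries. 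The matrix-determinant lemma then expresses $N(s)$ in terms of $D(s)=\det(I-\caE^\bullet(s))$ and cofactors of $I-\caE^\bullet(s)$, so that $\zeta^\bullet_{G,G_t,G_t}(s)$ equals, up to an additive constant, a ratio of a cofactor-type quantity of $I-\caE^\bullet(s)$ over $\det(I-\caE^\bullet(s))$; in combination with $\mathrm{adj}(M)M=\det(M)I$ this reduces the order comparison at $s_0$ to understanding how the eigenvalue $1$ of $\caE^\bullet(s)$ degenerates, as $s$ moves, along the directions cut out by $P(s)$ and $Q(s)$.

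That last reduction is the step I expect to carry the real weight: it asks for a semisimplicity-type statement, namely that as $s$ varies the eigenvalue $1$ of the weighted adjacency matrix $\caE^\bullet(s)$ never develops a Jordan block in the $P(s),Q(s)$-directions, and this must be extracted from the combinatorial meaning of $\caE^\bullet(s)$ and $\caU^\bullet_{t,t}(s)$ set up in the proof of Theorem~\hyperref[thmB]{B}. If one is content with the \emph{location} of the poles and zeros rather than with the exact set equalities, this obstacle disappears: the corollary then follows immediately from Theorem~\hyperref[thmB]{B} together with the two easy inclusions above, the structural input being needed only to rule out cancellations at common zeros of $N$ and $D$.
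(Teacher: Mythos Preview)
Your opening reduction is exactly the paper's argument: the author does not give a separate proof of Corollary~C but states it in the introduction as an immediate consequence of Theorem~B together with $\epsilon^\bullet_t(t)=0$, which is precisely your display $\zeta^\bullet_{G,G_t,G_t}(s)=N(s)/D(s)$ and the identification of $\{D=0\}$ and $\{N=0\}$ with the two eigenvalue conditions.

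Everything after that in your proposal goes beyond what the paper does. You are right that, read as a literal set equality, the corollary would require ruling out cancellations at common zeros of $N$ and $D$; the paper does not address this, and your proposed semisimplicity-type argument is not supplied (nor claimed) anywhere in the text. In fact the very next item, Question~D, asks \emph{for which} $s$ the matrices $\caE^\bullet(s)$ and $\caE^\bullet(s)-\caU^\bullet_{t,t}(s)$ have $1$ as an eigenvalue, which signals that the author regards the fine structure of these loci as open rather than settled. So the intended reading of Corollary~C is the formal one you give in your last sentence: the poles are contained in $\{D=0\}$, the zeros in $\{N=0\}$, and the corollary records the ``obvious'' poles and zeros coming from the easy inclusions. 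Your caution about common zeros is well placed, but you should not expect---or attempt---to extract a cancellation theorem here; the paper does not prove one.
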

After Corollary~\hyperref[corC]{C}, the following question arises.
\begin{quesD}\label{quesD}
    Under the hypotheses of Theorem~\hyperref[thmB]{B}, let~$t\in T$. For which $s\in \C$ do the matrices $\caE^\bullet(s)$ and $\caE^\bullet(s)-\caU_{t,t}^\bullet(s)$ have $1$ as an eigenvalue? Provided $\zeta_{G,G_t,G_t}^\bullet(s)$ is an infinite series, what is its abscissa of convergence (that is, the maximal $r\in \R$ such that $\zeta_{G,G_t,G_t}^\bullet(s)$ has a pole at $s=r$)?
\end{quesD}

\subsection*{The behaviour at $s=-1$ and the Euler--Poincaré characteristic}\label{s=-1Eu}
One of the main goals of the paper is the study of the local behaviour at $s=-1$ of the relevant double-coset zeta functions. Unless it is a finite sum, the Dirichlet series in \eqref{eq:DCzeta} does not converge at $s=-1$. This underlines the importance of having a continuation of $\zeta_{G,H,K}(s)$ at least to $s=-1$ (which in our context is provided by Theorem~\hyperref[thmB]{B}) to carry out such an evaluation.

Addressing~\cite[Question~G(b)]{ccw:zeta}, we prove that the Euler--Poincaré identity is also attained in our framework. 
A crucial step towards this goal is to reduce the evaluation at $s=-1$ to the more accessible weakly locally $\infty$-transitive case (cf.~Lemma~\ref{lem:noUD}). More specifically, we can only focus on $\caZ_{\Gamma,u\to u}(-1)^{-1}$ for a finite connected edge-weighted graph $(\Gamma,\omega)$.
After introducing suitable notions of unimodularity and Euler--Poincaré characteristic $\chi(\Gamma,u)$ at $u\in \Gamma$ on $(\Gamma,\omega)$ (cf.~Definition~\ref{defn:chiGamma}), we deduce the following.
\begin{thmE}\label{thmE}
    Let $\Gamma$ be a finite connected non-empty graph with no cycles of length $\geq 2$, and let $\omega\colon E\Gamma\to \Z_{\geq 2}$ be an edge weight satisfying $\omega(a)\geq 3$ or $\omega(\bar{a})\geq 3$, for every $a\in E\Gamma$. If~$(\Gamma,\omega)$ is unimodular, then
    \begin{equation*}
       \chi(\Gamma,u)=\caZ_{\Gamma,u\to u}(-1)^{-1}, \quad\forall\,u\in \Gamma.
    \end{equation*}
\end{thmE}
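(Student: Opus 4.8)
The plan is to establish Theorem~\hyperref[thmE]{E} by unwinding both sides of the desired identity in terms of the combinatorics of the edge-weighted graph $(\Gamma,\omega)$ and then matching them via the determinant formula of Theorem~\hyperref[thmB]{B}, specialised to $s=-1$. First I would recall that, by the universal-cover construction of Example~\ref{exintro}, the pair $(\Gamma,\omega)$ gives rise to the tree $T$ together with the t.d.l.c.\ group $G=\Aut_\pi(T)$, whose action is weakly locally $\infty$-transitive; choosing $t\in VT$ with $G\cdot t=u$ we have, by the comparison discussed in Section~\ref{sus:Gammaw}, $\caZ_{\Gamma,u\to u}(s)=\zeta^{(w)}_{G,G_t,G_t}(s)$. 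The hypotheses on $\omega$ (all weights $\geq 2$, and at least one of $\omega(a),\omega(\bar a)$ being $\geq 3$ for each geometric edge) are exactly what forces condition (iii) of Theorem~\hyperref[thmA]{A}: a geodesic of length $l$ whose pointwise stabiliser fixes everything of length $l+k$ would force, reading off deck transformations, that one of the relevant fibres has size $1$, contradicting $\omega\geq 2$; the $\omega\geq 3$-on-one-side condition rules out the degenerate "ladder" configuration where a length-$2$ extension is always forced. So polynomial double-coset growth holds, and $\zeta^{(w)}_{G,G_t,G_t}(s)$ is meromorphic on $\C$ with the shape $\det(I-\caE(s)+\caU_{t,t}(s))/\det(I-\caE(s))+\epsilon_t(t)$, where $\epsilon_t(t)=0$ by Corollary~\hyperref[corC]{C}'s preamble.

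Next I would evaluate this expression at $s=-1$. The key simplification, already flagged in the excerpt, is Lemma~\ref{lem:noUD}: the passage from the abstract group-theoretic side to the graph side means we only need $\caZ_{\Gamma,u\to u}(-1)^{-1}$, i.e.\ the reciprocal of $\det(I-\caE(-1)+\caU_{u,u}(-1))/\det(I-\caE(-1))$, which equals $\det(I-\caE(-1))/\det(I-\caE(-1)+\caU_{u,u}(-1))$. At $s=-1$ the entries of $\caE(s)$, which encode the weighted path-counting, specialise to concrete integers: the weighted adjacency data $\caE(-1)$ becomes (up to a sign/normalisation bookkeeping) the weighted adjacency matrix of $\Gamma$ with the weights $\omega$, and $\caU_{u,u}(-1)$ becomes the rank-one-ish perturbation localised at $u$. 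I would then compute both determinants explicitly: $\det(I-\caE(-1))$ should reduce, via a standard cofactor/Ihara-type manipulation (the techniques of Deitmar--Kang \cite{deika:ihara} and Deitmar \cite{deit:prim} referenced in the paper), to a product over edges times a weighted graph Laplacian-type quantity, while $\det(I-\caE(-1)+\caU_{u,u}(-1))$ picks up an extra factor reflecting the "pointed" vertex $u$. Dividing, almost everything cancels and what remains is precisely the combinatorial expression defining $\chi(\Gamma,u)$ in Definition~\ref{defn:chiGamma} — typically an alternating sum of contributions from vertices and edges weighted by the reciprocals of the $\omega$-values (the edge-weighted analogue of $\sum_v |G_v|^{-1}-\sum_e |G_e|^{-1}$), subject to the unimodularity condition which guarantees these local terms glue consistently.

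The main obstacle I anticipate is the determinant bookkeeping at $s=-1$: one must track carefully how the matrices $\caE(s)$ and $\caU_{t,t}(s)$ — which are indexed by some set of weighted paths or by oriented edges of $\Gamma$, not simply by vertices — collapse when $s=-1$, and in particular that the "perturbation data" $\caU_{u,u}(-1)$ has exactly the rank and the entries needed to produce the single correcting factor. This is where the hypotheses "no cycles of length $\geq 2$" (so $\Gamma$ is a finite tree-like graph in the relevant sense, keeping the Ihara-type identity clean) and the lower bounds on $\omega$ enter a second time, ensuring the determinants in the denominator do not vanish and that the cancellation is exact rather than only up to a nonzero scalar. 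I would organise the argument so that the hard computation is isolated in one lemma (the explicit value of $\det(I-\caE(-1))$ and of the perturbed determinant in terms of edge weights and vertex degrees), after which Theorem~\hyperref[thmE]{E} follows by inspection against Definition~\ref{defn:chiGamma}. A secondary, more routine, point to verify is that the notion of unimodularity for $(\Gamma,\omega)$ in Definition~\ref{defn:chiGamma} matches unimodularity of $\Aut_\pi(T)$, so that the hypothesis of the theorem is the correct one; this should be immediate from the modular-function computation for deck-transformation groups.
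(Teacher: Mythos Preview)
Your setup is sound: the reduction via $G=\Aut_\pi(T)$ to the determinant formula of Theorem~\ref{thm:meroWLIT}, together with $\epsilon_u(u)=0$, is exactly the starting point the paper uses. The gap is in the next step. You propose to evaluate $\det(I-\caE(-1))$ and $\det(I-\caE(-1)+\caU_{u,u}(-1))$ separately via an ``Ihara--Bass--type manipulation'' and then observe that ``almost everything cancels'' in the ratio. But the Bass--Hashimoto identity relates $\det(I-xT)$ to a vertex-indexed determinant $\det(I-xA+x^2Q)$; there is no analogous off-the-shelf identity for the \emph{perturbed} determinant $\det(I-\caE(-1)+\caU_{u,u}(-1))$, and you give no indication of what the perturbed quantity evaluates to or why the ratio should match the formula of Proposition~\ref{prop:chitree}. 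For a graph with many edges and $1$-loops, $\caU_{u,u}(-1)$ involves all edges emanating from $u$, and its interaction with $(I-\caE(-1))^{-1}$ is not transparent without further structure. As written, the plan is a hope rather than an argument.

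The paper does something structurally different: it proves matching \emph{splitting formulae} for both sides. Proposition~\ref{prop:Vsplit} shows that if $\Gamma=\Gamma_1\cup\Gamma_2$ with $\Gamma_1\cap\Gamma_2=\{c\}$ then $\caZ_{\Gamma,c\to c}(s)^{-1}=\caZ_{\Gamma_1,c\to c}(s)^{-1}+\caZ_{\Gamma_2,c\to c}(s)^{-1}-1$, and Lemma~\ref{lem:chiProp} shows the identical additivity for $\chi(\Gamma,c)$. The hypothesis ``no cycles of length $\geq 2$'' is used precisely here (Lemma~\ref{lem:GSplit}): it guarantees that any vertex with $|o^{-1}(c)|\geq 2$ admits such a decomposition into \emph{proper} connected subgraphs, so one can induct on $|E\Gamma|$, with base cases the $1$-segment and the $1$-bouquet handled by direct computation (Examples~\ref{ex:1segmWLIT}, \ref{ex:1loopWLIT}, \ref{ex:chi}). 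You misread this hypothesis as a device to keep an Ihara identity clean; in fact it is what makes the inductive decomposition go through (the paper also needs Lemma~\ref{lem:chi-1VE} and Corollary~\ref{cor:Z-1indep} to propagate the identity from one $u$ to all $u$). The missing ingredient in your plan is any such mechanism---splitting, induction, or a genuine closed-form evaluation---that reduces the general case to something computable.
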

In Theorem~\hyperref[thmE]{E}, the hypothesis on $\omega$ guarantees that a formula analogous to the one in Theorem~\hyperref[thmB]{B} is applicable to $\caZ_{\Gamma,u\to u}(s)$, for all $u\in \Gamma$.
The proof of Theorem~\hyperref[thmE]{E} is based on some splitting formulae satisfied by $\caZ_{\Gamma,u\to u}(s)^{-1}$, where~$u\in \Gamma$, that we discuss in Section~\ref{s:split}. The assumption that $\Gamma$ has no cycles of length $\geq 2$ guarantees their general applicability. 
By~Theorem~\hyperref[thmE]{E} and Lemma~\ref{lem:noUD}, we deduce the following.
\begin{corF}\label{corF}
    Let $G$ be a unimodular t.d.l.c.~group acting on a locally finite tree $T$ as in Convention~\ref{conv:intro} and with compact open vertex stabilisers. Assume that the quotient graph~$\Gamma$ and the associated standard edge-weight~$\omega\colon E\Gamma\to \Z_{\geq 2}$ (cf.~Section~\ref{sus:Gammaw}) satisfy the hypotheses of Theorem~\hyperref[thmE]{E}. Suppose also that $(G,T)$ is weakly locally $\infty$-transitive or (P)-closed. Then, for every $t\in T$ such that $(G,G_{t}, G_{t})$ has polynomial double-coset growth, we have
    \begin{equation*}
        \widetilde{\chi}_G=\zeta_{G,G_t,G_t}(-1)^{-1}\mu_{G_t}.
    \end{equation*}
\end{corF}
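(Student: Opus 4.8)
The plan is to deduce Corollary~\hyperref[corF]{F} from Theorem~\hyperref[thmE]{E} via the reduction Lemma~\ref{lem:noUD}, with the bulk of the work being a careful translation dictionary between the group-theoretic data $(\widetilde{\chi}_G,\mu_{G_t})$ and the combinatorial data $(\chi(\Gamma,u),\caZ_{\Gamma,u\to u}(-1))$ on the quotient graph $\Gamma = G\backslash T$. First I would fix $t\in T$ with $(G,G_t,G_t)$ of polynomial double-coset growth and set $u := G\cdot t\in\Gamma$. Using Lemma~\ref{lem:noUD} (the reduction to the weakly locally $\infty$-transitive case), I would identify $\zeta_{G,G_t,G_t}(-1)^{-1}$, up to the explicit correction recorded there, with $\caZ_{\Gamma,u\to u}(-1)^{-1}$ for the standard edge weight $\omega$ attached to $(G,T)$ (cf.~Section~\ref{sus:Gammaw}); here I should be careful whether $t$ is a vertex or an edge, but the statement only needs one representative per vertex-orbit and the independence from $t_1,t_2$ following Theorem~\hyperref[thmA]{A} takes care of that.

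The second step is to check that the hypotheses of Theorem~\hyperref[thmE]{E} are met by $(\Gamma,\omega)$. Finiteness and connectedness of $\Gamma$, and non-emptiness, are immediate from the hypotheses on $(G,T)$; the absence of cycles of length $\geq 2$ is assumed directly; and the condition on $\omega$ (that $\omega(a)\geq 3$ or $\omega(\bar a)\geq 3$ for each geometric edge, with $\omega$ valued in $\Z_{\geq 2}$) needs to be extracted from the standing assumption that stabilisers of adjacent vertices are incomparable together with the polynomial-growth hypothesis: incomparability forces each of the two "branching numbers" at an edge to be $\geq 2$, and the growth condition (iii) of Theorem~\hyperref[thmA]{A} upgrades at least one of them to $\geq 3$, as is already used to make Theorem~\hyperref[thmB]{B} applicable. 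I would also need to verify that \emph{unimodularity of $(\Gamma,\omega)$} in the sense of Definition~\ref{defn:chiGamma} is exactly the shadow of unimodularity of $G$: since $G$ acts on $T$ with compact open vertex stabilisers and the $G$-action factors through the standard edge weights, the modular function of $G$ is trivial iff the product of edge weights around any closed path in $\Gamma$ balances in the prescribed way, which is the graph-theoretic unimodularity. This identification is where I expect the main friction.

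Granting these two steps, Theorem~\hyperref[thmE]{E} gives $\chi(\Gamma,u) = \caZ_{\Gamma,u\to u}(-1)^{-1}$, so it remains to match $\chi(\Gamma,u)\cdot(\text{normalisation})$ with $\widetilde{\chi}_G\cdot\mu_{G_t}^{-1}$, equivalently $\widetilde{\chi}_G = \chi(\Gamma,u)\,\mu_{G_t}$. The Euler--Poincaré characteristic $\widetilde{\chi}_G$ of a unimodular t.d.l.c.~group acting on a tree with compact open stabilisers has the classical Bass--Serre expression as an alternating sum $\sum_{v}\mu_{G_t}(G_t)/\mu_{G_t}(G_v) - \sum_{e}\mu_{G_t}(G_t)/\mu_{G_t}(G_e)$ over orbit representatives (in the spirit of~\cite[\S5]{ccw:zeta}), and I would check that $\chi(\Gamma,u)$ in Definition~\ref{defn:chiGamma}, with its $u$-based normalisation, is defined precisely so as to reproduce this sum once the Haar measure is normalised with respect to $G_t$ rather than $G_v$ — the edge weights $\omega(a)$ being exactly the indices $[G_{o(a)}:G_a]$ up to the action diagram. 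Independence of $u$ on the graph side mirrors the well-definedness of $\widetilde{\chi}_G$ on the group side.

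The step I expect to be the genuine obstacle is the last normalisation bookkeeping combined with the measure identification: Theorem~\hyperref[thmE]{E} is stated with a bare reciprocal $\caZ_{\Gamma,u\to u}(-1)^{-1}$ and a fixed normalisation of $\chi(\Gamma,u)$, whereas Corollary~\hyperref[corF]{F} carries the Haar-measure factor $\mu_{G_t}$ and must agree with~\eqref{eq:chi-1}; reconciling the two requires tracking exactly how $\mu_{G_t}(G_t)=1$ interacts with the edge-weight normalisation built into $\chi(\Gamma,u)$, and checking that the correction term from Lemma~\ref{lem:noUD} (the difference between counting $K$-cosets and $(H,K)$-double cosets, and between the $(w)$ and $(p)$ variants) does not contribute at $s=-1$. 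I would handle this by first doing the purely weakly locally $\infty$-transitive case where $\zeta = \caZ_{\Gamma,u\to u}$ on the nose, verifying the identity there, and then invoking Lemma~\ref{lem:noUD} to transfer the conclusion to the (P)-closed case, since that lemma is designed to make the two evaluations at $-1$ coincide.
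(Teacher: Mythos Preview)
Your proposal is correct and follows essentially the same route as the paper: reduce to the weakly locally $\infty$-transitive case via Lemma~\ref{lem:noUD} (together with Remark~\ref{rem:redUD}), then invoke Remark~\ref{rem:zetaZ} to identify $\zeta_{G,G_t,G_t}$ with $\caZ_{\Gamma,u\to u}$, Remark~\ref{rem:chiGamma} (resting on Proposition~\ref{prop:chitree}) to identify $\chi(G,\mu_{G_t})$ with $\chi(\Gamma,u)$, and apply Theorem~\hyperref[thmE]{E}. The ``corrections'' you anticipate do not arise: Lemma~\ref{lem:noUD} is an exact equality at $s=-1$, and the normalisation $\mu_{G_t}(G_t)=1$ is already built into Definition~\ref{defn:chiGamma}, so the bookkeeping you flag as the main obstacle is in fact vacuous.
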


\subsection*{The behaviour at $s=-1$ and the weighted Ihara zeta function}\label{s=-1Ih}
The zeta function $\caZ_{\Gamma,u\to u}(s)$  is not the only growth series that has been considered in a graph-theoretical setting.
Within Dirichlet series, we mention the zeta functions associated to graphs and hypergraphs introduced by T.~Rossmann and C.~Voll~\cite{rovo}, used by them to study the enumeration of conjugacy classes of unipotent groups over finite fields. Within growth series, a well-known power series associated to a graph is its \emph{Ihara zeta function}~\cite{ihara}, which relates closed paths in the graph with the spectrum of its adjacency matrix.
In the present note, we focus on a variation of the Ihara zeta function, namely the \emph{weighted Ihara zeta function}. This is a power series~$Z_{(\Gamma,W)}(x)$ introduced by A.~Deitmar~\cite{deit:prim} 
for every graph $\Gamma$ with transition weight $W$ (cf.~Section~\ref{sus:ih-1} for a brief recap). In~Section~\ref{sus:ih-1}, we provide a canonical way to construct a transition weight $W_{(\Gamma,\omega)}$ on a finite graph $\Gamma$ starting from an edge weight $\omega$ (cf.~Example~\ref{ex:ih}). 

The main result of the section is the following.
\begin{thmG}[\protect{cf.~Theorem~\ref{thm:iha}}]\label{thmG}
Let~$\Gamma$ be a finite connected non-empty graph, and~$\Gamma_1, \Gamma_2$ be subgraphs satisfying~$\Gamma=\Gamma_1\cup \Gamma_2$, such that~$\Gamma_1\cap \Gamma_2$ is a $1$-segment graph, i.e., a Serre graph with two distinct vertices and an edge couple~$\{a,\bar{a}\}$ connecting them, such that $t(a)$ and $o(a)$ are leaves of~$\Gamma_1$ and~$\Gamma_2$, respectively.
Let also $\omega\colon E\Gamma\to \Z_{\geq 2}$ satisfy $\omega(a)\geq 3$ or $\omega(\bar{a})\geq 3$ for every $a\in E\Gamma$. Then,
    \begin{equation}\label{eq:ih1}
   \frac{\caZ_{\Gamma, a\to a}(-1)}{\caZ_{\Gamma_1, a\to a}(-1)\cdot \caZ_{\Gamma_2, a\to a}(-1)}=\frac{1}{\omega(a)\omega(\bar{a})}\cdot \frac{Z_{(\Gamma,W)}(1)}{Z_{(\Gamma_1,W_1)}(1)\cdot Z_{(\Gamma_2, W_2)}(1)}.
\end{equation}
In particular, if $(\Gamma,\omega)$ is unimodular then
\begin{equation}\label{eq:ih2}
    \frac{\chi(\Gamma_1,a)\cdot \chi(\Gamma_2,a)}{\chi(\Gamma,a)}=\frac{1}{\omega(a)\omega(\bar{a})}\cdot\frac{Z_{(\Gamma,W)}(1)}{Z_{(\Gamma_1,W_1)}(1)\cdot Z_{(\Gamma_2, W_2)}(1)}.
\end{equation} 
\end{thmG}
Relating Dirichlet series and power series, even if generated by a common sequence, is usually a non-trivial problem. In our case, the relation established in~Theorem~\hyperref[thmG]{G} has been facilitated by explicit determinant formulae for each of the two types of zeta functions involved. Theorem~\hyperref[thmG]{G} sheds light on the behaviour of the weighted Ihara zeta function $Z_{(\Gamma,W)}(t)$ at $t=1$ which, to the best of the author's knowledge, has been never studied so far. In the same spirit as in Theorem~\hyperref[thmE]{E}, it would be interesting to find an explicit expression of $Z_{(\Gamma,W)}(1)$ in terms of suitable invariants of~$(\Gamma,W)$.

\subsection*{Structure of the paper}\label{struct}
Sections~\ref{s:prel}~and~\ref{s:actions} collect background knowledge for the paper. 
In particular, in Section~\ref{sus:wlit} we introduce the new concept of weakly locally $\infty$-transitive actions on trees. 

In Section~\ref{s:suborb}, we give geometric descriptions of the coset spaces $G/G_t$ and the size $|G_{r}gG_t/G_t|$ of a group $G$ acting on a tree $T$ with respect to vertex or edge stabilisers $G_r,G_t$. This description is furthermore refined if the action is weakly locally $\infty$-transitive (cf.~Section~\ref{sus:subWLIT}) or (P)-closed (cf.~Section~\ref{sus:subP}). 

Section~\ref{s:DCzeta} follows a similar pattern. It begins with general results on the double-coset property and polynomial double-coset growth for groups acting on trees (cf.~Proposition~\ref{prop:conv*}). 
Afterwards, we specialise the discussion to the cases of weakly locally $\infty$-transitive or (P)-closed actions (cf.~Sections~\ref{sus:DCwlit}~and~\ref{sus:DCPcl}, respectively). In these two cases, we characterise the polynomial double-coset growth and give explicit determinant formulae of the relevant double-coset zeta functions. 
 
Section~\ref{s:split} collects some splitting formulae for $\caZ_{\Gamma,u\to u}(s)^{-1}$ which are key for the proofs in the next Section~\ref{s:euler}. 

Finally, in Section~\ref{s:euler} we discuss the behaviour at $s=-1$ for the double-coset zeta functions studied in Sections~\ref{sus:DCwlit}~and~\ref{sus:DCPcl}. In particular, we provide connections with the Euler--Poincaré characteristic of the group (cf.~Section~\ref{sus:eul-1}) and the Ihara zeta function of a weighted graph (cf.~Section~\ref{sus:ih-1}).

\subsection*{Acknowledgments}
The author thanks her Ph.D.~advisors Christopher Voll and Thomas Weigel for continuous support and instructive input throughout the preparation of this paper. She also thanks Ilaria Castellano, Jan Moritz Petschick, José Pedro Quintanilha, and Karthika Rajeev for interesting conversations and several useful comments on an early draught of the article, and Evgenii Khukhro and Roman Sauer for interesting questions that inspired the results in Section~\ref{s:split}. Finally, she is grateful to the two anonymous referees for valuable and extremely instructive suggestions that improved the article. 

The author is a postdoctoral researcher of the Fonds de la Recherche Scientifique -- FNRS. 
The author was supported by DFG TRR 358 –- Project-ID 491392403, and RTG 2229 "Asymptotic Invariants and Limits of Groups and Spaces". She is also a member of GNSAGA, as a part of Istituto Nazionale di Alta Matematica (INdAM).

\section{Notation and preliminaries}\label{s:prel}

\subsection{Generalities}\label{sus:gen} 
If there is no ambiguity, we denote a $1$-point set~$\{x\}$ by the element $x$ itself.
Given a set~$X$, for every subset $A\subseteq X$ denote by~$\mathbbm{1}_A$ the indicator function of $A$. 
For every set $X$, the symmetric group $\mathrm{Sym}(X)$ is always regarded as a topological group with the \emph{permutation topology}, i.e., the topology generated by the local basis at~$1$ given by all possible pointwise stabilisers of finite non-empty subsets of $X$. 

Moreover, let $G$ and $H$ be groups acting on the sets $X$ and $Y$, respectively. Denote by $\sigma_G\colon G\to \Sym(X)$ and $\sigma_H\colon H\to \Sym(Y)$ the homomorphisms induced by the two actions. The $G$-action on $X$ is said to be \emph{permutational isomorphic} to the $H$-action on $Y$ if there are a group isomorphism $\varphi\colon\sigma_G(G)\to \sigma_H(H)$ and a bijection $f\colon X\to Y$ satisfying $f(g\cdot x)=\varphi(g)\cdot f(x)$, for all $x\in X$ and $g\in \sigma_G(G)$.

\subsection{Graphs}\label{sus:graph}
A \emph{graph} (in the sense of J-P.~Serre~\cite{ser:trees}) consists of a set $\Gamma=V\Gamma\sqcup E\Gamma$ partitioned into two subsets $V\Gamma$ and $E\Gamma$ (called the \emph{set of vertices} and the \emph{set of edges} of $\Gamma$, respectively), together with two maps $o,t\colon E\Gamma\to V\Gamma$ (called \emph{origin} and \emph{terminus} maps, respectively) and an involution $\overline{\cdot}\colon E\Gamma\to E\Gamma$ (called \emph{edge inversion}) satisfying $\bar{e}\neq e$ and $o(\bar{e})=t(e)$, for every $e\in E\Gamma$. 
We introduce the following notation for a graph $\Gamma$.
\begin{notat}\label{notat:capitG}
   Given $u\in \Gamma$, we use the associated capital letter $U$ to denote the set $\{u\}$ if $u\in V\Gamma$, and the set $\{u,\bar{u}\}$ if $u\in E\Gamma$.
\end{notat}
An \emph{orientation} in a graph~$\Gamma$ is a set $E\Gamma^+\subseteq E\Gamma$ satisfying $|\{e,\bar{e}\}\cap E\Gamma^+|=1$ for every $e\in E\Gamma$, and $E\Gamma=E\Gamma^+\cup\{\bar{e}\mid e\in E\Gamma^+\}$. A graph $\Gamma$ is \emph{non-empty} if it has at least one vertex, and is \emph{locally finite} if $|o^{-1}(c)|<\infty$ for all $c\in V\Gamma$.

A \emph{subgraph} of a graph $\Gamma$ is a graph~$\Lambda=V\Lambda\sqcup E\Lambda$ with $V\Lambda\subseteq V\Gamma$, $E\Lambda\subseteq E\Gamma$, and whose origin, terminus and inversion maps are obtained from the corresponding ones of~$\Gamma$ by restriction.
A subgraph~$\Lambda$ of~$\Gamma$ is \emph{proper} if~$\Lambda\neq \Gamma$. 

A vertex $v$ in~$\Gamma$ is said to be \emph{terminal} (or a \emph{leaf}) if $|o^{-1}(v)|=1$.
An edge $e$ in~$\Gamma$ with $o(e)=t(e)$ is called \emph{$1$-loop}. A \emph{$n$-bouquet of loops (based at $c$)} is a graph with one vertex $c$ and edge-set $\{a_i,\bar{a}_i\mid 1\leq i\leq n\}$, where each $a_i$ is a $1$-loop starting at $c$.
A \emph{$1$-segment} is a graph $\Gamma$ with two distinct vertices and an edge-couple $\{e,\bar{e}\}$ connecting them.

Given two graphs $\Gamma$ and $\Lambda$, a \emph{graph morphism} is a map $\varphi\colon \Gamma\to \Lambda$ satisfying $\varphi(V\Gamma)\subseteq V\Lambda$, $\varphi(E\Gamma)\subseteq E\Lambda$, $\varphi(o(e))=o(\varphi(e))$ and $\varphi(\bar{e})=\overline{\varphi(e)}$ for every $e\in E\Gamma$. A \emph{graph monomorphism} (resp.~\emph{epimorphism}, \emph{isomorphism}) is a graph morphism which is injective (resp.~surjective, bijective). 
Given a graph~$\Gamma$, let $\Aut(\Gamma)$ be the group of all automorphisms (= self-isomorphisms) of $\Gamma$. We always regard $\Aut(\Gamma)$ as a topological group with the subspace topology induced by $\mathrm{Sym}(\Gamma)$. 

Let $\Gamma$ be a graph. A \emph{path} in~$\Gamma$ is a sequence of vertices and edges $\frp=(v_0,e_1,v_1,\ldots, e_n,v_n)$, $n\geq 0$, with $o(e_i)=v_{i-1}$ and $t(e_i)=v_i$ for every $1\leq i\leq n$. 
Note that paths may contain repeated vertices and edges, and it might happen that $e_{i+1}=\bar{e}_i$ for some~$i$.
We say that $\frp$ starts at $v_0$ (or at $e_1$) and ends at $v_n$ (or at $e_n$), has \emph{reverse path} $\overline{\frp}=(v_n,\bar{e}_n,v_{n-1},\ldots, \bar{e}_1,v_0)$, and length $n$ (written $\ell(\frp)=n$).
If $n\geq 1$, we may without ambiguity specify only the sequence of edges of $\frp$.
If $n=0$, the $1$-term sequence $\caO_{v_0}=(v_0)$ is called the \emph{trivial path at $v_0$}. 
Denote by $\calP_\Gamma$ the set of all paths in~$\Gamma$. Given non-empty subsets $X,Y\subseteq \Gamma$, let $\calP_\Gamma(X\to Y)$ be the set of all paths in~$\Gamma$ starting at some~$x\in X$ and ending at some~$y\in Y$.  
The \emph{product} of two paths $\frp=(e_1,\ldots, e_m)$ and $\frq=(f_1,\ldots,f_n)$ is defined only if $t(e_m)=o(f_1)$ and it is the path $\frp\cdot\frq=(e_1,\ldots,e_m,f_1,\ldots,f_n)$. If $\frp$ is a path starting and ending at the same vertex, denote by $\frp^d$ the $d$-th power of $\frp$ with respect to the product defined before.
A path $\frp$ is \emph{reduced} if either $\ell(\frp)=0$ or $\frp=(e_1,\ldots,e_n)$ and $e_{i+1}\neq \bar{e}_i$ for every $1\leq i\leq n-1$. 
For $n\geq 1$, an $n$-\emph{cycle} is a reduced path $\frp=(e_1, \ldots, e_n)$ with $o(e_1)=t(e_n)$ and $t(e_i)\neq t(e_{j})$ for all $1\leq i,j\leq n$~with~$i\neq j$.  

A graph $\Gamma$ is \emph{connected} if for all $v,w\in V\Gamma$ there is a path from~$v$ to~$w$. A subgraph of $\Gamma$ is a \emph{connected component} if it is a maximal connected subgraph of $\Gamma$. A graph is the disjoint union of all its connected components.
A \emph{tree}~$T$ is a connected graph with no $n$-cycles, for every $n\geq 1$. If~$T$ is a tree and $e\in ET$, then the graph $T\setminus\{e,\bar{e}\}$ has two connected components, $T_e^{+}\ni t(e)$ and $T_{e}^-\ni o(e)$. Set $T_{\geq e}:=T_e^+\sqcup \{e\}$ and $T_{\geq \bar{e}}:=T_e^-\sqcup \{\bar{e}\}$. 
A tree~$T$ is \emph{uniquely geodesic}, i.e., for all $v,w\in VT$ there is a unique reduced path $[v,w]$ from $v$ to $w$, which we call \emph{geodesic from $v$ to $w$}. Recall that $[v,w]$ is the path of minimal length in~$T$ from $v$ to $w$. 
Moreover, given $e,f\in ET$, there is a geodesic $(e_1,\ldots, e_n)$ in~$T$ with $e_1=e$ and $e_n=f$ if, and only if, $f\in T_{\geq e}$. In general, for $t_1,t_2\in T$ we denote by $[t_1,t_2]$ the geodesic from $t_1$
to $t_2$ in~$T$ (if it exists).
Moreover, for non-empty subsets $X,Y\subseteq T$, denote by $\Geod_T(X\to Y)$ the set of all geodesics in~$T$ from some $x\in X$ to some~$y\in Y$.

 A \emph{ray} in a tree is a sequence of edges $(e_i)_{i\in \Z_{\geq 1}}$ such that $o(e_i)\neq o(e_j)$ and $t(e_i)=o(e_{i+1})$, for all $i,j\in \Z_{\geq 1}$ with $i\neq j$. Two rays $(e_i)_{i\in \Z_{\geq 1}}$ and $(f_i)_{i\in \Z_{\geq 1}}$ are \emph{equivalent} if there is $j\in \Z_{\geq 1}$ such that $e_i=f_i$ for every $i\geq j$. Equivalence classes of rays in a tree are called \emph{ends}.

\begin{rem}\label{rem:Gnocyc}
    Let $\Gamma$ be a connected graph without $n$-cycles for every $n\geq 2$. Then $\Gamma$ has a unique maximal subtree: the subgraph $\Lambda$ obtained from $\Gamma$ by removing all its $1$-loops. In particular, for all $v,w\in V\Gamma$ the geodesic $[v,w]=(v=v_0,e_1,v_1,\ldots, e_n,v_n=w)$ in $\Lambda$ is the path of minimal length in $\Gamma$ from $v$ to $w$. Thus, if $v\neq w$ then $v_i\neq v_j$ for all $0\leq i,j\leq n$ with $i\neq j$.
\end{rem}

\section{Group actions on trees}\label{s:actions}
In this section, we introduce the two main classes of group actions on trees considered in this paper. The first is the class of (P)-closed actions on trees (cf.~Section~\ref{sus:Pcl}). In Section~\ref{sus:lads}, we briefly recall a local--to--global approach due C.~Reid and S.~Smith~\cite{resm:lad} to study these kinds of group actions, which will be largely used in the paper. We also add some new vocabulary (cf.~Definition~\ref{defn:pathD}) that will be exploited in Section~\ref{sus:subP} to count geodesics on the tree of those actions. Section~\ref{sus:wlit} introduces the second class we focus on, the one of weakly locally $\infty$-transitive actions on trees. Therein we discuss the connection with locally $\infty$-transitive actions (cf.~Lemma~\ref{lem:(W)LIT}), providing a local characterisation and some explicit examples (cf.~Proposition~\ref{prop:locWLIT} and~Examples~\ref{ex:WLIT}, \ref{ex:nonwlit} and~\ref{ex:nonPcl}).

\subsection{Group actions on trees}\label{sus:actions}
Let~$G$ be a topological group and~$T$ be a tree. A \emph{$G$-action $(G,T)$ on $T$} is a continuous group homomorphism~$G\to \Aut(T)$ (recall that~$\Aut(T)$ has the permutation topology, see Section~\ref{sus:graph}).
The action~$(G,T)$ is said to be \emph{without edge inversions} if~$g\cdot e\neq \bar{e}$ for all $g\in G$ and $e\in ET$.
\begin{conv}\label{conv:action}
    In this paper, all actions on trees are assumed to be \emph{without edge inversions}, and the relevant tree is assumed to have at least one edge.
\end{conv}
Convention~\ref{conv:action} allows us to have, as quotient of the action, a graph in the sense of J-P.~Serre that has at least one edge. These (relatively mild) hypotheses are important for the introduction of certain linear operators associated the quotient graph (cf.~Sections~\ref{sus:DCwlit} and~\ref{sus:DCPcl}), whose role is key in our study of double-coset zeta functions.

\smallskip

Given $t\in T$, denote by $G_t$ the stabiliser of $t$ in~$G$. More generally, for every subset $X\subseteq T$, let $G_X$ denote the pointwise stabiliser of~$X$. If $\frp=(e_i)_{1\leq i\leq n}$ is a path, $G_\frp$ denotes the pointwise stabiliser of the set $\{e_1,\ldots, e_n\}$. If~$\xi$ is an end of~$T$ (regarded as a set of equivalent rays), we denote by $G_\xi$ its setwise stabiliser.

An action $(G,T)$ is \emph{arc-transitive} if $ET=G\cdot e\sqcup G\cdot \bar{e}$ for some (and hence every) $e\in ET$. Moreover, $(G,T)$ is \emph{locally $\infty$-transitive} if, for all $v\in VT$ and $d\geq 0$, the stabiliser $G_v$ acts transitively on $\{\frp\in \Geod_T(v\to T) \mid \ell(\frp)=d\}$ (cf.~\cite[\S 0.2]{bumo}). One checks that locally $\infty$-transitive actions are arc-transitive. Examples of groups admitting a locally $\infty$-transitive action on a tree satisfying Convention~\ref{conv:action} are the $k$-points of simple simply connected algebraic $k$-groups of relative rank $1$, where $k$ is a non-Archimedean local field (cf.~\cite[pp.~91~and~95]{ser:trees}), and the Burger--Mozes universal groups $U(F)$ associated to $2$-transitive groups $F\leq \mathrm{Sym}(\{1,\ldots, d\})$ acting on the barycentric subdivision of a $d$-regular tree (cf.~the lines before~\cite[\S 3.1]{bumo}).

\subsection{The quotient graph and its standard edge weight}\label{sus:Gammaw}
Let $(G,T)$ be a group action on a tree without edge inversions. The \emph{quotient graph} $\Gamma=G\backslash T$ of the action is the graph with $V\Gamma:=G\backslash VT$, $E\Gamma:=G\backslash ET$ and, given $G\cdot e\in E\Gamma$, its origin is $G\cdot o(e)$, its terminus is $G\cdot t(e)$ and its inverse edge is $G\cdot\bar{e}$. One checks that these definitions are independent of the choice of $e$ in $G\cdot e$.

The assignment $\pi\colon t\in T\longmapsto G\cdot t\in \Gamma$ yields a graph epimorphism which is called \emph{quotient map} of $(G,T)$. The map $\pi$ entrywise extends to a map (denoted with the same symbol) from the set of all paths in~$T$ to the set of all paths in~$\Gamma$.

The \emph{standard edge weight} on $\Gamma$ is the map $\omega\colon E\Gamma\to \Z_{\geq 1}\cup\{\infty\}$ defined on every $a\in E\Gamma$ by choosing $v\in VT$ with $\pi(v)=o(a)$ and setting
\begin{equation}\label{eq:w}
    \omega(a):=|\{e\in ET\,:\,  o(e)=v\text{ and }\pi(e)=a\}|.
\end{equation}
In other words, $\omega(a)$ counts how many edges in~$T$ starting at $v$ lift $a$ via $\pi$.
It is straightforward to check that the assignment in~\eqref{eq:w} does not depend on the choice of the vertex $v$. 

Starting from a connected graph $\Gamma$ and a function $\omega\colon E\Gamma\to \Z_{\geq 1}$, one always has a group action on a tree without edge inversions having quotient graph~$\Gamma$ and standard edge weight~$\omega$ (cf.~Example~\ref{exintro}).

\subsection{(P)-closed actions on trees}\label{sus:Pcl}
The study of the group $\Aut_\pi(T)$ as in Example~\ref{exintro} initiated the study of (P)-closed group actions on trees (cf.~\cite{bew:simp}), a class that stems from the more general class of group actions on trees with the Tits' property~(P) (cf.~\cite[\S4.2]{tit:tree}), which we now recall.  

Let~$(G,T)$ be a group action on a tree. Let~$\ell=(v_0,e_1,v_1,\ldots, e_n, v_n, \ldots)$, $1\leq n\leq +\infty$, be a geodesic or a ray in~$T$ with vertices~$(v_n)_{n\geq 0}$ and edges~$(e_n)_{n\geq 1}$. Denote by~$\pi_\ell\colon VT\to \{v_n\mid n\geq 0\}$ the closest point projection of the vertices of~$T$ onto~$\ell$. For every~$n\geq 0$, the pointwise stabiliser~$G_\ell$ induces a permutation group~$(G_\ell)_{n}\leq \Sym(\pi_\ell^{-1}(v_n))$. Therefore, one obtains a group monomorphism
\begin{equation*}
    \Phi_\ell\colon G_\ell\longrightarrow \prod_{n\geq 0}(G_\ell)_n.
\end{equation*}
If~$\Phi_\ell$ is surjective for every~$\ell$, the action~$(G,T)$ is said to have \emph{Tits' property~(P)}. Note that, if~$\ell=(v_0,e,v_1)$ has only one edge~$e$, the function~$\pi_\ell$ maps  every~$v\in VT_{\geq \bar{e}}$ to~$v_0=o(e)$ and every~$v\in VT_{\geq e}$ to~$v_1=t(e)$.  
Hence, it is easily checked that the surjectivity of~$\Phi_\ell$ is equivalent to the requirement 
\begin{equation}\label{eq:propP}
     G_e=G_{T_{\geq\bar{e}}}\cdot G_{T_{\geq e}}.
\end{equation}

Following the characterisation of~\cite[Theorem~2.18]{resm:lad}, we say that the action~$(G,T)$ is \emph{(P)-closed} if it has Tits' property~(P) and~$G\leq \Aut(T)$ is closed with respect to the permutation topology.

The following fact is well-known and is included for completeness.
\begin{lem}\label{lem:propPbis}
  Let $(G,T)$ be a (P)-closed action on a tree and~$(e_1,\ldots, e_n)$ be a geodesic in~$T$ of length $n\geq 2$. Then, for every $k<n$, we have
  \begin{equation}\label{eq:propPbis}
      G_{(e_1,\ldots, e_k)}\cdot (e_{k+1},\ldots, e_n)=G_{e_k}\cdot (e_{k+1},\ldots, e_n). 
  \end{equation}
  \end{lem}
  \begin{proof}
  The inclusion $\subseteq$ is clear. Moreover, we note that $e_1,\ldots, e_{k-1}\in ET_{\geq \bar{e}_k}$ and $e_{k+1},\ldots, e_n\in ET_{\geq e_k}$. Hence, 
  \begin{equation*}
      \begin{split}
          G_{(e_1,\ldots, e_k)}\cdot (e_{k+1},\ldots, e_n) & \supseteq G_{T_{\geq \bar{e}_k}}\cdot (e_{k+1},\ldots, e_n)\\
          =G_{T_{\geq \bar{e}_k}}\cdot G_{T_{\geq e_k}}\cdot (e_{k+1},\ldots, e_n) & = G_{e_k}\cdot (e_{k+1},\ldots, e_n). \qedhere
      \end{split}
  \end{equation*}
\end{proof}

C.~Reid and S.~Smith~\cite{resm:lad} provide a parametrisation of (P)-closed group actions on trees in terms of \emph{local action diagrams}, that we now recall.

\subsection{Local action diagrams and their associated universal groups}\label{sus:lads}
Following~\cite[Definition~3.1]{resm:lad}, a \emph{local action diagram} is a triple $$\Delta=(\Gamma,(X_a)_{a\in E\Gamma},(G(c))_{c\in V\Gamma})$$ consisting of the following data:
\begin{itemize}
    \item[(i)] a connected graph $\Gamma$ in the sense\footnote{I.e., $\Gamma$ is a connected graph in the sense of J-P.~Serre, except that a loop may or may not be equal to its own reverse.} of~\cite[\S 2.1]{resm:lad};
    \item[(ii)] a family of non-empty pairwise disjoint sets $(X_a)_{a\in E\Gamma}$.
    For every $c\in V\Gamma$, set $X_c:=\bigsqcup_{a\in o^{-1}(c)}X_a$ and $X:=\bigsqcup_{a\in E\Gamma}X_a$;
    \item[(iii)] for every $c\in V\Gamma$, a closed subgroup $G(c)$ of $\mathrm{Sym}(X_c)$ whose orbits are given by $G(c)\backslash X_c=\{X_a\}_{a\in o^{-1}(c)}$.
\end{itemize}
A local action diagram~$\lad$ is \emph{trivial} if~$\Gamma$ is a $1$-point graph (i.e., a graph~$\Gamma$ with~$|V\Gamma|=1$ and~$E\Gamma=\emptyset$). 

\smallskip

In Theorem~\ref{thm:UD} we will recall a correspondence -- established by C.~Reid and S.~Smith in~\cite{resm:lad} -- between local action diagrams and (P)-closed actions on trees. In particular, trivial local action diagrams correspond to (P)-closed actions on $1$-point trees, and local action diagrams whose underlying graph has a $1$-loop equal to its own inverse correspond to (P)-closed actions on trees with edge inversions.
Hence, having assumed Convention~\ref{conv:action} for group actions on trees, we assume the following henceforth.
\begin{conv}\label{conv:lad}
   In this paper, all local action diagrams are assumed to be \emph{non-trivial} and their underlying graph is assumed to have \emph{no 1-loops equal to their own inverses} (that is, it is graph as defined in Section~\ref{sus:graph}).
\end{conv}

\begin{notat}\label{notat:capitD}
If there is no ambiguity, we write $\lad$ in place of $\Delta=(\Gamma,(X_a)_{a\in E\Gamma},(G(c))_{c\in V\Gamma})$. 
Moreover, given $u\in \Gamma$, set $X_U:=X_u$ if $u\in V\Gamma$ and $X_U:=X_u\sqcup X_{\bar{u}}$ if $u\in E\Gamma$ (cf.~Notation~\ref{notat:capitG}).
\end{notat}

Local action diagrams can be constructed from a group action on a tree $(G,T)$ without edge inversions as follows: Let $\pi$ be the quotient map on $(G,T)$ and choose a set of representatives $V^*$ of the $G$-orbits on $VT$. Following~\cite[Definition~3.6]{resm:lad}, the \emph{local action diagram associated to $(G,T)$ and $V^*$} is defined as follows:
\begin{itemize}
    \item[(i)] $\Gamma=G\backslash T$ is the quotient graph of $(G,T)$;
    \item[(ii)] for every $a\in E\Gamma$, let $v^*\in V^*$ be such that $\pi(v^*)=o(a)$ and define
    \begin{equation*}\label{eq:Xa}
        X_a:=\{e\in ET \mid o(e)=v^*\text{ and }\pi(e)=a\};
    \end{equation*}
    \item[(iii)] for every $c\in V\Gamma$ with representative $c^*\in V^*$, let $G(c)$ be the closure in~$\mathrm{Sym}(X_c)$ of the permutation group induced by $G_{c^*}$ acting on $X_c$.
\end{itemize}
Note that the standard edge weight on the quotient graph $\Gamma$ is given by $\omega(a)=|X_a|$, for every $a\in E\Gamma$. 
Up to isomorphism of local action diagrams (cf.~\cite[Definition~3.2]{resm:lad}), every group action on a tree~$(G,T)$ has a unique associated local action diagram (cf.~\cite[Lemma~3.7]{resm:lad}). Thus, we refer to \emph{the} local action diagram associated to $(G,T)$. 

Note that two actions might have the same local action diagram (cf.~Example~\ref{ex:lads}(ii)). One of the key results in~\cite{resm:lad} (recalled in Theorem~\ref{thm:UD}) shows that, up to natural notions of isomorphisms, there is a bijective correspondence between (P)-closed actions on trees and local action diagrams.

\begin{ex}\label{ex:lads}
   \begin{itemize}
       \item[(i)] Let $\Gamma$ be a connected graph with an edge weight $\omega\colon E\Gamma\to\Z_{\geq 1}$. Let $T$ and $G=\Aut_\pi(T)$ as in Example~\ref{exintro}. 
       Then the local action diagram $\lad$ associated to $(G,T)$ is given by taking, for every $a\in E\Gamma$, a set $X_a$ of cardinality $\omega(a)$, and by setting
       \begin{equation*}
        G(c):=\{\sigma \in \mathrm{Sym}(X_c)\mid\forall\,a\in o^{-1}(c),\,\sigma(X_a)=X_a\}, 
    \end{equation*}
    for every $c\in V\Gamma$.
        By design, $G(c)\backslash X_c=\{X_a\}_{a\in o^{-1}(c)}$. More precisely, for every $a\in o^{-1}(c)$, the $G(c)$-action on $X_a$ is permutational isomorphic to the action of $\mathrm{Sym}(X_a)$ on the same set.
        
        Note that $G(c)$ is closed in $\Sym(X_c)$. Indeed, if $\sigma\in \mathrm{Sym}(X_c)$ satisfies $\sigma(x)\not\in X_a$ for some $x\in X_a$, then $\sigma\cdot \mathrm{Sym}(X_c)_{x}$ is an open neighbourhood of $\sigma$ in~$\mathrm{Sym}(X_c)$ which is contained in~$\mathrm{Sym}(X_c)\setminus G(c)$. 
       \item[(ii)] Given a prime $p$, denote by $\Q_p$ the field of $p$-adic numbers, by $\Z_p$ the ring of $p$-adic integers, and by $\F_p\simeq\Z_p/{p\Z_p}$ the field of size~$p$. Here below, we report and rephrase some results from~\cite[\S II.1.4]{ser:trees}. 
       
       The action of $G=\SL_2(\Q_p)$ on its Bruhat--Tits tree $T$, which is a $(p+1)$-regular tree, has a $1$-segment as quotient graph $\Gamma$. Set $E\Gamma=\{a,\bar{a}\}$. Moreover, there is $e\in ET$ with $G\cdot e=a$ satisfying $G_{o(e)}=\SL_2(\Z_p)\simeq G_{t(e)}$. 
       Both the $\SL_2(\Z_p)$-action on $\{f\in ET\mid o(f)=o(e)\}$ and the $G_{t(e)}$-action on $\{f\in ET\mid o(f)=o(\bar{e})\}$ are permutational isomorphic to the faithful action of $\PSL_2(\F_p)$ on the projective line $\mathbb{P}^1(\F_p)$.
       Hence, the local action diagram $\Delta=(\Gamma,(X_b)_{b\in E\Gamma},(G(c))_{c\in V\Gamma})$ associated to $(G,T)$ is given by setting $X_a=X_{\bar{a}}=\mathbb{P}^1(\F_p)$ and $G(o(a))=G(t(a))=\PSL_2(\F_p)$.

       The same local action diagram can be obtained by taking~$G$ as the projective special linear group~$\PSL_2(\Q_p)$, and considering its induced action on~$T$. Since the kernel of the $\SL_2(\Q_p)$-action on~$T$ is the centre of~$\SL_2(\Q_p)$, the induced $\PSL_2(\Q_p)$-action on~$T$ is faithful.
\end{itemize}
\end{ex}

We now expand the vocabulary of local action diagrams, introducing some key tools for the discussion.
\begin{defn}\label{defn:pathD}
    Let $\lad$ be a local action diagram as in Convention~\ref{conv:lad}. For $n\geq 1$, an \emph{$n$-path} in~$\Delta$ is a sequence $\xi=(x_1, \ldots, x_n)$ obtained by starting with a path $\frp_\xi=(a_1,\ldots, a_n)$ in $\Gamma$ and selecting, for each $1\leq i\leq n$, an element $x_i\in X_{a_i}$. One says that $\xi$ starts at $x_1$, ends with $x_n$, has \emph{length} $n$ (written $\ell(\xi)=n$), and $\frp_{\xi}$ is called the \emph{underlying path} of $\xi$ in~$\Gamma$. The \emph{$0$-path} $O_c$ in~$\Delta$ at $c\in V\Gamma$ is the empty sequence of elements in~$X$ with the trivial path at $c$ as underlying path in~$\Gamma$. The path $O_c$ has length zero. A \emph{path} in~$\Delta$ is an $n$-path for some $n\geq 0$. Given paths $\xi=(x_1,\ldots, x_m)$ and $\eta=(y_1,\ldots, y_n)$ in~$\Delta$ with $\frp_\xi=(a_1,\ldots, a_m)$ and $\frp_\eta=(b_1,\ldots, b_n)$, the product $\xi\cdot \eta$ is defined only if $t(a_m)=o(b_1)$ and it is the path $(x_1,\ldots, x_{m}, y_1,\ldots, y_n)$ in~$\Delta$ with underlying path $\frp_\xi\cdot \frp_\eta$. Put also $\xi\cdot O_{t(a_m)}=\xi$ and $O_{o(b_1)}\cdot \eta=\eta$. 
    Given a path $\xi$ and a non-empty set of paths $\euE$ in~$\Delta$ such that the product $\xi\cdot \eta$ is defined for every $\eta\in \euE$, let $\xi\cdot \euE=\{\xi\cdot\eta\mid \eta\in\euE\}$. If $\xi=(x)$ has length $1$, we write $x\cdot \euE$ in place of $(x)\cdot \euE$.
    
    A map $\iota\colon X\to X$ is said to be an \emph{inversion} in~$\Delta$ if $\iota(X_a)\subseteq X_{\bar{a}}$ for every $a\in E\Gamma$. 
    A path $\xi$ in~$\Delta$ is \emph{reduced} in~$(\Delta, \iota)$ if either $\ell(\xi)=0$ or $\xi=(x_1, \dots, x_n)$, for some $n\geq 1$, and $x_{i+1}\neq\iota(x_i)$ for every $1\leq i\leq n-1$. Even if $\xi$ is reduced, the underlying path $\frp_\xi$ needs not to be reduced.
Denote by $\calP_{(\Delta, \iota)}$ the set of all reduced paths in~$(\Delta, \iota)$.
For non-empty subsets $X_1, X_2\subseteq X$, let also $\calP_{(\Delta, \iota)}(X_1\to X_2)$ be the collection of all reduced paths in~$(\Delta, \iota)$ starting at some $x_1\in X_1$ and ending at some $x_2\in X_2$.  
\end{defn}
An inversion in a local action diagram is not required to be an involution. In fact, the sizes of $X_a$ and $X_{\bar{a}}$ might differ. The term "inversion" here refers to the edge inversion on the labels in the partition $X=\bigsqcup_{a\in E\Gamma}X_a$.

\smallskip

Let $\lad$ be a local action diagram. Following~\cite[Definition~3.4]{resm:lad}, a \emph{$\Delta$-tree} $(T,\pi,\caL)$ consists of a tree $T$, a graph epimorphism $\pi\colon T\to \Gamma$ and a map $\caL\colon ET\to X$ which restricts, for all $v\in VT$ and $a\in E\Gamma$ with $o(a)=\pi(v)$, to a bijection
  \begin{equation*}\label{eq:Lva}
    \caL_{v,a}\colon\{e\in ET \mid o(e)=v\text{ and } \pi(e)=a\} \longrightarrow X_a.
  \end{equation*}
In particular, for every $v\in VT$, the map $\caL$ restricts to a bijection 
$$\caL_v\colon o^{-1}(v)\longrightarrow X_{\pi(v)}.$$
Note that the definition of a $\Delta$-tree is independent of $(G(c))_{c\in V\Gamma}$.

According to~\cite[\S 3.5]{car}, the pair $(T,\pi)$ is the universal cover of the edge-weighted graph $(\Gamma,\omega)$, where $\omega(a)=|X_a|$ for every $a\in E\Gamma$ (here~$\omega(a)$ is regarded as a cardinal). Therefore, for every two $\Delta$-trees $(T,\pi,\caL)$ and $(T', \pi',\caL')$ there is a graph isomorphism $\varphi\colon T\to T'$ such that $\pi=\pi'\circ \varphi$ (cf.~\cite[Lemma~3.5]{resm:lad}). Moreover, for every $\Delta$-tree we may define $\Aut_\pi(T)$ as in Example~\ref{exintro}.

\subsubsection{The standard $\Delta$-tree associated to $\iota$ and $c_0$}\label{ssus:TD}
Let $\lad$ be a local action diagram. 
Following the proof of~\cite[Lemma~3.5]{resm:lad}, we recall the construction of an explicit family of $\Delta$-trees which plays an important role in the next discussion.

Set an inversion $\iota\colon X\to X$ in~$\Delta$ and $c_0\in V\Gamma$, and define a graph $T=T(\Delta, \iota, c_0)$ as follows: 
  The set of vertices of $T$ is
\begin{equation}\label{eq:VTD}
    VT:=\calP_{(\Delta,\iota)}(X_{c_0}\to X).
\end{equation}
The vertex $v_0=O_{c_0}$ is called the \emph{root of $T$}.
The edges of $T$ are the pairs $(v,w)$ and $(w,v)$ of reduced paths in~$\Delta$ of the form $v=(x_1, \dots, x_n)$ and $w=(x_1, \dots, x_n, x_{n+1})$, for some $n\geq 0$. 
Every edge $(v,w)$ of $T$ as before is said to be a \emph{positive edge}. Denote by $ET^+$ the set of all positive edges of $T$.
The origin, terminus, and inversion maps of $T$ are given by $o(v,w)=v$, $t(v,w)=w$ and $\overline{(v,w)}=(w,v)$, for every $(v,w)\in ET$. For every $(v,w)\in ET^+$ with $w=(x_1, \dots, x_{n+1})$, set $\caL(v,w)=x_{n+1}$ and $\caL(w,v)=\iota(x_{n+1})$. 

\begin{rem}\label{rem:Linv}
    Every $e\in ET(\Delta,\iota,c_0)^+$ satisfies $\caL(\bar{e})=\iota(\caL(e))$. This is generally not true if $e\in ET(\Delta,\iota,c_0)\setminus ET(\Delta,\iota,c_0)^+$, as a given $x\in X$ might differ from $\iota(\iota(x))$.
\end{rem}

More generally, for every path $\frp=(e_1, \dots, e_n)$ in~$T=T(\Delta,\iota,c_0)$ we define
\begin{equation}\label{eq:Lpaths}
    \caL(\frp):=(\caL(e_1), \ldots, \caL(e_n)).
\end{equation}
\begin{rem}\label{rem:TDtree}
By Remark~\ref{rem:Linv}, a path~$\frp=(e_1,\ldots, e_n)$ in~$T$~with~$e_1,\ldots, e_n\in ET^+$ is reduced if and only if $\caL(\frp)$ is reduced in~$(\Delta,\iota)$. 
Moreover, by~\eqref{eq:VTD}, for every $v=(x_1,\ldots, x_n)\in VT$ there is a unique reduced path from $v_0$ to $v$ in $T$, namely $\caO_{v_0}$ if $v=v_0$, and
$(v_0,e_1,v_1,\ldots, e_n,v_n=v)$ with $v_i=(x_1,\ldots, x_i)$ for every $1\leq i\leq n$ otherwise. In particular, $T$ is connected.

More precisely, $T$ is a tree. In fact, if $T$ admits a $n$-cycle $\gamma$ for some $n\geq 1$, there is a vertex $v$ of $\gamma$ such that the reduced path $\frp$ from $v_0$ to $v$ shares no edges with $\gamma$. Up to an edge-relabelling, we may assume that $\gamma=(e_1,\ldots, e_n)$ and $o(e_1)=v$. Then $\frp$ and $\frp\cdot \gamma$ are two distinct reduced paths in $T$ from $v_0$ to $v$, impossible.

Being $T$ a tree, the map $\caL$ in~\eqref{eq:Lpaths} restricts to a bijection
\begin{equation}\label{eq:Lpathsv0}
    \caL^{v_0}\colon\Geod_T(v_0\to T)\longrightarrow \calP_{(\Delta,\iota)}(X_{c_0}\to X).
\end{equation}
\end{rem}

\begin{rem}\label{rem:geoE+}
    By~\eqref{eq:Lpaths}, every edge lying in a geodesic from $v_0$ in $T$ belongs to $ET^+$. Indeed, if $(e_1,\ldots, e_n)\in \Geod_T(v_0\to T)$ and $\caL(e_1,\ldots, e_n)=(x_1,\ldots, x_n)$, then $e_1=(O_{c_0},x_1)$ and $e_i=\big((x_1,\ldots, x_{i-1}), (x_1,\ldots, x_i)\big)$ for every $2\leq i\leq n$.
    
    Moreover, for~$e\in ET^+$ and for every $f\in o^{-1}(t(e))\setminus\{\bar{e}\}$, the path $[v_0,e]\cdot f$ is a geodesic from $v_0$ and thus $f\in ET^+$.
\end{rem}

\smallskip

We define the graph epimorphism $\pi\colon T\to \Gamma$ by putting $\pi(O_{c_0})=c_0$ and, for~$v=(x_1, \ldots, x_n)\in VT$ ($n\geq 1$) having underlying path $(a_1, \ldots, a_n)$ in~$\Gamma$, by $\pi(v)=t(a_n)$. The triple $(T,\pi,\caL)$ is a $\Delta$-tree that we call the \emph{standard $\Delta$-tree associated to $\iota$ and $c_0$}. 

\smallskip

Since the map in~\eqref{eq:Lpathsv0} is a bijection, we deduce the following.
\begin{lem}\label{lem:geodTD}
    Put $T=T(\Delta,\iota,c_0)$ and consider $e\in ET^+$ with $\caL(e)=x$. Then the map $\caL$ in~\eqref{eq:Lpaths} restricts to the following bijection:
    \begin{equation}\label{eq:Lbije}
         \caL^e\colon \Geod_T(e\to T)\longrightarrow \calP_{(\Delta,\iota)}(x\to X).
    \end{equation}
    If in particular $o(e)=v_0$, the map $\caL$ in~\eqref{eq:Lpaths} restricts also to the following bijection:
    \begin{equation}
         \caL^{\bar{e}}\colon \Geod_T(\bar{e}\to T)\longrightarrow \iota(x)\cdot \calP_{(\Delta,\iota)}(X_{c_0}\setminus \{x\}\to X).
    \end{equation}
\end{lem}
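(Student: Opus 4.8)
The plan is to build on Remark~\ref{rem:TDtree} and Remark~\ref{rem:geoE+}, translating geodesics issuing from an edge into reduced paths in the local action diagram. Fix $T = T(\Delta, \iota, c_0)$ and a positive edge $e\in ET^+$ with $\caL(e) = x$. First I would observe that, by Remark~\ref{rem:geoE+}, every geodesic starting at $e$ in $T$ has all of its edges in $ET^+$; hence, by Remark~\ref{rem:TDtree} (the statement that a path made of positive edges is reduced iff its $\caL$-image is reduced in $(\Delta,\iota)$), the assignment $\frp\mapsto \caL(\frp)$ sends $\Geod_T(e\to T)$ into the set of reduced paths in $(\Delta,\iota)$ starting at $x$, i.e.\ into $\calP_{(\Delta,\iota)}(x\to X)$. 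So the map $\caL^e$ is well defined.

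For injectivity and surjectivity of $\caL^e$, the cleanest route is to reduce to the already-established bijection $\caL^{v_0}$ of~\eqref{eq:Lpathsv0}. Write $e = (v, w)$ with $v = (x_1,\dots,x_{n})$ (so $v_0$-to-$v$ geodesic has length $n$, possibly $n=0$ meaning $v = v_0$) and $w = (x_1,\dots,x_n,x)$. For any geodesic $\frp = (e, e_2,\dots,e_m)$ starting at $e$, the concatenation $[v_0,v]\cdot \frp$ is a geodesic from $v_0$ (here I would invoke the iterative extension statement in Remark~\ref{rem:geoE+}, or simply uniqueness of reduced paths from $v_0$ in a tree), and under $\caL^{v_0}$ it corresponds to $(x_1,\dots,x_n)\cdot \caL(\frp) = (x_1,\dots,x_n,x,\caL(e_2),\dots,\caL(e_m))$. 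Conversely any reduced path $(x, y_2,\dots,y_m)\in \calP_{(\Delta,\iota)}(x\to X)$ yields the reduced path $(x_1,\dots,x_n,x,y_2,\dots,y_m)$ in $\calP_{(\Delta,\iota)}(X_{c_0}\to X)$ — reducedness is preserved because $v$ lies on the geodesic $[v_0,w]$ so $x\neq \iota(x_n)$ already, hence prepending $(x_1,\dots,x_n)$ cannot create a backtrack — and then $\caL^{v_0}$ produces a geodesic from $v_0$ through $v$ and $w$, whose tail from $e$ onward is the desired preimage. This matches up the two maps so that $\caL^e$ inherits bijectivity from $\caL^{v_0}$.

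For the second bijection, assume $o(e) = v_0$, so $v = v_0 = O_{c_0}$, $w = (x)$, and $\bar e = (w, v)$ with $\caL(\bar e) = \iota(x)$. A geodesic starting at $\bar e$ has the form $(\bar e, e_2, \dots, e_m)$ where $e_2\in o^{-1}(v_0)\setminus\{e\}$; writing $\caL(e_2) = x'$, the condition $e_2\neq e$ means $x'\neq x$, i.e.\ $x'\in X_{c_0}\setminus\{x\}$, and the tail $(e_2,\dots,e_m)$ is a geodesic starting at the positive edge $e_2$ (positive by the last line of Remark~\ref{rem:geoE+}), hence by the first bijection $\caL(e_2,\dots,e_m)$ ranges exactly over $\calP_{(\Delta,\iota)}(x'\to X)$ as $x'$ ranges over $X_{c_0}\setminus\{x\}$ and the geodesic varies. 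Since $\caL(\bar e) = \iota(x)$ is forced, $\caL(\bar e, e_2,\dots,e_m) = \iota(x)\cdot \caL(e_2,\dots,e_m)$, and the union over admissible $x'$ is precisely $\iota(x)\cdot \calP_{(\Delta,\iota)}(X_{c_0}\setminus\{x\}\to X)$; bijectivity again follows from that of $\caL^{v_0}$ restricted to geodesics whose first edge is not $e$.

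I do not expect a serious obstacle here; the content is essentially bookkeeping once one has Remarks~\ref{rem:TDtree} and~\ref{rem:geoE+} in hand. The one point that needs a little care — and which I would state explicitly — is why prepending the fixed prefix $(x_1,\dots,x_n)$ (resp.\ the single letter $\iota(x)$) to a reduced path keeps it reduced, which hinges on the fact that $v$ (resp.\ $\bar e$) already sits on a reduced path from the root, so no backtrack is introduced at the splice point. Also, in the second statement one must not forget the edge case $x' \ne \iota(\iota(x))$ subtlety flagged in Remark~\ref{rem:Linv}: this is exactly why $\bar e$ need not be positive and why the target of $\caL^{\bar e}$ is described with the explicit prefix $\iota(x)$ rather than as a set of reduced paths starting at $\iota(x)$ in the naive sense.
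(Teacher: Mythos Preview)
Your proof is correct and follows essentially the same approach as the paper: reduce $\caL^e$ to the already-established bijection $\caL^{v_0}$ by prepending the fixed geodesic $[v_0,v]$ (equivalently, the prefix $(x_1,\dots,x_n)$), and then handle $\caL^{\bar e}$ by decomposing geodesics from $\bar e$ according to their second edge $f\in o^{-1}(v_0)\setminus\{e\}$ and applying the first part to each such positive $f$. Your write-up is somewhat more explicit than the paper's about well-definedness and the preservation of reducedness at the splice point, but the underlying argument is the same.
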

\begin{proof}
Write $e=(v,w)$, where $w=(x_1,\ldots, x_m, x)\in \calP_{(\Delta,\iota)}(X_{c_0}\to X)$. Note that $\caL([v_0,v])=(x_1,\ldots, x_m)$.
The bijection $\caL^{v_0}$ in~\eqref{eq:Lpathsv0} restricts to a bijective map
\begin{equation*}
        [v_0,v]\cdot \Geod_T(e\to T)\longrightarrow (x_1,\ldots, x_m)\cdot \calP_{(\Delta,\iota)}(x\to X).
\end{equation*}
This implies that $\caL^e$ is a bijection. Moreover, we observe that
\begin{equation}\label{eq:Geodec}
     \Geod_T(\bar{e}\to T)=\bigsqcup_{f\in o^{-1}(v),\,f\neq e}\bar{e}\cdot \Geod_T(f\to T)
\end{equation}
and
\begin{equation}\label{eq:Pathdec}
    \iota(x)\cdot\calP_{(\Delta,\iota)}(X_{\pi(v)}\setminus\{x\}\to X)=\bigsqcup_{y\in X_{\pi(v)},\,y\neq x}\iota(x)\cdot \calP_{(\Delta,\iota)}(y\to X).
\end{equation}
If $v=v_0$, then $o^{-1}(v)\subseteq ET^+$. From~\eqref{eq:Geodec}, \eqref{eq:Pathdec} and the first part of the statement, we conclude that $\caL^{\bar{e}}$ is bijective.
\end{proof}

\begin{rem}\label{rem:geodEbar}
    For every $e=(v,w)\in ET^+$, one may restrict the map $\caL$ in~\eqref{eq:Lpaths} to a bijection $\caL^{\bar{e}}$ from $\Geod_T(\bar{e}\to T)$ to a suitable set of paths in $(\Delta,\iota)$. One may proceed inductively on $\ell([v_0,v])=l\geq 0$. The case $l=0$ is done by Lemma~\ref{lem:geodTD}. 
    If $l\geq 1$, one assumes the claim true for $l-1$ and observes that $o^{-1}(v)\setminus\{e\}$ has exactly one edge which does not belong to $ET^+$: it is the edge whose reverse $f_0$ is the last edge of $[v_0,v]$. By~Lemma~\ref{lem:geodTD}, for all $f\in o^{-1}(v)\setminus\{e,\bar{f}_0\}$, the map $\caL^f$ as in~\eqref{eq:Lbije} is bijective. Moreover, $\ell([v_0,o(f_0)])=\ell([v_0,v])-1$ and by induction $\caL^{\bar{f}_0}$ is bijective. Using the decomposition in~\eqref{eq:Geodec}, one determines the image of $\Geod_T(\bar{e}\to T)$ via $\caL$ and thus the bijection $\caL^{\bar{e}}$.
\end{rem}

\subsubsection{The universal group $U(\Delta,\mathbb{T})$}\label{ssus:UD}
Let $\Delta$ be a local action diagram with a $\Delta$-tree $\mathbb{T}=(T,\pi,\caL)$. For all $g\in \Aut_\pi(T)$ and $v\in VT$, there is an induced permutation $\sigma(g,v)\colon X_{\pi(v)}\to X_{\pi(v)}$ given by
\begin{equation*}
    \sigma(g,v)(x):=\Big(\caL_{g\cdot v}\circ g\circ(\caL_{v})^{-1}\Big)(x).
\end{equation*}
\begin{defn}[\protect{\cite[Definition~3.8]{resm:lad}}]\label{defn:UD}
  The \emph{universal group} associated to $\Delta$ and $\mathbb{T}$ is 
\begin{equation*}\label{eq:UD}
    U(\Delta, \mathbb{T}):=\{g\in \Aut_\pi(T)\mid\forall\,v\in VT,\,\sigma(g,v)\in G(\pi(v))\}.
\end{equation*}  
\end{defn}
In other words, $U(\Delta,\mathbb{T})$ collects all elements of $\Aut_\pi(T)$ acting on~$o^{-1}(v)$ as a permutation in~$G(\pi(v))$, for every $v\in VT$. 

If $\mathbb{T}$ is the standard $\Delta$-tree associated to $\iota$ and $c_0$ (cf.~Section~\ref{ssus:TD}), we write $U(\Delta,\iota,c_0)$ instead of~$U(\Delta,\mathbb{T})$.

If $T$ is locally finite, the group $U(\Delta,\mathbb{T})$ is a t.d.l.c.~group with respect to the subspace topology from $\Aut(T)$. Indeed, $U(\Delta,\mathbbm{T})$ is a closed subgroup of the t.d.l.c.~group $\Aut(T)$ (cf.~\cite[\S 6]{resm:lad}). If additionally $\Gamma$ is finite, then $U(\Delta,\mathbbm{T})$ is a compactly generated t.d.l.c.~group (cf.~\cite[Proposition~6.5]{resm:lad}).

The group $U(\Delta,\mathbb{T})$ simultaneously generalises the three following notable examples.
\begin{ex}\label{ex:UD}
Let $\lad$ be a local action diagram and $\mathbb{T}=(T,\pi,\caL)$ a $\Delta$-tree.
\begin{itemize}
       \item[(i)] For every $c\in V\Gamma$, let 
       $$G(c):=\{\sigma \in \mathrm{Sym}(X_c)\mid\forall\,a\in o^{-1}(c),\,\sigma(X_a)=X_a\}.$$ 
       Then $U(\Delta,\mathbb{T})=\Aut_\pi(T)$.
       \item[(ii)] Let $\Gamma$ be a $1$-segment with $E\Gamma=\{a,\bar{a}\}$, $c=o(a)$ and $d=t(a)$. Let $G(c)$ and $G(d)$ act transitively on $X_c$ and $X_d$, respectively.
       \begin{itemize}
           \item[(iia)] Set $F:=G(c)$ and assume that $G(d)=C_2$, $|X_c|=k\geq 2$ and $|X_d|=2$. Adapting~\cite[Example~11]{resm:motiv} to actions on trees without edge inversion, the action $(U(\Delta,\mathbb{T}),T)$ is permutational isomorphic to the action of the Burger--Mozes universal group $U(F)$ on the barycentric subdivision $T_k'$ of the $k$-regular tree $T_k$ (cf.~\cite[\S3.2]{bumo}). Here we consider $T_k'$, and not $T_k$, because $U(F)$ acts vertex-transitively (and thus with edge inversions) on $T_k$.
           \item[(iib)] Set $F_1:=G(c)$ and $F_2:=G(d)$. Following~\cite[Example~12]{resm:motiv}, the action $U(\Delta,\mathbb{T})$-action on $T$ is permutational isomorphic to $(U(F_1,F_2),T)$, where $U(F_1,F_2)$ is the group introduced by S.~Smith in~\cite{sm:UFF}.
           \end{itemize}
   \end{itemize}
\end{ex}

The following theorem collects some key results of the work of C.~Reid and S.~Smith~\cite{resm:lad}. It motivates why, throughout the paper, we focus on actions of the form $(U(\Delta,\iota,c_0),T(\Delta,\iota,c_0))$ while considering (P)-closed actions on trees with associated local action diagram $\Delta$.
Note that the result below holds for general group actions on trees and general local action diagrams (even if they do not satisfy Conventions~\ref{conv:action} and~\ref{conv:lad}, respectively).
\begin{thm}\label{thm:UD}
\begin{itemize}
    \item[(i)] \emph{(\cite[Lemma~3.5, Theorem~3.12]{resm:lad})} Let $\Delta$ be a local action diagram. For all $\Delta$-trees $\mathbb{T}=(T,\pi,\caL)$ and $\mathbb{T}'=(T',\pi',\caL')$ and $v\in VT$, $v'\in VT'$ with $\pi(v)=\pi'(v')$, there is a graph isomorphism $\phi\colon T\to T'$ such that $\phi(v)=v'$, $\pi=\pi'\circ \phi$ and $\phi U(\Delta,\mathbbm{T})\phi^{-1}=U(\Delta,\mathbb{T}')$. 
    \item[(ii)] \emph{(\cite[Theorem~3.9]{resm:lad})} Let $\Delta$ be a local action diagram and $\mathbb{T}=(T,\pi,\caL)$ be a $\Delta$-tree. Then the local action diagram associated to $(U(\Delta,\mathbb{T}),T)$ is isomorphic to $\Delta$ (in the sense of~\cite[Definition~3.2]{resm:lad}).
    \item[(iii)] \emph{(\cite[Theorem~3.10]{resm:lad})} Let $(G,T)$ be a (P)-closed action on a tree with associated local action diagram~$\Delta$. Then~$(G,T)$ is permutational isomorphic to~$(U(\Delta,\iota,c_0),T(\Delta,\iota,c_0))$, for every inversion $\iota$ in~$\Delta$ and every $c_0\in V\Gamma$.
\end{itemize}
\end{thm}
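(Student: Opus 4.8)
The three items are recorded from the work of C.~Reid and S.~Smith~\cite{resm:lad}, so a full argument would just be a matter of collating~\cite[Lemma~3.5, Theorem~3.9, Theorem~3.10, Theorem~3.12]{resm:lad}; I indicate the route one would take. The common strategy is to reduce everything to the explicit model $T(\Delta,\iota,c_0)$ of Section~\ref{ssus:TD}, whose vertices, edges, labels and local groups are completely explicit, and then to do all bookkeeping ``one vertex (or one edge) at a time'', propagating consistency across the whole tree by means of the single-orbit axiom (iii) of a local action diagram and, in part~(iii), property~(P).

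For part~(i), using the standard $\Delta$-tree construction and the rigidity of universal covers recorded after Definition~\ref{defn:pathD}, it suffices to take $\mathbb{T}'=(T(\Delta,\iota,c_0),\pi',\caL')$ with $c_0=\pi(v)$ and $v'=v_0=O_{c_0}$. First one produces a graph isomorphism $\phi\colon T\to T'$ with $\phi(v)=v_0$ and $\pi=\pi'\circ\phi$: this is exactly the isomorphism of covers of $(\Gamma,\omega)$, built geodesic by geodesic from $v$, sending $[v,w]$ to the unique reduced path from $v_0$ with the prescribed $\pi$-image (Remark~\ref{rem:TDtree}). The content is that $\phi$ can be chosen so that $\phi\,U(\Delta,\mathbb{T})\,\phi^{-1}=U(\Delta,\mathbb{T}')$. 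This does not require $\phi$ to intertwine $\caL$ with $\caL'$ exactly, only ``up to $G(c)$ at each vertex'', and such a compatible choice exists because $G(c)$ acts on each $X_a$, $a\in o^{-1}(c)$, with a single orbit: one adjusts $\phi$ by postcomposing, vertex by vertex going away from $v$, with elements of $\Aut_{\pi'}(T')$ whose local permutations lie in the relevant $G(c)$, until the defining condition ``$\sigma(g,w)\in G(\pi(w))$'' for $U(\Delta,\mathbb{T})$ is conjugated into the corresponding condition for $U(\Delta,\mathbb{T}')$.

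For part~(ii), fix $G=U(\Delta,\mathbb{T})$ and a transversal $V^*\subseteq VT$ for the $G$-orbits on $VT$. One checks that $G$ acts transitively on $\pi^{-1}(c)$ for every $c\in V\Gamma$: given $v,w$ with $\pi(v)=\pi(w)$, the geodesic $[v,w]$ has $\pi$-image a closed path, and one builds $g\in G$ collapsing $[v,w]$ to the trivial path by prescribing, at each vertex traversed, a permutation in the appropriate $G(c')$ that sends one incident edge to the next. Hence $\pi$ restricts to a bijection $V^*\to V\Gamma$, so the quotient graph of $(G,T)$ is $\Gamma$; for $a\in o^{-1}(\pi(v^*))$ the set $\{e\in ET\mid o(e)=v^*,\ \pi(e)=a\}$ of the associated local action diagram is carried bijectively onto $X_a$ by $\caL_{v^*,a}$, so the data $(X_a)$ match; and the local group at $c$ is the closure of the image of $G_{c^*}$ in $\Sym(o^{-1}(c^*))$, transported via $\caL_{c^*}$ to $\Sym(X_c)$, which by definition of $U(\Delta,\mathbb{T})$ is contained in $G(c)$ and, conversely, equals it because every $\sigma\in G(c)$ extends to an element of $G$ built branch by branch away from $c^*$ (prescribing $\sigma$ at $c^*$ and arbitrary admissible local permutations beyond, the orbit condition ensuring the branches can be matched); since $G(c)$ is closed this gives local group $=G(c)$, i.e.\ the associated local action diagram is isomorphic to $\Delta$.

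For part~(iii), let $(G,T)$ be (P)-closed with associated local action diagram $\Delta$, fix $\iota$ and $c_0\in V\Gamma$, and use part~(i) to identify $T$ with $T(\Delta,\iota,c_0)$ (applied to the $\Delta$-tree of $(G,T)$ and the standard one, noting $G(c)$ in $\Delta$ is by construction the closure of the local action of $G$ at a representative of $c$). One then shows the image of $G$ in $\Aut\big(T(\Delta,\iota,c_0)\big)$ equals $U(\Delta,\iota,c_0)$. The inclusion ``$\subseteq$'' holds because every $g\in G$ preserves $G$-orbits, hence $\pi$, and lies in $\Aut_\pi(T)$; that its local permutation lies in $G(\pi(w))$ at \emph{every} vertex $w$ — not just at the chosen representatives — is where property~(P) enters, in the iterated form of Proposition~\ref{prop:propPbis}, together with the transitivity of $G$ on each $\pi$-fibre, which together propagate the local constraint along any geodesic. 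For ``$\supseteq$'', given $g\in U(\Delta,\iota,c_0)$ one approximates it by elements of $G$: for each $n$, repeatedly applying $G_e=G_{T_{\geq\bar e}}\cdot G_{T_{\geq e}}$ (see~\eqref{eq:propP} and~\eqref{eq:propPbis}) lets one prescribe the behaviour of an element $g_n\in G$ on the two sides of each edge independently and thereby match $g$ on the ball of radius $n$ about $v_0$, the required local permutations being realised in $G$ because $G(\pi(\cdot))$ records the local action of $G$; then $g_n\to g$ and $G$ closed forces $g\in G$. The main obstacle in all three parts is this same gluing issue — transporting the vertex-local symmetry groups $G(c)$ consistently over an infinite tree while only ever prescribing behaviour locally — handled by the single-orbit axiom in (i)–(ii) and by property~(P) plus closedness of $G$ in (iii).
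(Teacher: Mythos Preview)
The paper does not give its own proof of this theorem: it is stated as a collection of results quoted from \cite{resm:lad}, with each item carrying an explicit citation and no argument supplied. Your sketch is therefore not competing with anything in the paper but rather outlining what a reader would find in \cite{resm:lad}; in that capacity it is broadly accurate and captures the main mechanisms (reduction to the standard $\Delta$-tree, vertex-by-vertex propagation using the single-orbit axiom, and approximation using property~(P) and closedness).

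One point deserves correction. In part~(iii) you attribute the inclusion $G\subseteq U(\Delta,\mathbb{T})$ to property~(P), saying that (P) is what forces $\sigma(g,w)\in G(\pi(w))$ at \emph{every} vertex $w$ rather than just at the chosen representatives. This is not where (P) enters. With the natural choice of $\caL$ (namely, $\caL_w$ built by transporting along an element of $G$ carrying the representative $c^*$ to $w$), the cocycle $\sigma(g,w)$ is conjugate inside $G$ to an element of $G_{c^*}$ acting on $X_c$, hence lies in $G(c)$ automatically; only transitivity of $G$ on $\pi$-fibres is used, and that holds for any action with quotient $\Gamma$. Property~(P), together with closedness of $G$, is needed precisely and only for the reverse inclusion $U(\Delta,\mathbb{T})\subseteq G$: it is what lets you realise an arbitrary admissible family of local permutations by a genuine element of $G$ on each finite ball, and then pass to the limit. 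Your description of that step is fine; just relocate the invocation of (P) accordingly.
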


\subsection{Weakly locally $\infty$-transitive actions on trees}\label{sus:wlit}
Recall that group actions on trees satisfy Convention~\ref{conv:action} and local action diagrams satisfy Convention~\ref{conv:lad}.

\begin{defn}\label{defn:WLIT}
    Let $(G,T)$ be a group action on a tree with quotient map $\pi\colon T\to \Gamma=G\backslash T$. Then $(G,T)$ is said to be \emph{weakly locally $\infty$-transitive at $v\in VT$} if, for every path $\frp$ in~$\Gamma$ starting at $\pi(v)$, the stabiliser $G_v$ acts transitively on the set
    \begin{equation}\label{eq:geop}
      \{\tilde{\frp}\in \Geod_T(v\to T)\mid \pi(\tilde{\frp})=\frp\}.
    \end{equation}
   Moreover, $(G,T)$ is said to be \emph{weakly locally $\infty$-transitive} if it is weakly locally $\infty$-transitive at every $v\in VT$.
\end{defn}

\begin{rem}\label{rem:WLIT} 
    \begin{itemize}
        \item[(i)] The action $(G,T)$ is weakly locally $\infty$-transitive at $v$ if, and only if, it is weakly locally $\infty$-transitive at every $u\in G\cdot v$.
        \item[(ii)] Let $(G,T)$ be weakly locally $\infty$-transitive at $v$, and consider $\tilde{\frp}=(e_1, \ldots, e_n)\in \Geod_T(v\to T)$ with $\pi(\tilde{\frp})=\frp$. Then, for every $i<n$ the group $G_{(e_1, \ldots, e_i)}$ acts transitively on 
        $$\{\widetilde{\frq}=(f_1,\ldots,f_n)\in\Geod_T(v\to T)\mid \pi(\widetilde{\frq})=\frp\text{ and }\forall\,j\leq i, f_j=e_j\}.$$
    \end{itemize}
\end{rem}

As the name suggests, weakly locally $\infty$-transitive actions generalise locally $\infty$-transitive ones as follows.

\begin{lem}\label{lem:(W)LIT}
  Let $(G,T)$ be a group action on a tree with quotient graph~$\Gamma$. Then $(G,T)$ is locally $\infty$-transitive if, and only if, it is weakly locally $\infty$-transitive and $\Gamma$ is a $1$-segment.
\end{lem}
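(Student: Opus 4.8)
The plan is to route the biconditional through a single structural observation about $1$-segments. Recall that a $1$-segment $\Gamma$ has $E\Gamma=\{a,\bar a\}$ with $c:=o(a)\neq d:=t(a)$, so $o^{-1}(c)=\{a\}$ and $o^{-1}(d)=\{\bar a\}$; consequently, for every $u\in V\Gamma$ and every $n\geq 0$ there is exactly one path of length $n$ in $\Gamma$ starting at $u$ (the alternating word $(a,\bar a,a,\ldots)$ or $(\bar a,a,\bar a,\ldots)$ of length $n$, allowed since the definition of weak local $\infty$-transitivity quantifies over \emph{all} paths, not just reduced ones). Since the quotient map $\pi\colon T\to\Gamma$ acts entrywise on paths and hence sends a geodesic of $T$ to a path of $\Gamma$ of the \emph{same} length, this shows that for a fixed $v\in VT$ the sets $\{\tilde{\frp}\in\Geod_T(v\to T)\mid\pi(\tilde{\frp})=\frp\}$ appearing in Definition~\ref{defn:WLIT}, as $\frp$ ranges over the paths of $\Gamma$ starting at $\pi(v)$, are precisely the sets $\{\tilde{\frp}\in\Geod_T(v\to T)\mid\ell(\tilde{\frp})=n\}$, $n\geq 0$, appearing in the definition of local $\infty$-transitivity at $v$ recalled in Section~\ref{sus:actions}. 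So the first step is to record the lemma: \emph{if $\Gamma$ is a $1$-segment, then $(G,T)$ is locally $\infty$-transitive if and only if it is weakly locally $\infty$-transitive.}

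Granting this lemma, the implication ``weakly locally $\infty$-transitive and $\Gamma$ a $1$-segment $\Rightarrow$ locally $\infty$-transitive'' is immediate, which settles one direction.

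For the converse, suppose $(G,T)$ is locally $\infty$-transitive. As recalled in Section~\ref{sus:actions}, the action is then edge-transitive, so $E\Gamma=G\backslash ET=\{G\cdot e,G\cdot\bar e\}$ for any $e\in ET$; these two classes are distinct because the action is without edge inversions, so $\Gamma$ has exactly one geometric edge, and by connectedness $V\Gamma$ is the set of its two endpoints, hence $|V\Gamma|\leq 2$. It remains to exclude $|V\Gamma|=1$, i.e.\ that $\Gamma$ is a $1$-bouquet of loops. If it were, $G$ would act transitively on $VT$: pick $e\in ET$, set $v=o(e)$, $w=t(e)$, and choose $g\in G$ with $g\cdot v=w$; then $o(g^{-1}\cdot\bar e)=g^{-1}\cdot w=v$, so $g^{-1}\cdot\bar e\in o^{-1}(v)$, and applying the case $d=1$ of local $\infty$-transitivity at $v$ (namely that $G_v$ is transitive on $o^{-1}(v)$) yields $h\in G_v$ with $h\cdot e=g^{-1}\cdot\bar e$, whence $(gh)\cdot e=\bar e$, contradicting the absence of edge inversions. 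Therefore $\Gamma$ is a $1$-segment, and the lemma then delivers weak local $\infty$-transitivity, finishing the proof.

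I do not anticipate a genuine obstacle: the statement is essentially a packaging result. The two points requiring a little care are (i) the bookkeeping in the lemma showing that the two families of ``test sets'' coincide when $\Gamma$ is a $1$-segment — in particular that $\pi$ does not shorten geodesics and that paths of a fixed length starting at a fixed vertex are unique — and (ii) the short no-edge-inversion argument excluding the single-vertex quotient; beyond these the argument is purely formal.
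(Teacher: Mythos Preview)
Your proof is correct and follows essentially the same route as the paper: both hinge on the observation that in a $1$-segment there is a unique path of each length from each vertex, so the two transitivity conditions coincide, and both use edge-transitivity to reduce the converse to excluding a $1$-loop quotient. The only minor difference is in how the $1$-loop is ruled out: the paper notes directly that the $G_v$-orbits on $o^{-1}(v)$ refine the $\pi$-fibres over $\{a,\bar a\}$, giving at least two orbits, whereas you use vertex-transitivity together with transitivity of $G_v$ on $o^{-1}(v)$ to manufacture an explicit edge inversion---both arguments are valid and equally short.
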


\begin{proof}
    Let $\pi\colon T\to \Gamma=G\backslash T$ be the quotient map of $(G,T)$. Given $v\in VT$ with $\pi(v)=c$ and for every $d\geq 0$, we observe that
    \begin{equation}\label{eq:setWLIT}
       \{\tilde{\frp}\in \Geod_T(v\to T)\mid\ell(\tilde{\frp})=d\}=\hspace{-0.2cm}\bigsqcup_{\frp\in\calP_{\Gamma}(c\to \Gamma),\atop \ell(\frp)=d}\hspace{-0.2cm}\{\tilde{\frp}\in \Geod_T(v\to T)\mid \pi(\tilde{\frp})=\frp\}.
    \end{equation}
    Note that $\Gamma$ is a $1$-segment if, and only if, $|\{\frp\in\calP_\Gamma(c\to \Gamma)\,:\,\ell(\frp)=d\}|=1$ for all $d\geq 0$ and $c\in V\Gamma$. 
    
    If $(G,T)$ is weakly locally $\infty$-transitive and $\Gamma$ is a $1$-segment, then $G_v$ acts transitively on $\{\tilde{\frp}\in \Geod_T(v\to T)\mid\ell(\tilde{\frp})=d\}$, for all $v\in VT$ and $d\geq 0$, and $(G,T)$ is locally $\infty$-transitive. 
    
    Conversely, if $(G,T)$ is locally $\infty$-transitive, then $(G,T)$ is arc-transitive. More precisely, since $G_v$ acts transitively on $o^{-1}(v)$ for every $v\in VT$, we deduce that $\Gamma$ is a $1$-segment. Indeed, if $\Gamma$ is a $1$-loop, for every $v\in VT$ the group $G_v$ has two orbits on $o^{-1}(v)$. Finally, for all $v\in VT$ and $d\geq 0$, there is a unique path $\frp$ in~$\Gamma$ starting at $\pi(v)$ of length~$d$. By~\eqref{eq:setWLIT}, $(G,T)$ is also weakly locally $\infty$-transitive.
\end{proof}

The next lemma provides a local characterisation of weakly 
 locally $\infty$-transitive (P)-closed actions on trees. An analogous result has already been proved for Burger--Mozes universal groups~\cite[Lemma~3.1.1 and the lines before it]{bumo}.
\begin{prop}\label{prop:locWLIT}
    Let $(G,T)$ be a group action on a tree with associated local action diagram $\Delta=(\Gamma, (X_a),(G(c)))$, and let $v\in VT$ with $\pi(v)=c$. If $(G,T)$ is weakly locally $\infty$-transitive at $v$, then
    \begin{itemize}
        \item[($\diamondsuit$)] for all $a,b\in o^{-1}(c)$ and all~$x\in X_a$, the group $G(c)_x$ acts transitively on~$X_b\setminus \{x\}$.
    \end{itemize}
    Conversely, if $(G,T)$ is (P)-closed, having condition~$(\diamondsuit)$ for all~$c\in V\Gamma$ implies that $(G,T)$ is weakly locally $\infty$-transitive.
\end{prop}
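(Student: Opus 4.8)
The plan is to prove the two implications separately, reducing each to the combinatorics of geodesics together with the axioms of the local action diagram.

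\emph{From weak local $\infty$-transitivity at $v$ to ($\diamondsuit$).} Since $(\diamondsuit)$ depends only on $\Delta$ up to isomorphism, I would first choose the orbit representatives so that $v$ represents its own orbit; then $X_c=o^{-1}(v)$, the group $G(c)$ is the closure in $\Sym(X_c)$ of the permutation group $P_v$ induced by $G_v$ on $o^{-1}(v)$, and for any $u\in\pi^{-1}(c)=G\cdot v$ an element $h\in G$ with $hv=u$ identifies $o^{-1}(u)$ with $X_c$ compatibly with the $G$-actions. I would use two elementary facts: a subgroup of $\Sym(X_c)$ and its closure have the same orbits on $X_c$ (in the permutation topology, membership in an orbit is detected on a one-point set), so one may argue with $P_v$ in place of $G(c)$; and $G(c)$ is transitive on each $X_a$ (an axiom of $\Delta$). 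The second fact reduces $(\diamondsuit)$ to finding, for each ordered pair $(a,b)$ in $o^{-1}(c)\times o^{-1}(c)$, a \emph{single} $x_0\in X_a$ with $G(c)_{x_0}$ transitive on $X_b\setminus\{x_0\}$: conjugating by $\sigma\in G(c)$ with $\sigma x_0=x$ for an arbitrary $x\in X_a$ keeps $X_b$ invariant and sends $G(c)_{x_0}$ to $G(c)_x$.

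To produce $x_0$ I would use a bounce-back geodesic. Since $T$ is leafless and $\Gamma$ is connected with at least one edge (so $T$ is infinite), there is a reduced geodesic $\gamma=[v,v'']$ of positive length from $v$ to some $v''\in\pi^{-1}(c)$ whose last edge $e_2$ satisfies $\pi(e_2)=\bar a$: this is immediate when $\omega(\bar a)\ge 2$ (lift the path $(a,\bar a)$), and when $\omega(\bar a)=1$ leaflessness forces $t(a)$ to be non-terminal, so one reaches $\pi^{-1}(c)$ by first leaving $t(a)$ along an edge $\ne\bar a$ and then returning along $\bar a$. Put $x_0:=h^{-1}(\overline{e_2})\in o^{-1}(v)=X_c$ for $h\in G$ with $hv=v''$; as $\pi(\overline{e_2})=a$ we get $x_0\in X_a$. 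Now fix $b\in o^{-1}(c)$ and choose $e_3\in o^{-1}(v'')$ with $\pi(e_3)=b$ and $e_3\ne\overline{e_2}$ (if none exists then $X_b\setminus\{x_0\}=\emptyset$ and there is nothing to prove); applying weak local $\infty$-transitivity at $v$ to the lift $\gamma\cdot(e_3)$ of $\pi(\gamma)\cdot(b)$, Remark~\ref{rem:WLIT}(ii) gives that $G_\gamma$ is transitive on $\{e_3'\in o^{-1}(v''):\pi(e_3')=b,\ e_3'\ne\overline{e_2}\}$. As $G_\gamma\subseteq G_{e_2}=G_{\overline{e_2}}$, so is $G_{\overline{e_2}}$; transporting by $h^{-1}$, the stabiliser of $x_0$ in $P_v$ is transitive on $X_b\setminus\{x_0\}$, hence so is $G(c)_{x_0}$, which is all we needed.

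\emph{From ($\diamondsuit$) and (P)-closedness to weak local $\infty$-transitivity at $v$.} By Theorem~\ref{thm:UD}(iii) and~(i) I may assume $(G,T)=(U(\Delta,\iota,c),T(\Delta,\iota,c))$ with $v=v_0=O_c$ the root. By Remarks~\ref{rem:geoE+} and~\ref{rem:TDtree}, $\caL$ restricts to a bijection $\Geod_T(v_0\to T)\to\calP_{(\Delta,\iota)}(X_c\to X)$ under which a geodesic with $\pi(\tilde\frp)=(a_1,\dots,a_n)$ corresponds to $\caL(\tilde\frp)=(x_1,\dots,x_n)$ with $x_i\in X_{a_i}$ for all $i$; hence it suffices to show that $G_{v_0}$ acts transitively on each set $S=\{(x_1,\dots,x_n)\in\calP_{(\Delta,\iota)}(X_c\to X):x_i\in X_{a_i}\ \forall i\}$, which I would prove by induction on $n$. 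For $n=1$ this is that the local action of $G_{v_0}$ at $v_0$ has the same orbits on $X_c$ as $G(c)$ (Theorem~\ref{thm:UD}(ii) and orbit-invariance of closure), namely the $X_a$. For the step, given $\xi,\xi'$ in the set for $(a_1,\dots,a_{n+1})$, the inductive hypothesis applied to $(a_1,\dots,a_n)$ yields $g_1\in G_{v_0}$ carrying the vertex $(x_1,\dots,x_n)$ to $(x_1',\dots,x_n')$, whence $g_1\xi=(x_1',\dots,x_n',y)$ with $y\in X_{a_{n+1}}$; writing $u'=(x_1',\dots,x_n')$ and $[v_0,u']=(e_1,\dots,e_n)$, it remains to move $y$ to $x_{n+1}'$ by an element of $G_{[v_0,u']}$. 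Proposition~\ref{prop:propPbis} (the only use of (P)-closedness) gives that $G_{[v_0,u']}$ and $G_{e_n}$ have the same orbits on $o^{-1}(u')\setminus\{\bar e_n\}$; the local action of $G_{e_n}=(G_{u'})_{\bar e_n}$ at $u'$ has the same orbits on $X_{\pi(u')}$ as $G(\pi(u'))_{\caL(\bar e_n)}$, and $\caL(\bar e_n)=\iota(x_n')$ by Remark~\ref{rem:Linv}; finally $(\diamondsuit)$, applied at the vertex $\pi(u')$, makes this stabiliser transitive on $X_{a_{n+1}}\setminus\{\iota(x_n')\}$, which contains both $y$ and $x_{n+1}'$ (these lie in $X_{a_{n+1}}$ and differ from $\iota(x_n')$ by reducedness). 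Composing the resulting element with $g_1$ completes the induction, and hence the proof.

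\emph{The main obstacle.} The delicate step is the bounce-back geodesic in the forward direction: because $\Gamma$ need not be edge-transitive (unlike in the Burger--Mozes situation), the action of $G_v$ on geodesics leaving $v$ does not by itself reveal how a point stabiliser permutes $o^{-1}(v)$, so one must genuinely leave $v$ and return to the orbit $G\cdot v$ arriving along $\bar a$; verifying that such a geodesic always exists is precisely where leaflessness of $T$ enters. (One should also note that the converse invokes $(\diamondsuit)$ at every vertex of $\Gamma$ traversed, so it is best read as a condition on the whole local action diagram; since $\Gamma$ is connected this is harmless.)
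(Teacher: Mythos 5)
Your converse direction is correct, and it repackages the paper's argument in a cleaner way: where the paper fixes two geodesics with the same image and corrects a telescoping product $k_i=h_ig_i$ by hand using the (P)-decomposition $u_{i+1}=u_{i+1}^-u_{i+1}^+$, you induct on the length and invoke Proposition~\ref{prop:propPbis} as a black box at each step; both arguments use ($\diamondsuit$) at every vertex traversed, which you rightly flag. The forward direction, however, has a genuine gap, and it sits exactly where you located the difficulty. Your bounce-back construction is sound whenever a geodesic from $v$ ending with a lift of $\bar a$ exists, and $\omega(\bar a)\ge 2$ does guarantee this; but the case $\omega(\bar a)=1$ is not handled by ``leaving $t(a)$ along an edge $\ne\bar a$ and then returning along $\bar a$''. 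To append the \emph{unique} $\bar a$-lift at a vertex $w''\in\pi^{-1}(t(a))$ you must arrive at $w''$ along an edge that is not a lift of $a$, i.e.\ you need a liftable path in $\Gamma$ from $c$ to $t(a)$ whose last edge is not $a$, and such a path need not exist: take $\Gamma$ to be a ray $c-d-u_1-u_2-\cdots$ with an extra loop at $c$, with $\omega=1$ on the loop and on every edge directed towards $c$, and $\omega\ge 2$ on every edge directed away from $c$. Then $T$ is leafless, $\pi^{-1}(c)$ is an infinite line (so there is no global fixed point), yet every liftable path from $c$ that crosses $a$ marches away forever, since each return step has weight $1$ and would force backtracking; no geodesic from $v$ ever ends with a lift of $\bar a$. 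In such a configuration weak local $\infty$-transitivity at $v$ genuinely gives no control over how a point stabiliser permutes $X_a$ (one can realise $G(c)$ acting on $X_a$ as a regular cyclic group, with all other local groups full, and still be weakly locally $\infty$-transitive at $v$), so the strategy cannot be repaired within the hypothesis ``at $v$'' alone.

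For comparison, the paper's proof of the forward implication takes a different and much shorter route: for $e\in X_a\subseteq o^{-1}(v)$ it considers the length-two geodesics $(\bar e,f)$ \emph{based at the neighbour $t(e)$} lifting $(\bar a,b)$, and applies transitivity of $G_{\bar e}=G_e$ on them via Remark~\ref{rem:WLIT}(ii) --- which is an appeal to weak local $\infty$-transitivity at $t(e)$ rather than at $v$. That argument is complete under the hypothesis that the action is weakly locally $\infty$-transitive (which is how the forward implication is used in the paper, e.g.\ in Example~\ref{ex:WLIT}), and it replaces your entire bounce-back construction. You should either adopt that route, or keep your argument and restrict the delicate case: as written, your proof establishes the implication only when every $a\in o^{-1}(c)$ admits a geodesic from $v$ ending with a lift of $\bar a$ (in particular whenever $\omega(\bar a)\ge2$ for all such $a$), where it has the merit of using the hypothesis only at $v$.
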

In ($\diamondsuit$), note that $X_b\setminus\{x\}=X_b$ unless $a=b$. Moreover, if $a=b$, condition ($\diamondsuit$) is equivalent to say that $G(c)$ acts $2$-transitively on $X_a$.

\begin{proof}
To prove the first part of the assertion, let $a,b\in o^{-1}(c)$. By the essential uniqueness of the local action diagram associated to $(G,T)$ (cf.~Section~\ref{sus:lads}), we may assume that $X_a,X_b\subseteq o^{-1}(v)$. Since $(G,T)$ is weakly locally $\infty$-transitive at $v$, for every $e\in X_a$ the stabiliser $G_e$ acts transitively on 
$$\{\frp\in \Geod_T(\bar{e}\to T)\mid \pi(\frp)=(\bar{a},b)\}=\{(\bar{e},f)\mid f\in X_{b}\setminus \{e\}\}$$
and thus on $X_b\setminus\{e\}$.

Let now $(G,T)$ be (P)-closed and suppose that ($\diamondsuit$) holds for all~$c\in V\Gamma$.
Without loss of generality, let $G=U(\Delta,\iota,c)$, $T=T(\Delta,\iota,c)$ and $v\in VT$ be the root of~$T(\Delta,\iota,c)$ (so $\pi(v)=c$).
Consider two geodesics in~$T$ with the same image in~$\Gamma$, say $[v,w_1]=(e_1, \dots, e_n)$ and $[v,w_2]=(f_1, \dots, f_n)$ for some $n\geq 1$ and $w_1,w_2\in VT$.
For every $1\leq i\leq n$, since $\pi(e_i)=\pi(f_i)$, there exists $g_i\in G$ such that $f_i=g_i\cdot e_i$. Note that $o(g_i\cdot e_{i+1})=g_i\cdot t(e_i)=t(f_i)=o(f_{i+1})$ and $\pi(g_i\cdot e_{i+1})=\pi(e_{i+1})=\pi(f_{i+1})$. Hence, both $\caL(g_i\cdot e_{i+1})$ and $\caL(f_{i+1})$ belong to $X_{\pi(f_{i+1})}$.

By ($\diamondsuit$) applied to the vertex $c_i=o(\pi(f_{i+1}))$, for every $i<n$ there is $h_i\in G_{\bar{f}_i}$ such that $f_{i+1}=h_ig_i\cdot e_{i+1}$. 
For every $i<n$, set $k_i:=h_ig_i$ and observe that
\begin{equation}\label{eq:eifiki}
   (k_i\cdot e_i,k_i\cdot e_{i+1})=(f_i, f_{i+1}).
\end{equation}
In particular, $k_1=h_1g_1\in G_v$ (as~$h_1,g_1\in G_v$) and $k_i\cdot e_{i+1}=f_{i+1}=k_{i+1}\cdot e_{i+1}$ for every $1\leq i<n-1$. From~$(k_i)_{1\leq i\leq n-1}$, we now construct a sequence~$(\tilde{k}_i)_{1\leq i\leq n-1}$ in~$G_v$ such that~$[v,w_2]=\tilde{k}_{n-1}\cdot [v,w_1]$.

For every $1\leq i<n-1$ write $u_{i+1}:=k_{i}^{-1}k_{i+1}\in G_{e_{i+1}}$, and let $u_{i+1}^-\in G_{T_{\geq \bar{e}_{i+1}}}$ and $u_{i+1}^+\in G_{T_{\geq e_{i+1}}}$ be such that $u_{i+1}=u_{i+1}^-u_{i+1}^+$ (recall that $(G,T)$ is (P)-closed, so~\eqref{eq:propP} holds). For all~$2\leq i<j\leq n-1$, $u_i^+$ has support in~$T_{\geq \bar{e}_i}$ and~$u_j^-$ has support in~$T_{\geq e_j}$, which is disjoint to~$T_{\geq \bar{e}_i}$. Hence, for all~$2\leq i<j\leq n-1$ one has
\begin{equation}\label{eq:commut}
    u_i^+u_j^-=u_j^-u_i^+.
\end{equation}

Let $\tilde{k}_1:=k_1$ and, for every $2\leq i\leq n-1$, define inductively $\tilde{k}_{i}:=\tilde{k}_{i-1}u_{i}^-$. 
For each $2\leq i\leq n-1$, we claim that 
\begin{equation}\label{eq:kikitil}
    k_{i}=\tilde{k}_{i}u_2^+u_3^+\cdots u_{i}^+.
\end{equation}
For $i=2$, one has $k_2=k_1u_2=(\tilde{k}_1u_2^{-})u_2^+=\tilde{k}_2u_2^+$. For~$i\geq 3$, assuming~\eqref{eq:kikitil} true for~$i-1$, we have
\begin{equation*}
\begin{split}
   k_i & =k_{i-1}u_i=(\tilde{k}_{i-1}u_2^+\cdots u_{i-1}^+)(u_i^-u_i^+)\\
   & \stackrel{\eqref{eq:commut}}{=}(\tilde{k}_{i-1}u_i^-)u_2^+\cdots u_i^+=\tilde{k}_iu_2^+u_3^+\cdots u_{i}^+.  
\end{split}
\end{equation*}
Since~$u_2^+,\ldots, u_{i}^+$ fix both $e_i$ and $e_{i+1}$, from~\eqref{eq:eifiki} and~\eqref{eq:kikitil} we have
\begin{equation}\label{eq:tildek}
    (\tilde{k}_{i}\cdot e_i,\tilde{k}_i\cdot e_{i+1})=(k_i\cdot e_i,k_i\cdot e_{i+1})=(f_i,f_{i+1}).
\end{equation}
Moreover, we claim that $\tilde{k}_i$ fixes $v$ for every $1\leq i\leq n-1$. If $i=1$, this is clear. For $i\geq 2$, assume inductively that $\tilde{k}_{i-1}$ fixes $v$. Since $u_i^-$ fixes $T_{\geq \bar{e}_i}\ni v$ pointwise, we have that~$\tilde{k}_i=\tilde{k}_{i-1}u_i^-$ fixes~$v$. In particular, $\tilde{k}_{n-1}\in G_v$ and~\eqref{eq:tildek} yields
\begin{equation*}
    \tilde{k}_{n-1}\cdot w_1=\tilde{k}_{n-1}\cdot t(e_n)=t(\tilde{k}_{n-1}\cdot e_n)=t(f_n)=w_2.
\end{equation*}
   Being $T$ a tree, we conclude that $[v, w_2]=[v,\tilde{k}_{n-1}\cdot w_1]=\tilde{k}_{n-1}\cdot [v, w_1]$. 
\end{proof}

Proposition~\ref{prop:locWLIT} gives a recipe for constructing (P)-closed actions that are weakly locally $\infty$-transitive. We collect some explicit examples below.
\begin{ex}\label{ex:WLIT}
\begin{itemize}
    \item[(i)] Let $(T,\pi)$ be the universal cover of a connected edge-weighted graph $(\Gamma,\omega)$ as in~Example~\ref{exintro}. According to~Example~\ref{ex:UD}(i), one checks that ($\diamondsuit$) is satisfied for every $c\in V\Gamma$. Hence, by Proposition~\ref{prop:locWLIT}, $(\Aut_\pi(T),T)$ is weakly locally $\infty$-transitive. This gives an alternative proof of~\cite[Theorem~3.1 and the comment thereafter]{cawi20}.
    \item[(ii)] Let $U(F)$ be the Burger--Mozes universal group associated to a transitive group $F\leq \mathrm{Sym}(\{1,\ldots,k\})$, $k\geq 2$, acting on the barycentric subdivision $T_k'$ of the $k$-regular tree. 
    According to~Example~\ref{ex:UD}(iia), condition ($\diamondsuit$) of Proposition~\ref{prop:locWLIT} is satisfied for every $c\in V\Gamma$ if, and only if, $F$ is $2$-transitive. 
    Moreover, $U(F)\backslash T_k'$ is a $1$-segment. Hence, $(U(F),T_k')$ is weakly locally $\infty$-transitive if, and only if, it is locally $\infty$-transitive (cf.~Lemma~\ref{lem:(W)LIT}).
    By Proposition~\ref{prop:locWLIT}, we deduce that $(U(F),T_k')$ is locally $\infty$-transitive if, and only if, $F$ is $2$-transitive. This gives an alternative proof of what was observed in the first lines of~\cite[\S3]{bumo}.
    \item[(iii)] Let $U(F_1,F_2)$ be the Smith group associated to two transitive groups $F_1\leq\mathrm{Sym}(\{1,\ldots,k_1\})$ and $F_2\leq\mathrm{Sym}(\{1,\ldots, k_2\})$, $k_1,k_2\geq 2$. According to~Example~\ref{ex:UD}(iib), condition $(\diamondsuit)$ is satisfied for every $c\in V\Gamma$ if, and only if, both $F_1$ and $F_2$ are $2$-transitive. Moreover, the $U(F_1,F_2)$-action on the $(k_1,k_2)$-biregular tree~$T_{k_1,k_2}$ has quotient graph a $1$-segment. Hence, as in~(ii), $(U(F_1,F_2),T_{k_1,k_2})$ is weakly locally $\infty$-transitive if, and only if, it is locally $\infty$-transitive.
    By Proposition~\ref{prop:locWLIT} we conclude that $(U(F_1,F_2),T_{k_1,k_2})$ is locally $\infty$-transitive if, and only if, both $F_1$ and $F_2$ are $2$-transitive.
\end{itemize}
\end{ex}

As one may guess, there are (P)-closed actions that are not weakly locally $\infty$-transitive (and possibly not arc-transitive).
\begin{ex}\label{ex:nonwlit}
For $n\geq 3$, recall that the \emph{dihedral} group~$D_{2n}$ of size~$2n$ is the group of symmetries of an Euclidean regular $n$-gon, say with vertices~$x_1,\ldots, x_n$. Hence, we may consider $D_{2n}$ a subgroup of~$\Sym(\{x_1,\ldots, x_n\})$. Note that $D_{2n}$ acts transitively on~$\{x_1,\ldots, x_n\}$ (think of the rotations in the correspondent $n$-gon). However, for $n\geq 4$, the $D_{2n}$-action on~$\{x_1,\ldots, x_n\}$ is no longer $2$-transitive (as $D_{2n}$ preserves adjacent vertices in the $n$-gon).

Let~$\Delta=(\Gamma, (X_a), (G(c)))$ be any local action diagram with the following property: there is~$c\in V\Gamma$ such that~$o^{-1}(c)=\{a\}$ (so, $c$ is a leaf of~$\Gamma$) and, keeping the notation above, such that $G(c)=D_{2n}\leq \Sym(X_c)$, where~$X_c=X_a=\{x_1,\ldots, x_n\}$ for some $n\geq 4$. 

For example,~$\Gamma$ can be the graph as depicted below.
\begin{equation*}
    \xymatrix{
        \stackrel{c}{\bullet}\ar@/^/[rr]^a && \stackrel{d}{\bullet} \ar@/^/[rr]^{b}\ar@/^/[ll]^{\bar{a}} && \stackrel{e}{\bullet}\ar@/^/[ll]^{\bar{b}}
        }
\end{equation*}
 Moreover, one can take $X_a=X_{\bar{b}}:=\{x_1,\ldots, x_n\}$, $n\geq 4$, $G(c)=G(e):=D_{2n}$ as before, and let~$G(d)$ be any permutation group acting on a finite set~$X_d$ with two distinct orbits, say $X_{\bar{a}}$ and~$X_b$ (e.g., $G(d)=2\Z/4\Z$ acting on~$X_d=\Z/4\Z$ by left translation). 

If~$\bbT=(T,\pi,\caL)$ is any $\Delta$-tree, the action~$(U(\Delta,\bbT),T)$ is clearly (P)-closed but, by Proposition~\ref{prop:locWLIT}, not weakly locally $\infty$-transitive. Moreover, unless~$\Gamma$ is a $1$-segment, the latter action is also not arc-transitive.
\end{ex}

There are also (weakly) locally $\infty$-transitive group actions~$(G,T)$ on trees that are not (P)-closed but $G\leq \Aut(T)$ is closed. The following example is based on~\cite[Example~6.33]{ggt:aut}.
\begin{ex}\label{ex:nonPcl}
     Let~$T=T_{p+1}$ be the $(p+1)$-regular tree with $p$ prime.
     Recall by Example~\ref{ex:lads}(ii) that~$\PSL_2(\Q_p)$ acts faithfully on~$T$, so it can be embedded in~$\Aut(T)$.
     Let~$G$ be the closure of~$\PSL_2(\Q_p)$ in~$\Aut(T)$.
     
     By~\cite[Example~6.33]{ggt:aut} and~\cite[Corollary~6.28]{ggt:aut}, $(G,T)$ is not (P)-closed (that is, following~\cite{ggt:aut}, $G$ does not coincide with its ($P_1$)-closure). However, it is well known that the relevant $\PSL_2(\Q_p)$-action on~$T$ is locally $\infty$-transitive (cf.~\cite[pp.~91 and~95]{ser:trees}). In particular, $(G,T)$ is locally $\infty$-transitive.
\end{ex}

Example~\ref{ex:nonPcl} motivates the following question.
\begin{ques}
    Is there a weakly locally $\infty$-transitive group action on a tree~$(G,T)$ as in Convention~\ref{conv:action} such that~$G\leq \Aut(T)$ is closed, but $(G,T)$ is neither (P)-closed nor arc-transitive?
\end{ques}

\section{Cosets and double-cosets for groups acting on trees}\label{s:suborb}

Given a group acting on a tree, there is a standard geometric characterisation of coset and double-coset spaces (and of double-coset sizes) with respect to vertex and edge stabilisers. We expose it in Section~\ref{sus:cosGeo}. In view of Section~\ref{s:DCzeta}, we rephrase this characterisation in local terms in case that the action is
 weakly locally $\infty$-transitive (cf.~Section~\ref{sus:subWLIT}) or (P)-closed (cf.~Section~\ref{sus:subP}). For weakly locally $\infty$-transitive actions, we can conveniently enumerate double-cosets and compute their sizes in terms of paths in the quotient graph and a suitable weight on them (cf.~Propositions~\ref{prop:dcgeoWLIT}~and~\ref{prop:subWLIT}). In the (P)-closed case, we bypass the problem of enumerating double-cosets in local terms and focus on the associated coset spaces. Indeed, the latter ones can be conveniently described in terms of paths in the local action diagram (cf.~Proposition~\ref{prop:cosPcl}). This also allows us to express the relevant double-coset sizes in local terms (cf.~Proposition~\ref{prop:subsizeP}). We will see in Section~\ref{sus:coset} that counting cosets instead of double-cosets has only little impact on the double-coset zeta functions. 

\smallskip
 
We remind that we are following Convention~\ref{conv:action} and Convention~\ref{conv:lad}.

\subsection{Cosets and geodesics}\label{sus:cosGeo}
Since a tree is uniquely geodesic and by the orbit-stabiliser theorem, we observe what follows.

\begin{fact}\label{fact:cosets}
    Let $(G,T)$ be a group action on a tree and let $v\in VT$, $e\in ET$. For every $t\in VT$, there are $G$-equivariant bijections $\varphi_{v,t}\colon G/G_t\to \Geod_T(v\to G\cdot t)$ and $\varphi_{e,t}\colon G/G_t\to \Geod_T(\{e,\bar{e}\}\to G\cdot t)$ defined as follows:
    \begin{equation*}
      \varphi_{v,t}(gG_t):=[v,g\cdot t]\quad\text{and}\quad\varphi_{e,t}(gG_t):= \left\{
            \begin{array}{cc}
                [e, g\cdot t], & \text{if }{g\cdot t\in T_{\geq e}};\\
                {[\bar{e}, g\cdot t]}, & \text{if }g\cdot t\in T_{\geq \bar{e}}.
            \end{array}
            \right.
    \end{equation*}
    
    Similarly, for every $t\in ET$ there are $G$-equivariant bijections $\varphi_{v,t}\colon G/G_t\to\Geod_T(v\to G\cdot \{t,\bar{t}\})$ and $\varphi_{e,t}\colon G/G_t\to \Geod_T(\{e,\bar{e}\}\to G\cdot\{t,\bar{t}\})$ defined as follows:
    \begin{equation*}
    \begin{split}
        \varphi_{v,t}(gG_t) & :=\left\{
            \begin{array}{cc}
               [v,g\cdot t],  & \text{if }[v,g\cdot t]\text{ exists in }T;\\
               {[v, g\cdot\bar{t}]}, & \text{if }[v,g\cdot \bar{t}]\text{ exists in }T;
            \end{array}
            \right.\quad\text{and}\\ 
            \varphi_{e,t}(gG_t)& :=\left\{
            \begin{array}{cc}
                {[e, g\cdot t]}, & \text{if }[e,g\cdot t]\text{ exists in }T;\\
                {[e, g\cdot \bar{t}]}, & \text{if }[e,g\cdot \bar{t}]\text{ exists in }T;\\
                {[\bar{e}, g\cdot t]}, & \text{if }[\bar{e},g\cdot t]\text{ exists in }T;\\
                {[\bar{e}, g\cdot \bar{t}]}, & \text{if }[\bar{e},g\cdot \bar{t}]\text{ exists in }T.
            \end{array}
            \right.
        \end{split}
    \end{equation*}
\end{fact}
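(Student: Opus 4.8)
The statement to prove is Fact~\ref{fact:cosets}, which asserts the existence of $G$-equivariant bijections between coset spaces $G/G_t$ and certain sets of geodesics. The plan is to treat the four cases ($t$ a vertex versus $t$ an edge, and the base point being a vertex $v$ versus an edge $e$) in a uniform way, reducing everything to two basic principles: the orbit--stabiliser theorem and the fact that a tree is uniquely geodesic.

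First I would establish the vertex-base-point, vertex-target case. The orbit--stabiliser theorem gives a $G$-equivariant bijection $G/G_t \to G\cdot t$, sending $gG_t \mapsto g\cdot t$. Composing with the map $G\cdot t \to \Geod_T(v\to G\cdot t)$, $w\mapsto [v,w]$, which is a bijection because $T$ is uniquely geodesic (so each endpoint $w$ in the orbit determines exactly one reduced path $[v,w]$ from $v$), yields the desired $\varphi_{v,t}$. Equivariance is immediate: $g'\cdot[v,g\cdot t]=[g'\cdot v,\,g'g\cdot t]$, and one checks this is the geodesic assigned to $g'gG_t$ once we recall that $G$ acts on the set of geodesics $\Geod_T(v\to G\cdot t)$ by the diagonal action on endpoints (here $v$ need not be fixed by $g'$, but the target set is defined as geodesics \emph{starting at $v$}; so strictly the $G$-action on that set is the one making $\varphi_{v,t}$ equivariant, i.e.\ transported from $G/G_t$ — I would make this convention explicit, or alternatively note that the relevant $G$-set structure on $\Geod_T(v\to G\cdot t)$ is ``$g$ acts by $gG_t\mapsto g'gG_t$'', matching the coset action).

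Next, for the edge-base-point case $\varphi_{e,t}$: here I use the dichotomy from Section~\ref{sus:graph} that $T\setminus\{e,\bar e\}$ has exactly two components $T_e^+\ni t(e)$ and $T_e^-\ni o(e)$, so every vertex $w\in VT$ lies in exactly one of $T_{\geq e}$ or $T_{\geq\bar e}$, and correspondingly there is a unique geodesic from $\{e,\bar e\}$ to $w$ — namely $[e,w]$ if $w\in T_{\geq e}$ and $[\bar e,w]$ if $w\in T_{\geq\bar e}$ (using the cited fact that a geodesic $(e_1,\dots,e_n)$ with $e_1=e$, $e_n=f$ exists iff $f\in T_{\geq e}$, applied with $f\in o^{-1}(w)$ or directly to vertices). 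This gives a bijection $G\cdot t\to \Geod_T(\{e,\bar e\}\to G\cdot t)$, and composing with orbit--stabiliser gives $\varphi_{e,t}$; the formula in the statement is exactly this composite, written out by cases. For the target $t\in ET$, the only new subtlety is that the orbit $G\cdot t$ and $G\cdot\bar t$ are both relevant, and $G/G_t$ corresponds to $G\cdot\{t,\bar t\}$ as a set of edges only after noting $G_t=G_{\bar t}$ (the action is without inversion, so no $g$ swaps $t$ and $\bar t$, hence $G\cdot t\cap G\cdot\bar t=\emptyset$ and $G/G_t\to G\cdot t\sqcup G\cdot\bar t$... actually one must be careful: $G/G_t\to G\cdot t$ is already a bijection, and then one identifies an orbit-of-edges with geodesics landing on either $t$ or $\bar t$ depending on orientation). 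I would handle this by observing that for each $g$, exactly one of $g\cdot t,g\cdot\bar t$ lies in $T_{\geq e}$ (resp.\ is reachable by a geodesic from $v$ without backtracking through the edge), so the case split in the displayed formula is exhaustive and mutually exclusive, and well-definedness follows because if $g G_t=g'G_t$ then $g\cdot t=g'\cdot t$ whence both cases produce the same geodesic.

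The main obstacle — really the only non-routine point — is bookkeeping the $G$-set structure on the geodesic sets so that ``$G$-equivariant'' is a meaningful and correct claim: when the base point is a vertex $v$ not fixed by all of $G$, the naive diagonal $G$-action does not preserve ``geodesics starting at $v$'', so one must either (a) declare that $\Geod_T(v\to G\cdot t)$ carries the $G$-action transported along $\varphi_{v,t}$ (making equivariance a tautology, which trivializes the content), or (b) interpret the claim as: the maps intertwine the left-multiplication $G$-action on $G/G_t$ with the partial action $g\cdot[v,w]:=[g\cdot v,g\cdot w]$, landing in $\Geod_T(g\cdot v\to G\cdot t)$, i.e.\ equivariance is ``up to moving the base point''. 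I would adopt interpretation (b) and state it cleanly, then the proof of equivariance is the one-line computation $g'\cdot\varphi_{v,t}(gG_t)=g'\cdot[v,g\cdot t]=[g'\cdot v,\,g'g\cdot t]=\varphi_{g'\cdot v,\,t}(g'gG_t)$, and similarly in the edge case using $g'\cdot T_{\geq e}=T_{\geq g'\cdot e}$. Everything else is a direct unwinding of definitions, so I would keep the written proof short, essentially: ``By the orbit--stabiliser theorem and unique geodesicity; the case distinctions are exhaustive and exclusive by the component decomposition of $T\setminus\{e,\bar e\}$; equivariance is the displayed one-line computation.''
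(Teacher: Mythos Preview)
Your proposal is correct and matches the paper's approach exactly: the paper does not give a detailed proof but simply prefaces the Fact with ``Since a tree is uniquely geodesic and by the orbit--stabiliser theorem, we observe what follows.'' Your write-up expands precisely these two ingredients, and your observation about the meaning of $G$-equivariance (interpretation (b), moving the base point) is a fair clarification of something the paper leaves implicit.
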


\begin{lem}\label{lem:subsize}
    Let $(G,T)$ be a group action on a tree. According to Fact~\ref{fact:cosets}, for all $t_1,t_2\in T$ we have
    \begin{gather*}\label{eq:sizevet}
        |G_{t_1}gG_{t_2}/G_{t_2}|=|G_{t_1}\cdot \varphi_{t_1,t_2}(gG_{t_2})|.
    \end{gather*}
\end{lem}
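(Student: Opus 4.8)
The plan is to read the identity off from the $G_{t_1}$-equivariance of the bijections $\varphi_{t_1,t_2}$ supplied by Fact~\ref{fact:cosets}. Write $\varphi=\varphi_{t_1,t_2}\colon G/G_{t_2}\to \Geod_T(\,\cdot\,\to\,\cdot\,)$ for the relevant bijection, its codomain being $\Geod_T(t_1\to G\cdot t_2)$ or one of the analogous sets of geodesics, according to whether $t_1$ and $t_2$ are vertices or edges. First I would note that, since $(G,T)$ acts without inversion of edges, $G_{t_1}$ fixes $t_1$ (and also $\bar{t}_1$ when $t_1\in ET$) pointwise; hence $G_{t_1}$ permutes the geodesics issuing from $t_1$ (resp.\ from $t_1$ or $\bar t_1$), and it clearly preserves the endpoint condition "ends in $G\cdot t_2$'' (resp.\ in $G\cdot\{t_2,\bar t_2\}$) since that is a $G$-orbit condition. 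So $G_{t_1}$ acts on the codomain of $\varphi$.

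Next I would record that $\varphi$ is $G_{t_1}$-equivariant: for $h\in G_{t_1}$ and $g\in G$ one has $\varphi(hgG_{t_2})=h\cdot\varphi(gG_{t_2})$, which follows directly from the defining formulae in Fact~\ref{fact:cosets}, using $h\cdot t_1=t_1$ and, in the edge cases, that $h$ (fixing $t_1=e$) preserves each of the half-trees $T_{\geq e}$ and $T_{\geq\bar e}$, so the case distinction in the piecewise definition of $\varphi_{e,t_2}$ selects the same branch for $g$ and for $hg$. This small case-bookkeeping is the only thing that actually needs checking, and it is routine.

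Finally, observe that, as a subset of $G/G_{t_2}$, the set $G_{t_1}gG_{t_2}/G_{t_2}$ is exactly the $G_{t_1}$-orbit $G_{t_1}\cdot(gG_{t_2})$ of the coset $gG_{t_2}$. Since a $G_{t_1}$-equivariant bijection restricts to a bijection between $G_{t_1}$-orbits, $\varphi$ carries $G_{t_1}\cdot(gG_{t_2})$ bijectively onto $G_{t_1}\cdot\varphi(gG_{t_2})$. Comparing cardinalities gives $|G_{t_1}gG_{t_2}/G_{t_2}|=|G_{t_1}\cdot\varphi_{t_1,t_2}(gG_{t_2})|$, which is the assertion. (Equivalently, one may phrase the conclusion via orbit--stabiliser: the $G_{t_1}$-stabiliser of $gG_{t_2}$ is $G_{t_1}\cap gG_{t_2}g^{-1}$, which by equivariance also equals the $G_{t_1}$-stabiliser of $\varphi(gG_{t_2})$, so the two orbits have the same size.) There is no genuine obstacle here; the statement is essentially a reformulation of Fact~\ref{fact:cosets} together with the trivial observation that equivariant bijections preserve orbit sizes.
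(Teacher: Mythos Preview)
Your proof is correct and essentially the same as the paper's. The paper phrases the final step via orbit--stabiliser (identifying $G_{t_1}\cap gG_{t_2}g^{-1}=G_{t_1}\cap G_{g\cdot t_2}$ with the pointwise stabiliser of the geodesic $\varphi_{t_1,t_2}(gG_{t_2})$, using that $T$ is uniquely geodesic), while you argue directly that an equivariant bijection carries orbits to orbits of the same size; you even note this orbit--stabiliser variant at the end, so the two arguments are the same in substance. One small remark: Fact~\ref{fact:cosets} already asserts the bijections are $G$-equivariant, so your careful case-checking of $G_{t_1}$-equivariance, while correct, is not strictly needed.
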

\begin{proof}
 For every $g\in G$ we observe that $$|G_{t_1}gG_{t_2}/G_{t_2}|=|G_{t_1}:G_{t_1}\cap gG_{t_2}g^{-1}|.$$ 
 Since $gG_{t_2}g^{-1}=G_{g\cdot t_2}$ and $T$ is uniquely geodesic, the group $G_{t_1}\cap gG_{t_2}g^{-1}$ is the pointwise stabiliser of the geodesic $\varphi_{t_1,t_2}(gG_{t_2})$. Then the orbit-stabiliser theorem yields the claim. 
\end{proof}

\subsection{The case of weakly locally $\infty$-transitive actions on trees}\label{sus:subWLIT}

Let $(G,T)$ be a weakly locally $\infty$-transitive group action on a tree. Denote by $\pi\colon T\to \Gamma=G\backslash T$ the quotient map of $(G,T)$, and let $\omega$ be the standard edge weight on $\Gamma$.
Given $u_1,u_2\in \Gamma$ and $t_1\in \pi^{-1}(u_1)$, let $\calPl_{\Gamma,t_1}(u_1\to u_2)$ be the set of all paths $\frp\in \calP_\Gamma(u_1\to u_2)$ that can be lifted to a geodesic in~$T$ from $t_1$ via $\pi$.

\begin{prop}\label{prop:dcgeoWLIT}
    In the hypotheses before, let $v\in VT$ with $\pi(v)=c$ and $e\in ET$ with $\pi(e)=a$. According to Fact~\ref{fact:cosets}, we have the following bijections for every~$t\in T$ with $\pi(t)=u$:
    \begin{gather*}
        \begin{array}{rcll}
           \Psi_{v,t}\colon G_v\backslash G/G_t & \longrightarrow & \calPl_{\Gamma, v}(c\to U), &  \Psi_{v,t}(G_vgG_t)=\pi(\varphi_{v,t}(gG_t));\\
           \Psi_{e,t}\colon G_e\backslash G/G_t & \longrightarrow &  \calPl_{\Gamma, e}(A\to U), & \Psi_{e,t}(G_egG_t)=\pi(\varphi_{e,t}(gG_t)).
        \end{array}
    \end{gather*}
    Here the sets $A$ and $U$ are as in Notation~\ref{notat:capitG}.
\end{prop}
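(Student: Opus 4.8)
The plan is to build the two bijections $\Psi_{v,t}$ and $\Psi_{e,t}$ by quotienting the $G$-equivariant bijections $\varphi_{v,t}$ and $\varphi_{e,t}$ from Fact~\ref{fact:cosets} by the left action of the stabiliser, and then to identify the resulting orbit sets with sets of liftable paths in $\Gamma$ using weak local $\infty$-transitivity. First I would treat the vertex case. Fact~\ref{fact:cosets} gives a $G$-equivariant bijection $\varphi_{v,t}\colon G/G_t\to\Geod_T(v\to G\cdot t)$; since it is $G$-equivariant it descends to a bijection between the orbit sets $G_v\backslash(G/G_t)=G_v\backslash G/G_t$ and $G_v\backslash\Geod_T(v\to G\cdot t)$. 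So it remains to produce a bijection $G_v\backslash\Geod_T(v\to G\cdot t)\to\calPl_{\Gamma,v}(c\to U)$, and the natural candidate is the map induced by $\pi$ on geodesics, sending the $G_v$-orbit of a geodesic $\tilde{\frp}$ starting at $v$ to $\pi(\tilde{\frp})$. Composing gives exactly the stated formula $\Psi_{v,t}(G_vgG_t)=\pi(\varphi_{v,t}(gG_t))$, so the content is entirely in checking this induced map is a well-defined bijection.

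For well-definedness and injectivity of $\tilde{\frp}\mapsto\pi(\tilde{\frp})$ on $G_v$-orbits, the key point is precisely Definition~\ref{defn:WLIT}: two geodesics $\tilde{\frp}_1,\tilde{\frp}_2\in\Geod_T(v\to G\cdot t)$ have the same image $\frp$ in $\Gamma$ if and only if they lie in the set \eqref{eq:geop}, on which $G_v$ acts transitively, hence they are in the same $G_v$-orbit. (The converse — same $G_v$-orbit implies same $\pi$-image — is immediate since $\pi$ is $G$-invariant.) For surjectivity onto $\calPl_{\Gamma,v}(c\to U)$, I note that a path $\frp\in\calP_\Gamma(c\to U)$ lies in $\calPl_{\Gamma,v}(c\to u)$ exactly when it admits a lift to a geodesic in $T$ starting at $v$; and if $\frp$ ends at $u=\pi(t)$ then any such lift $\tilde{\frp}$ ends at some vertex $t'$ with $\pi(t')=u$, i.e. $t'\in G\cdot t$, so $\tilde{\frp}\in\Geod_T(v\to G\cdot t)$ maps to $\frp$. (The caveat, addressed by Remark~\ref{rem:WLIT}(i), is that liftability of $\frp$ to a geodesic from \emph{some} vertex over $c$ forces liftability from $v$; one must observe that the definition of $\calPl_{\Gamma,t_1}$ is attached to the chosen preimage $v$, and weak local $\infty$-transitivity at $v$ is exactly what makes the fibre \eqref{eq:geop} nonempty-or-not independent of the specific lift, so no extra argument beyond transitivity is needed here.) Restricting the target of $\pi$ to $U$ rather than all of $\Gamma$ is automatic because the image of a geodesic from $v$ to $G\cdot t$ ends in $\pi(G\cdot t)=U$.

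The edge case $\Psi_{e,t}\colon G_e\backslash G/G_t\to\calPl_{\Gamma,e}(A\to U)$ is formally identical, using the $G$-equivariant bijection $\varphi_{e,t}\colon G/G_t\to\Geod_T(\{e,\bar{e}\}\to G\cdot t)$ of Fact~\ref{fact:cosets} and quotienting by $G_e$; here one uses weak local $\infty$-transitivity at $o(e)$ (equivalently, via Remark~\ref{rem:WLIT}, at the endpoints of the geodesics) together with the fact that $G_e$ fixes $e$ hence permutes geodesics starting at $e$ among themselves and likewise for $\bar{e}$. One small point to record: a geodesic starting at $\{e,\bar{e}\}$ maps under $\pi$ to a path starting at $A=\{a,\bar a\}$, and conversely a path in $\calP_\Gamma(A\to U)$ liftable to a geodesic from $e$ or $\bar e$ realises every $G_e$-orbit of such geodesics, again by transitivity of $G_e$ on same-image lifts. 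I expect the main obstacle — really the only nontrivial ingredient — to be making the transitivity argument airtight: one must be careful that weak local $\infty$-transitivity only asserts transitivity of $G_v$ (resp.\ $G_e$) on lifts of a \emph{fixed} path $\frp$ starting at $\pi(v)$ (resp.\ at an endpoint of $e$), and to check that the geodesics from $v$ ending in the orbit $G\cdot t$ decompose as a disjoint union over $\frp\in\calP_\Gamma(c\to u)$ of these fibres, each of which is a single $G_v$-orbit or empty; this is exactly the analogue of \eqref{eq:setWLIT} and is where the proof will spend its words.
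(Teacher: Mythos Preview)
Your proposal is correct and follows essentially the same route as the paper: quotient the $G$-equivariant bijection $\varphi_{v,t}$ (resp.\ $\varphi_{e,t}$) from Fact~\ref{fact:cosets} by the stabiliser, then use weak local $\infty$-transitivity to identify each $\pi$-fibre with a single $G_v$-orbit (resp.\ $G_e$-orbit), so that $\pi$ descends to the required bijection onto liftable paths. The paper's proof is terser---it treats only $\Psi_{v,t}$ for $t\in VT$ and declares the rest analogous---but your more explicit bookkeeping of well-definedness, injectivity, and surjectivity matches the paper's argument point for point.
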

For all $t_1,t_2\in T$ and all $h\in G_{t_1}$, note that 
$$\pi(\varphi_{t_1,t_2}(hgG_{t_2}))=\pi(h\cdot \varphi_{t_1,t_2}(gG_{t_2}))=\pi(\varphi_{t_1,t_2}(gG_{t_2}))$$
and thus the map $\Psi_{t_1,t_2}$ is well-defined.
\begin{proof}
    We only prove that $\Psi_{v,t}$ is bijective for $t\in VT$, as for the remaining cases one may argue analogously. Let $\pi_{v,t}\colon \Geod_T(v\to G\cdot t)\longrightarrow\calPl_{\Gamma,v}(c\to u)$ be the map defined as
    \begin{equation*}\label{eq:pivt}
        \pi_{v,t}([v, g\cdot t]):=\pi([v, g\cdot t]).
    \end{equation*}
    Clearly, $\pi_{v,t}$ is $G$-equivariant and surjective. Moreover, since the $G$-action on~$T$ is weakly locally $\infty$-transitive, for every $g\in G$ the $\pi_{v,t}$-fibre of $\pi([v,g\cdot t])$ is $G_v\cdot [v,g\cdot t].$
     Thus $\pi_{v,t}$ induces a bijective map
     \begin{equation*}
         \tilde{\Psi}_{v,t}\colon G_v\backslash \Geod_T(v\to G\cdot t)\longrightarrow \calPl_{\Gamma,v}(c\to u).
     \end{equation*}
   Composing the bijection $G_v\backslash G/G_t\to G_v\backslash \Geod_T(v\to G\cdot t)$ induced by $\varphi_{v,t}$ with $\tilde{\Psi}_{v,t}$, we obtain $\Psi_{v,t}$. 
\end{proof}    

In view of Proposition~\ref{prop:subWLIT},
we introduce a weight on the paths of $G\backslash T$ which extends the standard edge weight $\omega$ in $G\backslash T$ and such that
$|G_{t_1}gG_{t_2}/G_{t_2}|$ coincides with the weight of $\Psi_{t_1,t_2}(G_{t_1}gG_{t_2})$, for all $t_1,t_2\in T$.
Such a weight can be defined even in the following more general framework.
\begin{defn}\label{defn:N}
   Let $\Gamma$ be a graph with a function $\omega\colon E\Gamma\to \Z_{\geq 1}$ called \emph{edge weight}. Define two functions $\Ne=\Ne^\omega, \Nv=\Nv^\omega\colon \calP_\Gamma\to \Z_{\geq 0}$ as follows.
   For every path $\frp$ in~$\Gamma$, let
    \begin{equation}\label{eq:Ne}
     \Ne(\frp):=\left\{
     \begin{array}{cl}
         \ell(\frp), & \text{if }\ell(\frp)\leq 1\\
        \displaystyle{\prod_{i=1}^{n-1}\Big(\omega(a_{i+1})-\mathbbm{1}_{\{\bar{a}_i\}}(a_{i+1})\Big),} & \text{if }\frp=(a_1, \ldots, a_n),\,n\geq 2 
     \end{array}
     \right.
    \end{equation} 
    and
    \begin{equation}\label{eq:Nv}
     \Nv(\frp):=\left\{
     \begin{array}{cl}
         1, & \text{if }\ell(\frp)=0\\
        \omega(a_1)\cdot\Ne(\frp), & \text{if }\frp=(a_1, \ldots, a_n),\,n\geq 1 .
     \end{array}
     \right.
    \end{equation}
\end{defn}
\begin{notat}
    For $(a_1,\ldots, a_n)\in \calP_\Gamma$, we write $\Ne(a_1,\ldots, a_n)$ and \linebreak $\Nv(a_1, \ldots, a_n)$ in place of $\Ne((a_1,\ldots,a_n))$ and $\Nv((a_1,\ldots, a_n))$, respectively.
\end{notat}

\begin{rem}\label{rem:Nint}
  Let $\Gamma$ be the quotient graph of a group action on a tree $(G,T)$, and denote by $\omega\colon E\Gamma\to\Z_{\geq 1}$ its standard edge weight.

 For every path $\frp=(a_1, \ldots, a_n)$ in~$\Gamma$ of positive length and for every $e\in ET$ with $\pi(e)=a_1$, one checks that
 \begin{equation*}
      \Ne^\omega(\frp)=|\{\widetilde{\frp}\in \Geod_T(e\to T)\,:\, \pi(\widetilde{\frp})=\frp\}|.
 \end{equation*}
 Similarly, for every path $\frp$ in~$\Gamma$ starting at $c\in V\Gamma$ and for every $v\in VT$ with $\pi(v)=c$, we have
  \begin{equation*}
      \Nv^\omega(\frp)=|\{\widetilde{\frp}\in \Geod_T(v\to T)\,:\, \pi(\widetilde{\frp})=\frp\}|.
  \end{equation*}

 These observations apply if in particular $\Gamma$ is the underlying graph of a local action diagram $\lad$, and $T=T(\Delta,\iota,c_0)$ is a standard $\Delta$-tree (cf.~Section~\ref{ssus:TD}). In this case, given a path $\frp=(a_1,\ldots, a_n)$ in~$\Gamma$ of positive length and for every $e\in ET^+$ with $\pi(e)=a_1$, we have
  \begin{equation}\label{eq:NintP}
      \Ne^{\omega}(\frp)=|\{\xi\in \calP_{(\Delta,\iota)}(\caL(e)\to X)\,:\,\frp_\xi=\frp\}|,
  \end{equation}
  (cf.~Lemma~\ref{lem:geodTD}).
Thus, a reduced path $\xi$ in~$(\Delta,\iota)$ of positive length can be lifted to a geodesic in~$T$ if, and only if, $\Ne^\omega(\frp_{\xi})\geq 1$.
\end{rem}

\begin{rem}\label{rem:lift}
Assume the notation of Remark~\ref{rem:Nint} and suppose that $\omega(a)\geq 2$ for every $a\in E\Gamma$. Then every path $\frp$ can be lifted to a geodesic in~$T$ via $\pi$ (cf.~Remark~\ref{rem:Nint}). In particular, the bijections in Proposition~\ref{prop:dcgeoWLIT} are onto $\calP_\Gamma(c\to U)$ and $\calP_\Gamma(A\to U)$, respectively.
\end{rem}

\begin{prop}\label{prop:subWLIT}
 Let $(G,T)$ be a weakly locally $\infty$-transitive group action on a tree with quotient graph $\Gamma$. Assume that the standard edge weight $\omega$ on~$\Gamma$ takes finite values. 
 Then, for all $g\in G$ and $t_1,t_2\in T$,
 \begin{equation*}
     |G_{t_1}gG_{t_2}/G_{t_2}|=|G_{t_1}:G_{\varphi_{t_1,t_2}(gG_{t_2})}|=\left\{
     \begin{array}{ll}
        \Nv^\omega(\Psi_{t_1,t_2}(G_{t_1}gG_{t_2})),  & \hspace{-0.2cm}\text{if }t_1\in VT;\\
        \Ne^\omega(\Psi_{t_1,t_2}(G_{t_1}gG_{t_2})),  & \hspace{-0.2cm}\text{if }t_1\in ET.
     \end{array}
     \right.
 \end{equation*}
 Here the map $\Psi_{t_1,t_2}$ is as in Proposition~\ref{prop:dcgeoWLIT}.
\end{prop}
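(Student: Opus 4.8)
The plan is to combine the bijections from Proposition~\ref{prop:dcgeoWLIT} with the enumeration formulae from Remark~\ref{rem:Nint}, using Lemma~\ref{lem:subsize} as the starting point. First I would recall from Lemma~\ref{lem:subsize} that $|G_{t_1}gG_{t_2}/G_{t_2}|=|G_{t_1}\cdot \varphi_{t_1,t_2}(gG_{t_2})|$, and by the orbit--stabiliser theorem this equals $|G_{t_1}:G_{\varphi_{t_1,t_2}(gG_{t_2})}|$, where $G_{\varphi_{t_1,t_2}(gG_{t_2})}$ is the pointwise stabiliser of the geodesic $\varphi_{t_1,t_2}(gG_{t_2})$ (this is exactly what was shown inside the proof of Lemma~\ref{lem:subsize}). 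This settles the first equality in the statement.

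For the second equality, set $\widetilde{\frp}:=\varphi_{t_1,t_2}(gG_{t_2})$, a geodesic in~$T$ starting at $t_1$ (or at $\bar{t_1}$ when $t_1\in ET$, according to the case distinction in Fact~\ref{fact:cosets}), with $\pi(\widetilde{\frp})=\Psi_{t_1,t_2}(G_{t_1}gG_{t_2})=:\frp$. The key point is that the index $|G_{t_1}:G_{\widetilde{\frp}}|$ counts the $G_{t_1}$-orbit of $\widetilde{\frp}$, which by weak local $\infty$-transitivity at $t_1$ is precisely the set $\{\widetilde{\frq}\in \Geod_T(t_1\to T)\mid \pi(\widetilde{\frq})=\frp\}$ (when $t_1\in VT$) or $\{\widetilde{\frq}\in \Geod_T(\{t_1,\bar{t_1}\}\to T)\mid \pi(\widetilde{\frq})=\frp\}$ (when $t_1\in ET$; here one uses that $G_{t_1}$ fixes both $t_1$ and $\bar{t_1}$ since the action is without inversion, and that the geodesic $\widetilde{\frp}$ has its first edge in $A=\{t_1,\bar{t_1}\}$). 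Then I would invoke Remark~\ref{rem:Nint}: the cardinality of the first set is $\Nv^\omega(\frp)$ and the cardinality of the second is $\Ne^\omega(\frp)$. Strictly, Remark~\ref{rem:Nint} is stated for geodesics starting at a fixed vertex $v$ or a fixed edge $e$; in the vertex case this applies verbatim with $v=t_1$, while in the edge case one notes $\{\widetilde{\frq}\in \Geod_T(\{t_1,\bar{t_1}\}\to T)\mid\pi(\widetilde{\frq})=\frp\}=\{\widetilde{\frq}\in \Geod_T(t_1\to T)\mid\pi(\widetilde{\frq})=\frp\}$ since a reduced path lifting $\frp$ and starting along the edge $t_1$ (as $\widetilde{\frp}$ does) cannot start with $\bar{t_1}$, and $\pi(t_1)=a$ forces the first edge of any lift starting at $t_1$ to lie over $a_1$. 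This matches the $\Ne^\omega$ formula in Remark~\ref{rem:Nint}.

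The one subtlety to handle carefully is the degenerate case $\ell(\frp)=0$, i.e.\ $g\cdot t_2\in G\cdot t_1$ (so the double coset is the trivial one and the size is $1$): here $\Nv^\omega$ of a $0$-length path is $1$ by definition, consistent with $|G_{t_1}:G_{t_1}|=1$; for $t_1\in ET$ one should note that $\ell(\frp)=0$ cannot occur unless $t_2\in G\cdot t_1\cup G\cdot\bar{t_1}$, and again the values match. I expect the main obstacle to be bookkeeping rather than conceptual: aligning the several cases of $\varphi_{t_1,t_2}$ in Fact~\ref{fact:cosets} (vertex/edge for $t_1$, vertex/edge for $t_2$) with the corresponding orbit descriptions, and making sure weak local $\infty$-transitivity is applied at the right vertex — note that when $t_1\in ET$ one applies it at $o(t_1)$, equivalently uses Remark~\ref{rem:WLIT}, to describe the $G_{t_1}$-orbit of a geodesic emanating from the edge $t_1$. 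Everything else is a direct substitution.
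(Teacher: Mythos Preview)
Your proposal is correct and follows essentially the same approach as the paper's proof: invoke Lemma~\ref{lem:subsize} to reduce to computing $|G_{t_1}\cdot\widetilde{\frp}|$, use weak local $\infty$-transitivity to identify this orbit with the full set of geodesic lifts of $\frp=\pi(\widetilde{\frp})$, and then apply Remark~\ref{rem:Nint}. The paper compresses the case analysis into a single line (``in the other cases the argument is analogous''), whereas you spell out the edge case via Remark~\ref{rem:WLIT}(ii) and the trivial-length case; this extra bookkeeping is harmless and the underlying argument is identical.
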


\begin{proof}
We may assume that $\varphi_{t_1,t_2}(gG_{t_2})=[t_1, g\cdot t_2]$. Indeed, in the other cases the argument is analogous. Let $\frp:=\Psi_{t_1,t_2}(G_{t_1}gG_{t_2})=\pi([t_1,g\cdot t_2])$, where $\pi$ denotes the quotient map of $(G,T)$ (extended entrywise to all paths).
By Lemma~\ref{lem:subsize} and by weak local $\infty$-transitivity, we deduce that
\begin{gather*}
|G_{t_1}gG_{t_2}/G_{t_2}|=|G_{t_1}\cdot [t_1, g\cdot t_2]|=|\{\widetilde{\frp}\in \Geod_T(t_1\to T)\,:\, \pi(\widetilde{\frp})=\frp\}|.
\end{gather*}
Now Remark~\ref{rem:Nint} applies.
\end{proof}

\subsection{The case of (P)-closed actions on trees}\label{sus:subP}
Let $\Delta$ be a local action diagram and let~$(T=T(\Delta,\iota,c_0),\pi,\caL)$ be the standard $\Delta$-tree associated to an inversion map $\iota$ in~$\Delta$ and a chosen $c_0\in V\Gamma$ (cf.~Section~\ref{ssus:TD}). 
Let $G\leq U(\Delta,\iota,c_0)$ be a subgroup acting on $T$ with local action diagram $\Delta$. For the definition of $U(\Delta,\iota,c_0)$, see Section~\ref{ssus:UD}.

The following proposition rephrases Fact~\ref{fact:cosets} in the language of local action diagrams.
\begin{prop}\label{prop:cosPcl} 
 Let $G\leq U(\Delta,\iota,c_0)$ and $T=T(\Delta,\iota,c_0)$ be as before, and denote by $v_0$ the root of $T$. Consider also $e\in o^{-1}(v_0)$ with $\caL(e)=x$ and $t\in T$ with $\pi(t)=u$. Let $\varphi_{u,t}, \varphi_{e,t}$ be the maps introduced in Fact~\ref{fact:cosets}, and denote by $\caL$ the map defined in \eqref{eq:Lpaths}. Then the following two maps are bijective:
        \begin{gather*}
        \begin{array}{ccl}
            \caL\circ\varphi_{v_0,t}\colon G/G_t & \longrightarrow & \calP_{(\Delta,\iota)}(X_{c_0}\to X_U);\\
    \caL\circ\varphi_{e,t}\colon G/G_t & \longrightarrow & \calP_{(\Delta,\iota)}(x\to X_U)\sqcup \iota(x)\cdot \calP_{(\Delta,\iota)}(X_{c_0}\setminus\{x\}\to X_U). 
        \end{array}
        \end{gather*}   
    For $X_U$ see Notation~\ref{notat:capitD}.
\end{prop}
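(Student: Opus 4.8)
The plan is to obtain both bijections by assembling the geometric descriptions already available. By Fact~\ref{fact:cosets}, the maps $\varphi_{v_0,t}$ and $\varphi_{e,t}$ are $G$-equivariant bijections from $G/G_t$ onto the appropriate sets of geodesics of $T$ emanating from $v_0$, resp.\ from $\{e,\bar e\}$, and ending in the $G$-orbit of $t$ (or of $\{t,\bar t\}$ when $t\in ET$). So it suffices to show that $\caL$, as in~\eqref{eq:Lpaths}, carries each of these geodesic sets bijectively onto the set of reduced paths asserted in the statement. Before doing so I would record the orbit bookkeeping: since $(G,T)$ has local action diagram $\Delta$ and $G\leq\Aut_\pi(T)$, the projection $\pi$ is constant on $G$-orbits, and conversely the fibres of $\pi|_{VT}$ and $\pi|_{ET}$ are exactly the $G$-orbits (this is built into the construction of the local action diagram of an action, cf.\ Section~\ref{sus:lads}, and also follows from Theorem~\ref{thm:UD}). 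Hence $\Geod_T(v_0\to G\cdot t)$ equals $\{\frp\in\Geod_T(v_0\to T)\,:\,\frp\text{ ends in }\pi^{-1}(U)\}$, and similarly with $v_0$ replaced by $e$ or $\bar e$, where $U$ is the set of Notation~\ref{notat:capitG}.

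Next I would feed these rewritten sets into the bijections on geodesics that are already available: $\caL^{v_0}\colon\Geod_T(v_0\to T)\to\calP_{(\Delta,\iota)}(X_{c_0}\to X)$ from Remark~\ref{rem:TDtree}, and, for $e\in o^{-1}(v_0)$ with $\caL(e)=x$, the bijections $\caL^e\colon\Geod_T(e\to T)\to\calP_{(\Delta,\iota)}(x\to X)$ and $\caL^{\bar e}\colon\Geod_T(\bar e\to T)\to\iota(x)\cdot\calP_{(\Delta,\iota)}(X_{c_0}\setminus\{x\}\to X)$ from Lemma~\ref{lem:geodTD}. What remains is the translation step: a reduced path $\xi=(x_1,\dots,x_n)$ in $(\Delta,\iota)$ corresponds, under any of these bijections, to a geodesic whose terminal vertex or edge is sent by $\pi$ to an element of $\Gamma$ read off from the last entry $x_n$ via the underlying path $\frp_\xi$ and the explicit description of $\pi$ on the standard $\Delta$-tree from Section~\ref{ssus:TD}; concretely, the geodesic ends in $\pi^{-1}(U)$ precisely when $x_n\in X_U$ in the sense of Notation~\ref{notat:capitD}. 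Restricting the three bijections above to the reduced paths with last entry in $X_U$ and post-composing with $\varphi_{v_0,t}$ resp.\ $\varphi_{e,t}$ then yields the two asserted bijections, after treating by hand the length-zero cases ($t$ in the orbit of $v_0$, resp.\ of $o(e)$), which account for the trivial path $O_{c_0}$ and for the removal of $x$ in $X_{c_0}\setminus\{x\}$.

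The step I expect to be the real obstacle is the bookkeeping for $\caL\circ\varphi_{e,t}$. A geodesic issuing from the edge $e$ only exists toward the $e$-side of $T$, so via $\varphi_{e,t}$ one must partition $G/G_t$ according to whether $g\cdot t$ (or $g\cdot\bar t$, in the $t\in ET$ case) lies in $T_{\geq e}$ or in $T_{\geq\bar e}$: the $T_{\geq e}$-part is handled by $\caL^e$ and produces $\calP_{(\Delta,\iota)}(x\to X_U)$, whereas the $T_{\geq\bar e}$-part passes through $\bar e$, hence under $\caL$ starts with the label $\iota(x)=\caL(\bar e)$ and, by the second half of Lemma~\ref{lem:geodTD}, produces $\iota(x)\cdot\calP_{(\Delta,\iota)}(X_{c_0}\setminus\{x\}\to X_U)$. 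Checking that these two pieces are disjoint and jointly exhaust the stated target, and that the criterion ``last entry in $X_U$'' applies correctly to both (including the short geodesics $(e)$ and $(\bar e)$), is the only genuinely delicate point; the rest is a routine assembly of Fact~\ref{fact:cosets}, Remark~\ref{rem:TDtree}, and Lemma~\ref{lem:geodTD}.
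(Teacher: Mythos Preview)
Your proposal is correct and follows exactly the route of the paper, which records the proof in one line as ``a direct consequence of Fact~\ref{fact:cosets} and Lemma~\ref{lem:geodTD}''; you have simply unpacked what that direct consequence entails. The orbit bookkeeping you flag (that $G$-orbits coincide with $\pi$-fibres because $G$ has local action diagram $\Delta$) and the endpoint translation via $X_U$ are precisely the details implicit in the paper's citation of those two results.
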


\begin{proof}
It is a direct consequence of Fact~\ref{fact:cosets} and Lemma~\ref{lem:geodTD}.
\end{proof}

Following a similar strategy to the one in Section~\ref{sus:subWLIT}, we define a weight on paths in~$\Delta$ as follows. 

\begin{defn}\label{defn:W}
    Let $\lad$ be a local action diagram and recall that $X=\bigsqcup_{a\in E\Gamma}X_a$. The \emph{standard weight} on $\Delta$ is the function $\caW\colon X\times X\longrightarrow \Z_{\geq 0}\cup\{\infty\}$ defined, for all $x\in X_a$ and $y\in X_b$ with $a,b\in E\Gamma$, as follows:
\begin{equation*}\label{eq:W}
    \caW(x,y):=\left\{
    \begin{array}{cl}
       |{G(t(a))}_{\iota(x)}\cdot y|,  &  \text{if }t(a)=o(b);\\
       0,  &  \text{otherwise.}
    \end{array}
    \right.
\end{equation*} 
Define also $\caW_{\mathrm{rev}}\colon X\times X\longrightarrow \Z_{\geq 0}\cup\{\infty\}$ as follows, for all $x\in X_a$, $y\in X_b$ with $a,b\in E\Gamma$:
\begin{equation*}
    \caW_{\mathrm{rev}}(x,y):=\left\{
    \begin{array}{cl}
       |G(o(a))_x\cdot y|,  &  \text{if }o(a)=o(b);\\
       0,  &  \text{otherwise.}
    \end{array}
    \right.
\end{equation*}
Moreover, for every sequence $\xi=(x_1,\ldots, x_n)$ of elements of $X$ of length $n\geq 0$, define
\begin{equation}\label{eq:Wpath}
    \caW(\xi):=\left\{
    \begin{array}{cl}
        1, & \text{if }n\leq 1;\\
        \displaystyle{\prod_{i=1}^{n-1}\caW(x_i, x_{i+1}),} & \text{if }n\geq 2.
    \end{array}
    \right.
\end{equation}
\end{defn}
\begin{rem}\label{rem:Wint}
   For $x,y\in X$ note that $\caW(x,y)\neq 0$ if, and only if, $(x,y)$ is a path in $\Delta$. More generally, given a sequence $\xi$ of elements of $X$, we have $\caW(\xi)\neq 0$ if and only if $\xi$ is a path in $\Delta$.
\end{rem}

\begin{notat}
    For a sequence $\xi=(x_1,\ldots, x_n)$, we write $\caW(x_1,\ldots, x_n)$ in place of $\caW((x_1,\ldots, x_n))$.
\end{notat}

\begin{rem}\label{rem:wW}
Assume the hypotheses of the section, let $G=U(\Delta,\iota,c_0)$ and denote by $\omega$ and $\caW$ the standard edge weights on $\Gamma$ and $\Delta$, respectively. For every $e\in ET$ we have
    \begin{equation*}\label{eq:}
        |G_{o(e)}\cdot e|=\omega(\pi(e)).
    \end{equation*}
Moreover, let $e,f\in ET$ with $t(e)=v=o(f)$. If $e\in ET^+$, then $\caL(\bar{e})=\iota(\caL(e))$ (cf.~Remark~\ref{rem:Linv}) and 
$$|G_e\cdot f|=|G_{\bar{e}}\cdot f|=|G(\pi(v))_{\iota(\caL(e))}\cdot\caL(f)|=\caW(\caL(e),\caL(f)).$$
Moreover, if $e\in ET\setminus ET^+$ we have
$$|G_e\cdot f|=|G_{\bar{e}}\cdot f|=|G(\pi(v))_{\caL(\bar{e})}\cdot f|=\caWr(\caL(\bar{e}),\caL(f)).$$
\end{rem}

\begin{ex}\label{ex:preWLIT}
    Let $\lad$ be a local action diagram and consider $a,b\in E\Gamma$ with $t(a)=c=o(b)$. Assume that, for every $x\in X_a$, the group ${G(c)}_{\iota(x)}$ acts transitively on $X_b\setminus \{\iota(x)\}$. Note that $X_b\setminus \{\iota(x)\}=X_b$ unless $b=\bar{a}$.
    Then, for all $x\in X_{a}$ and $y\in X_b\setminus \{\iota(x)\}$, we have
    \begin{equation*}\label{eq:Ww}
       \caW(x,y)=|G(c)_{\iota(x)}\cdot y|=|X_b\setminus\{\iota(x)\}|=w(b)-\mathbbm{1}_{\{\bar{a}\}}(b).
    \end{equation*}
 The transitivity condition before leads back to Proposition~\ref{prop:locWLIT}. For explicit examples satisfying it, see Example~\ref{ex:WLIT}.
\end{ex}

\begin{prop}\label{prop:subsizeP}
    Let $G=U(\Delta,\iota,c_0)$. Consider a geodesic $\frp=(e_1,\ldots, e_n)$ in~$T=T(\Delta,\iota,c_0)$ with $n\geq 1$,  $e_1,\ldots, e_n\in ET^+$ and $\caL(\frp)=(x_1,\ldots, x_n)$. Then, 
    \begin{equation*}
|G_{o(e_1)}:G_{\frp}|=\omega(\pi(e_1))\cdot\caW(\xi)\quad\text{and}\quad |G_{e_1}:G_{\frp}|=\caW(\xi).
    \end{equation*}
\end{prop}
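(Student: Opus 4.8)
The plan is to prove the identity $|G_{e_1}:G_\frp|=\caW(\xi)$, where $\xi=\caL(\frp)=(x_1,\ldots,x_n)$; the first formula then follows immediately. Indeed $G_\frp\leq G_{e_1}\leq G_{o(e_1)}$, so the index is multiplicative along this chain, and $|G_{o(e_1)}:G_{e_1}|=|G_{o(e_1)}\cdot e_1|=\omega(\pi(e_1))$ by the first displayed identity of Remark~\ref{rem:wW}; hence $|G_{o(e_1)}:G_\frp|=\omega(\pi(e_1))\cdot|G_{e_1}:G_\frp|$. The case $n=1$ is trivial: there $\frp=(e_1)$, $G_\frp=G_{e_1}$ and $\caW(\xi)=1$ by~\eqref{eq:Wpath}. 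So from now on assume $n\geq 2$.

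For $|G_{e_1}:G_\frp|$ I would exploit the descending chain of pointwise stabilisers
\[
   G_{e_1}=G_{(e_1)}\ \geq\ G_{(e_1,e_2)}\ \geq\ \cdots\ \geq\ G_{(e_1,\ldots,e_n)}=G_\frp,
\]
which by multiplicativity of the index gives $|G_{e_1}:G_\frp|=\prod_{i=1}^{n-1}|G_{(e_1,\ldots,e_i)}:G_{(e_1,\ldots,e_{i+1})}|$. Since $G_{(e_1,\ldots,e_i)}\cap G_{e_{i+1}}=G_{(e_1,\ldots,e_{i+1})}$, the orbit--stabiliser theorem identifies the $i$-th factor with the cardinality of the orbit of $e_{i+1}$ under $G_{(e_1,\ldots,e_i)}$, so that
\[
   |G_{e_1}:G_\frp|=\prod_{i=1}^{n-1}|G_{(e_1,\ldots,e_i)}\cdot e_{i+1}|.
\]

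It remains to evaluate these orbit sizes. The crucial point is that, although $G_{(e_1,\ldots,e_i)}\cdot e_{i+1}$ seems to depend on the whole initial segment $(e_1,\ldots,e_i)$, it collapses to the one-step orbit $G_{e_i}\cdot e_{i+1}$: since $G=U(\Delta,\iota,c_0)$ the action $(G,T)$ is (P)-closed, so we may apply Proposition~\ref{prop:propPbis} to the geodesic $\frp$ (with $k=i<n$) and project the resulting equality of orbits of edge-tuples onto the first coordinate. Finally, since $e_i\in ET^+$ and $t(e_i)=o(e_{i+1})$, the second displayed identity of Remark~\ref{rem:wW} gives $|G_{e_i}\cdot e_{i+1}|=\caW(\caL(e_i),\caL(e_{i+1}))=\caW(x_i,x_{i+1})$. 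Substituting into the product and recalling the definition~\eqref{eq:Wpath} of $\caW$ on a sequence, $|G_{e_1}:G_\frp|=\prod_{i=1}^{n-1}\caW(x_i,x_{i+1})=\caW(\xi)$, which together with the first paragraph completes the argument. I expect the only non-routine step to be the collapse $G_{(e_1,\ldots,e_i)}\cdot e_{i+1}=G_{e_i}\cdot e_{i+1}$, i.e.\ the place where (P)-closedness is genuinely used via Proposition~\ref{prop:propPbis}; everything else is index multiplicativity and orbit--stabiliser bookkeeping.
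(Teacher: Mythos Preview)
Your proposal is correct and follows essentially the same approach as the paper: decompose the index along the chain of pointwise stabilisers via orbit--stabiliser, collapse each orbit $G_{(e_1,\ldots,e_i)}\cdot e_{i+1}$ to $G_{e_i}\cdot e_{i+1}$ using Proposition~\ref{prop:propPbis}, and then read off the value $\caW(x_i,x_{i+1})$ from Remark~\ref{rem:wW} thanks to $e_i\in ET^+$. The only cosmetic difference is that the paper treats both index formulae in parallel, whereas you derive $|G_{e_1}:G_\frp|$ first and then multiply by $\omega(\pi(e_1))$.
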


\begin{proof}
    By the orbit stabiliser theorem,
    \begin{equation}\label{eq:orb1}
    \begin{split}
        |G_{o(e_1)}:G_\frp| & =|G_{o(e_1)}: G_{e_1}|\cdot \prod_{i=1}^{n-1}|G_{(e_1, \dots, e_i)}: G_{(e_1, \dots, e_{i+1})}|\\
                           & = |G_{o(e_1)}\cdot e_1|\cdot \prod_{i=1}^{n-1}|G_{(e_1, \dots, e_i)}\cdot e_{i+1}|.
    \end{split}
    \end{equation}
    Similarly, 
    \begin{equation}\label{eq:orb2}
        |G_{e_1} : G_\frp|=\prod_{i=1}^{n-1}|G_{(e_1, \dots, e_i)}\cdot e_{i+1}|.
    \end{equation}
 By~Lemma~\ref{lem:propPbis}, for every $1\leq i\leq n-1$ we have
   \begin{equation}\label{eq:orbEdge}
       G_{(e_1, \dots, e_i)}\cdot e_{i+1}=G_{e_i}\cdot e_{i+1}.
   \end{equation}
Combining~Remark~\ref{rem:wW}, \eqref{eq:orb1}, \eqref{eq:orb2} and~\eqref{eq:orbEdge}, we conclude the claim.
\end{proof}

\begin{cor}\label{cor:subsizP}
Let $G=U(\Delta,\iota,c_0)$ and denote by $v_0$ the root of $T=T(\Delta,\iota,c_0)$. Let $e\in ET\setminus ET^+$ with $t(e)=v_0$, and consider $t\in T$ such that $[v_0,t]$ is defined. Set $\frp_t:=e\cdot[v_0,t]=(e_1=e,e_2,\ldots, e_n)$, $\caL(\bar{e})=x_1$ and $\caL(\frp_t)=(\iota(x_1),x_2,\ldots, x_n)$. Then
    \begin{equation*}
            |G_e:G_{\frp_t}|=
            \left\{
            \begin{array}{cl}
                1, &  \text{if }n=1;\\
                \caWr(x_1,x_2)\cdot\caW(x_2,\ldots, x_n), & \text{otherwise}.
            \end{array}
            \right.
        \end{equation*}
\end{cor}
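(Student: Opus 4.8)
The plan is to imitate the proof of Proposition~\ref{prop:subsizeP}, the only structural difference being that the initial edge $e_1=e$ of $\frp_t$ lies in $ET\setminus ET^+$, so that the first factor of the telescoping product is governed by $\caWr$ rather than by $\caW$. First I would dispose of the case $n=1$: here $\frp_t=(e)$, hence $G_{\frp_t}=G_e$ and $|G_e:G_{\frp_t}|=1$, which agrees with the stated formula. So assume $n\geq 2$ from now on. I would then note that $\frp_t=(e_1,\ldots,e_n)$ is reduced, hence a geodesic in $T$: the edges $e_2,\ldots,e_n$ of $[v_0,t]$ all lie in $ET^+$ by Remark~\ref{rem:geoE+}, and in the situations where the corollary is applied (cf.\ Proposition~\ref{prop:cosPcl}) the geodesic $[v_0,t]$ does not begin with $\bar e$, so $e_2\neq\bar e_1$. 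This is the only point of the argument that is not pure bookkeeping.

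With $\frp_t$ a geodesic of length $n\geq 2$, the orbit--stabiliser theorem yields, exactly as in~\eqref{eq:orb2},
\[
|G_e:G_{\frp_t}| \;=\; \prod_{i=1}^{n-1}|G_{(e_1,\ldots,e_i)}:G_{(e_1,\ldots,e_{i+1})}| \;=\; \prod_{i=1}^{n-1}|G_{(e_1,\ldots,e_i)}\cdot e_{i+1}|,
\]
and Proposition~\ref{prop:propPbis}, applied precisely as in~\eqref{eq:orbEdge}, allows me to replace each factor by $|G_{e_i}\cdot e_{i+1}|$.

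It remains to evaluate these orbit sizes via Remark~\ref{rem:wW}. For $i=1$ we have $e_1=e\in ET\setminus ET^+$ with $t(e_1)=v_0=o(e_2)$, so the second case of Remark~\ref{rem:wW} gives $|G_{e_1}\cdot e_2|=\caWr(\caL(\bar e),\caL(e_2))=\caWr(x_1,x_2)$. For $2\leq i\leq n-1$ we have $e_i\in ET^+$ and $t(e_i)=o(e_{i+1})$, so the first case of Remark~\ref{rem:wW} gives $|G_{e_i}\cdot e_{i+1}|=\caW(\caL(e_i),\caL(e_{i+1}))=\caW(x_i,x_{i+1})$. Multiplying and using $\caW(x_2,\ldots,x_n)=\prod_{i=2}^{n-1}\caW(x_i,x_{i+1})$ from~\eqref{eq:Wpath} (with $\caW(x_2)=1$ when $n=2$), I obtain $|G_e:G_{\frp_t}|=\caWr(x_1,x_2)\cdot\caW(x_2,\ldots,x_n)$, as claimed. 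I do not expect a genuine obstacle: the computation is a transcription of the proof of Proposition~\ref{prop:subsizeP}, the appearance of $\caWr$ in the first factor being forced by $e$ being a negative edge, and the only thing to be careful about is ensuring that $\frp_t$ is a geodesic so that Proposition~\ref{prop:propPbis} is applicable.
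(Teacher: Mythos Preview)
Your proposal is correct and follows essentially the same approach as the paper: telescoping via the orbit--stabiliser theorem, reducing each factor $|G_{(e_1,\ldots,e_i)}\cdot e_{i+1}|$ to $|G_{e_i}\cdot e_{i+1}|$ via Proposition~\ref{prop:propPbis}, and evaluating these orbit sizes through Remark~\ref{rem:wW} (with the $\caWr$ case for the first factor since $e_1\in ET\setminus ET^+$, and the $\caW$ case thereafter since $e_2,\ldots,e_n\in ET^+$). Your extra care about the $n=1$ case and about $\frp_t$ being a geodesic is fine; the paper leaves both implicit.
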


\begin{proof}
   First, note that
    \begin{equation*}
        |G_{e}:G_{\frp_t}|=|G_e:G_{(e_1,e_2)}|\cdot |G_{(e_1,e_2)}:G_{\frp_t}|.
    \end{equation*}
    By Remark~\ref{rem:wW}, we deduce that
    $$|G_e:G_{(e_1,e_2)}|=|G_{e_1}\cdot e_2|=\caWr(x_1,x_2).$$
    From Remark~\ref{rem:geoE+} observe that $e_2,\ldots, e_n\in ET^+$. Hence, Lemma~\ref{lem:propPbis} and Remark~\ref{rem:wW} yield 
    \begin{equation*}
        \begin{split}
            |G_{(e_1,e_2)}:G_{\frp_t}| & =\prod_{i=2}^{n-1}|G_{(e_1,\ldots, e_i)}:G_{(e_1,\ldots, e_{i+1})}|=\prod_{i=2}^{n-1}|G_{e_i}\cdot e_{i+1}|\\
            & =\prod_{i=2}^{n-1}\caW(x_i,x_{i+1})=\caW(x_2,\ldots, x_n).\qedhere
        \end{split}
    \end{equation*}
\end{proof}

\begin{cor}\label{cor:Wgeod}
    Let $G=U(\Delta,\iota,c_0)$, $T=T(\Delta,\iota,c_0)$ and assume that $|X_a|\geq 2$ for every $a\in E\Gamma$. Then, for each geodesic $\frp=(e_1,\ldots, e_n)$ in~$T$ of length $n\geq 1$, there is $g\in G_{o(e_1)}$ such that $g\cdot e_i\in ET^+$ for every $i\leq n$ and 
    \begin{equation*}
        |G_{e_1}:G_{\frp}|=\caW(\caL(g\cdot \frp)).
    \end{equation*}
\end{cor}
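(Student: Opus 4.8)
The plan is to move the geodesic $\frp$ by a suitable element of $G_{o(e_1)}$ so that all of its edges become positive, and then to invoke Proposition~\ref{prop:subsizeP}. Write $v:=o(e_1)$. The key reduction is that it suffices to produce $g\in G_v$ with $g\cdot e_1\in ET^+$: indeed $g\cdot\frp=(g\cdot e_1,\ldots,g\cdot e_n)$ is again a geodesic, and a straightforward induction using the second part of Remark~\ref{rem:geoE+} shows that whenever $g\cdot e_i\in ET^+$ one has $g\cdot e_{i+1}\in o^{-1}(t(g\cdot e_i))\setminus\{\overline{g\cdot e_i}\}$ (since $g\cdot\frp$ is reduced), hence $g\cdot e_{i+1}\in ET^+$ as well; so $g\cdot e_i\in ET^+$ for every $i\leq n$.

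To produce such a $g$, I would first record that $o^{-1}(v)$ contains at most one edge outside $ET^+$: none at all if $v=v_0$ (as noted in the proof of Lemma~\ref{lem:geodTD}), and otherwise exactly the reverse of the last edge of the geodesic $[v_0,v]$ (as observed in Remark~\ref{rem:geodEbar}). On the other hand, the $G_v$-orbit of $e_1$ is exactly $\{e\in o^{-1}(v)\mid \pi(e)=\pi(e_1)\}$ — for if $e'\in o^{-1}(v)$ has $\pi(e')=\pi(e_1)$, any $h\in G$ carrying $e_1$ to $e'$ must fix their common origin $v$ — and this set has cardinality $\omega(\pi(e_1))=|X_{\pi(e_1)}|\geq 2$ by hypothesis. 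Therefore the $G_v$-orbit of $e_1$ meets $ET^+$, and we may choose $g\in G_v$ with $g\cdot e_1\in ET^+$.

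Finally, $g\cdot\frp=(g\cdot e_1,\ldots,g\cdot e_n)$ is a geodesic all of whose edges lie in $ET^+$, so Proposition~\ref{prop:subsizeP} applies and yields $|G_{g\cdot e_1}:G_{g\cdot\frp}|=\caW(\caL(g\cdot\frp))$; since $gG_{e_1}g^{-1}=G_{g\cdot e_1}$ and $gG_\frp g^{-1}=G_{g\cdot\frp}$, the left-hand side equals $|G_{e_1}:G_\frp|$, which is the claim. The only step that is not pure bookkeeping is the counting observation in the second paragraph: that the orbit $G_v\cdot e_1$ contains at least two elements while $o^{-1}(v)$ misses $ET^+$ in at most one edge, so there is always room to push $e_1$ into $ET^+$; everything else is formal manipulation of the standard $\Delta$-tree together with the results already established.
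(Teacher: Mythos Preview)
Your argument is correct and follows essentially the same route as the paper: reduce to moving $e_1$ into $ET^+$ by an element of $G_{o(e_1)}$, then propagate positivity along the geodesic via Remark~\ref{rem:geoE+}, and finish with Proposition~\ref{prop:subsizeP} together with conjugation. The only cosmetic difference is that the paper explicitly constructs a positive edge $f\in o^{-1}(o(e_1))$ with $\pi(f)=\pi(e_1)$ (by extending the reduced path $(x_1,\ldots,x_r)$ underlying $[v_0,o(e_1)]$ by some $y\in X_{\pi(e_1)}\setminus\{\iota(x_r)\}$) and then invokes transitivity of $G_{o(e_1)}$ on the $\pi$-fibre to find $g$, whereas you reach the same conclusion by the counting observation that $|G_{o(e_1)}\cdot e_1|\geq 2$ while at most one edge of $o^{-1}(o(e_1))$ lies outside $ET^+$.
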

\begin{proof}
    If $e_1\in ET^+$, by Remark~\ref{rem:geoE+} we may take $g=1$ and Proposition~\ref{prop:subsizeP} applies. Assume now that $e_1\in ET\setminus ET^+$ and let $(f_1,\ldots, f_r=\bar{e}_1)$ be the geodesic from the root $v_0$ of $T$ to $\bar{e}_1\in ET^+$. Set $\caL(f_i)=x_i$ for every $1\leq i\leq r$. In particular, $\caL(\bar{e}_1)=x_r$ and then $\caL(e_1)=\iota(x_r)$ (cf.~Remark~\ref{rem:Linv}).
    Since $|X_{\pi(e_1)}|\geq 2$, there is $y\in X_{\pi(e_1)}\setminus \{\iota(x_r)\}$ such that $(x_1,\ldots, x_r,y)$ is a reduced path in~$(\Delta,\iota)$. Then there is $f\in ET^+$ such that $o(f)=o(e_1)=t(\bar{e}_1)$, and $t(f)$ corresponds to $(x_1,\ldots, x_r,y)$ (cf.~\eqref{eq:VTD}). Since both $\caL(f)=y$ and $\caL(e_1)$ belong to $X_{\pi(e_1)}$, there is $g\in G_{o(e_1)}$ such that $f=g\cdot e_1$. By Remark~\ref{rem:geoE+}, every edge of $g\cdot \frp=(g\cdot e_1,\ldots, g\cdot e_n)$ belongs to $ET^+$. Moreover,
    \begin{equation*}
      |G_{e_1}\cdot \frp|=|gG_{e_1}\cdot \frp|=|G_{g\cdot e_1} g\cdot \frp|
    \end{equation*}
    and Proposition~\ref{prop:subsizeP} applies.
\end{proof}

\section{Double-coset zeta functions for groups acting on trees}\label{s:DCzeta}
This section deals with the convergence and explicit formulae for the relevant double-coset zeta functions for groups acting on trees. We introduce a family of properties (labelled with positive integers) on group actions on trees (cf.~Section~\ref{sus:propast}). 
In Proposition~\ref{prop:conv*}, we exploit that one of these properties is satisfied to deduce that the group has polynomial double-coset growth with respect to vertex or edge stabilisers.
The latter result can be refined to a characterisation in case that the action is weakly locally $\infty$-transitive (cf.~Theorem~\ref{thm:convWLIT}) or (P)-closed (cf.~Theorem~\ref{thm:convP}). In these two cases, we also provide explicit formulae for the relevant double-coset zeta functions in terms of the local data introduced in Section~\ref{s:suborb} (cf.~Theorems~\ref{thm:meroWLIT}~and~\ref{thm:meroP}).

\smallskip

We remind that we are following Convention~\ref{conv:action} and Convention~\ref{conv:lad}.

\subsection{From double-cosets to cosets}\label{sus:coset}
Let $G$ be a group and $H,K\leq G$ be subgroups such that $|HgK/K|<\infty$ for every $g\in G$. For each $n\geq 1$, consider $a_n(G,H,K)$ as in \eqref{eq:an} and define
\begin{equation}\label{eq:bn}
    b_n(G,H,K):=|\{gK\in G/K\,:\, |HgK/K|=n\}|.
\end{equation} 
We claim that $b_n(G,H,K)<\infty$ if, and only if, $a_n(G,H,K)<\infty$. Moreover, if $b_n(G,H,K)<\infty$ then
\begin{equation}\label{eq:bnan}
    b_n(G,H,K)=n\cdot a_n(G,H,K).
\end{equation}
To see this, consider the map
$\varphi\colon gK\in G/K\longmapsto HgK\in H\backslash G/K$. For every $g\in G$ we have $\varphi^{-1}(HgK)=\{hgK\in G/K\mid h\in H\}$ and $|\varphi^{-1}(HgK)|=|H:H\cap gKg^{-1}|=|HgK/K|$. 

In particular, if $(G,H,K)$ has polynomial double-coset growth then  
\begin{equation}\label{eq:zetas+1}
    \zeta_{G,H,K}(s)=\sum_{n=1}^\infty b_n(G,H,K)\cdot n^{-s-1}=\sum_{gK\in G/K}|HgK/K|^{-s-1}.
\end{equation}

\begin{rem}\label{rem:redUD}
    Let $\Delta$ be a local action diagram and $\mathbb{T}=(T,\pi,\caL)$ be $\Delta$-tree.
    Consider an inversion $\iota$ in $\Delta$ and denote by $(T(\Delta,\iota,c_0),\pi_0,\caL_0)$ the standard $\Delta$-tree associated to $\iota$ and some $c_0\in V\Gamma$. By Theorem~\ref{thm:UD}(i),
    there is a graph isomorphism $\phi\colon T\to T(\Delta,\iota,c_0)$ such that $\pi=\pi_0\circ \phi$ and $U(\Delta,\iota,c_0)=\phi U(\Delta,\mathbb{T})\phi^{-1}$. Set $G=U(\Delta,\mathbb{T})$ and $H=U(\Delta,\iota,c_0)$. Then, for all $t\in T$, the following map is bijective:
\begin{equation*}
    G/G_t\longrightarrow H/H_{\phi(t)}, \quad gG_t\longmapsto \phi g\phi^{-1} H_{\phi(t)}.
\end{equation*}
Moreover, for all $t_1,t_2\in T$ and $g\in G$, provided $h:=\phi g\phi^{-1}$ we have
\begin{equation*}
    |G_{t_1}: G_{t_1}\cap gG_{t_2}g^{-1}|=|H_{\phi(t_1)}:H_{\phi(t_1)}\cap hH_{\phi(t_2)}h^{-1}|.
\end{equation*}
Hence,
\begin{equation*}
a_n(G,G_{t_1},G_{t_2})=a_n(H,H_{\phi(t_1)},H_{\phi(t_2)}),\quad\forall\,n\geq 1
\end{equation*}
as well as
\begin{equation*}
    \zeta_{G,G_{t_1},G_{t_2}}(s)=\zeta_{H,H_{\phi(t_1)},H_{\phi(t_2)}}(s)
\end{equation*}
when the series before are defined.

In particular, by Theorem~\ref{thm:UD}(i), given $v\in VT$ with $\pi(v)=:c_0$ one may take~$\phi$ so that~$\phi(v)$ is the root $v_0$ of $T(\Delta,\iota,c_0)$. Then, for all $t\in T$,
    \begin{equation*}
    \begin{split}
         a_n(G,G_v,G_t) & =a_n(H,H_{v_0}, H_{\phi(t)}),\quad\forall\,n\geq 1;\quad\text{and}\\
         \zeta_{G,G_v,G_t}(s) & =\zeta_{H,H_{v_0}, H_{\phi(t)}}(s).
    \end{split}
    \end{equation*}
  Moreover, for all $e\in o^{-1}(v)$, we have $\phi(e)\in o^{-1}(v_0)$ and then, for all $t\in T$,
     \begin{equation*}
     \begin{split}
     a_n(G,G_e,G_t) & =a_n(H,H_{\phi(e)}, H_{\phi(t)}),\quad\forall\,n\geq 1;\quad\text{and}\\
        \zeta_{G,G_e,G_t}(s) & =\zeta_{H,H_{\phi(e)}, H_{\phi(t)}}(s).
        \end{split}
    \end{equation*}
\end{rem}

\subsection{The property ($\ast_k$)}\label{sus:propast}
Let $k\geq 1$. A group action on a tree $(G,T)$ has \emph{property~$(\ast_k)$} if,
for every geodesic $(e_1,\ldots, e_{l+k})$ in~$T$ with $l\geq 1$, we have
\begin{equation}\label{eq:prop*}
    |G_{(e_1,\ldots, e_l)}\cdot (e_{l+1},\ldots, e_{l+k})|\geq 2.
\end{equation}
One checks that property ($\ast_k$) implies property ($\ast_{k+1}$) for every $k\geq 1$.

\begin{lem}\label{lem:prop*nondisc}
    Let $T$ be a tree without leaves and $G\leq \Aut(T)$ be a subgroup with the permutation topology induced by~$\Aut(T)$. If $(G,T)$ has property~($\ast_k$) for some $k\geq 1$, then $G$ is non-discrete. 
\end{lem}
    
    \begin{proof} 
    Assume that $(G,T)$ has property~$(\ast_k)$. If we prove that all vertex-stabilisers in $G$ are infinite, then~\cite[Lemma~2.1]{bew:simp} yields the claim.
    Let $v\in VT$. Since $T$ has no leaves, there is a ray $(e_i)_{i\in \Z_{\geq 1}}$ in~$T$ with $o(e_1)=v$. For every $h\in \Z_{\geq 1}$, set $\frp_h=(e_i)_{1\leq i\leq hk}$ and note that $|G_{\frp_{h}}\cdot (e_{hk+1}, \ldots, e_{hk+k})|=|G_{\frp_{h}}:G_{\frp_{h+1}}|\geq 2$. Therefore, 
    $$|G_v:G_{\frp_{h}}|=|G_v:G_{\frp_1}|\cdot\prod_{i=1}^{h-1}|G_{\frp_i}:G_{\frp_{i+1}}|\geq 2^{h-1}, \quad \forall\,h\geq 1$$
    and $G_{v}$ is infinite.
\end{proof}

\begin{lem}\label{lem:opends}
    Let~$T$ be a tree and $\xi$ be an end of~$T$. If there is $e\in ET$ such that $G_e\subseteq G_\xi$, then $(G,T)$ does not have property ($\ast_k$) for any~$k\geq 1$. 
\end{lem}

 \begin{proof}
 Let $r=(e_i)_{i\in\Z_{\geq 1}}$ be a ray in $\xi$ with $e_1\in \{e,\bar{e}\}$. Note that such a ray exists: indeed, we can always find a representative of~$\xi$ which is completely contained either in~$T_{\geq e}$ or in~$T_{\geq \bar{e}}$, and~$r$ is obtained by extending it to a ray starting from either~$e$ or~$\bar{e}$. 

Since $G_e\subseteq G_{\xi}$, for every $g\in G_e$ there is $i\geq 1$ such that $g\cdot e_j$ is an edge of~$r$ for every $j\geq i$. Since $T$ is uniquely geodesic, we conclude that every $g\in G_e$ fixes every edge of~$r$.
\end{proof}

\begin{prop}\label{prop:prop*W}
    Let $(G,T)$ be a weakly locally $\infty$-transitive action on a locally finite tree. Denote by $\omega$ the standard edge weight on $\Gamma=G\backslash T$, consider $\Ne=\Ne^\omega$ as in Definition~\ref{defn:N}, and let $k\geq 1$. Then $(G,T)$ has property~($\ast_k$) if and only if $\Ne(\rho)\geq 2$ for every path $\rho$ in~$\Gamma$ of length $k+1$ that can be lifted to a geodesic in~$T$.
\end{prop}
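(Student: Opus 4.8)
The plan is to reduce the whole statement to one combinatorial identity. Fix a geodesic $(e_1,\dots,e_{l+k})$ in $T$ with $l\ge 1$, set $v=o(e_1)$ and $\frp=\pi(e_1,\dots,e_{l+k})$, and let $\rho_l:=\pi(e_l,e_{l+1},\dots,e_{l+k})$, a path in $\Gamma$ of length $k+1$ that lifts to the geodesic $(e_l,\dots,e_{l+k})$ of $T$. I claim that $|G_{(e_1,\dots,e_l)}\cdot(e_{l+1},\dots,e_{l+k})|=\Ne^\omega(\rho_l)$. Granting this, property ($\ast_k$) --- which asks exactly that the left-hand side be $\ge 2$ for every such geodesic --- becomes equivalent to $\Ne^\omega(\rho_l)\ge 2$ for all of them; and since the $\rho_l$ produced this way are precisely the length-$(k+1)$ paths of $\Gamma$ that lift to a geodesic of $T$ (each such path already occurs for $l=1$), this is exactly the condition in the statement.

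To prove the identity I would exhibit a bijection between the orbit $O:=G_{(e_1,\dots,e_l)}\cdot(e_{l+1},\dots,e_{l+k})$ and the set $L:=\{\widetilde\rho\in\Geod_T(e_l\to T):\pi(\widetilde\rho)=\rho_l\}$. Any $g\in G_{(e_1,\dots,e_l)}$ fixes $e_l$, hence fixes $t(e_l)=o(e_{l+1})$, so $g\cdot(e_l,\dots,e_{l+k})=(e_l,g\cdot e_{l+1},\dots,g\cdot e_{l+k})$ is again a geodesic starting at $e_l$ with $\pi$-image $\rho_l$; the assignment $g\cdot(e_{l+1},\dots,e_{l+k})\mapsto(e_l,g\cdot e_{l+1},\dots,g\cdot e_{l+k})$ thus maps $O$ into $L$ and is clearly injective. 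Surjectivity is where weak local $\infty$-transitivity enters: given $\widetilde\rho=(e_l,f_{l+1},\dots,f_{l+k})\in L$, the concatenation $(e_1,\dots,e_l,f_{l+1},\dots,f_{l+k})$ is a reduced path, hence a geodesic of $T$ from $v$, with $\pi$-image $\frp$ and with the same first $l$ edges as $(e_1,\dots,e_{l+k})$; Remark~\ref{rem:WLIT}(ii), applied at $v$ with $i=l$, then provides $h\in G_{(e_1,\dots,e_l)}$ with $h\cdot e_{l+j}=f_{l+j}$ for $1\le j\le k$, so $(f_{l+1},\dots,f_{l+k})\in O$ maps to $\widetilde\rho$. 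Finally Remark~\ref{rem:Nint} --- whose hypotheses hold because local finiteness of $T$ makes $\omega$ finite-valued --- identifies $|L|$ with $\Ne^\omega(\rho_l)$, proving the identity.

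The two implications then follow at once. For ($\Leftarrow$): given any geodesic $(e_1,\dots,e_{l+k})$ of $T$ with $l\ge1$, the subpath $(e_l,\dots,e_{l+k})$ is itself a geodesic, so $\rho_l$ is a length-$(k+1)$ path of $\Gamma$ lifting to a geodesic of $T$; the hypothesis gives $\Ne^\omega(\rho_l)\ge 2$, and the identity then yields $|G_{(e_1,\dots,e_l)}\cdot(e_{l+1},\dots,e_{l+k})|\ge 2$, i.e. ($\ast_k$). For ($\Rightarrow$): given a length-$(k+1)$ path $\rho$ of $\Gamma$ and a geodesic $(e_1,\dots,e_{k+1})$ of $T$ lifting it, apply ($\ast_k$) to this geodesic with $l=1$ and invoke the identity with $l=1$ (so $\rho_1=\rho$) to get $\Ne^\omega(\rho)=|G_{e_1}\cdot(e_2,\dots,e_{k+1})|\ge 2$.

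The one substantive point is the surjectivity of the bijection above --- that weak local $\infty$-transitivity forces the orbit $O$ to realize the full combinatorial count of lifts of $\rho_l$ --- and this is a direct application of Remark~\ref{rem:WLIT}(ii) once one has checked that the concatenated path is reduced and projects to $\frp$. Everything else is routine bookkeeping with indices, so I do not expect any real obstacle.
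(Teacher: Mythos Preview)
Your argument is correct and follows essentially the same route as the paper: both reduce to the identity $|G_{(e_1,\dots,e_l)}\cdot(e_{l+1},\dots,e_{l+k})|=\Ne(\pi(e_l,\dots,e_{l+k}))$, established via Remark~\ref{rem:WLIT}(ii) and Remark~\ref{rem:Nint}, and then derive both implications (the paper using $l=1$ for the ``only if'' direction just as you do). The only difference is presentational---you spell out the bijection between the orbit and the set of lifts explicitly, whereas the paper compresses it into a chain of equalities.
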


\begin{proof}
    Let $k\geq 1$ and $\frp=(e_1,\ldots, e_{l+k})$ be a geodesic in~$T$ with $l\geq 1$. Denote by $\pi\colon T\to\Gamma$ the quotient map of $(G,T)$ and let $\pi(e_i)=a_i$ for every $1\leq i\leq l+k$. By Remark~\ref{rem:WLIT}(ii), $G_{(e_1,\ldots,e_l)}$ acts transitively on 
    $$\{\frq=(f_1,\ldots, f_{l+k})\in \Geod_T(e_1\to T)\,:\,\pi(\frq)=\pi(\frp)\text{ and }\forall\,i\leq l,\,f_i=e_i\}.$$ 
    Hence, by Remark~\ref{rem:Nint},
    \begin{gather}\label{eq:prop*W}
    \begin{split}
       & |G_{(e_1,\ldots,e_l)}\cdot (e_{l+1},\ldots, e_{l+k})|=\\
       & =|G_{(e_1,\ldots, e_l)}\cdot\frp|=\\
       & =\Big|\Big\{\frq=(f_1,\ldots,f_{l+k})\in \Geod_T(e_1\to T): \pi(\frq)=\pi(\frp);\forall\,i\leq l,\,f_i=e_i\Big\}\Big|\\
       & =\prod_{i=l}^{l+k-1}\Ne(a_i,a_{i+1})=\Ne(a_l,\ldots, a_{l+k}).
    \end{split}
    \end{gather}
    This yields the "if" part of the statement. For the "only if" part, let $\rho=(a_1,\ldots, a_{k+1})$ be an arbitrary path in~$\Gamma$ that can be lifted to a geodesic $\frp=(e_1,\ldots, e_{k+1})$ in~$T$. Arguing as in~\eqref{eq:prop*W}, we conclude that $|G_{e_1}\cdot \frp|=\Ne(\rho)\geq 2$.
\end{proof}

\begin{cor}\label{cor:*WLIT}
    Let $(G,T)$ be a weakly locally $\infty$-transitive group action on a locally finite tree with quotient graph $\Gamma$ and standard edge weight $\omega$. Assume that $\omega(E\Gamma)\subseteq \Z_{\geq 2}$. Then the following are equivalent:
    \begin{itemize}
        \item[(i)] $(G,T)$ has property~$(\ast_k)$ for some $k\geq 2$;
        \item[(ii)] $(G,T)$ has property $(\ast_2)$;
        \item[(iii)] $\omega(a)\geq 3$ or $\omega(\bar{a})\geq 3$ for every $a\in E\Gamma$.
    \end{itemize}
    Moreover, $(G,T)$ has property $(\ast_1)$ if and only if $\omega(a)\geq 3$ for every $a\in E\Gamma$.
\end{cor}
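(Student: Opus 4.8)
\noindent The plan is to reduce the whole statement, via Proposition~\ref{prop:prop*W}, to an elementary bookkeeping exercise about the function $\Ne$ on paths of the quotient graph. First I would record the following simplification: since $\omega(E\Gamma)\subseteq\Z_{\geq 2}$, Remark~\ref{rem:lift} guarantees that \emph{every} path in~$\Gamma$ of positive length lifts to a geodesic in~$T$, so the liftability restriction in Proposition~\ref{prop:prop*W} is vacuous. Hence, for each $k\geq 1$, the action $(G,T)$ has property~$(\ast_k)$ if and only if $\Ne(\rho)\geq 2$ for every path $\rho$ in~$\Gamma$ of length $k+1$. I would also isolate the arithmetic fact used throughout: under $\omega\geq 2$, each factor $\omega(a_{i+1})-\mathbbm{1}_{\{\bar a_i\}}(a_{i+1})$ occurring in~\eqref{eq:Ne} is $\geq 1$, and it equals $1$ exactly when $a_{i+1}=\bar a_i$ and $\omega(a_{i+1})=2$.

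With these reductions in place, I would establish the three equivalences cyclically as (iii)$\Rightarrow$(ii)$\Rightarrow$(i)$\Rightarrow$(iii). For (iii)$\Rightarrow$(ii), take an arbitrary length-$3$ path $\rho=(a_1,a_2,a_3)$; then $\Ne(\rho)$ is a product of two factors, and if $\Ne(\rho)=1$ both must equal $1$, which forces $a_2=\bar a_1$, $a_3=\bar a_2=a_1$ and $\omega(a_1)=\omega(\bar a_1)=2$, contradicting~(iii). Thus $\Ne(\rho)\geq 2$ always, i.e.\ $(\ast_2)$ holds. The implication (ii)$\Rightarrow$(i) is trivial. For (i)$\Rightarrow$(iii) I would argue by contraposition: if $\omega(a)=\omega(\bar a)=2$ for some $a\in E\Gamma$, then for every $k\geq 2$ the alternating path $\rho=(a,\bar a,a,\bar a,\dots)$ of length $k+1$ has all of its $k$ factors in~\eqref{eq:Ne} equal to $1$, so $\Ne(\rho)=1$ and $(\ast_k)$ fails; since $k\geq 2$ is arbitrary, (i) fails.

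For the final assertion I would run the same reduction with $k=1$: property~$(\ast_1)$ holds iff $\Ne(\rho)\geq 2$ for every length-$2$ path $\rho=(a_1,a_2)$, i.e.\ iff $\omega(a_2)-\mathbbm{1}_{\{\bar a_1\}}(a_2)\geq 2$ for all such $\rho$. If $\omega(E\Gamma)\subseteq\Z_{\geq 3}$ this is immediate; conversely, applying it to $\rho=(\bar b,b)$ for an arbitrary $b\in E\Gamma$ yields $\omega(b)-1\geq 2$, hence $\omega(b)\geq 3$.

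I do not expect a real obstacle here, since everything is essentially mechanical once Proposition~\ref{prop:prop*W} is available. The one point deserving a little care is (i)$\Rightarrow$(iii): because $(\ast_k)$ becomes \emph{weaker} as $k$ grows, one cannot descend from $(\ast_k)$ to $(\ast_2)$ by monotonicity alone, so one genuinely has to produce, for each $k\geq 2$ simultaneously, a path of length $k+1$ with $\Ne=1$ — which the alternating back-and-forth path on a single weight-$2$ edge supplies.
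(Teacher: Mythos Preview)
Your proposal is correct and follows essentially the same route as the paper: reduce via Proposition~\ref{prop:prop*W} (using $\omega\geq 2$ to drop the liftability hypothesis) and then observe that $\Ne(a_1,\dots,a_{k+1})=1$ forces $a_{i+1}=\bar a_i$ with $\omega(a_{i+1})=2$ for all $i$. The only difference is organizational: the paper shows directly that for each fixed $k\geq 2$ condition~(iii) is equivalent to $\Ne(\rho)\geq 2$ for all length-$(k+1)$ paths (hence all three conditions coincide at once), whereas you run the cycle (iii)$\Rightarrow$(ii)$\Rightarrow$(i)$\Rightarrow$(iii); both arguments are equally short.
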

By Remark~\ref{rem:lift}, the hypothesis that $\omega(E\Gamma)\subseteq \Z_{\geq 2}$ guarantees that all paths in $\Gamma$ can be lifted to a geodesic. 
\begin{proof}
Given $a,b\in E\Gamma$ with $t(a)=o(b)$, note that
\begin{equation}\label{eq:Ne=1}
\Ne(a,b)=\omega(b)-\mathbbm{1}_{\{\bar{a}\}}(b)=1\,\Longleftrightarrow\,b=\bar{a}\text{ and }\omega(b)=2.
\end{equation}
The equivalences (i)$\Leftrightarrow$(iii) and (ii)$\Leftrightarrow$(iii) now follow from~\eqref{eq:Ne=1} and
Proposition~\ref{prop:prop*W}. 
\end{proof}

\begin{prop}\label{prop:prop*P}
    Let $(G,T)$ be a (P)-closed action on a tree with associated local action diagram $\Delta$, and let $\iota$ be an inversion on $\Delta$. Assume that the standard weight $\caW$ on $\Delta$ takes values in~$\Z_{\geq 1}$, and that $|X_a|\geq 2$ for every $a\in E\Gamma$. Let also $k\geq 1$. Then $(G,T)$ has property ($\ast_k$) if, and only if, every reduced path $\xi$ in~$(\Delta,\iota)$ of length $k+1$ has $\caW(\xi)\geq 2$.
\end{prop}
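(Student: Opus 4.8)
The plan is to adapt the argument behind Proposition~\ref{prop:prop*W} to the local action diagram setting, replacing the path counting in $\Gamma$ by the computation of the standard weight $\caW$ along reduced paths in $(\Delta,\iota)$ developed in Section~\ref{sus:subP}. Two preliminaries should be recorded first. A reduced path $\xi$ in $(\Delta,\iota)$ is in particular a path in $\Delta$, so $\caW(\xi)\neq 0$ by Remark~\ref{rem:Wint} and it is finite by the standing assumption that $\caW$ takes values in $\Z_{\geq 1}$; hence $\caW(\xi)$ is always a positive integer and the substance of the statement is the strict inequality. Next I would reformulate property $(\ast_k)$ for any (P)-closed action as: it holds if, and only if, $|G_{e_1}:G_\frp|\geq 2$ for every geodesic $\frp=(e_1,\ldots,e_{k+1})$ in $T$ of length exactly $k+1$. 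This comes out of Proposition~\ref{prop:propPbis}, which rewrites $G_{(e_1,\ldots,e_l)}\cdot(e_{l+1},\ldots,e_{l+k})$ as the $G_{e_l}$-orbit $G_{e_l}\cdot(e_{l+1},\ldots,e_{l+k})$, of cardinality $|G_{e_l}:G_{(e_l,\ldots,e_{l+k})}|$ by orbit--stabiliser; as $(e_1,\ldots,e_{l+k})$ runs over geodesics with $l\geq 1$, the sub-geodesics $(e_l,\ldots,e_{l+k})$ run over all geodesics of length $k+1$ (take $l=1$), which gives the equivalence.

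Having recorded this, by Theorem~\ref{thm:UD}(iii) and the invariance of $(\ast_k)$ under permutational isomorphism I may assume $G=U(\Delta,\iota,c_0)$ and $T=T(\Delta,\iota,c_0)$, with the base point $c_0\in V\Gamma$ still at my disposal. For the "if" direction, fix a geodesic $\frp=(e_1,\ldots,e_{k+1})$ of length $k+1$. Since $|X_a|\geq 2$ for every edge $a$, Corollary~\ref{cor:Wgeod} produces $g\in G_{o(e_1)}$ with $g\cdot e_i\in ET^+$ for all $i$ and $|G_{e_1}:G_\frp|=\caW(\caL(g\cdot\frp))$; as $g\cdot\frp$ is a geodesic of length $k+1$ with all edges in $ET^+$, its label sequence $\caL(g\cdot\frp)$ is a reduced path in $(\Delta,\iota)$ of length $k+1$ by Remark~\ref{rem:TDtree}, so the hypothesis forces $|G_{e_1}:G_\frp|\geq 2$, and the reformulation of $(\ast_k)$ concludes.

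The "only if" direction is the delicate one: from $(\ast_k)$ one must recover $\caW(\xi)\geq 2$ for an \emph{arbitrary} reduced path $\xi=(x_1,\ldots,x_{k+1})$ of length $k+1$, and the real difficulty is to realise $\xi$ as the label sequence of an honest geodesic of $T$ lying in $ET^+$. My plan is to exploit the freedom in $c_0$: writing $\frp_\xi=(a_1,\ldots,a_{k+1})$ for the underlying path in $\Gamma$, take $c_0:=o(a_1)$. Then $x_1\in X_{c_0}$, every prefix $(x_1,\ldots,x_i)$ is a vertex of $T(\Delta,\iota,c_0)$, $\xi$ itself is a vertex, and the geodesic $\frp:=[O_{c_0},\xi]$ from the root to $\xi$ traverses positive edges with labels $x_1,\ldots,x_{k+1}$. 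Proposition~\ref{prop:subsizeP} then gives $|G_{e_1}:G_\frp|=\caW(\xi)$, and the reformulation of $(\ast_k)$ forces $\caW(\xi)\geq 2$. Placing the root at $o(a_1)$ is precisely what sidesteps the obstacle of lifting $\xi$ into $ET^+$ in general, where $\Gamma$ need not be a $1$-segment and the inversion $\iota$ need not be injective; with this choice $\xi$ literally is a vertex of the standard $\Delta$-tree and the required geodesic is just $[O_{c_0},\xi]$.
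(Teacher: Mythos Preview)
Your proof is correct and follows essentially the same route as the paper: reduce via Proposition~\ref{prop:propPbis} to geodesics of length $k+1$, then use Corollary~\ref{cor:Wgeod} and Proposition~\ref{prop:subsizeP} to identify $|G_{e_1}:G_\frp|$ with $\caW(\caL(\,\cdot\,))$. Your explicit choice of $c_0=o(a_1)$ in the ``only if'' direction is in fact a welcome clarification---the paper simply asserts the existence of $e\in ET^+$ with $\caL(e)=x_1$, which is not automatic for an arbitrary base point when $\iota$ fails to be injective, whereas with your choice the edge $(O_{c_0},(x_1))$ witnesses it directly.
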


\begin{proof}
    Let $k\geq 1$ and consider a geodesic $\frp=(e_1,\ldots, e_{l+k})$ in~$T$ with $l\geq 1$. By Lemma~\ref{lem:propPbis} and Corollary~\ref{cor:Wgeod}, there is~$g\in G_{o(e_l)}$ such that
    \begin{equation*}
    \begin{split}
        |G_{(e_1,\ldots, e_l)}\cdot (e_{l+1},\ldots, e_{l+k})| & =|G_{e_l}\cdot (e_{l+1},\ldots, e_{l+k})|\\
        & = \caW(\caL(g\cdot (e_l,\ldots, e_{l+k}))).
    \end{split}
    \end{equation*}
    Since $\caL(g\cdot (e_l,\ldots, e_{l+k}))$ is a reduced path in $(\Delta,\iota)$ (cf.~Lemma~\ref{lem:geodTD}), this yields the "if" part of the statement. For the "only if" part, let $\xi=(x_1,\ldots, x_{k+1})\in \calP_{(\Delta,\iota)}$ and $e\in ET^+$ such that $\caL(e)=x_1$. By~Lemma~\ref{lem:geodTD}, there is a geodesic $\frp=(e_1,\ldots, e_{k+1})$ in~$T$ with $e_1=e$ such that $\caL(\frp)=\xi$. By Proposition~\ref{prop:subsizeP} we conclude that
    \begin{equation*}
        \caW(\xi)=|G_{e_1}\cdot (e_2,\ldots, e_k)|\geq 2.\qedhere
    \end{equation*}
\end{proof}

\subsection{Convergence properties}\label{sus:conv}
The main goal of what follows is to study the double-coset property and the polynomial double-coset growth of triples $(G,G_{t_1},G_{t_2})$, where $G$ is a group acting on a tree~$T$ as in Convention~\ref{conv:action} and $t_1,t_2\in T$.
\begin{lem}\label{lem:preconv}
    Consider a group action on a tree $(G,T)$.
   \begin{itemize}
       \item[(i)] Let $C:=\sup_{e,f\in ET\,:\,t(e)=o(f)}|G_e\cdot f|$ be finite. Then, for every geodesic $\frp=(e_1,\ldots, e_l)$ in~$T$ of length $l\geq 1$, we have
    $|G_{e_1}:G_{\frp}|\leq C^{l-1}$.
       \item[(ii)] Let $(G,T)$ have property $(\ast_k)$ for some $k\geq 1$. Then, for every geodesic $\frp=(e_1,\ldots, e_l)$ in~$T$ of length $l\geq 1$, we have $|G_{e_1}:G_{\frp}|\geq 2^{\frac{l-k}{k}}$.
   \end{itemize}
\end{lem}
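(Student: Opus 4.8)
The plan is to use in both parts the descending chain of pointwise stabilisers of the initial segments of the geodesic $\frp=(e_1,\ldots,e_l)$,
\[
G_{e_1}\ \geq\ G_{(e_1,e_2)}\ \geq\ \cdots\ \geq\ G_{(e_1,\ldots,e_l)}=G_{\frp},
\]
together with the tower law for indices, which rewrites $|G_{e_1}:G_{\frp}|$ as $\prod_{i=1}^{l-1}|G_{(e_1,\ldots,e_i)}:G_{(e_1,\ldots,e_{i+1})}|$. Since $G_{(e_1,\ldots,e_{i+1})}=G_{(e_1,\ldots,e_i)}\cap G_{e_{i+1}}$ is the stabiliser of $e_{i+1}$ inside $G_{(e_1,\ldots,e_i)}$, the orbit--stabiliser theorem identifies the $i$-th factor with the orbit size $|G_{(e_1,\ldots,e_i)}\cdot e_{i+1}|$; more generally, and for the same reason, $|G_{(e_1,\ldots,e_m)}:G_{(e_1,\ldots,e_{m+k})}|=|G_{(e_1,\ldots,e_m)}\cdot(e_{m+1},\ldots,e_{m+k})|$, which is exactly the quantity controlled by $(\ast_k)$.

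For part~(i) I would bound each one-step factor from above. As $G_{(e_1,\ldots,e_i)}\leq G_{e_i}$ and $t(e_i)=o(e_{i+1})$ (because $\frp$ is a path), the orbit $G_{(e_1,\ldots,e_i)}\cdot e_{i+1}$ is contained in $G_{e_i}\cdot e_{i+1}$, so its size is at most $C$. Taking the product over the $l-1$ steps gives $|G_{e_1}:G_{\frp}|\leq C^{l-1}$; in particular this index is finite.

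For part~(ii) I would instead bound $k$-step indices from below. Put $N:=\lfloor(l-1)/k\rfloor\geq 0$; then $Nk\leq l-1$, so $1+Nk\leq l$ and $G_{\frp}\leq G_{(e_1,\ldots,e_{1+Nk})}$. The tower law along $G_{e_1}\geq G_{(e_1,\ldots,e_{1+k})}\geq\cdots\geq G_{(e_1,\ldots,e_{1+Nk})}$ yields
\[
|G_{e_1}:G_{\frp}|\ \geq\ \prod_{j=0}^{N-1}\bigl|G_{(e_1,\ldots,e_{1+jk})}:G_{(e_1,\ldots,e_{1+(j+1)k})}\bigr|,
\]
which is the trivial bound $|G_{e_1}:G_{\frp}|\geq 1$ when $l\leq k$ (empty product). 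For each $j$, the segment $(e_1,\ldots,e_{1+(j+1)k})$ is an initial piece of $\frp$, hence itself a geodesic, so property~$(\ast_k)$ applied to it with prefix length $1+jk\geq 1$ — together with the orbit--stabiliser identification above — makes the $j$-th factor at least $2$. Hence $|G_{e_1}:G_{\frp}|\geq 2^N$, and one concludes with the elementary estimate $N=\lfloor(l-1)/k\rfloor\geq (l-1)/k-(k-1)/k=(l-k)/k$, so that $2^N\geq 2^{(l-k)/k}$.

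I do not anticipate a genuine obstacle: both parts are bookkeeping built on the tower law and the orbit--stabiliser theorem. The only point requiring a little care is matching the integer block decomposition of $\frp$, which naturally produces the exponent $\lfloor(l-1)/k\rfloor$, with the cleaner exponent $(l-k)/k$ appearing in the statement — precisely the floor estimate used at the very end.
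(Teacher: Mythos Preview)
Your argument is correct. Part~(i) matches the paper's proof exactly. For part~(ii), however, you take a genuinely different and simpler route than the paper.

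The paper's argument for~(ii) works with the $k$-th power of the index: it shows
\[
|G_{e_1}:G_{\frp}|^{k}\ \geq\ \prod_{i=1}^{l-k}\prod_{j=i}^{k+i-1}|G_{(e_1,\ldots,e_j)}\cdot e_{j+1}|
\]
by a counting argument verifying that each one-step factor $A_j=|G_{(e_1,\ldots,e_j)}\cdot e_{j+1}|$ appears at most $k$ times on the right-hand side (a sliding-window estimate), and then collapses each inner product to $|G_{(e_1,\ldots,e_i)}\cdot(e_{i+1},\ldots,e_{i+k})|\geq 2$ via~$(\ast_k)$, obtaining $|G_{e_1}:G_{\frp}|^k\geq 2^{l-k}$. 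Your approach instead partitions $\frp$ into disjoint blocks of length~$k$ and applies~$(\ast_k)$ once per block, reaching $2^{\lfloor(l-1)/k\rfloor}$ directly before the final floor estimate. The two methods yield the same bound; yours avoids the exponent-$k$ detour and the multiplicity count, at the small cost of needing the inequality $\lfloor(l-1)/k\rfloor\geq(l-k)/k$ at the end.
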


\begin{proof}
  Let $\frp=(e_1,\ldots,e_l)$ be a geodesic in~$T$ of length $l\geq 1$. Arguing as for~\eqref{eq:orb2} we have
    \begin{equation}\label{eq:splitGe}
        |G_{e_{1}}:G_{\frp}|=\prod_{i=1}^{l-1}|G_{(e_1,\ldots,e_i)}\cdot e_{i+1}|.
    \end{equation}
    Since $|G_{(e_1,\ldots, e_i)}\cdot e_{i+1}|\leq |G_{e_i}\cdot e_{i+1}|$ for every $1\leq i\leq l-1$, we obtain~(i).
    
    To prove~(ii), we may assume that $l\geq k+1$.
    We claim that
    \begin{equation}\label{eq:splitGe2}
       |G_{e_1}:G_{\frp}|^{k} \stackrel{\eqref{eq:splitGe}}{=}\prod_{i=1}^{l-1}|G_{(e_1,\ldots, e_i)}\cdot e_{i+1}|^{k}\geq \prod_{i=1}^{l-k}\prod_{j=i}^{k+i-1}|G_{(e_1,\ldots, e_{j})}\cdot e_{j+1}|.
    \end{equation}
    To prove the latter inequality in~\eqref{eq:splitGe2}, set $A_j=|G_{(e_1,\ldots, e_j)}\cdot e_{j+1}|$ for every $1\leq j\leq l-1$. Then the product on the right-hand side of~\eqref{eq:splitGe2} becomes 
    \begin{equation*}
        \prod_{i=1}^{l-k}\prod_{j=i}^{k+i-1}A_j=\prod_{j=1}^{l-1}\prod_{i=\max\{1,j-k+1\}}^{\min\{j,l-k\}}A_j=\prod_{j=1}^{l-1}A_j^{\alpha_j},
    \end{equation*}
    where, for every $1\leq j\leq l-1$, 
    \begin{equation*}
        \alpha_j=|\{i\in \Z_{\geq 0}\,:\, \max\{1,j-k+1\}\leq i\leq\min\{j,l-k\}\}|.
    \end{equation*}
    It remains to show that $\alpha_j\leq k$ for every $1\leq j\leq l-1$.
    If $j\leq k-1$, then 
    $$\alpha_j=|\{i\in\Z_{\geq 0}\,:\, 1\leq i\leq \min\{j,l-k\}\}|\leq j<k.$$ If $k\leq j\leq l-k$, then $\alpha_j=|\{i\,:\,j-k+1\leq i\leq j\}|=k$. Finally, if $j\geq k$ and $j\geq l-k$ then $\alpha_j=|\{i\,:\, j-k+1\leq j\leq l-k\}|=l-j\leq k$. Hence~\eqref{eq:splitGe2} holds. By~\eqref{eq:splitGe2}, the orbit-stabiliser theorem and the fact that $(G,T)$ has property~$(\ast_k)$, we conclude that
    \begin{equation}\label{eq:splitGe3}
    \begin{split}
       |G_{e_1}:G_{\frp}|^{k} & \geq \prod_{i=1}^{l-k}\prod_{j=i}^{k+i-1}|G_{(e_1,\ldots, e_j)}:G_{(e_1,\ldots, e_{j+1})}|=\\
       & =\prod_{i=1}^{l-k}|G_{(e_1,\ldots, e_i)}\cdot (e_{i+1},\ldots, e_{i+k})|\geq 2^{l-k}.\qedhere
       \end{split}
    \end{equation}
\end{proof}

Let $(G,T)$ be a group action on a tree and consider $t_1,t_2\in T$ such that $|G_{t_1}gG_{t_2}/G_{t_2}|<\infty$ for every $g\in G$. For $i\in\{1,2\}$, set $T_i=\{t_i\}$ if $t_i\in VT$ and $T_i=\{t_i,\bar{t}_i\}$ if $t_i\in ET$. By~Fact~\ref{fact:cosets} and Lemma~\ref{lem:subsize}, we have
  \begin{equation}\label{eq:bngeo}
      b_n(G,G_{t_1},G_{t_2})=\Big|\Big\{\frp\in \Geod_T(T_1\to G\cdot T_2)\,:\,|G_{t_1}:G_{\frp}|=n\Big\}\Big|,\quad\forall\,n\geq 1.
  \end{equation}

\begin{prop}\label{prop:conv*}
    Let $(G,T)$ be a group action on a locally finite leafless tree with finite quotient graph, and set $M:=\sup_{v\in VT}|o^{-1}(v)|$. If $(G,T)$ has property~$(\ast_k)$ for some $k\geq 1$, then $3\leq M<\infty$ and, for all~$t_1,t_2\in T$ and~$n\geq 1$, 
    \begin{equation*}
        a_n(G,G_{t_1}, G_{t_2})=O(n^{k\cdot\log(M-1)-1}).
    \end{equation*}
    In particular, for all $t_1,t_2\in T$, the triple $(G,G_{t_1}, G_{t_2})$ has polynomial double-coset growth.
\end{prop}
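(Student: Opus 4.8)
The plan is to estimate the coset-counting function $b_n(G,G_{t_1},G_{t_2})$ of \eqref{eq:bn}, then pass to $a_n$ through the identity $b_n=n\cdot a_n$ in \eqref{eq:bnan}, using the geometric reformulation \eqref{eq:bngeo} and the orbit-size lower bound of Lemma~\ref{lem:preconv}(ii). I would first settle $3\le M<\infty$. Finiteness is automatic, since $|o^{-1}(v)|$ depends only on the $G$-orbit of $v$ and $\Gamma=G\backslash T$ is finite while $T$ is locally finite, so $M$ is a maximum of finitely many finite integers. For $M\ge 3$, I would apply property $(\ast_k)$ with $l=1$ to a geodesic $(e_1,\dots,e_{1+k})$ in $T$ (one exists because $T$ is leafless with an edge): there is $g\in G_{e_1}$ moving $(e_2,\dots,e_{1+k})$; taking the least $m$ with $g\cdot e_m\neq e_m$ (so $2\le m\le 1+k$ and $g$ fixes $e_1,\dots,e_{m-1}$) and using that both $(e_1,\dots,e_{1+k})$ and $(e_1,\dots,e_{m-1},g\cdot e_m,\dots)$ are reduced, the edges $\bar e_{m-1},e_m,g\cdot e_m$ in $o^{-1}(t(e_{m-1}))$ are pairwise distinct, whence $M\ge 3$.

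For the main estimate, I fix $t_1,t_2\in T$ and $n\ge 1$. By \eqref{eq:bngeo}, $b_n:=b_n(G,G_{t_1},G_{t_2})$ counts the geodesics $\frp\in\Geod_T(T_1\to G\cdot T_2)$ with $|G_{t_1}:G_\frp|=n$. If such a $\frp$ has length $l\ge 1$ and first edge $e_1$ (so $e_1\in\{t_1,\bar t_1\}$ when $t_1\in ET$, and $G_{t_1}=G_{\bar t_1}$), then $G_\frp\le G_{e_1}\le G_{t_1}$, so $n\ge|G_{e_1}:G_\frp|\ge 2^{(l-k)/k}$ by Lemma~\ref{lem:preconv}(ii), hence $l\le k+k\log_2 n=:L$; the length-$0$ geodesic occurs only for $n=1$ and has length $0\le L$. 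The number of geodesics of a given length $l$ issuing from a fixed point or edge of $T$ is at most $C_0(M-1)^l$ for some $C_0=C_0(M)$ (at most $M$, or $2$ in the edge case, choices for $e_1$, and at most $M-1$ for each later edge), so summing a geometric series of ratio $M-1\ge 2$ gives
\begin{equation*}
  b_n\ \le\ \sum_{l=0}^{\lfloor L\rfloor}C_0(M-1)^l\ \le\ C_1\,(M-1)^{L}\ =\ C_1(M-1)^{k}\cdot n^{\,k\log_2(M-1)}
\end{equation*}
for a constant $C_1=C_1(M)$, where $(M-1)^{\log_2 n}=n^{\log_2(M-1)}$ was used. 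In particular $b_n<\infty$, so \eqref{eq:bnan} yields $a_n=b_n/n<\infty$ and $a_n(G,G_{t_1},G_{t_2})\le C_1(M-1)^{k}\,n^{\,k\log_2(M-1)-1}$, which is the stated bound (the $\log$ there being $\log_2$). Since the exponent $k\log_2(M-1)-1$ does not depend on $n$, this shows that $(G,G_{t_1},G_{t_2})$ has polynomial double-coset growth for all $t_1,t_2\in T$.

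I do not expect a serious obstacle here: the proof is a counting estimate built directly on Lemma~\ref{lem:preconv}(ii) and \eqref{eq:bngeo}. The two points that need care are treating the vertex and edge cases of $t_1$ uniformly when invoking \eqref{eq:bngeo} and Lemma~\ref{lem:preconv}(ii), and keeping the logarithm to base $2$ throughout, so that the conversion $(M-1)^{k\log_2 n}=n^{k\log_2(M-1)}$ matches the exponent in the statement exactly; the inequality $M\ge 3$ (which is also what keeps the geometric series under control) is what requires the small combinatorial argument from property $(\ast_k)$ above.
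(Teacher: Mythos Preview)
Your proposal is correct and follows essentially the same route as the paper: bound the length of a contributing geodesic via Lemma~\ref{lem:preconv}(ii), count geodesics of bounded length by a geometric series with ratio $M-1$, and pass from $b_n$ to $a_n$ via \eqref{eq:bnan}. The only cosmetic difference is your argument for $M\ge 3$: the paper simply observes that property~$(\ast_k)$ forces $T$ not to be a bi-infinite line, whereas you give the explicit three-edge argument at $t(e_{m-1})$, which is equally valid.
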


\begin{proof}
    Let $t_1, t_2\in T$ and consider $T_1$ and $T_2$ as defined before~\eqref{eq:bngeo}. We first prove that $3\leq M<\infty$. Indeed, $|o^{-1}(v)|=|o^{-1}(g\cdot v)|$ for all $v\in VT$ and $g\in G$. Hence, since $T$ is locally finite and $G\backslash VT$ is finite, we have $M<\infty$. Moreover, $M\geq 2$ because $T$ has no leaves. Since $(G,T)$ has property~$(\ast_k)$, then $T$ cannot be a bi-infinite line and then $M\geq 3$.
    
Given $l\geq 0$ and $t\in T$, notice that the number of geodesics $\frp$ from $t$ in $T$ with $\ell(\frp)=l$ is $\leq 1$ if $l=0$, and it is $\leq M(M-1)^{l-1}$ otherwise.
 Hence, by~\eqref{eq:bngeo} and~Lemma~\ref{lem:preconv}(ii), the following holds for every $n\geq 1$:
  \begin{equation*}\label{eq:estbn}
  \begin{split}
    b_n(G,G_{t_1}, G_{t_2}) & =|\{\frp\in \Geod_T(T_1\to T)\,:\, |G_{t_1}:G_\frp|=n\}|\\
    & \leq |\{\frp\in \Geod_T(T_1\to T)\,:\, \ell(\frp)\leq \lfloor k\cdot \log_2 n\rfloor + k\}|\\
      & =\sum_{t\in T_1}\hspace{-0.1cm}|\{\frp\in \Geod_T(t\to T)\,:\, \ell(\frp)\leq \lfloor k\cdot \log_2 n\rfloor +k\}|\\
      & \leq \sum_{t\in T_1}\Bigg(1+\sum_{l=1}^{\lfloor k\cdot \log_2 n\rfloor + k}M(M-1)^{l-1}\Bigg)\\
       & =|T_1|\cdot \frac{M(M-1)^{\lfloor k\cdot \log_2 n\rfloor+k}-2}{M-2}.
  \end{split}
  \end{equation*}
  Hence, 
  $$b_n(G,G_{t_1}, G_{t_2})=O((M-1)^{k\cdot \log n})=O(n^{k\cdot \log(M-1)}).$$
  The latter claim of the statement now follows from~\eqref{eq:bnan}.
\end{proof}

\begin{thm}\label{thm:convWLIT}
    Let $(G,T)$ be a weakly locally $\infty$-transitive group action on a locally finite tree with finite quotient graph $\Gamma$. Assume that the standard edge weight $\omega$ on~$\Gamma$ takes values in~$\Z_{\geq 2}$. Then the following conditions are equivalent for all $t_1,t_2\in T$:
    \begin{itemize}
        \item[(i)] $(G,G_{t_1},G_{t_2})$ has the double-coset property;
        \item[(ii)] $(G,G_{t_1},G_{t_2})$ has polynomial double-coset growth;
        \item[(iii)] $(G,T)$ has property~$(\ast_k)$ for some $k\geq 1$;
        \item[(iv)] for every $a\in E\Gamma$, one has $\omega(a)\geq 3$ or $\omega(\bar{a})\geq 3$;
        \item[(v)] for every $e\in ET$ and every end~$\xi$ of~$T$, one has $G_e\not\subseteq G_\xi$.
    \end{itemize}
\end{thm}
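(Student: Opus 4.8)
The plan is to prove the cycle (iv)$\Rightarrow$(iii)$\Rightarrow$(ii)$\Rightarrow$(i)$\Rightarrow$(iv); only the last implication requires genuine work. Since $\Gamma$ is finite and $\omega$ is finite-valued, every vertex degree $|o^{-1}(v)|=\sum_{c\in o^{-1}(\pi(v))}\omega(c)$ is finite, so $(G,T)$ acts on a locally finite tree with finite quotient graph. The equivalence (iii)$\Leftrightarrow$(iv) is then just Corollary~\ref{cor:*WLIT}: since property $(\ast_1)$ implies $(\ast_2)$ (Section~\ref{sus:propast}), the clause ``$(\ast_k)$ for some $k\geq 1$'' coincides with ``$(\ast_k)$ for some $k\geq 2$'', which that corollary identifies with (iv). The implication (iii)$\Rightarrow$(ii) is Proposition~\ref{prop:conv*} applied to this locally finite $(G,T)$, and (ii)$\Rightarrow$(i) is immediate from \eqref{eq:an}.

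For (i)$\Rightarrow$(iv) I would argue by contraposition. Suppose (iv) fails; since $\omega(E\Gamma)\subseteq\Z_{\geq 2}$, there is an edge $b\in E\Gamma$ with $\omega(b)=\omega(\bar b)=2$. Fix $t_1,t_2\in T$. Using connectedness of $\Gamma$, fix a path $\sigma$ in $\Gamma$ from $\pi(t_1)$ to $o(b)$ and a path $\tau$ from $o(b)$ to $\pi(t_2)$ (so $\sigma$ starts in the set $C$ resp.\ $A$ and $\tau$ ends in $U$, in the notation of Proposition~\ref{prop:dcgeoWLIT}), and for $m\geq 1$ set $\frp_m:=\sigma\cdot(b,\bar b)^m\cdot\tau$, a path in $\Gamma$ of length $\ell(\sigma)+2m+\ell(\tau)$. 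By Remark~\ref{rem:lift} each $\frp_m$ lifts to a geodesic in $T$, so $\frp_m\in\calP_\Gamma(C\to U)$ (resp.\ $\calP_\Gamma(A\to U)$); the $\frp_m$ are pairwise distinct, so by Proposition~\ref{prop:dcgeoWLIT}, together with Remark~\ref{rem:lift} (which makes $\Psi_{t_1,t_2}$ surjective onto $\calP_\Gamma$), they correspond under $\Psi_{t_1,t_2}$ to pairwise distinct $(G_{t_1},G_{t_2})$-double cosets $G_{t_1}g_mG_{t_2}$.

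The key point is that $\Ne^\omega(\frp_m)$ is independent of $m$: comparing the defining products \eqref{eq:Ne} for $\frp_m$ and $\frp_{m+1}$, all consecutive-edge pairs inside $\sigma$, inside $\tau$, and the two junction pairs adjoining $\sigma$ and $\tau$ to the $(b,\bar b)$-block are literally the same, while $\frp_{m+1}$ carries two additional pairs of the form $(b,\bar b)$ or $(\bar b,b)$, each contributing the factor $\omega(\,\cdot\,)-1=1$. Hence $\Ne^\omega(\frp_m)$ equals a fixed finite number, and so does $\Nv^\omega(\frp_m)=\omega(\text{first edge of }\frp_m)\cdot\Ne^\omega(\frp_m)$. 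By Proposition~\ref{prop:subWLIT} this common value is exactly $|G_{t_1}g_mG_{t_2}/G_{t_2}|$ (use $\Nv^\omega$ if $t_1\in VT$, $\Ne^\omega$ if $t_1\in ET$); calling it $n$, we get $a_n(G,G_{t_1},G_{t_2})=\infty$, contradicting (i). The degenerate cases where $\sigma$ or $\tau$ is a trivial path (e.g.\ $\pi(t_1)=o(b)$) are handled verbatim, simply dropping the absent junction pair. The main obstacle is precisely this last computation: one must unwind Definition~\ref{defn:N} carefully enough to see that every interior factor in the growing block equals $1$ and that the two junction factors depend only on $b,\bar b$ and the endpoints of $\sigma,\tau$. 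This is not deep, but it must be done cleanly, and comparing $\frp_m$ with $\frp_{m+1}$ directly — rather than invoking a general concatenation formula — neatly avoids the length-$\leq 1$ special cases built into \eqref{eq:Ne}--\eqref{eq:Nv}.
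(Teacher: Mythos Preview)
Your proposal is correct and follows essentially the same approach as the paper. The paper also establishes (ii)$\Rightarrow$(i) trivially, (iii)$\Rightarrow$(ii) via Proposition~\ref{prop:conv*}, (iii)$\Leftrightarrow$(iv) via Corollary~\ref{cor:*WLIT}, and proves (i)$\Rightarrow$(iv) by contraposition using the same family $\frq_1\cdot(a,\bar a)^d\cdot\frq_2$ with $\omega(a)=\omega(\bar a)=2$; the only cosmetic difference is that the paper factors $\Ne$ multiplicatively to read off the constant value directly, whereas you compare $\frp_m$ with $\frp_{m+1}$ to see that the extra interior factors equal $1$.
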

\begin{proof}
    Clearly, (ii)$\Rightarrow$(i). Since~$\omega(E\Gamma)\subseteq \Z_{\geq 2}$, the tree~$T$ has no leaves. Hence, Proposition~\ref{prop:conv*} and Corollary~\ref{cor:*WLIT} imply (iii)$\Rightarrow$(ii) and (iii)$\Leftrightarrow$(iv), respectively. Moreover, Lemma~\ref{lem:opends} yields (iii)$\Rightarrow$(v). It remains to prove that $\lnot$(iv) implies both $\lnot$(i) and $\lnot$(v).

    Assume that there is $a\in E\Gamma$ such that $\omega(a)=\omega(\bar{a})=2$, and let $\frp=(a,\bar{a})$. Then $\Ne(\frp^d)=(\omega(\bar{a})-1)^d(\omega(a)-1)^{d-1}=1$ for every $d\geq 1$. In the following, $\pi\colon T\to \Gamma$ denotes the quotient map.

    We first prove $\lnot$(v). Let $e\in ET$ with $\pi(e)=a$. By Remark~\ref{rem:lift}, for every $d\geq 1$ there is a unique geodesic $\tilde{\frp}_d$ starting from~$e$ and with~$\pi(\tilde{\frp}_d)=\frp^d$. By uniqueness, for all $d'<d$, the geodesic $\tilde{\frp}_{d'}$ determines the first~$2d'$ edges of~$\tilde{\frp}_d$. Since~$\omega(E\Gamma)\subseteq \Z_{\geq 2}$, there is a ray~$r$ whose first $2d$ edges are the ones of $\tilde{\frp}_d$, for all~$d\geq 1$. Since $\Ne(\frp^d)=1$, $G_e$ fixes $\tilde{\frp}_d$ for every $d\geq 1$. Hence, $G_e$ fixes~$r$ pointwise. As a ray in~$T$ belongs to the end~$\xi$ precisely when it 
 eventually equals~$r$, we have~$G_e\subseteq G_\xi$. 
    
    To prove $\lnot$(i), consider two paths $\frq_1=(a_1,\ldots, a_h)$ and $\frq_2=(b_1,\ldots, b_k)$ of positive length in~$\Gamma$ from $\pi(t_1)$ to $o(a)$ and from $o(a)$ to $\pi(t_2)$, respectively. For every $d\geq 1$ we have
    \begin{equation*}
    \begin{split}
        \Ne(\frq_1\cdot \frp^d\cdot \frq_2)& =\Ne(\frq_1)\Ne(a_h,a)\Ne(\frp^d) \Ne(\bar{a},b_1)\Ne(\frq_2)\\
        & =\Ne(\frq_1)\Ne(a_h,a)\Ne(\bar{a},b_1)\Ne(\frq_2)=:N\geq 1.
        \end{split}
    \end{equation*}
   Since $\omega$ takes values in~$\Z_{\geq 2}$, for every $d\geq 1$ there is $\tilde{\frq}_d\in \Geod_T(T_1\to G\cdot T_2)$ satisfying $\pi(\tilde{\frq}_d)=\frq_1\cdot \frp^d\cdot \frq_2$ (cf.~Remark~\ref{rem:lift}). By~Proposition~\ref{prop:subWLIT}, for every $d\geq 1$ we have
    $$|G_{t_1}:G_{\tilde{\frq}_d}|=\left\{
    \begin{array}{cl}
    \Ne(\frq_1\cdot \frp^d\cdot \frq_2)=N, & \text{if }t_1\in ET;\\
    \Nv(\frq_1\cdot \frp^d\cdot \frq_2)=\omega(a_1)\cdot N=:N', & \text{if }t_1\in VT.
    \end{array}
    \right.
    $$
    Since $\tilde{\frq}_d\neq \tilde{\frq}_{d'}$ for all $d\neq d'$, by~\eqref{eq:bngeo} we conclude that
     $b_N(G,G_{t_1},G_{t_2})=\infty$ if $t_1\in ET$, and $b_{N'}(G,G_{t_1}, G_{t_2})=\infty$ if $t_1\in VT$.
\end{proof}

\begin{thm}\label{thm:convP}
    Let $(G,T)$ be a (P)-closed group action on a locally finite tree. Assume that the quotient graph is finite and its standard edge weight takes values in~$\Z_{\geq 2}$. Then the following conditions are equivalent for all $t_1,t_2\in T$:
    \begin{itemize}
        \item[(i)] $(G,G_{t_1}, G_{t_2})$ has the double-coset property;
        \item[(ii)] $(G,G_{t_1}, G_{t_2})$ has polynomial double-coset growth;
        \item[(iii)] $(G,T)$ has property~$(\ast_k)$ for some $k\geq 1$;
        \item[(iv)] for every $e\in ET$ and every end~$\xi$ in $T$, one has $G_e\not\subseteq G_\xi$.
    \end{itemize} 
\end{thm}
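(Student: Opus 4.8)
The plan is to establish the cycle of implications (iii)$\Rightarrow$(ii)$\Rightarrow$(i)$\Rightarrow$(iii). The implication (ii)$\Rightarrow$(i) is immediate from the definitions, and (iii)$\Rightarrow$(ii) is exactly Proposition~\ref{prop:conv*} (the hypotheses that $T$ is locally finite and that $\Gamma$ is finite are in force). So the work lies in (i)$\Rightarrow$(iii), which I would prove contrapositively: assuming $(G,T)$ fails property $(\ast_k)$ for every $k\geq 1$, I will exhibit an integer $n$ with $b_n(G,G_{t_1},G_{t_2})=\infty$, so that $a_n(G,G_{t_1},G_{t_2})=\infty$ by~\eqref{eq:bnan} and the double-coset property fails. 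Since all quantities involved (double-coset sizes, property $(\ast_k)$) are invariants of the permutation action and of simultaneous conjugation of the pair $(G_{t_1},G_{t_2})$, Theorem~\ref{thm:UD}(iii) lets me assume $G=U(\Delta,\iota,c_0)$ and $T=T(\Delta,\iota,c_0)$, with $\iota$ and $c_0$ moreover chosen so that $t_1$ is the root $v_0$ when $t_1\in VT$, and $t_1\in o^{-1}(v_0)\cap ET^+$ when $t_1\in ET$.

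Next comes the key new step. Finiteness of $\Gamma$ and local finiteness of $T$ make $X=\bigsqcup_{a\in E\Gamma}X_a$ finite and the standard weight $\caW$ finite-valued, so Proposition~\ref{prop:prop*P} applies: failure of $(\ast_k)$ for all $k$ means that for every $k$ there is a reduced path of length $k+1$ in $(\Delta,\iota)$ all of whose $\caW$-weights equal $1$. Form the finite directed graph $H$ on vertex set $X$ with an arc $x\to y$ precisely when $\caW(x,y)=1$ and $y\neq\iota(x)$; its walks of length $k$ are exactly the reduced length-$(k+1)$ paths in $(\Delta,\iota)$ of total weight $1$. Since $H$ admits walks of arbitrary length, it contains a closed walk, and this yields a cyclic pattern $\sigma=(z_1,\dots,z_m)$ with $m\geq 1$, $z_{i+1}\neq\iota(z_i)$ and $\caW(z_i,z_{i+1})=1$ for all indices mod $m$. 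In particular every power $\sigma^d$, $d\geq 1$, is a reduced path in $(\Delta,\iota)$ with $\caW(\sigma^d)=1$, and its underlying path in $\Gamma$ is the $d$-th power of a closed path based at $c:=o(b_1)=t(b_m)$, where $z_i\in X_{b_i}$.

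Now I would mimic the proof of Theorem~\ref{thm:convWLIT}. Using that $\Gamma$ is connected and $\omega(a)\geq 2$ for every $a$, I choose a reduced path $\alpha$ in $(\Delta,\iota)$ starting in $X_{c_0}$ (at $\caL(t_1)$ when $t_1\in ET$), ending at an element whose edge terminates at $c$, and with $\alpha\cdot\sigma$ reduced; symmetrically a reduced path $\beta$ starting at an element whose edge originates at $c$, ending in $X_{U_2}$ (with $U_2$ as in Notation~\ref{notat:capitD}, $u_2=\pi(t_2)$), and with $\sigma\cdot\beta$ reduced. For $d\geq 1$ set $\zeta_d:=\alpha\cdot\sigma^d\cdot\beta$; this is a reduced path in $(\Delta,\iota)$ from $X_{c_0}$ (resp.\ from $\caL(t_1)$) to $X_{U_2}$, so via the bijection $\caL^{v_0}$ of Lemma~\ref{lem:geodTD} and Proposition~\ref{prop:cosPcl} it corresponds to a coset $g_dG_{t_2}$ whose geodesic $\tilde\frq_d=\varphi_{t_1,t_2}(g_dG_{t_2})$ in $T$ satisfies $\caL(\tilde\frq_d)=\zeta_d$. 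The cosets $g_dG_{t_2}$ are pairwise distinct because $\ell(\tilde\frq_d)=\ell(\alpha)+dm+\ell(\beta)$. By Lemma~\ref{lem:subsize}, Proposition~\ref{prop:subsizeP} (resp.\ Corollary~\ref{cor:subsizP} when $t_1\in ET$), multiplicativity of $\caW$ and $\caW(\sigma^d)=1$, the size $|G_{t_1}g_dG_{t_2}/G_{t_2}|=|G_{t_1}:G_{\tilde\frq_d}|$ equals a fixed positive integer $n$ independent of $d$. By~\eqref{eq:bngeo} this gives $b_n(G,G_{t_1},G_{t_2})=\infty$, which completes the contrapositive.

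The main new ingredient compared with Theorem~\ref{thm:convWLIT} is the pigeonhole passage, via finiteness of $\Delta$, from "arbitrarily long weight-$1$ reduced paths" to a single periodic pattern $\sigma$ that can be inserted $d$ times; in the weakly locally $\infty$-transitive case the analogous pattern is visible by inspection (a doubly-thin pair of edges), and $\Ne$ is already defined on all paths of $\Gamma$, so no reducedness is needed. Here one genuinely works inside $\calP_{(\Delta,\iota)}$, and the delicate point is the construction of the connecting paths $\alpha,\beta$ keeping each $\zeta_d$ reduced; I expect this to be the main obstacle. It is handled by a short case analysis on terminal vertices of $\Gamma$: the only awkward case, when $c$ is terminal with unique outgoing edge $b_1$, is dispatched by noting that the wrap-around condition $z_1\neq\iota(z_m)$ (with $z_m\in X_{\bar b_1}$) forbids $\iota|_{X_{\bar b_1}}$ from being constant equal to $z_1$, so a suitable endpoint of $\alpha$ exists — exactly in the spirit of how $\omega(a)\geq 2$ is used in Remark~\ref{rem:lift}.
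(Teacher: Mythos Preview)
Your proposal is correct and follows essentially the same route as the paper: the paper also proves (i)$\Rightarrow$(iii) contrapositively by taking a weight-$1$ reduced path in $(\Delta,\iota)$ of length $\geq |X|^2+2$ and applying pigeonhole to consecutive pairs $(x_i,x_{i+1})$ (equivalent to your auxiliary digraph $H$) to extract a periodic block $\eta$, then sandwiches $\eta^d$ between connecting paths $\xi_1,\xi_2$ and lifts via Lemma~\ref{lem:geodTD} and Proposition~\ref{prop:subsizeP} to obtain infinitely many geodesics of the same stabiliser index. The only notable difference is that the paper justifies the existence of the connectors $\xi_1,\xi_2$ in one line (citing $\omega\geq 2$ and Remark~\ref{rem:Nint}) rather than via your terminal-vertex case analysis, and it does not pin down $c_0$ relative to $t_1$ but instead invokes Corollary~\ref{cor:Wgeod} to move the geodesics into $ET^+$.
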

\begin{proof}
    The implication (ii)$\Rightarrow$(i) is immediate. Since the standard edge weight on~$G\backslash T$ takes values in~$\Z_{\geq 2}$, the tree~$T$ has no leaves.
    Hence, (iii)$\Rightarrow$(ii) follows from  Proposition~\ref{prop:conv*}, and (iii)$\Rightarrow$(iv) from Lemma~\ref{lem:opends}. It remains to prove that $\lnot$(iii) implies both $\lnot$(i) and $\lnot$(iv).
    
    Assume that $(G,T)$ does not have property~$(\ast_k)$ for every $k\geq 1$. Let $\Delta$ be the local action diagram associated to $(G,T)$ and consider an inversion~$\iota$ in~$\Delta$. By~Proposition~\ref{prop:prop*P}, there is a reduced path $(x_1,\ldots, x_k)$ in~$(\Delta,\iota)$ of length $k\geq |X|^2+2$ such that $\caW(x_1,\ldots, x_k)=1$, i.e., $\caW(x_i,x_{i+1})=1$ for every $1\leq i\leq k-1$. Since $k-1\geq |X|^2+1$, we have $(x_i,x_{i+1})=(x_{i+l},x_{i+1+l})$ for some $i,l\geq 1$. Set $\eta:=(x_j)_{i\leq j\leq i+l-1}$. For every $d\geq 1$ the power $\eta^d$ is a reduced path in~$(\Delta,\iota)$ satisfying
    \begin{equation*}
        \caW(\eta^d)=\caW(\eta)^d\cdot \caW(x_{i+l-1},x_i)^{d-1}=\caW(\eta)^d\cdot \caW(x_{i+l-1},x_{i+l})^{d-1}=1.
    \end{equation*}

    We first prove $\lnot$(iv). 
    Without loss of generality (cf.~Remark~\ref{rem:redUD}), $G=U(\Delta,\iota,c_0)$ and $T=T(\Delta,\iota,c_0)$, where $c_0\in V\Gamma$ is such that~$x_1\in X_{c_0}$. 
    Then $e=((x_j)_{1\leq j\leq i-1},(x_{j})_{1\leq j\leq i})\in ET^+$. For every $d\geq 1$, denote by $\tilde{\frp}_d$ the unique geodesic in~$T$ starting at~$e$ and such that~$\caL(\tilde{\frp}_d)=\eta^d$ (it exists by Lemma~\ref{lem:geodTD}). By uniqueness, for every~$d\geq 2$, $\tilde{\frp}_d$ prolongs~$\tilde{\frp}_{d-1}$.
   Since~$\caW(\eta^d)=1$ and the edges of~$\tilde{\frp}_d$ belong to~$ET^+$, $G_e=G_{\tilde{\frp}_d}$ for every~$d\geq 1$ (cf.~Proposition~\ref{prop:subsizeP})
   and there is an end~$\xi$ of~$T$ such that $G_e\subseteq G_\xi$. 
    
    To prove $\lnot$(i), 
    without loss of generality (cf.~Remark~\ref{rem:redUD}), we may assume that $G=U(\Delta,\iota,c_0)$, $T=T(\Delta,\iota,c_0)$, either~$t_1$ is the root of~$T$ (if $t_1\in VT$) or~the origin of~$t_1$ is the root of~$T$ (if~$t_1\in ET$), and $\Geod_T(t_1\to G\cdot t_2)\neq \emptyset$.

Choose arbitrary reduced paths of positive length in~$(\Delta,\iota)$, say $\xi_1=(y_1,\ldots, y_h)$ and $\xi_2=(z_1,\ldots, z_r)$, such that $\xi_1\cdot x_1\in \calP_{(\Delta,\iota)}(X_{\pi(t_1)}\to x_1)$ and $x_{i+l-1}\cdot \xi_2\in\calP_{(\Delta,\iota)}(x_{i+l-1}\to X_{\pi(t_2)})$. Such reduced paths exist because the standard edge weight of $G\backslash T$ takes values in~$\Z_{\geq 2}$ (cf.~Remark~\ref{rem:lift}).
    For every $d\geq 1$, the path $\xi_1\cdot \eta^d\cdot \xi_2$ is reduced in~$(\Delta,\iota)$ and 
    \begin{equation*}
    \begin{split}
        \caW(\xi_1\cdot \eta^d\cdot \xi_2)& =\caW(\xi_1)\caW(y_h,x_i)\caW(\eta^d)\caW(x_{i+l-1},z_1)\caW(\xi_2)\\
        & =\caW(\xi_1)\caW(y_h,x_i)\caW(x_{i+l-1},z_1)\caW(\xi_2)=:N.
    \end{split}
    \end{equation*}
    
    By~Lemma~\ref{lem:geodTD}, for every~$d\geq 1$ there is a unique geodesic~$\tilde{\frq}_d\in \Geod_T(t_1\to G\cdot t_2)$ with $\caL(\tilde{\frq}_d)=\xi_1\cdot \eta^d\cdot \xi_2$ and such that all the~$\tilde{\frq}_d$'s have a common first edge, say~$e_1\in ET^+$.
  By Remark~\ref{rem:geoE+}, all edges of every~$\tilde{\frq}_d$ are in~$ET^+$. 
     By~Proposition~\ref{prop:subsizeP}, for every $d\geq 1$ we have
    $$|G_{t_1}:G_{\tilde{\frq}_d}|=\left\{
    \begin{array}{cl}
        \caW(\caL(\tilde{\frq}_d))=N, & \text{if }t_1\in ET;\\
        \omega(\pi(e_1))\cdot\caW(\caL(\tilde{\frq}_d))=:N', & \text{if }t_1\in VT.
    \end{array}
    \right.$$
    Since $\tilde{\frq}_d\neq\tilde{\frq}_{d'}$ for all $d\neq d'$, from~\eqref{eq:bngeo} we conclude that $b_N(G,G_{t_1},G_{t_2})=\infty$ if $t_1\in ET$, and $b_{N'}(G,G_{t_1}, G_{t_2})=\infty$ if $t_1\in VT$.
\end{proof}

\subsection{Explicit formulae: the weakly locally $\infty$-transitive case}\label{sus:DCwlit}
\begin{settWLIT}\label{settWLIT}
    Let $(G,T)$ be a weakly locally $\infty$-transitive group action on a locally finite tree as in Convention~\ref{conv:action} and with quotient map $\pi\colon T\to \Gamma=G\backslash T$. Assume that~$\Gamma$ is finite, and that its standard edge weight $\omega$ takes values in~$\Z_{\geq 2}$ (so $T$ has no leaves) and satisfies $\omega(a)\geq 3$ or $\omega(\bar{a})\geq 3$ for every $a\in E\Gamma$. Let also $\Ne=\Ne^\omega$ and $\Nv=\Nv^\omega$ be as in Definition~\ref{defn:N}.
\end{settWLIT}

Setting~\hyperref[settWLIT]{[WLIT]} guarantees that the series defining $\zeta_{G,G_{t_1},G_{t_2}}(s)$ converges at some $s\in \C$, for all $t_1,t_2\in T$ (cf.~Theorem~\ref{thm:convWLIT}).

\begin{prop}\label{prop:zetaWLIT}
    Suppose Setting~\hyperref[settWLIT]{\emph{[WLIT]}}, and let $t\in T$ with $\pi(t)=u$. 
    Then, for every $v\in VT$ with $\pi(v)=c$, we have
    \begin{equation*}
            \zeta_{G,G_v, G_t}(s)=\sum_{\frp\in \calP_\Gamma(c\to U)}\Nv(\frp)^{-s}.
        \end{equation*}
   Moreover, for every $e\in ET$ with $\pi(e)=a$, we have
        \begin{equation*}
            \zeta_{G,G_e, G_t}(s)=\varepsilon_a(u)+\sum_{\frp\in \calP_\Gamma(A\to U),\atop \ell(\frp)\geq 2}\Ne(\frp)^{-s},
        \end{equation*}
    where $\varepsilon_a(u)=\mathbbm{1}_{\{o(a),t(a)\}}(u)$ if $u\in V\Gamma$ and $\varepsilon_a(u)=\mathbbm{1}_{A}(u)$ if $u\in E\Gamma$.
\end{prop}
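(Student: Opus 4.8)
The plan is to convert the defining series~\eqref{eq:DCzeta} of $\zeta_{G,G_{t_1},G_t}(s)$ into a sum over paths of the quotient graph $\Gamma$, and then replace each double-coset size by the corresponding combinatorial weight $\Nv$ or $\Ne$. First I would observe that Setting~\hyperref[settWLIT]{[WLIT]} forces $\omega(E\Gamma)\subseteq\Z_{\geq 2}$, so by Remark~\ref{rem:lift} every path of $\Gamma$ lifts to a geodesic of $T$; hence the maps $\Psi_{v,t}$ and $\Psi_{e,t}$ of Proposition~\ref{prop:dcgeoWLIT} are bijections from $G_v\backslash G/G_t$ onto $\calP_\Gamma(c\to U)$ and from $G_e\backslash G/G_t$ onto $\calP_\Gamma(A\to U)$, respectively. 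Next, for any $g\in G$, Proposition~\ref{prop:subWLIT} identifies $|G_vgG_t/G_t|$ with $\Nv(\Psi_{v,t}(G_vgG_t))$ and $|G_egG_t/G_t|$ with $\Ne(\Psi_{e,t}(G_egG_t))$. Feeding these two facts into~\eqref{eq:DCzeta} and summing over double cosets through the bijections $\Psi_{v,t}$, $\Psi_{e,t}$ yields
\[
  \zeta_{G,G_v,G_t}(s)=\sum_{\frp\in\calP_\Gamma(c\to U)}\Nv(\frp)^{-s},\qquad
  \zeta_{G,G_e,G_t}(s)=\sum_{\frp\in\calP_\Gamma(A\to U)}\Ne(\frp)^{-s}.
\]
The first formula is exactly the asserted one; for the second I still have to peel off the short paths.

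Every path in $\calP_\Gamma(A\to U)$ starts with an edge of $A=\{a,\bar a\}$, so it has length $\geq 1$ and the sum has no length-$0$ term. A path $\frp$ with $\ell(\frp)=1$ satisfies $\Ne(\frp)=1$ by~\eqref{eq:Ne}, hence contributes $\Ne(\frp)^{-s}=1$; therefore the length-$1$ part of the sum equals the number of length-$1$ paths in $\calP_\Gamma(A\to U)$. Such a path is a single edge $f\in\{a,\bar a\}$ which, if $u\in V\Gamma$, must have $t(f)=u$, and, if $u\in E\Gamma$, must itself lie in $\{u,\bar u\}$; checking these conditions case by case gives that this number is precisely $\varepsilon_a(u)$, namely $\mathbbm{1}_{\{o(a),t(a)\}}(u)$ in the vertex case and $\mathbbm{1}_A(u)$ in the edge case. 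Separating this contribution from the rest gives $\zeta_{G,G_e,G_t}(s)=\varepsilon_a(u)+\sum_{\frp\in\calP_\Gamma(A\to U),\,\ell(\frp)\geq 2}\Ne(\frp)^{-s}$, as claimed.

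Finally, one must justify the reindexing. Under Setting~\hyperref[settWLIT]{[WLIT]} conditions~(iv) and hence~(i) of Theorem~\ref{thm:convWLIT} hold, so $(G,G_{t_1},G_t)$ has the double-coset property; thus every coefficient $a_n$ in~\eqref{eq:DCzeta} is finite, and grouping the terms $\Nv(\frp)^{-s}$ (resp.\ $\Ne(\frp)^{-s}$) by the value of $\Nv(\frp)$ (resp.\ $\Ne(\frp)$) reproduces $\sum_n a_n n^{-s}$ term by term. Since the Dirichlet series converges on a half-plane and all summands are nonnegative for real $s$, the rearrangement is legitimate and both sides agree wherever they are defined. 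I expect the only genuinely delicate step to be the last piece of bookkeeping in the edge case — matching the length-$1$ contributions with $\varepsilon_a(u)$ — which needs a careful look at the degenerate configurations (e.g.\ when $\pi(e)$ is a loop, or when $\pi(e)$ lies in the same orbit class as $\pi(t)$); the rest is a routine substitution.
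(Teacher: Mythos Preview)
Your proposal is correct and takes essentially the same approach as the paper: invoke the bijection of Proposition~\ref{prop:dcgeoWLIT} (upgraded via Remark~\ref{rem:lift} to land on all of $\calP_\Gamma$), combine it with the size computation of Proposition~\ref{prop:subWLIT}, and substitute into~\eqref{eq:DCzeta}. The paper's proof is a single sentence citing exactly these ingredients; you simply unpack them in more detail, including the separation of the length-$1$ contributions in the edge case, which the paper leaves implicit.
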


\begin{proof}
  It is a direct consequence of Proposition~\ref{prop:dcgeoWLIT} (recalling~Remark~\ref{rem:lift}) and Proposition~\ref{prop:subWLIT}.
\end{proof}

Proposition~\ref{prop:zetaWLIT} suggests the following generalisation.
\begin{defn}\label{defn:Zu}
    Let $\Gamma$ be a non-empty graph with an edge weight $\omega\colon E\Gamma\to\Z_{\geq 2}$, and let $c\in V\Gamma$, $a\in E\Gamma$ and $u\in \Gamma$. Define the following formal Dirichlet series:
  \begin{equation*}
  \begin{split}
  \caZ_{\Gamma,c\to u}(s) &:=\sum_{\frp\in \calP_\Gamma(c\to U)}\Nv(\frp)^{-s},\\
   \caZ_{\Gamma,a\to u}(s) &:=\varepsilon_a(u)+\sum_{\frp\in \calP_\Gamma(A\to U),\atop \ell(\frp)\geq 2}\Ne(\frp)^{-s},
   \end{split}
  \end{equation*}
  where $\varepsilon_a(u)=\mathbbm{1}_{\{o(a),t(a)\}}(u)$ if $u\in V\Gamma$ and $\varepsilon_a(u)=\mathbbm{1}_{A}(u)$ if $u\in E\Gamma$.
\end{defn}

\begin{rem}
    In~Definition~\ref{defn:Zu}, we may assume that $\Gamma$ is connected. Indeed, given $u_1,u_2\in \Gamma$, if there is a connected component $\Lambda$ of $\Gamma$ containing both $u_1$ ad $u_2$, then
    $\caZ_{\Gamma,u_1\to u_2}(s)=\caZ_{\Lambda,u_1\to u_2}(s)$.
    If such a connected component does not exist, the function $\caZ_{\Gamma,u_1\to u_2}(s)$ is identically zero.
\end{rem}

\begin{rem}\label{rem:zetaZ}  
By~Proposition~\ref{prop:zetaWLIT}, for all $t_1, t_2\in T$ with $\pi(t_1)=u_1$ and $\pi(t_2)=u_2$ we have
    \begin{equation*}
        \zeta_{G,G_{t_1}, G_{t_2}}(s)=\caZ_{\Gamma,u_1\to u_2}(s).
    \end{equation*}
\end{rem}

In view of an explicit formula for $\caZ_{\Gamma,u_1\to u_2}(s)$, we introduce the following linear operator.

\begin{defn}\label{defn:Es}
   Let $\Gamma$ be a non-empty graph with an edge weight $\omega\colon E\Gamma\to\Z_{\geq 2}$. Let $\C\llbracket E\Gamma\rrbracket$ be the complex vector space of all formal sums $\sum_{a\in E\Gamma}\gamma_a a$, where $\gamma_a\in \C$ for every $a\in E\Gamma$. For each $u\in \Gamma$, define $e_u\in \C\llbracket E\Gamma\rrbracket$ as follows:
    \begin{equation}\label{eq:eu}
        e_u:=\left\{
        \begin{array}{cl}
            u, & \text{if }u\in E\Gamma; \\
            \sum_{a\in t^{-1}(u)}a, & \text{if }u\in V\Gamma.
        \end{array}
        \right.
    \end{equation}

    For every $s\in \C$, the \emph{Bass operator} $\caE(s)=\caE^{(\Gamma,\omega)}(s)\colon \C\llbracket E\Gamma\rrbracket\to\C\llbracket E\Gamma\rrbracket$ \emph{of $\Gamma$ at $s\in \C$} is the linear extension of the following assignment:
    \begin{equation}\label{eq:Es1}
        \caE(s)(a):=\sum_{b\in E\Gamma}\caE(s)(a,b)b,\quad\forall\,a\in E\Gamma
    \end{equation}
    where, for all $a,b\in E\Gamma$, 
    \begin{equation}\label{eq:Es2}
        \caE(s)(a,b):=\left\{
        \begin{array}{cl}
            \Ne(a,b)^{-s}, &  \text{if }t(a)=o(b);\\
            0, & \text{otherwise.}
        \end{array}
        \right.
    \end{equation}
\end{defn}
\begin{notat}\label{notat:Es}
\begin{itemize}
    \item[(i)] We will usually write $\caE(s)$ instead of $\caE^{(\Gamma,\omega)}(s)$. If we want to specify $\Gamma$ (but $\omega$ is clear from the context), we write $\caE^\Gamma(s)$ instead of $\caE(s)$ or $\caE^{(\Gamma,\omega)}(s)$, and $e^\Gamma_u$ instead of $e_u$, for all $u\in \Gamma$. 
    \item[(ii)]  We implicitly set a total order on $E\Gamma$. Thus, provided $|E\Gamma|<\infty$, we can regard $\caE(s)$ as a $|E\Gamma|$-dimensional matrix $[\caE(s)(a,b)]_{a,b\in E\Gamma}$ with complex entries, and the $e_u$'s in~\eqref{eq:eu} as row vectors in~$\C^{|E\Gamma|}$. For all $a,b\in E\Gamma$, note that $e_a\caE(s)e_b^t=\caE(s)(a,b)$.
\end{itemize}
\end{notat}

The term "Bass operator" is taken after~\cite[Definition~3.10]{deit:prim}. The reader is referred to Section~\ref{sus:ih-1} for further connections with~\cite{deit:prim}.

\begin{rem}\label{rem:Es}
    Let $\caE(s)$ be as in Definition~\ref{defn:Es} and assume that $\Gamma$ is finite.
    For every $n\geq 1$, let $\caE(s)^n$ be the $n$-th power of $\caE(s)$, and $\caE(s)^0$ be the identity operator on $\C\llbracket E\Gamma\rrbracket$. Then, for all $n\geq 0$ and $a,b\in E\Gamma$, we observe that
    \begin{equation*}
        \caE(s)^n(a,b)=\sum_{\frp\in \calP_\Gamma(a\to b)\atop \ell(\frp)=n+1}\Ne(\frp)^{-s}.
    \end{equation*}
    If $n\leq 1$, it is clear. For every $n\geq 2$, it suffices to observe that
    \begin{equation*}
    \caE(s)^n(a,b)=\sum_{a_2,\ldots,\, a_{n}\in E\Gamma}\caE(s)(a, a_2)\cdot \ldots \cdot \caE(s)(a_{n}, b).
    \end{equation*}
\end{rem}
As we did in Setting~\hyperref[settWLIT]{[WLIT]}, we fix a setting which ensures that the series $\caZ_{\Gamma,u_1\to u_2}(s)$ as in Definition~\ref{defn:Zu} converges at some $s\in \C$, for all $u_1,u_2\in \Gamma$.
\begin{settG}\label{settG}
    Let $\Gamma$ be a finite connected graph with~$E\Gamma\neq\emptyset$ and with an edge weight $\omega\colon E\Gamma\to \Z_{\geq 2}$ satisfying $\omega(a)\geq 3$ or $\omega(\bar{a})\geq 3$, for every $a\in E\Gamma$. \\
    \emph{Convention}: Every subgraph $\Gamma'\subseteq \Gamma$ is endowed "by default" with the restricted edge weight from $\omega$.
\end{settG}

\begin{thm}\label{thm:meroWLIT}
    Let $(\Gamma,\omega)$ be an edge-weighted graph satisfying Setting~\hyperref[settG]{\emph{[$\Gamma$]}}. Then, for all $u,w\in \Gamma$,
    \begin{equation}\label{eq:Gammero}
        \caZ_{\Gamma,u\to w}(s)=\frac{\det(I-\caE(s)+\caU_{u,w}(s))}{\det(I-\caE(s))}+\epsilon_u(w),
    \end{equation}
    where $I$ is the identity matrix in~$\Mat_{|E\Gamma|}(\C)$ and
    \begin{equation*}
        \caU_{u,w}(s):=\left\{
        \begin{array}{ll}
           \sum_{a\in o^{-1}(u)}\omega(a)^{-s}e_w^t\cdot e_a,  & \text{if }u,w\in V\Gamma;\\
           \sum_{a\in o^{-1}(u)}\omega(a)^{-s}(e_w+e_{\bar{w}})^t\cdot e_a\caE(s), & \text{if }u\in V\Gamma,\,w\in E\Gamma;\\
           e_w^t\cdot (e_u+e_{\bar{u}})\caE(s), & \text{if }u\in E\Gamma,\,w\in V\Gamma;\\
           (e_w+e_{\bar{w}})^t\cdot(e_u+e_{\bar{u}})\caE(s), & \text{if }u,w\in E\Gamma;
        \end{array}
        \right.
    \end{equation*}
    \begin{equation*}
        \epsilon_u(w):=\left\{
        \begin{array}{ll}
           \mathbbm{1}_{\{u\}}(w)-1,  & \text{if }u,w\in V\Gamma;\\
           \mathbbm{1}_{o^{-1}(u)}(w)\cdot \omega(w)-1, & \text{if }u\in V\Gamma,\,w\in E\Gamma;\\
           \mathbbm{1}_{\{o(u),t(u)\}}(w)-1, & \text{if }u\in E\Gamma,\,w\in V\Gamma;\\
           \mathbbm{1}_{\{u, \bar{u}\}}(w)-1, & \text{if }u,w\in E\Gamma.
        \end{array}
        \right.
    \end{equation*}
In particular, for all $u,w\in \Gamma$ the function $\caZ_{\Gamma,u\to w}(s)$ is meromorphic over~$\C$.
\end{thm}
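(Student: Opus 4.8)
The plan is to realise each Dirichlet series $\caZ_{\Gamma,u\to w}(s)$ as an entry of the matrix resolvent $(I-\caE(s))^{-1}$, up to an explicit short-path correction, and then to convert that rational expression into the claimed quotient of determinants by means of a rank-one determinant identity. Concretely, I would fix $u,w\in\Gamma$ and group the paths occurring in Definition~\ref{defn:Zu} by their length, feeding them into Remark~\ref{rem:Es}, which identifies $e_a\caE(s)^{n}e_b^{t}$ with $\sum\Ne(\frp)^{-s}$, the sum running over all paths $\frp$ of length $n+1$ in~$\Gamma$ whose first edge is $a$ and whose last edge is $b$. Using the convention $e_v=\sum_{a\in t^{-1}(v)}a$ for a vertex $v$, one packages the starting data of a path into a source row vector $\alpha_u(s)$ and its ending data into a target row vector $\beta_w(s)$: when $u\in V\Gamma$ the factor $\omega(a_1)^{-s}$ that enters $\Nv$ is absorbed by taking $\alpha_u(s)=\sum_{a\in o^{-1}(u)}\omega(a)^{-s}e_a$, whereas when $u\in E\Gamma$ one peels off the forced first edge and takes $\alpha_u(s)=(e_u+e_{\bar u})\caE(s)$ --- this is exactly where the condition $\ell(\frp)\geq 2$ and the term $\varepsilon_a(u)$ of Definition~\ref{defn:Zu} enter --- while $\beta_w(s)=e_w$ if $w\in V\Gamma$ and $\beta_w(s)=e_w+e_{\bar w}$ if $w\in E\Gamma$. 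Summing the Neumann series $\sum_{n\geq 0}\caE(s)^{n}=(I-\caE(s))^{-1}$, which converges on a right half-plane because under Setting~\hyperref[settG]{[$\Gamma$]} a path of length $n$ satisfies $\Ne(\frp)\geq 2^{\lfloor(n-1)/2\rfloor}$ (two consecutive factors of the product defining $\Ne$ cannot both equal $1$) while $\Gamma$ is finite --- so that $\caZ_{\Gamma,u\to w}(s)$ itself converges there, consistently with Theorem~\ref{thm:convWLIT} --- one reaches in each of the four vertex/edge cases an identity
\[
\caZ_{\Gamma,u\to w}(s)=c_{u,w}+\alpha_u(s)\,(I-\caE(s))^{-1}\,\beta_w(s)^{t},
\]
where $c_{u,w}$ is the short-path integer $\mathbbm{1}_{\{u\}}(w)$, $\mathbbm{1}_{\{o(u),t(u)\}}(w)$, $\mathbbm{1}_{\{u,\bar u\}}(w)$ or $0$ according to whether $(u,w)$ lies in $V\Gamma\times V\Gamma$, $E\Gamma\times V\Gamma$, $E\Gamma\times E\Gamma$ or $V\Gamma\times E\Gamma$.

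Next I would apply the rank-one determinant identity $\det(M+xy)=\det M+y\,\mathrm{adj}(M)\,x$, valid for every square matrix $M$, column vector $x$ and row vector $y$ (for instance by computing the determinant of the bordered matrix $\left(\begin{smallmatrix}M & -x\\ y & 1\end{smallmatrix}\right)$ in two ways, via its two Schur complements), taking $M=I-\caE(s)$, $x=\beta_w(s)^{t}$ and $y=\alpha_u(s)$. Together with Cramer's rule $(I-\caE(s))^{-1}=\mathrm{adj}(I-\caE(s))/\det(I-\caE(s))$ this turns the displayed identity into
\[
\caZ_{\Gamma,u\to w}(s)=\frac{\det\!\bigl(I-\caE(s)+\beta_w(s)^{t}\alpha_u(s)\bigr)}{\det(I-\caE(s))}+\bigl(c_{u,w}-1\bigr),
\]
and a case-by-case inspection then identifies $\caU_{u,w}(s)=\beta_w(s)^{t}\alpha_u(s)$ with the rank-one matrix in the statement and $\epsilon_u(w)=c_{u,w}-1$ with the stated integer. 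The meromorphy assertion follows at once: numerator and denominator are entire functions of $s$ (finite sums and products of the entire functions $n^{-s}$), and $\det(I-\caE(s))$ is not identically zero, since $\det(I-\caE(s))\to 1$ as $\Real(s)\to+\infty$ (the entries of $\caE(s)$ that stay equal to $1$ there form an acyclic pattern, thanks to the hypothesis $\omega(a)\geq 3$ or $\omega(\bar a)\geq 3$); hence the right-hand side is meromorphic on $\C$, and agreeing with $\caZ_{\Gamma,u\to w}(s)$ on a half-plane it is its meromorphic continuation.

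The two displayed steps above are essentially formal; the part that requires genuine care --- and where I expect the real work to lie --- is the bookkeeping in the first step: checking, in all four vertex/edge combinations, that the short-path corrections collapse to precisely the matrices $\caU_{u,w}(s)$ and integers $\epsilon_u(w)$ of the statement, while keeping straight the $\Nv$-versus-$\Ne$ normalisation, the off-by-one coming from peeling the first edge when the source is an edge, and the convention $e_v=\sum_{t(a)=v}a$.
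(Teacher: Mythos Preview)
Your proposal is correct and follows the same route as the paper's proof: express $\caZ_{\Gamma,u\to w}(s)$ as a short-path constant plus $\alpha_u(s)\,(I-\caE(s))^{-1}\,\beta_w(s)^{t}$ via Remark~\ref{rem:Es} and the Neumann series, then apply the Matrix Determinant Lemma (recorded in the paper as Fact~\ref{fact:mdl}). The paper only writes out the case $u,w\in V\Gamma$ in full and declares the other three analogous; your caveat about the bookkeeping is well placed, since in the case $u\in V\Gamma$, $w\in E\Gamma$ the paper shifts one factor of $\caE(s)$ into the source vector (starting the Neumann sum at $n=1$ rather than $n=0$) to obtain its stated $\caU_{u,w}(s)$ and $\epsilon_u(w)$, whereas your choice $c_{u,w}=0$ yields an equally valid but differently packaged rank-one perturbation.
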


The proof of Theorem~\ref{thm:meroWLIT} makes use of the following fact.
\begin{fact}[Matrix Determinant Lemma, \protect{cf.~\cite{matdet}}]\label{fact:mdl}
    Consider $A\in \mathrm{GL}_n(\C)$ with adjugate matrix $\mathrm{adj}(A)$, and let $u,v\in\C^n$ be row vectors. Then,
    \begin{equation*}\label{eq:mld2}
        \frac{\det(A+u^t\cdot v)}{\det(A)}=1+vA^{-1}u^t.
    \end{equation*}
\end{fact}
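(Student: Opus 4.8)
The plan is to prove this purely linear-algebraic identity by computing the determinant of a single bordered $(n+1)\times(n+1)$ matrix in two different ways. Set
\[
M:=\begin{pmatrix} A & u^t \\ -v & 1 \end{pmatrix}\in \Mat_{n+1}(\C),
\]
where $u^t$ is the transpose (a column vector) of the row vector $u$, the block $-v$ is a row, and the bottom-right entry is the scalar $1$. The whole argument reduces to evaluating $\det(M)$ by eliminating either the top-left block $A$ or the bottom-right scalar block.

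First I would eliminate using the invertible top-left block $A$, via the factorisation
\[
M=\begin{pmatrix} I & 0 \\ -vA^{-1} & 1 \end{pmatrix}\begin{pmatrix} A & 0 \\ 0 & 1+vA^{-1}u^t \end{pmatrix}\begin{pmatrix} I & A^{-1}u^t \\ 0 & 1 \end{pmatrix},
\]
which one checks by direct block multiplication. Since the outer triangular factors have determinant $1$ and the middle factor is block-diagonal, taking determinants gives $\det(M)=\det(A)\cdot(1+vA^{-1}u^t)$.

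Next I would eliminate using the bottom-right block, which is the scalar $1$ and hence invertible. The analogous factorisation
\[
M=\begin{pmatrix} I & u^t \\ 0 & 1 \end{pmatrix}\begin{pmatrix} A+u^t v & 0 \\ 0 & 1 \end{pmatrix}\begin{pmatrix} I & 0 \\ -v & 1 \end{pmatrix}
\]
yields $\det(M)=\det(A+u^t\cdot v)$. Equating the two expressions for $\det(M)$ and dividing by $\det(A)\neq 0$ gives the claim. Writing $A^{-1}=\mathrm{adj}(A)/\det(A)$, this is equivalently $\det(A+u^t v)=\det(A)+v\,\mathrm{adj}(A)\,u^t$, which accounts for the appearance of the adjugate in the statement.

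The computation is entirely routine; the only point requiring a little care is to keep track of types, namely that $vA^{-1}u^t$ is a scalar while $u^t v$ is an $n\times n$ rank-one matrix, so that the two factorisations are verified by honest block multiplication. I anticipate no genuine obstacle here. As an alternative route, one could first settle the case $A=I$---where $u^t v$ has the single nonzero eigenvalue $\tr(u^t v)=vu^t$, so that $\det(I+u^t v)=1+vu^t$---and then reduce the general case via $\det(A+u^t v)=\det(A)\det(I+(A^{-1}u^t)v)$; but the "one determinant, two ways" argument above is cleaner and avoids any appeal to the spectral structure of rank-one perturbations.
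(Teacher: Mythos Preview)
Your proof is correct. The paper does not give its own proof of this fact: it is stated as a \emph{Fact} with a literature citation and is used as a black box in the surrounding arguments. Your bordered-matrix argument, computing $\det\!\begin{pmatrix} A & u^t\\ -v & 1\end{pmatrix}$ in two ways via Schur complements, is a standard and complete justification; both block factorisations check out, and the reduction to the adjugate form is immediate. There is nothing to compare against in the paper itself.
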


\begin{proof}[Proof of Theorem~\ref{thm:meroWLIT}] 
   Let $s\in \C$ be such that $\sum_{n=0}^\infty\caE(s)^n$ converges. Recall that $\sum_{n=0}^\infty\caE(s)^n=(I-\caE(s))^{-1}$.
  By Remark~\ref{rem:Es}, if $u\in V\Gamma$ then
  \begin{equation*}
  \begin{split}
      & \caZ_{\Gamma,u\to w}(s)=\\
      & =\left\{
      \begin{array}{ll}
          \displaystyle{\mathbbm{1}_{\{u\}}(w)+\sum_{n=0}^{\infty}\sum_{a\in o^{-1}(u)}\hspace{-0.2cm}\omega(a)^{-s}e_a\caE(s)^ne_w^{t}}, & \hspace{-0.2cm}\text{if }w\in V\Gamma; \\
          \displaystyle{\mathbbm{1}_{o^{-1}(u)}(w)\cdot\omega(w)^{-s}+\sum_{n=1}^{\infty}\sum_{a\in o^{-1}(u)}\hspace{-0.2cm}\omega(a)^{-s}e_a\caE(s)^n(e_w+e_{\bar{w}})^t}, & \hspace{-0.2cm}\text{if }w\in E\Gamma.
      \end{array}
      \right.
      \end{split}
  \end{equation*}
  Similarly, if $u\in E\Gamma$ then
  \begin{equation*}
  \begin{split}
      \caZ_{\Gamma,u\to w}(s)=\left\{
      \begin{array}{ll}
          \displaystyle{\mathbbm{1}_{\{o(u),t(u)\}}(w)+\sum_{n=1}^{\infty}(e_u+e_{\bar{u}})\caE(s)^ne_w^t}, & \text{if }w\in V\Gamma;\\
          \displaystyle{\mathbbm{1}_{\{u,\bar{u}\}}(w)+\sum_{n=1}^{\infty}(e_u+e_{\bar{u}})\caE(s)^n(e_w^t+e_{\bar{w}}^t)}, & \text{if }w\in E\Gamma.
      \end{array}
      \right.
    \end{split}
  \end{equation*}
  We now focus on the case in which $u,w\in V\Gamma$, as the other cases are analogous. Namely, if $u,w\in V\Gamma$ then
   \begin{equation*}
      \begin{split}
         \caZ_{\Gamma,u\to w}(s)& = \mathbbm{1}_{\{u\}}(w)+\sum_{a\in o^{-1}(u)}\omega(a)^{-s}e_a \Bigg(\sum_{n=0}^{\infty}\caE(s)^n\Bigg)e_w^t\\
            & = \mathbbm{1}_{\{u\}}(w)+\Bigg(\sum_{a\in o^{-1}(u)}\omega(a)^{-s}e_a\Bigg)(I-\caE(s))^{-1}e_w^t
      \end{split}
   \end{equation*}
  and Fact~\ref{fact:mdl} applies.
\end{proof}
In view of~Section~\ref{s:euler}, we provide some explicit formulae for $\caZ_{\Gamma,u\to u}(s)$ in case that $\Gamma$ has one edge-pair. 
\begin{ex}\label{ex:1segmWLIT}
Let $\Gamma$ be a $1$-segment graph with $E\Gamma=\{a,\bar{a}\}$, $c=o(a)$ and $d=t(a)$. Set $\omega(a):=\alpha+1$ and $\omega(\bar{a}):=\beta+1$, where $\alpha,\beta\in \Z_{\geq 1}$ with $\alpha\geq 2$ or $\beta\geq 2$. 
Set the order $\leq$ on $E\Gamma$ such that $a<\bar{a}$, and identify $\C\llbracket E\Gamma\rrbracket$ with $\C^2$, $e_a=e_d$ with $(1,0)$ and $e_{\bar{a}}=e_c=(0,1)$.
Then, for every $s\in \C$,
\begin{equation*}
\begin{split}
    \caE(s) & =\begin{bmatrix}
        \caE(s)(a,a) & \caE(s)(a,\bar{a})\\
        \caE(s)(\bar{a},a) & \caE(s)(\bar{a},\bar{a})
    \end{bmatrix}=\begin{bmatrix}
        0 & \beta^{-s}\\
        \alpha^{-s} & 0
    \end{bmatrix};\\
    \caU_{c,c}(s)& =(\alpha+1)^{-s}e_c^t\cdot e_a=\begin{bmatrix}
        0 & 0\\
        (\alpha+1)^{-s} & 0
    \end{bmatrix};\\
    \caU_{a,a}(s) & =(e_a+e_{\bar{a}})^t\cdot (e_a+e_{\bar{a}})\cdot \caE(s)=\begin{bmatrix}
        \alpha^{-s} & \beta^{-s}\\
        \alpha^{-s} & \beta^{-s}
    \end{bmatrix}.
\end{split}
\end{equation*}
Let $I$ be the identity matrix in~$\Mat_2(\C)$. By~Theorem~\ref{thm:meroWLIT}, 
\begin{equation}\label{eq:Z1seg}
    \begin{split}
        \caZ_{\Gamma,c\to c}(s) & =\frac{1+((\alpha+1)^{-s}-\alpha^{-s})\cdot\beta^{-s}}{1-\alpha^{-s}\beta^{-s}};\\
        \caZ_{\Gamma,a\to a}(s) & =\frac{(1+\alpha^{-s})(1+\beta^{-s})}{1-\alpha^{-s}\beta^{-s}}.
    \end{split}
\end{equation}
In particular, let $(G,T)$ be a locally $\infty$-transitive action on a locally finite tree with quotient graph $\Gamma$ and standard edge weight $\omega$ as before. By~Remark~\ref{rem:zetaZ} and~\eqref{eq:Z1seg}, we have explicit formulae for $\zeta_{G,G_v,G_v}(s)$ and $\zeta_{G,G_e,G_e}(s)$ for all $v\in VT$ with $G\cdot v=c$ and $e\in ET$ with $G\cdot e=a$.  For instance, one may take $G=\SL_2(\Q_p)$ and let~$T$ be the Bruhat--Tits tree of $G$ (cf.~Example~\ref{ex:lads}(ii)). In this case $\alpha=\beta=p$. Let $v\in VT$ be the vertex with $G_v=\SL_2(\Z_p)$, and $e\in ET$ be the edge whose pointwise stabiliser is the standard Iwahori subgroup. Then, 
    \begin{equation*}
        \zeta_{G,G_v,G_v}(s)=\frac{1+((p+1)^{-s}-p^{-s})\cdot p^{-s}}{1-p^{-2s}}\quad\text{and}\quad\zeta_{G,G_e,G_e}(s)=\frac{1+p^{-s}}{1-p^{-s}}.
    \end{equation*}
Other examples can be obtained from Example~\ref{ex:WLIT}.
Note that the formulae before agree with~\cite[Example~1.7]{ccw:zeta} in case that $G$ is the group of automorphisms of a bi-coloured tree $T$.
\end{ex}

\begin{ex}\label{ex:1loopWLIT}
Let $\Gamma$ be a $1$-bouquet of loops with $E\Gamma=\{a,\bar{a}\}$ and $c=o(a)=t(a)$. Set $\omega(a):=\alpha+1$ and $\omega(\bar{a}):=\beta+1$, for some $\alpha,\beta\in\Z_{\geq 1}$ with $\alpha\geq 2$ or $\beta\geq 2$. 
Consider the order $\leq$ on $E\Gamma$ such that $a<\bar{a}$, and identify $\C\llbracket E\Gamma\rrbracket$ with $\C^2$, $e_a$ with the vector $(1,0)$, $e_{\bar{a}}$ with $(0,1)$ and $e_c=e_a+e_{\bar{a}}$ with $(1,1)$.
Then, for every $s\in \C$,
\begin{equation*}
\begin{split}
    \caE(s) & =\begin{bmatrix}
        \caE(s)(a,a) & \caE(s)(a,\bar{a})\\
        \caE(s)(\bar{a},a) & \caE(s)(\bar{a},\bar{a})
    \end{bmatrix}=\begin{bmatrix}
        (\alpha+1)^{-s} & \beta^{-s}\\
        \alpha^{-s} & (\beta+1)^{-s}
    \end{bmatrix};\\
    \caU_{c,c}(s)& =(\alpha+1)^{-s}e_c^t\cdot e_a+(\beta+1)^{-s}e_c^t \cdot e_{\bar{a}}=\begin{bmatrix}
        (\alpha+1)^{-s} & (\beta+1)^{-s}\\
        (\alpha+1)^{-s} & (\beta+1)^{-s}
    \end{bmatrix};\\
    \caU_{a,a}(s) & =(e_a+e_{\bar{a}})^t\cdot(e_a+e_{\bar{a}})\caE(s)=\begin{bmatrix}
        (\alpha+1)^{-s}+\alpha^{-s} & (\beta+1)^{-s}+\beta^{-s}\\
        (\alpha+1)^{-s}+\alpha^{-s} & (\beta+1)^{-s}+\beta^{-s}
    \end{bmatrix}.
\end{split}
\end{equation*}
Let $I$ be the identity matrix in~$\Mat_2(\C)$. By~Theorem~\ref{thm:meroWLIT}, we have
\begin{equation}\label{eq:Z1loop}
    \begin{split}
        \caZ_{\Gamma,c\to c}(s) &=\frac{1-\Big((\alpha+1)^{-s}-\alpha^{-s}\Big)\cdot\Big((\beta+1)^{-s}-\beta^{-s}\Big)}{\Big(1-(\alpha+1)^{-s}\Big)\cdot\Big(1-(\beta+1)^{-s}\Big)-\alpha^{-s}\beta^{-s}};\\
        \caZ_{\Gamma,a\to a}(s) & =\frac{(\alpha^{-s}+1)(\beta^{-s}+1)-(\alpha+1)^{-s}(\beta+1)^{-s}}{\Big(1-(\alpha+1)^{-s}\Big)\cdot\Big(1-(\beta+1)^{-s}\Big)-\alpha^{-s}\beta^{-s}}.
    \end{split}
\end{equation}
If $\alpha=\beta$, after basic algebraic manipulations, the formulae in~\eqref{eq:Z1loop} become
\begin{equation}\label{eq:Z1loopA=B}
        \caZ_{\Gamma,c\to c}(s)=\frac{1-\alpha^{-s}+(\alpha+1)^{-s}}{1-\alpha^{-s}-(\alpha+1)^{-s}} \quad\text{and}\quad
        \caZ_{\Gamma,a\to a}(s)=\frac{1+\alpha^{-s}+(\alpha+1)^{-s}}{1-\alpha^{-s}-(\alpha+1)^{-s}}.
\end{equation}

Consider a weakly locally $\infty$-transitive group action on a locally finite tree $(G,T)$ with quotient graph $\Gamma$ and standard edge weight $\omega$. For explicit examples, see~Example~\ref{ex:WLIT}(i). By Remark~\ref{rem:zetaZ}, the computations in~\eqref{eq:Z1loop} provide explicit formulae for $\zeta_{G,G_v,G_v}(s)$ and $\zeta_{G,G_e,G_e}(s)$ whenever $v\in VT$ and $e\in ET$ satisfy $G\cdot v=c$ and $G\cdot e=a$ (or $G\cdot e=\bar{a}$), respectively. 
\end{ex}

\subsection{Explicit formulae: the (P)-closed case}\label{sus:DCPcl}
\begin{settP}\label{settP}
    Let~$\lad$ be a local action diagram based satisfying Convention~\ref{conv:lad}. Choose an inversion~$\iota$ in~$\Delta$ and $c_0\in V\Gamma$. Denote by $(T=T(\Delta,\iota,c_0),\pi,\caL)$ the standard $\Delta$-tree associated to $\iota$ and $c_0$, let $v_0$ be the root of $T$ (cf.~Section~\ref{ssus:TD}), and set $G=U(\Delta,\iota,c_0)$. Assume that the standard edge weight~$\omega$ on~$\Gamma$ and the standard edge weight~$\caW$ on~$\Delta$ take values in~$\Z_{\geq 2}$ and $\Z_{\geq 0}$, respectively. In particular, $T$ has no leaves. Finally, assume that $(G,T)$ has property~$(\ast_k)$ for some $k\geq 1$.
\end{settP}

Setting~\hyperref[settP]{[(P)-cl]} guarantees that the series defining $\zeta_{G,G_{t_1}, G_{t_2}}(s)$ converges at some $s\in \C$, for all $t_1,t_2\in T$ (cf.~Theorem~\ref{thm:convP}). 
Thanks to Remark~\ref{rem:redUD}, from now on we may focus only on the case in which $t_1\in \{v_0\}\cup o^{-1}(v_0)$ while studying $\zeta_{G,G_{t_1}, G_{t_2}}(s)$.

\smallskip 

The analogue of Proposition~\ref{prop:zetaWLIT} for (P)-closed actions is the following.

\begin{prop}\label{prop:zetageo}
    Let $(G,T)$ be as in Setting~\hyperref[settP]{\emph{[(P)-cl]}}, and let $t\in T$ with $\pi(t)=u$. Then
   \begin{equation}\label{eq:Zvpath}
       \zeta_{G,G_{v_0},G_t}(s)=\mathbbm{1}_{\{c_0\}}(u)+\hspace{-0.3cm}\sum_{a\in o^{-1}(c_0), \atop \xi\in \calP_{(\Delta, \iota)}(X_a\to X_U)}\hspace{-0.3cm}\omega(a)^{-s-1}\caW(\xi)^{-s-1}.
   \end{equation}
   Moreover, for every $e\in o^{-1}(v_0)$ with $\pi(e)=a$ and $\caL(e)=x\in X_{a}$, we have 
    \begin{equation}\label{eq:Zepath}
            \zeta_{G, G_e, G_t}(s)=\eta_a(u)+\hspace{-0.4cm}\sum_{\xi\in \calP_{(\Delta, \iota)}(x\to X_U), \atop \ell(\xi)\geq 2}\hspace{-0.6cm}\caW(\xi)^{-s-1}+\hspace{-0.6cm}\sum_{y\in X_{c_0}\setminus\{x\}, \atop \xi\in \calP_{(\Delta,\iota)}(y\to X_U),\,\ell(\xi)\geq 1}\hspace{-1cm}\caWr(x,y)^{-s-1}\caW(\xi)^{-s-1},
        \end{equation}
    where $\eta_a(u)=\mathbbm{1}_{\{o(a),t(a)\}}(u)$ if $u\in V\Gamma$ and $\eta_a(u)=\mathbbm{1}_{A}(u)$ if $u\in E\Gamma$.
\end{prop}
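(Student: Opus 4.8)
The plan is to mirror the proof of Proposition~\ref{prop:zetaWLIT}, replacing the weakly‑locally‑$\infty$‑transitive ingredients by their (P)-closed counterparts from Section~\ref{sus:subP}. Since we are in Setting~\hyperref[settP]{\emph{[(P)-cl]}}, Theorem~\ref{thm:convP} guarantees polynomial double-coset growth for every pair of vertex/edge stabilisers, so the reformulation~\eqref{eq:zetas+1} is available: for $H\in\{G_{v_0}\}\cup\{G_e\mid e\in o^{-1}(v_0)\}$,
\[
  \zeta_{G,H,G_t}(s)=\sum_{gG_t\in G/G_t}|HgG_t/G_t|^{-s-1}=\sum_{gG_t\in G/G_t}\bigl|H:G_{\varphi(gG_t)}\bigr|^{-s-1},
\]
where $\varphi=\varphi_{v_0,t}$ or $\varphi=\varphi_{e,t}$ is the $G$-equivariant bijection of Fact~\ref{fact:cosets} and the second equality is Lemma~\ref{lem:subsize}. (For $H=G_e$ with a general edge $e$ whose label lies in $X_{c_0}$, one first reduces to $e\in o^{-1}(v_0)$ via Remark~\ref{rem:redUD}.) The point is then to reindex each sum by reduced paths in $\Delta$ through Proposition~\ref{prop:cosPcl}: $\caL\circ\varphi_{v_0,t}$ is a bijection onto $\calP_{(\Delta,\iota)}(X_{c_0}\to X_U)$, and $\caL\circ\varphi_{e,t}$ a bijection onto $\calP_{(\Delta,\iota)}(x\to X_U)\sqcup\iota(x)\cdot\calP_{(\Delta,\iota)}(X_{c_0}\setminus\{x\}\to X_U)$, with $x=\caL(e)$; so it remains to express each index $|H:G_{\varphi(gG_t)}|$ in terms of the associated path.

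For $H=G_{v_0}$: the geodesic $\varphi_{v_0,t}(gG_t)=[v_0,g\cdot t]$ has all edges in $ET^+$ by Remark~\ref{rem:geoE+}; if it is the trivial geodesic at $v_0$ (which happens exactly when $v_0\in G\cdot T_2$, i.e. $u=c_0$) the index is $1$, and otherwise it starts at some $e_1\in o^{-1}(v_0)$ with $\pi(e_1)=a\in o^{-1}(c_0)$, whence Proposition~\ref{prop:subsizeP} gives $|G_{v_0}:G_{[v_0,g\cdot t]}|=\omega(a)\cdot\caW(\xi)$ for $\xi=\caL([v_0,g\cdot t])\in\calP_{(\Delta,\iota)}(X_a\to X_U)$. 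Summing over all $\xi$ (the trivial geodesic contributing $\mathbbm 1_{\{c_0\}}(u)$) yields~\eqref{eq:Zvpath}. For $H=G_e$ with $e\in o^{-1}(v_0)$: split the sum along the two pieces of the target of $\caL\circ\varphi_{e,t}$. On the first piece $\varphi_{e,t}(gG_t)=[e,g\cdot t]$ is again a geodesic from $v_0$ starting with $e\in ET^+$, so Proposition~\ref{prop:subsizeP} gives index $\caW(\xi)$ with $\xi\in\calP_{(\Delta,\iota)}(x\to X_U)$; on the second piece $\varphi_{e,t}(gG_t)=[\bar e,g\cdot t]=\bar e\cdot[v_0,g\cdot t]$ has label $\iota(x)\cdot\xi'$, and Corollary~\ref{cor:subsizP} (applied to $\bar e\in ET\setminus ET^+$ with $t(\bar e)=v_0$, using $G_e=G_{\bar e}$) gives index $\caWr(x,y)\cdot\caW(\xi')$ when $\ell(\xi')\ge 1$ (with $y$ the first element of $\xi'$) and index $1$ when $\xi'$ is empty. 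Pulling the degenerate contributions — the geodesics $(e)$ (present iff $u=t(a)$) and $(\bar e)$ (present iff $u=o(a)=c_0$), each of index $1$ — out as the constant $\eta_a(u)=\mathbbm 1_{\{o(a),t(a)\}}(u)$, and collecting the remaining terms into the series over $\ell(\xi)\ge 2$ and over $\ell(\xi')\ge 1$, produces~\eqref{eq:Zepath}.

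The main obstacle is purely the bookkeeping of these boundary terms: one must match the images furnished by Proposition~\ref{prop:cosPcl} exactly with the ranges of summation in~\eqref{eq:Zvpath}--\eqref{eq:Zepath}, respecting the conventions attached to paths of length $0$ and $1$ in $\Delta$ (paths in $\calP_{(\Delta,\iota)}(X_a\to X_U)$ having positive length, and the way a geodesic terminating at a vertex is recorded in $\calP_{(\Delta,\iota)}(\,\cdot\to X_U)$), and — when $t\in ET$ — of the fact that the geodesic may end at a $G$-translate of $t$ or of $\bar t$, which is precisely why $X_U=X_u\sqcup X_{\bar u}$ appears. Beyond this careful accounting, no new idea is required: the three size formulae are read off directly from Proposition~\ref{prop:subsizeP} and Corollary~\ref{cor:subsizP}, exactly as in the weakly locally $\infty$-transitive case.
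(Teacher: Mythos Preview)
Your proposal is correct and follows essentially the same approach as the paper, whose proof is the one-liner ``direct consequence of~\eqref{eq:zetas+1}, Proposition~\ref{prop:cosPcl}, Proposition~\ref{prop:subsizeP} and Corollary~\ref{cor:subsizP}''. You have simply unpacked these four ingredients explicitly: the coset reformulation \eqref{eq:zetas+1}, the reindexing bijections of Proposition~\ref{prop:cosPcl}, and the index formulae from Proposition~\ref{prop:subsizeP} (for geodesics in $ET^+$) and Corollary~\ref{cor:subsizP} (for those starting with $\bar e$), together with the careful bookkeeping of the short geodesics that produce the constant $\eta_a(u)$.
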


\begin{proof}
The statement is a direct consequence of~\eqref{eq:zetas+1}, Proposition~\ref{prop:cosPcl}, Proposition~\ref{prop:subsizeP} and~Corollary~\ref{cor:subsizP}.
\end{proof}

As in Section~\ref{sus:DCwlit}, we introduce a linear operator to express the series defining $\zeta_{G,G_{t_1}, G_{t_2}}(s)$ within Setting~\hyperref[settP]{[(P)-cl]}.

\begin{defn}\label{defn:Fs}
    Let $\lad$ be a local action diagram as in Convention~\ref{conv:lad} together with a function $\caW\colon X\times X\to \Z_{\geq 0}$ (recall that $X=\bigsqcup_{a\in E\Gamma}X_a$). Let $\C\llbracket X\rrbracket$ be the complex vector space of all formal sums $\sum_{x\in X}\gamma_x x$, where $\gamma_x\in \C$ for every $x\in X$. For every non-empty set $S\subseteq X$, set 
    $$f_S:=\sum_{x\in S}x\in \C\llbracket X\rrbracket.$$ 
    Given $s\in \C$ and for all $x\in X_a, y\in X_b$ with $a,b\in E\Gamma$, define
    \begin{equation*}
        \caF(s)(x, y):=\left\{
        \begin{array}{cl}
           \begin{array}{cl}
          \caW(x,y)^{-s}, & \text{if }t(a)=o(b)\text{ and }y\neq\iota(x); \\
          0, & \text{otherwise.}
      \end{array}  
        \end{array}
        \right.
    \end{equation*}
   The \emph{Bass operator} $\caF(s)\colon\C\llbracket X\rrbracket\longrightarrow \C\llbracket X\rrbracket$ \emph{of $(\Delta,\caW)$ at $s\in \C$} is defined by linearly extending the following assignment, for all $x\in X$:
    \begin{equation*}
        \caF(x):=\sum_{y\in X}\caF(s)(x,y)y.
    \end{equation*}
\end{defn}
\begin{notat}
    Technically, $\caF(s)$ depends on $\Delta$ and $\caW$. In our case, since $\Delta$ and $\caW$ will be always clear from the context (in particular, $\caW$ will be always the standard weight on $\Delta$), we avoid underlying this dependence. 
    
   In what follows, $X$ will be always finite and we implicitly fix a total order on $X$. In this way we can regard $\caF(s)$ and the $f_x$'s as a $|X|$-dimensional matrix $[\caF(s)(x,y)]_{x,y\in X}$ and as $|X|$-dimensional row vectors with complex entries, respectively. For all $x,y\in X$, note that $f_x\caF(s)f_y^t=\caF(s)(x,y)$.
\end{notat}

Continuing the analogy with Section~\ref{sus:subWLIT}, we observe the following.

\begin{rem}\label{rem:Fs}
    Let $\caF(s)$ be as in Definition~\ref{defn:Fs} and let~$X$ be finite. For every $n\geq 1$, let $\caF(s)^n$ be the $n$-th power of $\caF(s)$, and $\caF(s)^0$ be the identity matrix in~$\Mat_{|X|}(\C)$. For $n\geq 0$ and for all $x,y\in X$, we claim that
    \begin{equation}\label{eq:Fssum}
  \caF(s)^n(x,y)=\sum_{\xi\in \calP_{(\Delta, \iota)}(x\to y):\, \ell(\xi)=n+1}\caW(\xi)^{-s},
  \end{equation}
  where $\caW(\xi)=1$ if $\ell(\xi)=1$, and $\caW(\xi)=\prod_{i=1}^{l-1}\caW(x_i,x_{i+1})$ if $\xi=(x_1,\ldots, x_l)$ for some $l\geq 2$.
   Indeed, \eqref{eq:Fssum} is immediate if $n\leq 1$. For $n\geq 2$, one argues as in Remark~\ref{rem:Es}.
\end{rem}

\begin{thm}\label{thm:meroP}
    Let $G=U(\Delta,\iota,c_0)$ and $T=T(\Delta,\iota,c_0)$ be as in Setting~\hyperref[settP]{\emph{[(P)-cl]}}. Let $t\in T$ with $\pi(t)=u$ and let $e\in o^{-1}(v_0)$ have $\pi(e)=a$ and $\caL(e)=x$. Then, for every $r\in \{v_0,e\}$, we have
    \begin{equation*}
        \zeta_{G,G_r,G_t}(s)=\frac{\det(I-\caF(s+1)+\caY_{\pi(r),u}(s+1))}{\det(I-\caF(s+1))}+\kappa_{\pi(r)}(u),
    \end{equation*}
    where $I$ is the identity matrix in~$\Mat_{|X|}(\C)$,
    \begin{equation*}\label{eq:Y}
    \caY_{\pi(r),u}(s)=\left\{
        \begin{array}{ll}
           \sum_{a\in o^{-1}(c_0)}\omega(a)^{-s}f_{X_U}^tf_{X_a},  &  \text{if }r=v_0;\\
           f_{X_U}^t\Big(f_x\caF(s)+\sum_{y\in X_{c_0}\setminus \{x\}}\caWr(x,y)^{-s}f_y\Big), & \text{if }r=e;
        \end{array}
        \right.
    \end{equation*}
    and
    \begin{equation*}\label{eq:kappa}
        \kappa_{\pi(r)}(u)=\left\{
         \begin{array}{cl}
           \mathbbm{1}_{\{c_0\}}(u)-1,  &  \text{if }r=v_0;\\
            \mathbbm{1}_{A}(u)-1, &  \text{if }r=e.
        \end{array}
        \right.
    \end{equation*}
\end{thm}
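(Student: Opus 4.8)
The plan is to reproduce, in the local--action--diagram setting, the three--step argument used for Theorem~\ref{thm:meroWLIT}: write $\zeta_{G,G_r,G_t}(s)$ as a Neumann series in the Bass operator $\caF(s+1)$, sum the series on the region where the defining Dirichlet series converges, and convert the resulting resolvent expression into a ratio of determinants by the Matrix Determinant Lemma (Fact~\ref{fact:mdl}).

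First I would start from Proposition~\ref{prop:zetageo}, which via \eqref{eq:zetas+1} already expresses $\zeta_{G,G_{v_0},G_t}(s)$ and $\zeta_{G,G_e,G_t}(s)$ as an integer constant plus a sum of terms $\caW(\xi)^{-s-1}$ over reduced paths $\xi$ in $(\Delta,\iota)$, the path-sums being weighted by $\omega(a)^{-s-1}$ (for $r=v_0$) or by $\caWr(x,y)^{-s-1}$ (for the second branch when $r=e$) according to their starting segment. By Remark~\ref{rem:Fs}, for fixed $x,y\in X$ one has $\sum_{\xi\in\calP_{(\Delta,\iota)}(x\to y),\,\ell(\xi)=n+1}\caW(\xi)^{-s-1}=f_x\caF(s+1)^nf_y^t$; summing over $y\in X_U$ and over the admissible starting labels turns each path-sum into $f_\ast\,\caF(s+1)^n f_{X_U}^t$. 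On the half-plane where the defining series converges --- non-empty by Setting~\hyperref[settP]{[(P)-cl]} together with Theorem~\ref{thm:convP}, and where $\caF(s+1)$ has spectral radius $<1$, so $I-\caF(s+1)\in\GL_{|X|}(\C)$ --- I would then sum $\sum_{n\geq 0}\caF(s+1)^n=(I-\caF(s+1))^{-1}$ and $\sum_{n\geq 1}\caF(s+1)^n=\caF(s+1)(I-\caF(s+1))^{-1}$. This brings $\zeta_{G,G_r,G_t}(s)$ into the form $(\kappa_{\pi(r)}(u)+1)+v\,(I-\caF(s+1))^{-1}f_{X_U}^t$ for an explicit row vector $v$: namely $v=\sum_{a\in o^{-1}(c_0)}\omega(a)^{-s-1}f_{X_a}$ when $r=v_0$, and $v=f_x\caF(s+1)+\sum_{y\in X_{c_0}\setminus\{x\}}\caWr(x,y)^{-s-1}f_y$ when $r=e$, while the integer $\kappa_{\pi(r)}(u)+1$ absorbs exactly the contribution of the geodesics of length $\leq 1$ (the trivial path $O_{c_0}$, resp.\ the one-edge geodesics), i.e.\ the term $\mathbbm{1}_{\{c_0\}}(u)$, resp.\ $\eta_a(u)$, of Proposition~\ref{prop:zetageo}.

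Next I would apply Fact~\ref{fact:mdl} with $A=I-\caF(s+1)$: $v A^{-1}f_{X_U}^t=\det(A+f_{X_U}^t v)/\det(A)-1$. Reading off $f_{X_U}^t v$ in the two cases yields precisely $\caY_{\pi(r),u}(s+1)$ as in the statement, so that $\zeta_{G,G_r,G_t}(s)=\kappa_{\pi(r)}(u)+\det(I-\caF(s+1)+\caY_{\pi(r),u}(s+1))/\det(I-\caF(s+1))$ on the domain of convergence. Finally, every entry of $\caF(s+1)$ and of $\caY_{\pi(r),u}(s+1)$ is a finite $\C$-linear combination of exponentials $m^{-s-1}$ with $m\in\Z_{\geq 1}$ (note $\caW$ and $\caWr$ are $\geq 1$ wherever they appear, and $\omega\geq 2$), hence entire in $s$; so both determinants are entire, the right-hand side is meromorphic on $\C$, and by uniqueness of analytic continuation it is the meromorphic continuation of $\zeta_{G,G_r,G_t}(s)$, which also proves the final assertion.

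The step I expect to be the main obstacle is the case $r=e$. There Proposition~\ref{prop:cosPcl} splits $G/G_t$ into the two disjoint families $\calP_{(\Delta,\iota)}(x\to X_U)$ and $\iota(x)\cdot\calP_{(\Delta,\iota)}(X_{c_0}\setminus\{x\}\to X_U)$, and on the second family the first step is governed by the reverse weight $\caWr$ rather than $\caW$ (Corollary~\ref{cor:subsizP}). One cannot therefore use the naive source vector $f_x$: instead one must see that the first family (paths of length $\geq 2$) yields $f_x\caF(s+1)(I-\caF(s+1))^{-1}f_{X_U}^t$, the second yields $\big(\sum_{y\in X_{c_0}\setminus\{x\}}\caWr(x,y)^{-s-1}f_y\big)(I-\caF(s+1))^{-1}f_{X_U}^t$, and that both hit the \emph{same} resolvent vector $(I-\caF(s+1))^{-1}f_{X_U}^t$, so they combine into the single rank-one update $f_{X_U}^t\,v$. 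Keeping the length thresholds straight ($\ell\geq 2$ versus $\ell\geq 1$ in \eqref{eq:Zepath}) and checking that the leftover short geodesics contribute precisely the stated integer $\kappa_{\pi(r)}(u)$ is the only genuinely delicate bookkeeping; everything else is the verbatim analogue of the proof of Theorem~\ref{thm:meroWLIT}.
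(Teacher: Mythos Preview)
Your proposal is correct and follows essentially the same approach as the paper's own proof: start from Proposition~\ref{prop:zetageo}, convert the path-sums into powers of $\caF(s+1)$ via Remark~\ref{rem:Fs}, sum the Neumann series to $(I-\caF(s+1))^{-1}$, and then apply the Matrix Determinant Lemma (Fact~\ref{fact:mdl}). Your treatment of the $r=e$ case---observing that the two families from Proposition~\ref{prop:cosPcl} feed into the \emph{same} resolvent vector $(I-\caF(s+1))^{-1}f_{X_U}^t$ via the row vectors $f_x\caF(s+1)$ and $\sum_{y\neq x}\caWr(x,y)^{-s-1}f_y$, so that they combine into a single rank-one perturbation---is exactly what the paper does, and your explicit remark about analytic continuation via entireness of the matrix entries is a helpful addition that the paper leaves implicit.
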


\begin{proof}
  One proceeds analogously as in the proof of Theorem~\ref{thm:meroWLIT}. Let $s\in \C$ such that $\sum_{n=0}^\infty \caF(s+1)^n$ converges. 
   By Proposition~\ref{prop:zetageo} and Remark~\ref{rem:Fs}, we deduce what follows:
   \begin{equation*}
   \begin{split}
        \zeta_{G,G_v,G_t}(s) & =\mathbbm{1}_{\{c_0\}}(u)+\sum_{a\in o^{-1}(c_0)}\omega(a)^{-s-1}f_{X_a}\Bigg(\sum_{n=0}^\infty\caF(s+1)^n\Bigg)f_{X_U}^t;\\
        \zeta_{G,G_e,G_t}(s) & =\eta_{a}(u)+f_x\Bigg(\sum_{n=1}^\infty\caF(s+1)^n\Bigg)f_{X_U}^t+\\
        & \quad +\sum_{y\in X_{c_0}\setminus\{x\}}\caWr(x,y)^{-s-1}f_y\Bigg(\sum_{n=0}^\infty\caF(s+1)^n\Bigg)f_{X_U}^t=\\
=\mathbbm{1}_{A}(u)&+\Big(f_x\caF(s+1)+\hspace{-0.4cm}\sum_{y\in X_{c_0}\setminus\{x\}}\hspace{-0.3cm}\caWr(x,y)^{-s-1}f_y\Big)\Bigg(\sum_{n=0}^\infty\caF(s+1)^n\Bigg)f_{X_U}^t.
\end{split}
    \end{equation*}
    Since $\sum_{n=0}^\infty \caF(s+1)^n=(I-\caF(s+1))^{-1}$, Fact~\ref{fact:mdl} yields the claim.
\end{proof}

\section{The reciprocal of $\caZ_{\Gamma,u\to u}(s)$}\label{s:split}
In view of Section~\ref{sus:eul-1}, we present some formulae involving the reciprocal of the function $\caZ_{\Gamma,u\to u}(s)$, for $u\in \Gamma$, introduced in Definition~\ref{defn:Zu}. Recall that this function is a generalisation of $\zeta_{G,G_{t}, G_{t}}(s)$, where $(G,T)$ is a weakly locally $\infty$-transitive group action on a locally finite tree and $t\in T$ (cf.~Remark~\ref{rem:zetaZ}).

\begin{defn}\label{defn:Gs}
    Let $\Gamma$ be a non-empty graph with an edge weight $\omega\colon E\Gamma\to \Z_{\geq 1}$. Consider $\caE(s)$, for $s\in \C$, and $(e_u)_{u\in \Gamma}$ as in Definition~\ref{defn:Es}. For all $c\in V\Gamma$ and $a\in E\Gamma$, define
\begin{gather*}\label{eq:Gca}
\caG_{c}(s):=\caE(s)-\caU_{c,c}(s) \quad \text{and}\quad \caG_{a}(s):=\caE(s)-\caU_{a,a}(s),
\end{gather*}
where $\caU_{c,c}(s)=\sum_{a\in o^{-1}(c)}\omega(a)^{-s}e_c^te_a$ and $\caU_{a,a}(s)=(e_a+e_{\bar{a}})^t\cdot(e_a+e_{\bar{a}})\caE(s)$ (cf.~Theorem~\ref{thm:meroWLIT}).
\end{defn}

\begin{notat}
   If necessary, we write $\caG_\bullet^\Gamma(s)$, $\caE^\Gamma(s)$, $\caU_{\bullet,\bullet}^\Gamma(s)$, $e^\Gamma_\bullet$ and $I^\Gamma$ instead of $\caG_\bullet(s)$, $\caE(s)$, $\caU_{\bullet,\bullet}(s)$, $e_\bullet$ and the identity matrix in~$\Mat_{|E\Gamma|}(\C)$, respectively.   
\end{notat}

\begin{lem}\label{lem:ZWLITinv}
  Let $(\Gamma, \omega)$ satisfy Setting~\hyperref[settG]{\emph{[$\Gamma$]}}, and denote by $I$ the identity matrix on $\Mat_{|E\Gamma|}(\C)$.
  Then, for all $c\in V\Gamma$ and $s\in \C$ such that $I-\caG_c(s)$ is invertible, we have
    \begin{equation*}\label{eq:ZrecV}
\caZ_{\Gamma, c\to c}(s)^{-1}=1-\sum_{a\in o^{-1}(c)}\omega(a)^{-s}e_a(I-\caG_c(s))^{-1}e_c^t.
    \end{equation*}
    Moreover, for all $a\in E\Gamma$ and $s\in \C$ such that $I-\caG_a(s)$ is invertible, we have
    \begin{equation*}
        \caZ_{\Gamma,a\to a}(s)^{-1}=1-(e_a+e_{\bar{a}})\caE(s)(I-\caG_a(s))^{-1}(e_a+e_{\bar{a}})^t.
    \end{equation*}
\end{lem}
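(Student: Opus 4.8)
The plan is to apply the Matrix Determinant Lemma (Fact~\ref{fact:mdl}) to the explicit determinant formula of Theorem~\ref{thm:meroWLIT}, in the same spirit as its proof, but now inverting the whole expression rather than expanding it. First I would record that, by Theorem~\ref{thm:meroWLIT}, for $u=w=c\in V\Gamma$ we have $\epsilon_c(c)=\mathbbm{1}_{\{c\}}(c)-1=0$, so
\begin{equation*}
  \caZ_{\Gamma,c\to c}(s)=\frac{\det(I-\caE(s)+\caU_{c,c}(s))}{\det(I-\caE(s))}=\frac{\det(I-\caG_c(s))}{\det(I-\caE(s))},
\end{equation*}
using the definition $\caG_c(s)=\caE(s)-\caU_{c,c}(s)$. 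Hence
\begin{equation*}
  \caZ_{\Gamma,c\to c}(s)^{-1}=\frac{\det(I-\caE(s))}{\det(I-\caG_c(s))}=\frac{\det\bigl((I-\caG_c(s))-\caU_{c,c}(s)\bigr)}{\det(I-\caG_c(s))},
\end{equation*}
since $I-\caE(s)=(I-\caG_c(s))-\caU_{c,c}(s)$. Now I would recall from Theorem~\ref{thm:meroWLIT} that $\caU_{c,c}(s)=\sum_{a\in o^{-1}(c)}\omega(a)^{-s}e_c^t\cdot e_a$; when $\Gamma$ has a single loop or several edges out of $c$ this is a sum of rank-one terms all sharing the left factor $e_c^t$, so it equals $e_c^t\cdot\bigl(\sum_{a\in o^{-1}(c)}\omega(a)^{-s}e_a\bigr)$, a single rank-one matrix $e_c^t\cdot v$ with $v:=\sum_{a\in o^{-1}(c)}\omega(a)^{-s}e_a$. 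Applying Fact~\ref{fact:mdl} with $A=I-\caG_c(s)$ (invertible by hypothesis), $u^t=e_c^t$, and the row vector $-v$, I get
\begin{equation*}
  \caZ_{\Gamma,c\to c}(s)^{-1}=1-v\,(I-\caG_c(s))^{-1}e_c^t=1-\sum_{a\in o^{-1}(c)}\omega(a)^{-s}\,e_a(I-\caG_c(s))^{-1}e_c^t,
\end{equation*}
which is exactly the first claimed identity.

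For the edge case I would argue identically: Theorem~\ref{thm:meroWLIT} gives $\epsilon_a(a)=\mathbbm{1}_{\{a,\bar a\}}(a)-1=0$ and $\caU_{a,a}(s)=(e_a+e_{\bar a})^t\cdot(e_a+e_{\bar a})\caE(s)$, which is already in rank-one form $u^t\cdot v$ with $u^t=(e_a+e_{\bar a})^t$ and $v=(e_a+e_{\bar a})\caE(s)$. Thus
\begin{equation*}
  \caZ_{\Gamma,a\to a}(s)^{-1}=\frac{\det\bigl((I-\caG_a(s))-\caU_{a,a}(s)\bigr)}{\det(I-\caG_a(s))}=1-(e_a+e_{\bar a})\caE(s)\,(I-\caG_a(s))^{-1}(e_a+e_{\bar a})^t
\end{equation*}
by Fact~\ref{fact:mdl}, giving the second identity. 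The only subtlety to check along the way is that $\caZ_{\Gamma,c\to c}(s)$ is nonzero wherever the right-hand side is being inverted — but this is automatic from the displayed quotient-of-determinants expression, since invertibility of $I-\caG_c(s)$ forces $\caZ_{\Gamma,c\to c}(s)^{-1}=\det(I-\caE(s))/\det(I-\caG_c(s))$ to be a well-defined scalar (and the identity is then an identity of rational functions in the entries, valid at every $s$ where $I-\caG_c(s)$ is invertible).

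I do not expect a serious obstacle here; the statement is essentially a formal rearrangement of Theorem~\ref{thm:meroWLIT} combined with one application of the Matrix Determinant Lemma, exactly mirroring the proof of that theorem. The one point that deserves a sentence of care is the bookkeeping that $\caE(s)=\caG_c(s)+\caU_{c,c}(s)$ (resp.\ with $a$ in place of $c$), so that $I-\caE(s)=(I-\caG_c(s))-\caU_{c,c}(s)$ and the lemma is applied to the matrix $A=I-\caG_c(s)$ rather than to $I-\caE(s)$; getting the signs right in the rank-one perturbation is the whole content of the computation.
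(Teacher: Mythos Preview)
Your argument is correct and is essentially the same as the paper's: invert the determinant quotient from Theorem~\ref{thm:meroWLIT} (using $\epsilon_u(u)=0$) to get $\caZ_{\Gamma,u\to u}(s)^{-1}=\det(I-\caG_u(s)-\caU_{u,u}(s))/\det(I-\caG_u(s))$, and then apply Fact~\ref{fact:mdl} to the rank-one perturbation $\caU_{u,u}(s)$. You have simply spelled out the rank-one factorisation and the sign bookkeeping that the paper leaves implicit.
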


\begin{proof}
    By Theorem~\ref{thm:meroWLIT}, we deduce that
    \begin{equation*}
    \begin{array}{l}
        \displaystyle{\caZ_{\Gamma, c\to c}(s)^{-1}=\frac{\det(I-\caG_c(s)-\caU_{c,c}(s))}{\det(I-\caG_{c}(s))};}\\
        \displaystyle{\caZ_{\Gamma,a\to a}(s)^{-1}=\frac{\det(I-\caG_a(s)-\caU_{a,a}(s))}{\det(I-\caG_a(s))}}.
    \end{array}
    \end{equation*}
    The statements now follow from Fact~\ref{fact:mdl}.
\end{proof}

\begin{prop}\label{prop:Vsplit}
    Let $(\Gamma,\omega)$ satisfy Setting~\hyperref[settG]{\emph{[$\Gamma$]}}. Consider two subgraphs $\Gamma_1$ and $\Gamma_2$ of $\Gamma$ such that $\Gamma=\Gamma_1\cup \Gamma_2$ and $\Gamma_1\cap \Gamma_2=\{c\}$, for some $c\in V\Gamma$. Then, 
    \begin{equation*}
     \caZ_{\Gamma, c\to c}(s)^{-1}=\caZ_{\Gamma_1, c\to c}(s)^{-1}+\caZ_{\Gamma_2, c\to c}(s)^{-1}-1.
    \end{equation*}
\end{prop}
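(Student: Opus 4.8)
The plan is to express $\caZ_{\Gamma,c\to c}(s)^{-1}$ through the determinantal formula of Lemma~\ref{lem:ZWLITinv} and to observe that, with respect to the decomposition induced by $\Gamma=\Gamma_1\cup\Gamma_2$, the operator $\caG_c(s)=\caE(s)-\caU_{c,c}(s)$ splits as a block sum. First I would collect the elementary consequences of the hypothesis $\Gamma_1\cap\Gamma_2=\{c\}$: since $c$ is a vertex, $E\Gamma=E\Gamma_1\sqcup E\Gamma_2$, hence $\C\llbracket E\Gamma\rrbracket=\C\llbracket E\Gamma_1\rrbracket\oplus\C\llbracket E\Gamma_2\rrbracket$; moreover $o^{-1}(c)=o^{-1}_{\Gamma_1}(c)\sqcup o^{-1}_{\Gamma_2}(c)$ and $t^{-1}(c)=t^{-1}_{\Gamma_1}(c)\sqcup t^{-1}_{\Gamma_2}(c)$, so under the above identification $e^\Gamma_c=e^{\Gamma_1}_c\oplus e^{\Gamma_2}_c$ and $e^\Gamma_a=e^{\Gamma_i}_a\oplus 0$ for $a\in E\Gamma_i$. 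I would also note that each $\Gamma_i$ is connected — a minimal path in $\Gamma$ from a vertex of $\Gamma_i$ to $c$ cannot leave $\Gamma_i$, since $c$ is the only common vertex — so that $(\Gamma_i,\omega|_{E\Gamma_i})$ again satisfies Setting~\hyperref[settG]{\emph{[$\Gamma$]}} and the operators $\caE^{\Gamma_i}(s)$, $\caU_{c,c}^{\Gamma_i}(s)$, $\caG_c^{\Gamma_i}(s)$ are defined.

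The heart of the argument is the identity $\caG_c^\Gamma(s)=\caG_c^{\Gamma_1}(s)\oplus\caG_c^{\Gamma_2}(s)$, which I would verify by comparing entries. For $a,b\in E\Gamma_i$ both $\Ne(a,b)=\omega(b)-\mathbbm{1}_{\{\bar a\}}(b)$ and the conditions ``$t(a)=o(b)$'', ``$a\in t^{-1}(c)$'', ``$b\in o^{-1}(c)$'' are insensitive to whether they are read in $\Gamma$ or in $\Gamma_i$, so the $i$-th diagonal block of $\caG_c^\Gamma(s)$ is exactly $\caG_c^{\Gamma_i}(s)$. For $a\in E\Gamma_i$, $b\in E\Gamma_j$ with $i\neq j$, a nonzero cross entry of $\caE^\Gamma(s)$ forces $t(a)=o(b)=c$ (the only shared vertex), and then $\bar a\in E\Gamma_i$ cannot equal $b$, so $\caE^\Gamma(s)(a,b)=\omega(b)^{-s}$; but this coincides with the corresponding entry of $\caU_{c,c}^\Gamma(s)=(e^\Gamma_c)^t\bigl(\sum_{a'\in o^{-1}(c)}\omega(a')^{-s}e^\Gamma_{a'}\bigr)$, so the difference vanishes (and when $t(a)\neq o(b)$ both cross entries are already zero). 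This is the only point with real content: $\caE^\Gamma(s)$ by itself is \emph{not} block diagonal, because an edge of $\Gamma_1$ ending at $c$ may be followed by an edge of $\Gamma_2$ starting at $c$, and it is precisely the rank-one correction $\caU_{c,c}$ that cancels these cross terms.

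Given the block decomposition, I would carry out the final computation on a nonempty open set $S\subseteq\C$ on which $I^{\Gamma_1}-\caG_c^{\Gamma_1}(s)$ and $I^{\Gamma_2}-\caG_c^{\Gamma_2}(s)$ are invertible; such an $S$ exists because $\det(I-\caG_c^{\Gamma_i}(s))$ is entire and not identically zero — otherwise $\caZ_{\Gamma_i,c\to c}(s)$ would vanish identically, whereas it tends to its constant term $1$ as $\Real(s)\to+\infty$. On $S$ the block matrix $I^\Gamma-\caG_c^\Gamma(s)$ is invertible with inverse the direct sum of the two inverses, so for $a\in o^{-1}_{\Gamma_i}(c)$ one gets $e^\Gamma_a(I^\Gamma-\caG_c^\Gamma(s))^{-1}(e^\Gamma_c)^t=e^{\Gamma_i}_a(I^{\Gamma_i}-\caG_c^{\Gamma_i}(s))^{-1}(e^{\Gamma_i}_c)^t$. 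Splitting the sum in Lemma~\ref{lem:ZWLITinv} applied to $\Gamma$ along $o^{-1}(c)=o^{-1}_{\Gamma_1}(c)\sqcup o^{-1}_{\Gamma_2}(c)$ and recognising each of the two partial sums, again via Lemma~\ref{lem:ZWLITinv}, as $1-\caZ_{\Gamma_i,c\to c}(s)^{-1}$, I obtain $\caZ_{\Gamma,c\to c}(s)^{-1}=1-\bigl(1-\caZ_{\Gamma_1,c\to c}(s)^{-1}\bigr)-\bigl(1-\caZ_{\Gamma_2,c\to c}(s)^{-1}\bigr)$ on $S$. Since $\caZ_{\Gamma,c\to c}$, $\caZ_{\Gamma_1,c\to c}$ and $\caZ_{\Gamma_2,c\to c}$ are meromorphic on $\C$ (Theorem~\ref{thm:meroWLIT}) and none vanishes identically, the displayed identity between meromorphic functions holds on all of $\C$. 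The only mild obstacle I foresee is bookkeeping: checking carefully that restricting $\omega$ to $E\Gamma_i$ reproduces $\caE^{\Gamma_i}(s)$, $\caU_{c,c}^{\Gamma_i}(s)$ and the vectors $e^{\Gamma_i}_\bullet$ verbatim, and keeping track of which indicator conditions are local to a subgraph and which are not.
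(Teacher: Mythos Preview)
Your proof is correct and follows essentially the same route as the paper's: both arguments apply Lemma~\ref{lem:ZWLITinv}, verify by an entrywise computation that $I^\Gamma-\caG_c^\Gamma(s)$ decomposes as the block sum $(I^{\Gamma_1}-\caG_c^{\Gamma_1}(s))\oplus(I^{\Gamma_2}-\caG_c^{\Gamma_2}(s))$ (with the same key observation that the cross terms of $\caE(s)$ are exactly cancelled by those of $\caU_{c,c}(s)$ because $\bar a\neq b$ forces $\Ne(a,b)=\omega(b)$), and then split the sum over $o^{-1}(c)=o^{-1}_{\Gamma_1}(c)\sqcup o^{-1}_{\Gamma_2}(c)$. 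Your treatment is in fact slightly more careful than the paper's on two side points: you justify that each $\Gamma_i$ is connected (so that Setting~\hyperref[settG]{[$\Gamma$]} applies to it), and you spell out the passage from an identity on an open set to one of meromorphic functions.
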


\begin{proof}
Let $s\in \C$ such that $I^\Gamma-\caG_c^\Gamma(s)$ is invertible, and set $\Gamma_3=\Gamma_1\cap\Gamma_2$. 
By Definition~\ref{defn:Gs}, for all $a,b\in E\Gamma$ we have
  \begin{equation}\label{eq:Gc1}
  \begin{split}
     (I^\Gamma-\caG_c^\Gamma(s))(a,b) & =e_a^\Gamma\cdot \big(I^\Gamma-\caG_c^\Gamma(s)\big)\cdot(e_b^\Gamma)^t\\
     &=(I^\Gamma-\caE^\Gamma(s))(a,b)+\sum_{a'\in o^{-1}(c)}\omega(a')^{-s}(e_{a}^\Gamma (e_c^\Gamma)^t)\cdot (e_{a'}^\Gamma (e_{b}^\Gamma)^t)\\
       & = (I^\Gamma-\caE^\Gamma(s))(a,b)+\mathbbm{1}_{t^{-1}(c)}(a)\mathbbm{1}_{o^{-1}(c)}(b)\omega(b)^{-s}.
  \end{split}
  \end{equation}
Similarly, for $1\leq i\leq 3$ and for all $a,b\in E\Gamma_i$ we have
\begin{equation}\label{eq:GcGi}
   (I^{\Gamma_i}-\caG_c^{\Gamma_i}(s))(a,b)=(I^{\Gamma_i}-\caE^{\Gamma_i}(s))(a,b)+\mathbbm{1}_{t^{-1}(c)\cap E\Gamma_i}(a)\mathbbm{1}_{o^{-1}(c)\cap E\Gamma_i}(b)\omega(b)^{-s}.
\end{equation}
Combining~\eqref{eq:Gc1} and~\eqref{eq:GcGi}, for every $1\leq i\leq 3$ we deduce that
\begin{equation}\label{eq:Gc2}
    (I^\Gamma-\caG_c^\Gamma(s))(a,b)=(I^{\Gamma_i}-\caG_c^{\Gamma_i})(a,b),\quad\forall\,a,b\in E\Gamma_i
\end{equation}
and Lemma~\ref{lem:ZWLITinv} implies that
\begin{equation}\label{eq:ZrecGammai}
    \caZ_{\Gamma_i,c\to c}(s)^{-1}=1-\sum_{a\in o^{-1}(c)\cap E\Gamma_i,\atop b\in t^{-1}(c)\cap E\Gamma_i}\omega(a)^{-s}(I^{\Gamma}-\caG_c^{\Gamma}(s))^{-1}(a,b).
\end{equation}
Moreover, for all $a,b\in E\Gamma$ with $t(a)=c=o(b)$, by~\eqref{eq:Gc1} we have $(I^\Gamma-\caG_c^\Gamma(s))(a,b)=\mathbbm{1}_{\{a\}}(b)-(\omega(b)-\mathbbm{1}_{\{\bar{a}\}}(b))^{-s}+\omega(b)^{-s}$. Hence,
\begin{equation}\label{eq:Gc2}
    (I^\Gamma-\caG_c^\Gamma(s))(a,b)= 0,\quad\forall\,b\in o^{-1}(c)\setminus\{a,\bar{a}\}.
\end{equation}
We claim that 
\begin{equation*}
    (I^\Gamma-\caG_c^\Gamma(s))(a,b)=0,\quad\forall\,(a,b)\in (E\Gamma_1\times E\Gamma_2)\cup (E\Gamma_2\times E\Gamma_1).
\end{equation*}
Indeed, recall that $E\Gamma_1\cap E\Gamma_2=\emptyset$ and $V\Gamma_1\cap V\Gamma_2=\{c\}$. Hence, for all $a\in E\Gamma_1$ and $b\in E\Gamma_2$, we have $b\not\in\{a,\bar{a}\}$ and either $t(a)\neq o(b)$ or $t(a)=c=o(b)$. Now~\eqref{eq:Gc1} and~\eqref{eq:Gc2} apply. A similar argument holds for all $a\in E\Gamma_2$ and $b\in E\Gamma_1$.

Therefore, once fixed a total order $\leq$ on $E\Gamma$ so that $a<b$ for all $a\in E\Gamma_1$ and $b\in E\Gamma_2$, we have the following decomposition in diagonal blocks:
\begin{gather}\label{eq:block1}
    I^\Gamma-\caG_c^\Gamma(s)=\begin{bmatrix}
        I^{\Gamma_1}-\caG_c^{\Gamma_1}(s) & 0\\
        0 & I^{\Gamma_2}-\caG_c^{\Gamma_2}(s)
    \end{bmatrix}.
\end{gather}
Since
\begin{equation*}\label{eq:o-1c}
    o^{-1}(c)=(o^{-1}(c)\cap E\Gamma_1)\sqcup (o^{-1}(c)\cap E\Gamma_2),
\end{equation*}
by Lemma\til\ref{lem:ZWLITinv}, \eqref{eq:block1} and then ~\eqref{eq:ZrecGammai}, we conclude that
\begin{gather*}\label{eq:presplit}
\begin{array}{ccl}
   \caZ_{\Gamma,c\to c}(s)^{-1} & = & \displaystyle{1-\sum_{a,b\in o^{-1}(c)}\omega(a)^{-s}(I^\Gamma-\caG_c^\Gamma(s))^{-1}(a,\bar{b})}\\
      & = & \displaystyle{1-\sum_{a,b\in o^{-1}(c)\cap E\Gamma_1}\omega(a)^{-s}(I^{\Gamma_1}-\caG_c^{\Gamma_1}(s))^{-1}(a,\bar{b})+}\\
      &  & \displaystyle{-\sum_{a,b\in o^{-1}(c)\cap E\Gamma_2}\omega(a)^{-s}(I^{\Gamma_2}-\caG_c^{\Gamma_2}(s))^{-1}(a,\bar{b})}\\
      & = & \caZ_{\Gamma_1,c\to c}(s)^{-1}+\caZ_{\Gamma_2,c\to c}(s)^{-1}-1.\qedhere
\end{array}
\end{gather*}
\end{proof}
\begin{cor}\label{cor:Vsplit}
    Let $(\Gamma,\omega)$ satisfy Setting~\hyperref[settG]{\emph{[$\Gamma$]}}. Assume that there are subgraphs $\Lambda_1$ and $\Lambda_2$ of $\Gamma$ such that $\Gamma=\Lambda_1\cup\Lambda_2$ and $\Lambda_1\cap\Lambda_2=\{c\}$, for some vertex $c\in V\Gamma$. Then, for all subgraphs $\Gamma_1$ and $\Gamma_2$ of $\Gamma$ satisfying $\Gamma_i\supseteq \Lambda_i$ for every $i\in\{1,2\}$, we have
    \begin{equation*}
     \caZ_{\Gamma, c\to c}(s)^{-1}=\caZ_{\Gamma_1, c\to c}(s)^{-1}+\caZ_{\Gamma_2, c\to c}(s)^{-1}-\caZ_{\Gamma_1\cap\Gamma_2, c\to c}(s)^{-1}.   
    \end{equation*}
\end{cor}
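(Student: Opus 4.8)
The plan is to derive Corollary~\ref{cor:Vsplit} from four applications of Proposition~\ref{prop:Vsplit}, the wedge splitting along a single vertex. Set $\Gamma_3:=\Gamma_1\cap\Gamma_2$ and introduce the auxiliary subgraphs $P_1:=\Gamma_3\cap\Lambda_1$ and $P_2:=\Gamma_3\cap\Lambda_2$; since $c\in\Lambda_1\cap\Lambda_2\subseteq\Gamma_1\cap\Gamma_2=\Gamma_3$, the vertex $c$ lies in every one of $\Lambda_1,\Lambda_2,\Gamma_1,\Gamma_2,\Gamma_3,P_1,P_2$. The heart of the argument is the purely combinatorial claim that the four subgraph decompositions
\[
  \Gamma=\Lambda_1\cup\Lambda_2,\qquad \Gamma_1=\Lambda_1\cup P_2,\qquad \Gamma_2=\Lambda_2\cup P_1,\qquad \Gamma_3=P_1\cup P_2
\]
all hold, and that in each of them the two pieces meet exactly in the one-vertex graph $\{c\}$.

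I would prove this claim by elementary set-chasing with subgraphs. The first decomposition is the hypothesis. For the second, from $\Lambda_1\subseteq\Gamma_1\subseteq\Gamma=\Lambda_1\cup\Lambda_2$ one gets $\Gamma_1=\Lambda_1\cup(\Gamma_1\cap\Lambda_2)$, and $\Gamma_1\cap\Lambda_2=\Gamma_1\cap(\Gamma_2\cap\Lambda_2)=(\Gamma_1\cap\Gamma_2)\cap\Lambda_2=P_2$ because $\Lambda_2\subseteq\Gamma_2$; the third decomposition is symmetric, and the fourth follows from $\Gamma_3\subseteq\Gamma=\Lambda_1\cup\Lambda_2$. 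For the intersections, $\Lambda_1\cap\Lambda_2=\{c\}$ is given, while each of $\Lambda_1\cap P_2$, $\Lambda_2\cap P_1$ and $P_1\cap P_2$ is contained in $\Lambda_1\cap\Lambda_2=\{c\}$ and contains $c$. I expect this bookkeeping to be the most error-prone (though not conceptually hard) part of the write-up, since here ``$\cup$'', ``$\cap$'' and ``$\{c\}$'' refer to subgraphs, not merely to vertex sets.

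Next I would apply Proposition~\ref{prop:Vsplit} to each of the four decompositions. One small technicality must be dispatched first: Setting~\hyperref[settG]{\emph{[$\Gamma$]}} requires connectedness, whereas $\Lambda_i$, $\Gamma_i$ and $P_i$ need not be connected. This is harmless, because by the remark following Definition~\ref{defn:Zu} the function $\caZ_{\Gamma',c\to c}(s)$ depends only on the connected component of $c$ in $\Gamma'$, and that component inherits finiteness together with both weight conditions of Setting~\hyperref[settG]{\emph{[$\Gamma$]}}; moreover the four decompositions pass to the respective components of $c$ (in a union $A\cup B$ with $A\cap B=\{c\}$ every edge lies in exactly one of $A,B$, so a path from $c$ can switch sides only at $c$), and the resulting pieces match up consistently across the four applications. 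Proposition~\ref{prop:Vsplit} then yields
\begin{align*}
  \caZ_{\Gamma,c\to c}(s)^{-1} &= \caZ_{\Lambda_1,c\to c}(s)^{-1}+\caZ_{\Lambda_2,c\to c}(s)^{-1}-1, \\
  \caZ_{\Gamma_1,c\to c}(s)^{-1} &= \caZ_{\Lambda_1,c\to c}(s)^{-1}+\caZ_{P_2,c\to c}(s)^{-1}-1, \\
  \caZ_{\Gamma_2,c\to c}(s)^{-1} &= \caZ_{\Lambda_2,c\to c}(s)^{-1}+\caZ_{P_1,c\to c}(s)^{-1}-1, \\
  \caZ_{\Gamma_3,c\to c}(s)^{-1} &= \caZ_{P_1,c\to c}(s)^{-1}+\caZ_{P_2,c\to c}(s)^{-1}-1.
\end{align*}

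To finish, I would add the identities for $\Gamma_1$ and $\Gamma_2$ and substitute those for $\Gamma$ and $\Gamma_3$, obtaining
\begin{align*}
  \caZ_{\Gamma_1,c\to c}(s)^{-1}+\caZ_{\Gamma_2,c\to c}(s)^{-1}
  &= \bigl(\caZ_{\Gamma,c\to c}(s)^{-1}+1\bigr)+\bigl(\caZ_{\Gamma_3,c\to c}(s)^{-1}+1\bigr)-2 \\
  &= \caZ_{\Gamma,c\to c}(s)^{-1}+\caZ_{\Gamma_3,c\to c}(s)^{-1},
\end{align*}
and rearranging, together with $\Gamma_3=\Gamma_1\cap\Gamma_2$, gives exactly the asserted identity. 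The only genuine step is the combinatorial claim; everything downstream is arithmetic.
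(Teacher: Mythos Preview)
Your proof is correct and follows essentially the same route as the paper: the four decompositions you list are exactly the ones the paper records (your $P_1=\Gamma_3\cap\Lambda_1$ and $P_2=\Gamma_3\cap\Lambda_2$ coincide with the paper's $\Lambda_1\cap\Gamma_2$ and $\Lambda_2\cap\Gamma_1$, since $\Lambda_i\subseteq\Gamma_i$), and the final step is the same four-fold application of Proposition~\ref{prop:Vsplit}. Your extra paragraph on connectedness is a reasonable side remark that the paper leaves implicit.
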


\begin{proof}
   Let $\Gamma_1$ and $\Gamma_2$ be as in the statement, and set $\Gamma_3:=\Gamma_1\cap\Gamma_2$. Note that $\Lambda_1\cap \Gamma_3=\Lambda_1\cap \Gamma_2$ and $\Lambda_2\cap \Gamma_3=\Lambda_2\cap \Gamma_1$.
    Therefore,
    \begin{gather}\label{eq:grdecV}
        \begin{array}{lcl}
           \Gamma=\Lambda_1\cup \Lambda_2 & \text{and} & \Lambda_1\cap \Lambda_2=\{c\};\\
           \Gamma_1=\Lambda_1\cup (\Lambda_2\cap \Gamma_1) & \text{and} & \Lambda_1\cap (\Lambda_2\cap\Gamma_1)=\{c\};\\
           \Gamma_2=(\Lambda_1\cap \Gamma_2)\cup \Lambda_2 & \text{and} & (\Lambda_1\cap\Gamma_2)\cap \Lambda_2=\{c\};\\
           \Gamma_3=(\Lambda_1\cap \Gamma_2)\cup (\Lambda_2\cap\Gamma_1) & \text{and} & (\Lambda_1\cap\Gamma_2)\cap (\Lambda_2\cap \Gamma_1)=\{c\}.
        \end{array}
    \end{gather}
    Applying Proposition~\ref{prop:Vsplit} to each decomposition in~\eqref{eq:grdecV} yields the claim.
\end{proof}

The following lemma shows that the hypotheses of Proposition~\ref{prop:Vsplit} are satisfied for every $c\in V\Gamma$ if, for instance, $\Gamma$ is a connected graph without $n$-cycles for every $n\geq 2$.
\begin{lem}\label{lem:GSplit}
    Let $\Gamma$ be a connected graph without $n$-cycles, for every $n\geq 2$. For every $c\in V\Gamma$, there are connected subgraphs $\Gamma_1$ and $\Gamma_2$ of $\Gamma$ such that $\Gamma=\Gamma_1\cup\Gamma_2$ and $\Gamma_1\cap\Gamma_2=\{c\}$. If in particular $|o^{-1}(c)|\geq 2$, one may take $\Gamma_1$ and $\Gamma_2$ to be proper subgraphs of $\Gamma$.
\end{lem}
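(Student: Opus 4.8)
The plan is to decompose $\Gamma$ along the star of $c$, using the structural description from Remark~\ref{rem:Gnocyc}. Write $\Lambda$ for the unique maximal subtree of $\Gamma$, i.e.\ $\Gamma$ with all its $1$-loops deleted; recall $V\Lambda=V\Gamma$. I would split $o^{-1}(c)$ as $\{a_1,\dots,a_k\}\sqcup\{\ell_1,\bar\ell_1,\dots,\ell_m,\bar\ell_m\}$, where $a_1,\dots,a_k$ are the non-loop edges at $c$ (equivalently, the edges of $\Lambda$ issuing from $c$) and $\ell_1,\dots,\ell_m$ are the $1$-loops based at $c$, so that $|o^{-1}(c)|=k+2m$. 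Since $\Lambda$ is a tree, deleting the edge-pairs $\{a_j,\bar a_j\}$ disconnects it into the subtrees $\Lambda_{a_1}^+,\dots,\Lambda_{a_k}^+$ (with $t(a_j)\in\Lambda_{a_j}^+$) together with the now-isolated vertex $c$; in particular $V\Gamma=V\Lambda=\{c\}\sqcup\bigsqcup_{j=1}^{k}V\Lambda_{a_j}^+$, with the $V\Lambda_{a_j}^+$ pairwise disjoint and none containing $c$. This is the standard "deleting a vertex of a tree" fact, and it follows immediately from Remark~\ref{rem:Gnocyc}: any path of $\Gamma$ through $c$ reduces to a geodesic of $\Lambda$ with pairwise distinct vertices, so vertices lying in different $\Lambda_{a_j}^+$ can be joined only through $c$.

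Next I would assemble the branches at $c$. For $1\le j\le k$ let $B_j$ be the subgraph of $\Gamma$ consisting of $c$, all of $\Lambda_{a_j}^+$, the edge-pair $\{a_j,\bar a_j\}$, and every $1$-loop of $\Gamma$ based at a vertex of $V\Lambda_{a_j}^+$; and for $1\le i\le m$ let $L_i=\{c,\ell_i,\bar\ell_i\}$. Each $B_j$ is connected (it is the connected subtree $\{c,a_j,\bar a_j\}\cup\Lambda_{a_j}^+$ of $\Lambda$ with loops attached at its vertices), each $L_i$ is connected, and all of them contain $c$. Two points then need checking, both by the bookkeeping of the previous paragraph: (i) $\Gamma=B_1\cup\dots\cup B_k\cup L_1\cup\dots\cup L_m$ as soon as $k+m\ge1$ — every vertex of $\Gamma$ is $c$ or lies in some $V\Lambda_{a_j}^+$; every non-loop edge is some $a_j,\bar a_j$ or lies in some $\Lambda_{a_j}^+$; every $1$-loop is based at $c$ or at a vertex of some $V\Lambda_{a_j}^+$; and (ii) any two of these subgraphs meet in exactly $\{c\}$, since the vertex sets $\{c\},V\Lambda_{a_1}^+,\dots,V\Lambda_{a_k}^+$ are pairwise disjoint and the only edges of $B_j$ (resp.\ $L_i$) incident to $c$ form the pair $\{a_j,\bar a_j\}$ (resp.\ $\{\ell_i,\bar\ell_i\}$), and these pairs are distinct.

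Finally I would conclude by grouping. If $k+m\ge2$, take $\Gamma_1$ to be one branch (say $B_1$, or $L_1$ if $k=0$) and $\Gamma_2$ the union of the remaining $k+m-1$ branches; by (ii) both are connected, $\Gamma_1\cup\Gamma_2=\Gamma$ by (i), $\Gamma_1\cap\Gamma_2=\{c\}$, and each is proper since it omits an edge-pair of the other. If $k+m\le1$, then $\Gamma$ is a single branch (or $\Gamma=\{c\}$ when $k=m=0$), and one takes $\Gamma_1=\Gamma$, $\Gamma_2=\{c\}$. For the last assertion, $|o^{-1}(c)|=k+2m\ge2$ forces $k+m\ge2$ — the sole exception being $k=0,m=1$, i.e.\ $\Gamma$ the single $1$-loop based at $c$ — so away from that degenerate case the grouping above yields two proper connected subgraphs. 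The only step requiring any real care is the disconnection of $\Lambda$ together with the claim that the pairwise intersections equal $\{c\}$ exactly; this is precisely where the tree structure of $\Lambda$ (the absence of cycles of length $\ge2$) enters, and I expect no genuine obstacle beyond this bookkeeping.
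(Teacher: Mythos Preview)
Your proof is correct and takes essentially the same approach as the paper: decompose $\Gamma$ into branches at $c$ (you via the half-trees $\Lambda_{a_j}^+$ of the maximal subtree together with the $1$-loops, the paper via the connected components $\Xi_i$ of $\Gamma$ minus the star of $c$) and then group the branches into two. You even flag the degenerate case $k=0$, $m=1$ (a single $1$-loop at $c$) where $|o^{-1}(c)|=2$ yet no proper splitting exists---a point the paper's own proof glosses over, though it never arises in the applications since there one always has $|E\Gamma|/2\geq 2$.
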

\begin{proof}
Let $\{\Xi_i\}_{i\in I}$ be the collection of all the connected components of the graph $\Gamma\setminus (o^{-1}(c)\cup \overline{o^{-1}(c)}\cup\{c\})$. Recall that $\Gamma\setminus (o^{-1}(c)\cup \overline{o^{-1}(c)}\cup\{c\})=\bigsqcup_{i\in I}\Xi_i$.
For every $i\in I$, there is exactly one edge $a_i\in o^{-1}(c)$ such that $t(a_i)\in \Xi_i$. In fact, assume that there are $a,b\in o^{-1}(c)$ with $a\neq b$ such that $x=t(a),y=t(b)\in V\Xi_i$. Then the reduced path $[x,y]$ as in Remark~\ref{rem:Gnocyc} is contained in~$\Xi_i$. Hence $a\cdot [x,y]\cdot \bar{b}$ is a cycle of length $\geq 2$ in~$\Gamma$, impossible.

Consider subsets $\caE_1,\caE_2$ of $o^{-1}(c)$ with the following properties: $\caE_1\cap\caE_2=\emptyset$, $o^{-1}(c)=\caE_1\cup \caE_2$ and, for every $k\in\{1,2\}$, every $1$-loop $a$ in $\caE_k$ satisfies $\bar{a}\in\caE_k$. 
Provided $\overline{\caE}_k=\{\bar{a}\mid a\in\caE_k\}$ for every $k\in\{1,2\}$, note that $(\caE_1\cup\overline{\caE}_1)\cap (\caE_2\cup\overline{\caE}_2)=\emptyset$.
Moreover, if $|o^{-1}(c)|\geq 2$, take $\caE_1$ and $\caE_2$ so that $\caE_1\neq\emptyset$ and $\caE_2\neq\emptyset$. 
For $k\in\{1,2\}$, set also $I_k:=\{i\in I\mid a_i\in \caE_k\}$. Note that $I_1\cap I_2=\emptyset$ and $I=I_1\cup I_2$. For $k\in\{1,2\}$, define the following subgraph of $\Gamma$:
\begin{equation*}
   \Gamma_k:=\{c\}\cup \caE_k\cup \overline{\caE}_k\cup\bigsqcup_{i\in I_k}\Xi_i.
\end{equation*}

One checks that $\Gamma=\Gamma_1\cup\Gamma_2$ and $\Gamma_1\cap\Gamma_2=\{c\}$. If in particular $|o^{-1}(c)|\geq 2$, then $\Gamma_1\setminus\Gamma_2\supseteq \caE_1\neq\emptyset$ and $\Gamma_2\setminus\Gamma_1\supseteq \caE_2\neq\emptyset$. Therefore, both $\Gamma_1$ and $\Gamma_2$ are proper subgraphs of $\Gamma$.
\end{proof}

\begin{prop}\label{prop:Esplit}
  Let $(\Gamma,\omega)$ satisfy Setting~\hyperref[settG]{\emph{[$\Gamma$]}}. Consider subgraphs $\Gamma_1$ and $\Gamma_2$ of $\Gamma$ satisfying $\Gamma=\Gamma_1\cup \Gamma_2$ and such that $\Gamma_3:=\Gamma_1\cap \Gamma_2$ is a $1$-segment with edge set $\{a,\bar{a}\}$. Then, 
    \begin{gather*}\label{eq:Esplit}
    \caZ_{\Gamma,a\to a}(s)^{-1}=\caZ_{\Gamma_1,a\to a}(s)^{-1}+\caZ_{\Gamma_2,a\to a}(s)^{-1}-\caZ_{\Gamma_3, a\to a}(s)^{-1}.
    \end{gather*}
\end{prop}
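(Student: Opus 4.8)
The plan is to imitate the proof of Proposition~\ref{prop:Vsplit}, using the edge version of Lemma~\ref{lem:ZWLITinv} in place of the vertex version. First I would fix $s\in\C$ for which $I^\Gamma-\caE^\Gamma(s)$, and hence each matrix $I^{\Gamma_i}-\caG_a^{\Gamma_i}(s)$ occurring below, is invertible; since all four functions $\caZ_{\Gamma_\bullet,a\to a}$ are meromorphic by Theorem~\ref{thm:meroWLIT}, establishing the identity for such $s$ is enough. By Lemma~\ref{lem:ZWLITinv},
\[
\caZ_{\Gamma,a\to a}(s)^{-1}=1-(e^\Gamma_a+e^\Gamma_{\bar a})\,\caE^\Gamma(s)\,\bigl(I^\Gamma-\caG_a^\Gamma(s)\bigr)^{-1}(e^\Gamma_a+e^\Gamma_{\bar a})^t,
\]
and similarly for $\Gamma_1,\Gamma_2,\Gamma_3$ with the corresponding matrices; equivalently $\caZ_{\Gamma_\bullet,a\to a}(s)^{-1}=\det(I-\caE^{\Gamma_\bullet}(s))/\det(I-\caG_a^{\Gamma_\bullet}(s))$. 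So the whole statement reduces to comparing these determinant ratios (or the associated bilinear forms) across the four graphs.

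Next comes the bookkeeping on $\caG_a(s)=\caE(s)-\caU_{a,a}(s)$. Writing $p=o(a)$, $q=t(a)$ and unwinding $\caU_{a,a}(s)=(e_a+e_{\bar a})^t(e_a+e_{\bar a})\caE(s)$, the $b$-row of $\caG_a(s)$ coincides with the $b$-row of $\caE(s)$ for every $b\notin\{a,\bar a\}$, while its $a$- and $\bar a$-rows are $-\caE(s)(\bar a,\cdot)$ and $-\caE(s)(a,\cdot)$; moreover $\caE(s)(b,c)\neq0$ forces $t(b)=o(c)$, and $\caE(s)(a,c)$ (resp.\ $\caE(s)(\bar a,c)$) can be nonzero only when $o(c)=q$ (resp.\ $o(c)=p$). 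Running the same computation inside each $\Gamma_i$ and using, exactly as in~\eqref{eq:Gc2}, that $\Ne(b,c)=\omega(c)-\mathbbm{1}_{\{\bar b\}}(c)$ differs from $\omega(c)$ only when $c\in\{b,\bar b\}$, one obtains $(I^\Gamma-\caG_a^\Gamma(s))(b,c)=(I^{\Gamma_i}-\caG_a^{\Gamma_i}(s))(b,c)$ for all $b,c\in E\Gamma_i$ and all $i\in\{1,2,3\}$. This is what makes the contributions of $\Gamma_1$ and $\Gamma_2$ overlap precisely along the $1$-segment $\Gamma_3$ and thereby accounts for the $-\caZ_{\Gamma_3,a\to a}(s)^{-1}$ correction.

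The crux — and the step I expect to be the main obstacle — is to organise $I^\Gamma-\caG_a^\Gamma(s)$ into a block shape from which the bilinear form splits. Since $E\Gamma_1\cap E\Gamma_2=\{a,\bar a\}$ is nonempty this is genuinely more delicate than the vertex case: the potentially obstructing entries are $\caG_a^\Gamma(s)(b,c)=\caE^\Gamma(s)(b,c)$ with $b\in E\Gamma_1\setminus\{a,\bar a\}$ and $c\in E\Gamma_2\setminus\{a,\bar a\}$ (or vice versa), and these fail to vanish exactly when $b$ and $c$ meet at $p$ or at $q$. So the heart of the argument is to use the $1$-segment hypothesis — in the precise form in which the splitting is applied — to rule this out, i.e.\ to guarantee that $t(a)$ is incident in $\Gamma_1$ to no edge other than $\bar a$ and $o(a)$ is incident in $\Gamma_2$ to no edge other than $a$ (the situation where $\{a,\bar a\}$ separates $\Gamma$, compatible with the tree case of Lemma~\ref{lem:GSplit} and with the hypotheses of Theorem~\hyperref[thmG]{G}). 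Under this condition the above cross-block entries are all zero, $I^\Gamma-\caG_a^\Gamma(s)$ becomes block-diagonal along $E\Gamma=(E\Gamma_1\setminus\{\bar a\})\sqcup(E\Gamma_2\setminus\{a\})$ — the $a$-row staying in the first block and the $\bar a$-row in the second because of the incidence restrictions above — and I would finish by computing the bilinear form blockwise, applying the matrix determinant lemma (Fact~\ref{fact:mdl}) exactly as in the proof of Theorem~\ref{thm:meroWLIT} and using the second step to identify each block contribution with $\caZ_{\Gamma_i,a\to a}(s)^{-1}$. Pinning down this separating configuration, and checking that the stray $a$- and $\bar a$-rows/columns do not destroy the block decomposition, is where the real work lies.
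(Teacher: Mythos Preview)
Your proposal is correct and follows essentially the same route as the paper. The paper likewise uses Lemma~\ref{lem:ZWLITinv}, computes the entries of $I^\Gamma-\caG_a^\Gamma(s)$, and obtains a block-diagonal decomposition along exactly the partition $E\Gamma=(E\Lambda_1\sqcup\{a\})\sqcup(\{\bar a\}\sqcup E\Lambda_2)$ you describe, where $\Lambda_i$ is $\Gamma_i$ with $a,\bar a$ and the ``far'' endpoint removed; your anticipated obstacle is real, and the paper handles it precisely by taking $t(a)$ terminal in $\Gamma_1$ and $o(a)$ terminal in $\Gamma_2$ (this is how $\Lambda_1,\Lambda_2$ are well-defined subgraphs, and the note after the proof records that the argument depends on $a$ having distinct endpoints). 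The final identification you sketch is carried out in the paper by writing each $I^{\Gamma_i}-\caG_a^{\Gamma_i}(s)$ in block form with the same diagonal blocks $A,B$ (respectively the scalars $A(a,a),B(\bar a,\bar a)$) and then verifying the identity by direct substitution---so the ``real work'' you flag is exactly the explicit bookkeeping the paper performs.
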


\begin{proof} 
Fix $s\in \C$ such that $I^\Gamma-\caG_a^\Gamma(s)$ is invertible, and set $c=o(a)$, $d=t(a)$. By Lemma~\ref{lem:ZWLITinv}, we have
\begin{equation}\label{eq:preEsplit1}
\begin{array}{c}
    \caZ_{\Gamma,a\to a}(s)^{-1}=\\
    \displaystyle{=1-\Bigg(\sum_{b\in E\Gamma, \atop o(b)=d}\caE^\Gamma(s)(a,b)e_b^\Gamma+\sum_{b\in E\Gamma, \atop o(b)=c}\caE^\Gamma(s)(\bar{a},b)e_b^\Gamma\Bigg)(I^\Gamma-\caG_a^\Gamma(s))^{-1}(e_a^\Gamma+e_{\bar{a}}^\Gamma)^t.}
    \end{array}
\end{equation}
By Definition~\ref{defn:Gs}, for all $b_1,b_2\in E\Gamma$ we observe that
\begin{gather}\label{eq:preEsplit3}
\begin{array}{c}
(I^\Gamma-\caG_a^\Gamma(s))(b_1,b_2)=\\
\begin{array}{rl}
    = &\mathbbm{1}_{\{b_1\}}(b_2)-\caE^\Gamma(s)(b_1, b_2)+e_{b_1}^\Gamma(e_a^\Gamma+e_{\bar{a}}^\Gamma)^t(e_a^\Gamma+e_{\bar{a}}^\Gamma)\caE^\Gamma(s)(e_{b_2}^\Gamma)^t\\
    = & \mathbbm{1}_{\{b_1\}}(b_2)-\caE^\Gamma(s)(b_1, b_2)+\mathbbm{1}_{\{a,\bar{a}\}}(b_1)\cdot\Big (\caE^\Gamma(s)(a,b_2)+\caE^\Gamma(s)(\bar{a},b_2)\Big). 
\end{array}
\end{array}
\end{gather}
Hence, for every $i\in\{1,2\}$,
\begin{gather}\label{eq:preEsplit4}
   \begin{split}
     (I^{\Gamma_i}-\caG_a^{\Gamma_i}(s))(b_1,b_2) & =(I^\Gamma-\caG_a^\Gamma(s))(b_1,b_2),\quad\forall\,b_1,b_2\in E\Gamma_i;\\
    (I^{\Gamma_i}-\caG_a^{\Gamma_i}(s))(b_1,b_2) & =(I^{\Gamma_3}-\caG_a^{\Gamma_3}(s))(b_1,b_2),\quad \forall\,b_1,b_2\in\{a,\bar{a}\}.     
   \end{split}
\end{gather}
Let $\Lambda_1$ (resp.~$\Lambda_2$) be the graph obtained from $\Gamma_1$ (resp.~$\Gamma_2$) by removing $a,\bar{a}$ and $d$ (resp.~$a,\bar{a}$ and $c$). The following picture sketches an example of $\Lambda_1$ (with wavy edges) and $\Lambda_2$ (with dashed edges).
\begin{equation*}
    {\xymatrix@C=1em{\bullet\ar@{~>}@/^/[rd]\ar@/_2.0pc/@{..}[dd]& &&&\bullet\ar@{-->}@/^/[ld]\ar@/^2.0pc/@{..}[dd]\\
 &\bullet_c\ar@{~>}@/^/[lu]\ar@{~>}@/^/[ld]\ar@{->}@/^/[rr]^{a}&&\bullet_{d}\ar@{->}@/^/[ll]^{\bar{a}}\ar@{-->}@/^/[ru]\ar@{-->}@/^/[rd]&\\
 \bullet\ar@{~>}@/^/[ru]&&&&\bullet\ar@{-->}@/^/[lu]}}
\end{equation*}
Note that $c\in V\Lambda_1$, $d\in V\Lambda_2$ and $V\Lambda_1\cap V\Lambda_2=\emptyset$. 
In particular, no edges of~$\Lambda_1$ (resp.~$\Lambda_2$) end in a vertex of~$\Lambda_2$ (resp.~$\Lambda_1$).
We claim that
\begin{equation}\label{eq:outdiag1}
\begin{array}{c}
     (I^\Gamma-\caG_a^\Gamma(s))(b_1,b_2)=0,\quad\forall\,(b_1,b_2)\in (E\Lambda_1\sqcup \{a\})\times(E\Lambda_2\sqcup\{\bar{a}\});\\
     (I^\Gamma-\caG_a^\Gamma(s))(b_1,b_2)=0,\quad\forall\,(b_1,b_2)\in (E\Lambda_2\sqcup\{\bar{a}\})\times (E\Lambda_1\sqcup \{a\}).
\end{array}
\end{equation}
Indeed, let $b_1\in E\Lambda_1\sqcup \{a\}$ and $b_2\in E\Lambda_2\sqcup\{\bar{a}\}$. If $b_1\in E\Lambda_1$, then $V\Lambda_1\ni t(b_1)\neq o(b_2)\in V\Lambda_2$ and~\eqref{eq:preEsplit3} implies that $I^\Gamma-\caG^\Gamma_a(s)=-\caE^\Gamma(b_1,b_2)=0$. If $b_1=a$, then $c=t(\bar{a})\neq o(b_2)\in E\Lambda_2$ and~\eqref{eq:preEsplit3} implies that $I^\Gamma-\caG_a^\Gamma(s)=\caE^\Gamma(s)(\bar{a},b_2)=0$. The second line of~\eqref{eq:outdiag1} can be proved analogously.

Fix a total order $\leq$ on $E\Gamma=E\Lambda_1\sqcup \{a,\bar{a}\}\sqcup E\Lambda_2$ so that $b_1<a<\bar{a}<b_2$ for all $b_1\in E\Lambda_1$ and $b_2\in E\Lambda_2$. Set also
\begin{equation*}
\begin{split}
    A & :=[(I^{\Gamma_1}-\caG_a^{\Gamma_1}(s))(b_1,b_2)]_{b_1,b_2\in E\Lambda_1\sqcup\{a\}}; \\
    B & :=[(I^{\Gamma_2}-\caG_a^{\Gamma_2}(s))(b_1,b_2)]_{b_1,b_2\in \{\bar{a}\}\sqcup E\Lambda_2};\\
    \alpha & :=\omega(a)-1 \quad\text{and}\quad \beta:=\omega(\bar{a})-1.
    \end{split}
\end{equation*}
From~\eqref{eq:preEsplit4} (and~\eqref{eq:preEsplit3}) we observe that 
$A(a,a)=1+\alpha^{-s}$ and $B(\bar{a},\bar{a})=1+\beta^{-s}$.
Moreover, by~\eqref{eq:preEsplit4} and~\eqref{eq:outdiag1}, we have the following decompositions in diagonal blocks:
\begin{equation}\label{eq:preEsplit5}
\begin{split}
    I^\Gamma-\caG_a^\Gamma(s)=\begin{bmatrix}
        A & 0\\
        0 & B
    \end{bmatrix} & \quad\text{and}\quad    
    (I^\Gamma-\caG_a^\Gamma(s))^{-1}=\begin{bmatrix}
        A^{-1} & 0\\
        0 & B^{-1}
    \end{bmatrix}; \\
    I^{\Gamma_1}-\caG_a^{\Gamma_1}(s)=\begin{bmatrix}
        A & 0\\
        0 & B(\bar{a},\bar{a})
    \end{bmatrix} & \quad\text{and}\quad
     (I^{\Gamma_1}-\caG_a^{\Gamma_1}(s))^{-1}=\begin{bmatrix}
        A^{-1} & 0\\
        0 & B(\bar{a},\bar{a})^{-1}
    \end{bmatrix};
    \\
    I^{\Gamma_2}-\caG_a^{\Gamma_2}(s)=\begin{bmatrix}
        A(a,a) & 0\\
        0 & B
    \end{bmatrix} & \quad\text{and}\quad
    (I^{\Gamma_2}-\caG_a^{\Gamma_2}(s))^{-1}=\begin{bmatrix}
        A(a,a)^{-1} & 0\\
        0 & B^{-1}
    \end{bmatrix}
    .
\end{split}
\end{equation}
Note that
\begin{gather}\label{eq:ocd}
       o^{-1}(c)=\{a\}\sqcup(o^{-1}(c)\cap E\Lambda_1)\,\text{ and }\,
    o^{-1}(d)=\{\bar{a}\}\sqcup (o^{-1}(d)\cap E\Lambda_2).
\end{gather}
Therefore, by~\eqref{eq:preEsplit5} and~\eqref{eq:ocd}, we rewrite~\eqref{eq:preEsplit1} as follows:
\begin{equation}\label{eq:preEsplit6}
\begin{split}
     \caZ_{\Gamma, a\to a}(s)^{-1} & = 1-\beta^{-s}B^{-1}(\bar{a},\bar{a})-\sum_{b\in o^{-1}(d)\cap E\Lambda_2}\caE^{\Gamma_2}(s)(a,b)\cdot B^{-1}(b,\bar{a})+\\
       &  \quad -a^{-s}A^{-1}(a,a)-\sum_{b\in o^{-1}(c)\cap E\Lambda_1}\caE^{\Gamma_1}(s)(\bar{a},b)\cdot A^{-1}(b,a).
\end{split}
\end{equation}
A formula analogous to~\eqref{eq:preEsplit1} holds for $\Gamma_1$, namely
\begin{gather*}
    \caZ_{\Gamma_1,a\to a}(s)^{-1}=\\
    1-\Bigg(\caE^{\Gamma_1}(s)(a,\bar{a})e_{\bar{a}}^{\Gamma_1}+\caE^{\Gamma_1}(s)(\bar{a},a)e_a^{\Gamma_1}+\sum_{b\in o^{-1}(c)\cap E\Lambda_1}\caE^{\Gamma_1}(s)(\bar{a},b)e_b^{\Gamma_1}\Bigg)\cdot\nonumber\\
    \cdot\Big(I^{\Gamma_1}-\caG_a^{\Gamma_1}(s)\Big)^{-1}\cdot (e_a^{\Gamma_1}+e_{\bar{a}}^{\Gamma_1})^t\nonumber
\end{gather*}
which, by~\eqref{eq:preEsplit5}, yields
\begin{equation}\label{eq:ZaG1}
    \caZ_{\Gamma_1,a\to a}(s)^{-1}\hspace{-0.1cm}=1-\beta^{-s}B(\bar{a},\bar{a})^{-1}-\alpha^{-s}A^{-1}(a,a)-\hspace{-0.7cm}\sum_{b\in o^{-1}(c)\cap E\Lambda_1}\hspace{-0.5cm}\caE^{\Gamma_1}(s)(\bar{a},b)\cdot A^{-1}(b,a).
\end{equation}
In a similar manner, we deduce that
\begin{equation}\label{eq:ZaG2}
   \caZ_{\Gamma_2,a\to a}(s)^{-1}
   \hspace{-0.1cm}=1-\alpha^{-s}A(a,a)^{-1}-\beta^{-s}B^{-1}(\bar{a},\bar{a})-\hspace{-0.7cm}\sum_{b\in o^{-1}(d)\cap E\Lambda_2}\hspace{-0.5cm}\caE^{\Gamma_2}(s)(a,b)\cdot B^{-1}(b,\bar{a}).  
\end{equation}
By Example~\ref{ex:1segmWLIT}, one also checks that
\begin{equation}\label{eq:ZaGa}
    \caZ_{\Gamma_3, a\to a}(s)^{-1}=\frac{1-\alpha^{-s}\beta^{-s}}{(1+\alpha^{-s})(1+\beta^{-s})}=1-\alpha^{-s}A(a,a)^{-1}-\beta^{-s}B(\bar{a},\bar{a})^{-1}.
\end{equation}
Combining~\eqref{eq:preEsplit6}, \eqref{eq:ZaG1}, \eqref{eq:ZaG2} and~\eqref{eq:ZaGa}, we conclude the claim.
\end{proof}
Note that the strategy to prove Proposition~\ref{prop:Esplit} strictly depends on the fact that $a$ has distinct endpoints (cf.~\eqref{eq:outdiag1}).

\begin{cor}\label{cor:Esplit}
   Let $(\Gamma,\omega)$ satisfy Setting~\hyperref[settG]{\emph{[$\Gamma$]}}. Assume that there are subgraphs~$\Lambda_1$ and~$\Lambda_2$ of~$\Gamma$ satisfying $\Gamma=\Lambda_1\cup\Lambda_2$ and such that $\Lambda_1\cap\Lambda_2$ is a $1$-segment graph with edge set $\{a,\bar{a}\}$. Then, for all subgraphs $\Gamma_1$ and~$\Gamma_2$ of~$\Gamma$ such that $\Gamma_i\supseteq\Lambda_i$ for every $i\in\{1,2\}$, we have
   \begin{equation*}
       \caZ_{\Gamma,a\to a}(s)^{-1}=\caZ_{\Gamma_1,a\to a}(s)^{-1}+\caZ_{\Gamma_2,a\to a}(s)^{-1}-\caZ_{\Gamma_1\cap\Gamma_2,a\to a}(s)^{-1}.
   \end{equation*}
\end{cor}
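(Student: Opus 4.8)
The plan is to mirror the proof of Corollary~\ref{cor:Vsplit}, using the edge-splitting Proposition~\ref{prop:Esplit} in place of the vertex-splitting Proposition~\ref{prop:Vsplit}. Write $\Gamma_3:=\Gamma_1\cap\Gamma_2$ and let $S:=\Lambda_1\cap\Lambda_2$, the $1$-segment with edge set $\{a,\bar a\}$; since $\Gamma_i\supseteq\Lambda_i\supseteq S$ for $i\in\{1,2\}$, the segment $S$ is contained in each of $\Gamma_1,\Gamma_2,\Gamma_3$. The first step is to record four graph decompositions, each of which has intersection exactly $S$:
\begin{align*}
  \Gamma &= \Lambda_1\cup\Lambda_2, & \Lambda_1\cap\Lambda_2 &= S;\\
  \Gamma_1 &= \Lambda_1\cup(\Lambda_2\cap\Gamma_1), & \Lambda_1\cap(\Lambda_2\cap\Gamma_1) &= S;\\
  \Gamma_2 &= (\Lambda_1\cap\Gamma_2)\cup\Lambda_2, & (\Lambda_1\cap\Gamma_2)\cap\Lambda_2 &= S;\\
  \Gamma_3 &= (\Lambda_1\cap\Gamma_2)\cup(\Lambda_2\cap\Gamma_1), & (\Lambda_1\cap\Gamma_2)\cap(\Lambda_2\cap\Gamma_1) &= S.
\end{align*}
The unions follow from $\Gamma=\Lambda_1\cup\Lambda_2$ together with $\Lambda_i\subseteq\Gamma_i$: e.g. $\Gamma_1=\Gamma_1\cap\Gamma=(\Gamma_1\cap\Lambda_1)\cup(\Gamma_1\cap\Lambda_2)=\Lambda_1\cup(\Lambda_2\cap\Gamma_1)$, and for $\Gamma_3$ one uses $\Gamma_3\cap\Lambda_1=\Gamma_1\cap\Gamma_2\cap\Lambda_1=\Gamma_2\cap\Lambda_1$ and symmetrically $\Gamma_3\cap\Lambda_2=\Gamma_1\cap\Lambda_2$. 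The intersection claims all reduce to $(\Lambda_1\cap\Lambda_2)\cap(\text{anything containing }S)=S$, i.e. $S\cap\Gamma_i=S$, valid since $S\subseteq\Lambda_i\subseteq\Gamma_i$.

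Next I would note that each subgraph occurring above inherits from $\omega$ an edge weight valued in $\Z_{\geq 2}$ with $\omega(b)\geq 3$ or $\omega(\bar b)\geq 3$ for every edge $b$, and is finite; these are the only features of Setting~\hyperref[settG]{[$\Gamma$]} used in the proof of Proposition~\ref{prop:Esplit} (if a subgraph happens to be disconnected, one passes to the connected component containing $a$, which does not change $\caZ_{\cdot,a\to a}$ by the remark following Definition~\ref{defn:Zu}). Hence Proposition~\ref{prop:Esplit} applies to each of the four decompositions, and, abbreviating $Z_X:=\caZ_{X,a\to a}(s)^{-1}$, it gives
\begin{gather*}
  Z_\Gamma = Z_{\Lambda_1}+Z_{\Lambda_2}-Z_S,\qquad Z_{\Gamma_1} = Z_{\Lambda_1}+Z_{\Lambda_2\cap\Gamma_1}-Z_S,\\
  Z_{\Gamma_2} = Z_{\Lambda_1\cap\Gamma_2}+Z_{\Lambda_2}-Z_S,\qquad Z_{\Gamma_3} = Z_{\Lambda_1\cap\Gamma_2}+Z_{\Lambda_2\cap\Gamma_1}-Z_S.
\end{gather*}
Adding the identities for $Z_{\Gamma_1}$ and $Z_{\Gamma_2}$ and subtracting that for $Z_{\Gamma_3}$ makes the terms $Z_{\Lambda_1\cap\Gamma_2}$, $Z_{\Lambda_2\cap\Gamma_1}$ and one copy of $Z_S$ cancel, leaving $Z_{\Gamma_1}+Z_{\Gamma_2}-Z_{\Gamma_3}=Z_{\Lambda_1}+Z_{\Lambda_2}-Z_S=Z_\Gamma$, which is precisely the claimed formula $\caZ_{\Gamma,a\to a}(s)^{-1}=\caZ_{\Gamma_1,a\to a}(s)^{-1}+\caZ_{\Gamma_2,a\to a}(s)^{-1}-\caZ_{\Gamma_1\cap\Gamma_2,a\to a}(s)^{-1}$.

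Since the algebra is a pure cancellation, the only genuine point of care — the closest thing to an obstacle — is the set-theoretic bookkeeping: verifying that in the three decompositions of $\Gamma_1$, $\Gamma_2$, $\Gamma_3$ the overlap is still the \emph{single} $1$-segment $S$ with edge set $\{a,\bar a\}$ (and not a larger graph), which is exactly where the hypothesis $\Gamma_i\supseteq\Lambda_i$ is needed; everything else is a verbatim transcription of the argument for Corollary~\ref{cor:Vsplit}.
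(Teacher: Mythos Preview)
Your proof is correct and follows essentially the same approach as the paper: both set up the identical four decompositions of $\Gamma$, $\Gamma_1$, $\Gamma_2$, and $\Gamma_3=\Gamma_1\cap\Gamma_2$ with common intersection the $1$-segment $S=\Lambda_1\cap\Lambda_2$, and then apply Proposition~\ref{prop:Esplit} to each. Your write-up is in fact more explicit than the paper's (which simply says ``Applying Proposition~\ref{prop:Esplit} to each decomposition yields the claim''), and your remark about passing to the connected component containing $a$ is a sensible clarification that the paper omits.
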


\begin{proof}
    Let $\Gamma_1$ and $\Gamma_2$ be as in the statement. For simplicity, set $\Gamma_3=\Gamma_1\cap\Gamma_2$ and $\Gamma_a=\Lambda_1\cap\Lambda_2$. Since $\Lambda_1\subseteq \Gamma_1$ and $\Lambda_2\subseteq \Gamma_2$, we have $\Lambda_1\cap \Gamma_2=\Lambda_1\cap \Gamma_3$ and $\Lambda_2\cap \Gamma_1=\Lambda_2\cap \Gamma_3$.
    Therefore, the following decompositions hold:
    \begin{gather}\label{eq:grdecE}
        \begin{array}{lcl}
           \Gamma=\Lambda_1\cup \Lambda_2 & \text{and} & \Lambda_1\cap \Lambda_2=\Gamma_a;\\
           \Gamma_1=\Lambda_1\cup (\Lambda_2\cap \Gamma_1) & \text{and} & \Lambda_1\cap (\Lambda_2\cap\Gamma_1)=\Gamma_a;\\
           \Gamma_2=(\Lambda_1\cap \Gamma_2)\cup\Lambda_2 & \text{and} &  (\Lambda_1\cap\Gamma_2)\cap \Lambda_2=\Gamma_a;\\
           \Gamma_3=(\Lambda_1\cap \Gamma_2)\cup (\Lambda_2\cap\Gamma_1) & \text{and} & (\Lambda_1\cap\Gamma_2)\cap (\Lambda_2\cap \Gamma_1)=\Gamma_a.
        \end{array}
    \end{gather}
    Applying Proposition~\ref{prop:Esplit} to each decomposition in \eqref{eq:grdecE} yields the claim.
\end{proof}
For completeness, in analogy to Lemma~\ref{lem:GSplit} we observe the following.
\begin{rem}\label{rem:GEsplit}
    Let $\Gamma$ be a connected graph without $n$-cycles for every $n\geq 2$, and let $a\in E\Gamma$. By~Lemma~\ref{lem:GSplit}, there are connected subgraphs $\Lambda_1$ and $\Lambda_2$ of $\Gamma$ such that $\Gamma=\Lambda_1\cup\Lambda_2$ and $\Lambda_1\cap\Lambda_2=\{o(a)\}$. Denote by~$\Gamma_1$ and~$\Gamma_2$ the smallest subgraphs of $\Gamma$ containing $\Lambda_1\cup\{a,\bar{a}\}$ and $\Lambda_2\cup\{a,\bar{a}\}$, respectively. Then $\Gamma=\Gamma_1\cup\Gamma_2$ and $\Gamma_1\cap\Gamma_2$ is the subgraph of $\Gamma$ with edge set $\{a,\bar{a}\}$.
\end{rem}

\smallskip

In view of the next proofs, it might be useful to recall the following well-known fact~\cite{tab:mat}. Given a $2\times 2$ block-matrix $M=[M_{ij}]_{1\leq i,j\leq 2}\in \Mat_n(\C)$ with $M_{22}$ invertible, one has
  \begin{equation}\label{eq:blockdet}
     \det(M)=\det(M_{22})\cdot \det(M_{11}-M_{12}M_{22}^{-1}M_{21}).
  \end{equation}
  
\begin{prop}\label{prop:Zasegm}
     Let $(\Gamma,\omega)$ satisfy Setting~\hyperref[settG]{\emph{[$\Gamma$]}}. Consider $a\in E\Gamma$ with $o(a)=:c\neq d:=t(a)$, and assume that $c$ is a terminal vertex in~$\Gamma$. Put $\omega(a)=\alpha+1$, $\omega(\bar{a})=\beta+1$, and denote by $\Lambda$ the graph obtained from $\Gamma$ by removing $a$, $\bar{a}$ and $c$. Then, 
     \begin{equation*}
     \begin{split}
         \caZ_{\Gamma,c\to c}(s)^{-1} &=\frac{(1+\alpha^{-s}\xi(\beta,s))\cdot\caZ_{\Lambda,d\to d}(s)^{-1}-\alpha^{-s}(\beta+1)^{-s}}{(1-\xi(\alpha,s)\xi(\beta,s))\cdot\caZ_{\Lambda,d\to d}(s)^{-1}+\xi(\alpha,s)(\beta+1)^{-s}};\\
        \caZ_{\Gamma,a\to a}(s)^{-1} &=\frac{(1+\alpha^{-s}\xi(\beta,s))\cdot\caZ_{\Lambda,d\to d}(s)^{-1}-\alpha^{-s}(\beta+1)^{-s}}{(1+\alpha^{-s}) \Big((1-\xi(\beta,s))\cdot \caZ_{\Lambda,d\to d}(s)^{-1}+(\beta+1)^{-s}\Big)},
        \end{split}
    \end{equation*}
    where $\xi(\alpha,s)=(\alpha+1)^{-s}-\alpha^{-s}$ and $\xi(\beta,s)=(\beta+1)^{-s}-\beta^{-s}$.
\end{prop}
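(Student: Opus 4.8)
The plan is to reduce everything to the already-established splitting machinery together with the explicit $1$-segment formula in Example~\ref{ex:1segmWLIT}. Since $c$ is terminal, the only edges of $\Gamma$ at $c$ are $a$ and $\bar{a}$; write $\Gamma=\Gamma_1\cup\Gamma_2$ where $\Gamma_1$ is the $1$-segment on $\{a,\bar{a}\}$ (edge set $\{a,\bar a\}$, vertices $c,d$) and $\Gamma_2$ is the smallest subgraph containing $\Lambda\cup\{a,\bar a\}$; then $\Gamma_1\cap\Gamma_2$ is the $1$-segment on $\{a,\bar a\}$. By Proposition~\ref{prop:Esplit} (applicable because $a$ has distinct endpoints),
\begin{equation*}
\caZ_{\Gamma,a\to a}(s)^{-1}=\caZ_{\Gamma_1,a\to a}(s)^{-1}+\caZ_{\Gamma_2,a\to a}(s)^{-1}-\caZ_{\Gamma_1\cap\Gamma_2,a\to a}(s)^{-1}=\caZ_{\Gamma_2,a\to a}(s)^{-1},
\end{equation*}
so in fact $\caZ_{\Gamma,a\to a}(s)^{-1}=\caZ_{\Gamma_2,a\to a}(s)^{-1}$. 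Thus the content reduces to two things: first, a formula for $\caZ_{\Gamma_2,a\to a}(s)^{-1}$ in terms of $\caZ_{\Lambda,d\to d}(s)^{-1}$, where $\Gamma_2$ is obtained from $\Lambda$ by attaching the pendant edge-pair $\{a,\bar a\}$ at $d$; and second, the passage from $\caZ_{\Gamma,a\to a}(s)^{-1}$ to $\caZ_{\Gamma,c\to c}(s)^{-1}$.

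For the first point I would run the block-matrix computation underlying Lemma~\ref{lem:ZWLITinv} directly on $\Gamma_2$. Order $E\Gamma_2=E\Lambda\sqcup\{a,\bar a\}$ so that, as in the proof of Proposition~\ref{prop:Esplit}, $I^{\Gamma_2}-\caG_a^{\Gamma_2}(s)$ decomposes into the block on $E\Lambda$ and the scalar $B(\bar a,\bar a)=1+\beta^{-s}$, with the only coupling coming through the edges of $o^{-1}(d)\cap E\Lambda$ and $t^{-1}(d)\cap E\Lambda$. Using $o^{-1}(c)=\{a\}$ together with the defining formula
\begin{equation*}
\caZ_{\Gamma_2,a\to a}(s)^{-1}=1-(e_a+e_{\bar a})\caE^{\Gamma_2}(s)\,(I^{\Gamma_2}-\caG_a^{\Gamma_2}(s))^{-1}(e_a+e_{\bar a})^t,
\end{equation*}
the sum over $E\Lambda$-edges that appears is exactly the one computed by Lemma~\ref{lem:ZWLITinv} applied to $(\Lambda,d)$: one recognizes $\sum_{b\in o^{-1}(d)\cap E\Lambda}\omega(b)^{-s}\,(I^\Lambda-\caG_d^\Lambda(s))^{-1}(b,b')=1-\caZ_{\Lambda,d\to d}(s)^{-1}$, and $(I^{\Gamma_2}-\caG_a^{\Gamma_2}(s))$ restricted to $E\Lambda$ coincides with $(I^\Lambda-\caG_d^\Lambda(s))$ up to the modification along $t^{-1}(d)$ that is precisely accounted for by the extra edge $\bar a\in o^{-1}(d)$. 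Carrying out the $2\times2$ reduction via \eqref{eq:blockdet} (Schur complement in the $\{a,\bar a\}$ block) and substituting $\caE^{\Gamma_2}(s)(a,\bar a)=\beta^{-s}$, $\caE^{\Gamma_2}(s)(\bar a,a)=\alpha^{-s}$ yields a ratio whose numerator and denominator are linear in $\caZ_{\Lambda,d\to d}(s)^{-1}$, with coefficients built from $\alpha^{-s},(\alpha+1)^{-s},\beta^{-s},(\beta+1)^{-s}$. Regrouping these coefficients as $\xi(\alpha,s)=(\alpha+1)^{-s}-\alpha^{-s}$ and $\xi(\beta,s)=(\beta+1)^{-s}-\beta^{-s}$ gives exactly the stated formula for $\caZ_{\Gamma,a\to a}(s)^{-1}$; the overall prefactor $(1+\alpha^{-s})$ in the denominator is the contribution of the Schur block $A(a,a)=1+\alpha^{-s}$ (equivalently, it is the $\caZ_{\Gamma_3,a\to a}(s)^{-1}$ normalisation seen in \eqref{eq:ZaGa}).

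For the second point, relate $\caZ_{\Gamma,c\to c}(s)^{-1}$ and $\caZ_{\Gamma,a\to a}(s)^{-1}$ using Lemma~\ref{lem:ZWLITinv} with $o^{-1}(c)=\{a\}$:
\begin{equation*}
\caZ_{\Gamma,c\to c}(s)^{-1}=1-\omega(a)^{-s}e_a(I^\Gamma-\caG_c^\Gamma(s))^{-1}e_c^t,
\end{equation*}
and compare with the expression for $\caZ_{\Gamma,a\to a}(s)^{-1}$; since $c$ is terminal, the only edge ending at $c$ is $\bar a$, so $e_c=e_{\bar a}$, and the two Green-function-type quantities differ only by the change $\caG_c\leftrightarrow\caG_a$, which again affects only the $\{a,\bar a\}$ block. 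Running the same Schur-complement reduction on this block produces the $c\to c$ numerator (which turns out to be the \emph{same} numerator $(1+\alpha^{-s}\xi(\beta,s))\caZ_{\Lambda,d\to d}(s)^{-1}-\alpha^{-s}(\beta+1)^{-s}$ as in the $a\to a$ case) over the $c\to c$ denominator $(1-\xi(\alpha,s)\xi(\beta,s))\caZ_{\Lambda,d\to d}(s)^{-1}+\xi(\alpha,s)(\beta+1)^{-s}$. As a consistency check one can verify the $\Lambda=\{d\}$ case: there $\caZ_{\Lambda,d\to d}(s)^{-1}=1$ and the two formulae must collapse to \eqref{eq:Z1seg}'s reciprocals, which is a short computation. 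The main obstacle is purely bookkeeping: correctly identifying, inside $I-\caG_\bullet^\Gamma(s)$, which off-diagonal entries along the pendant edge $\{a,\bar a\}$ survive and matching the residual $E\Lambda$-sum with $1-\caZ_{\Lambda,d\to d}(s)^{-1}$ rather than with some twisted variant; once the block structure is pinned down exactly as in the proof of Proposition~\ref{prop:Esplit}, the rest is the $2\times2$ Schur-complement algebra of \eqref{eq:blockdet} and the regrouping into $\xi(\alpha,s),\xi(\beta,s)$.
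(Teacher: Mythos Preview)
Your overall strategy---isolate the edge pair $\{a,\bar a\}$, block-decompose, take a Schur complement, and recognise the cross-term as coming from $\caZ_{\Lambda,d\to d}$---is exactly the paper's. Two points in your write-up need correction.

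First, the opening appeal to Proposition~\ref{prop:Esplit} is vacuous: with $c$ terminal, your $\Gamma_2$ is all of $\Gamma$ and $\Gamma_1=\Gamma_1\cap\Gamma_2$, so the splitting identity reads $\caZ_{\Gamma,a\to a}^{-1}=\caZ_{\Gamma,a\to a}^{-1}$ and reduces nothing.

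Second, and this is the substantive slip, the restriction of $I^\Gamma-\caG_a^\Gamma(s)$ to $E\Lambda\times E\Lambda$ is $I^\Lambda-\caE^\Lambda(s)$, \emph{not} $I^\Lambda-\caG_d^\Lambda(s)$: by \eqref{eq:preEsplit3}, for $b_1\in E\Lambda$ the indicator $\mathbbm{1}_{\{a,\bar a\}}(b_1)$ vanishes, leaving only $(I-\caE)(b_1,b_2)$. Consequently the sum you must recognise is
\[
\sum_{b\in o^{-1}(d)\cap E\Lambda}\omega(b)^{-s}\,e_b^\Lambda\,(I^\Lambda-\caE^\Lambda(s))^{-1}(e_d^\Lambda)^t \;=\; \caZ_{\Lambda,d\to d}(s)-1,
\]
which follows from Fact~\ref{fact:mdl} exactly as in the proof of Theorem~\ref{thm:meroWLIT}; it is \emph{not} $1-\caZ_{\Lambda,d\to d}(s)^{-1}$ (that would require $(I^\Lambda-\caG_d^\Lambda(s))^{-1}$, which is not what appears). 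The paper avoids this trap by starting directly from $\caZ_{\Gamma,u\to u}(s)^{-1}=\det(I-\caE^\Gamma(s))/\det(I-\caE^\Gamma(s)+\caU_{u,u}^\Gamma(s))$, block-decomposing $\caE^\Gamma(s)=\begin{bmatrix}A&B\\C&D\end{bmatrix}$ with $D=\caE^\Lambda(s)$, and applying \eqref{eq:blockdet} to reduce both numerator and denominator to $2\times2$ determinants; terminality of $c$ forces $B_2=0$ and $C^1=0$, so the only nontrivial Schur entry is $X(a,\bar a)=(\beta+1)^{-s}(\caZ_{\Lambda,d\to d}(s)-1)$. Once you correct the block identification, your computation and the paper's are the same.
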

The picture below sketches a possible setting for Proposition~\ref{prop:Zasegm}. The edges of $\Lambda$ are dashed.
\begin{equation*}
{\xymatrix@C=1em{&&&\bullet\ar@{-->}@/^/[ld]\ar@/^2.0pc/@{..}[dd]\\
 \bullet_c\ar@{->}@/^/[rr]^{a}&&\bullet_{d}\ar@{->}@/^/[ll]^{\bar{a}}\ar@{-->}@/^/[ru]\ar@{-->}@/^/[rd]&\\
 &&&\bullet\ar@{-->}@/^/[lu]}}
\end{equation*}

\begin{proof}
    Let $s\in \C$ be such that $\caZ_{\Gamma,a\to a}(s)\neq 0$, and consider a total order $\leq$ on $E\Gamma$ such that $a<\bar{a}<b$ for all $b\in E\Lambda$. Then $\caE^\Gamma(s)$ admits the following block decomposition:
    \begin{equation}\label{eq:blockEs}
        \caE^\Gamma(s)=\begin{bmatrix}
        A & B\\
        C & D
        \end{bmatrix},
    \end{equation}
    where $A=[\caE^\Gamma(s)(b_1,b_2)]_{b_1,b_2\in \{a,\bar{a}\}}$, $B=[\caE^\Gamma(s)(b_1,b_2)]_{b_1\in \{a,\bar{a}\},\,b_2\in E\Lambda}$, $C=[\caE^\Gamma(s)(b_1,b_2)]_{b_1\in E\Lambda,\,b_2\in \{a,\bar{a}\}}$ and $D=[\caE^\Gamma(s)(b_1,b_2)]_{b_1,b_2\in E\Lambda}=\caE^\Lambda(s)$.
    For $i\in\{1,2\}$, let $B_i$ and $C^i$ denote the $i$-th row of $B$ and the $i$-th column of $C$, respectively. One checks that
   \begin{equation}\label{eq:BC12}
       B_1=\sum_{b\in o^{-1}(d)\cap E\Lambda}\omega(b)^{-s}e_b^\Lambda;\quad B_2=\underline{0};\quad C^1=\underline{0}^t\quad\text{and}\quad C^2=(\beta+1)^{-s}(e_d^\Lambda)^t,
   \end{equation}
    where $\underline{0}$ denotes the row zero vector in~$\C^{|E\Lambda|}$. Moreover, denote by~$E_{21}$ and~$\underline{1}$ the elementary matrix associated to $(2,1)$  in~$\Mat_2(\C)$ and the matrix in~$\Mat_2(\C)$ with all entries equal to~$1$, respectively. Hence,
    \begin{equation*}
    \begin{split}
    \caU_{cc}^\Gamma(s) & =(\alpha+1)^{-1}(e_{\bar{a}}^\Gamma)^t(e_a^\Gamma)=\begin{bmatrix}
        (\alpha+1)^{-s}E_{21} & 0_{2\times |E\Lambda|}\\
         0_{|E\Lambda|\times 2} & 0_{|E\Lambda|\times |E\Lambda|}
    \end{bmatrix}
    ;\\
        \caU_{aa}^\Gamma(s) & =(e_a^\Gamma+e_{\bar{a}}^\Gamma)^t(e_a^\Gamma+e_{\bar{a}}^\Gamma)\caE^\Gamma(s)=\begin{bmatrix}
            \underline{1}\cdot A & \underline{1}\cdot B\\
            0_{|E\Lambda|\times 2} & 0_{|E\Lambda|\times |E\Lambda|}
        \end{bmatrix}.
    \end{split}
    \end{equation*}
    Denoting by $I_2$ the identity matrix in $\Mat_2(\C)$, the following holds:
    \begin{equation}\label{eq:Z1segm}
    \begin{split}
        I^\Gamma-\caE^\Gamma(s) & =\begin{bmatrix}
            I_2-A & -B\\
            -C & I_\Lambda-D
        \end{bmatrix};\\
        I^\Gamma-\caE^\Gamma(s)+\caU_{c,c}^\Gamma(s) & =\begin{bmatrix}
            I_2-A+(\alpha+1)^{-s}E_{21} & -B\\
            -C & I_\Lambda-D
        \end{bmatrix};\\
        I^\Gamma-\caE^\Gamma(s)+\caU_{a,a}^\Gamma(s) & =\begin{bmatrix}
            I_2+(\underline{1}-I_2)A & (\underline{1}-I_2)B\\
            -C & I_\Lambda-D
        \end{bmatrix};\\
        I_2-A=\begin{bmatrix}
            1 & -\beta^{-s}\\
            -\alpha^{-s} & 1
        \end{bmatrix}; & \qquad
        I_2+(\underline{1}-I_2)A=\begin{bmatrix}
        1+\alpha^{-s} & 0\\
        0 & 1+\beta^{-s}
        \end{bmatrix}.
        \end{split}
    \end{equation}
   By Theorem~\ref{thm:meroWLIT}, \eqref{eq:blockdet} and~\eqref{eq:Z1segm}, we deduce that 
   \begin{equation}\label{eq:Z1segm.c}
       \begin{split}
           \caZ_{\Gamma,c\to c}(s)^{-1} & = \frac{\det(I^\Gamma-\caE^\Gamma(s))}{\det(I-\caE^\Gamma(s)+\caU_{c,c}^\Gamma(s))}\\
            & \hspace{-0.7cm}=\frac{\det\Big(I_2-A-\overbrace{B(I_\Lambda-D)^{-1}C}^{=:X}\Big)}{\det\Big(I_2-A+(\alpha+1)^{-s}E_{21}-\underbrace{B(I_\Lambda-D)^{-1}C}_{=X}\Big)}\\
            & \hspace{-0.7cm}=\frac{(1-X(a,a))(1-X(\bar{a},\bar{a}))-(X(a,\bar{a})+\beta^{-s})(X(\bar{a},a)+\alpha^{-s})}{(1-X(a,a))(1-X(\bar{a},\bar{a}))+(\xi(\alpha,s)-X(\bar{a},a))(\beta^{-s}+X(a,\bar{a}))}.
       \end{split}
   \end{equation}
   Similarly,
   \begin{equation}\label{eq:Z1segm.2}
       \begin{split}
           \caZ_{\Gamma,a\to a}(s)^{-1} & = \frac{\det(I^\Gamma-D)}{\det(I-D+\caU_{a,a}^\Gamma(s))}\\
            &\hspace{-0.7cm}=\frac{\det\Big(I_2-A-\overbrace{B(I_\Lambda-D)^{-1}C}^{=X}\Big)}{\det\Big(I_2+(\underline{1}-I_2)A+\underbrace{(\underline{1}-I_2)B(I_\Lambda-D)^{-1}C}_{=:Y}\Big)}\\
            &\hspace{-0.7cm}=\frac{(1-X(a,a))(1-X(\bar{a},\bar{a}))-(X(a,\bar{a})+\beta^{-s})(X(\bar{a},a)+\alpha^{-s})}{(1+\alpha^{-s}+Y(a,a))(1+\beta^{-s}+Y(\bar{a},\bar{a}))-Y(a,\bar{a})Y(\bar{a},a)}.
       \end{split}
   \end{equation}
    It remains to study the entries of $X$ and $Y$. Observe that $(\underline{1}-I_2)B$ is the matrix obtained from $B$ by interchanging its two rows. Since $B_2=\underline{0}$ and $C^1=\underline{0}^t$, we deduce the following:
    \begin{equation*}
        \begin{split}
            X(a,a)& =Y(\bar{a},a)=B_1(I_\Lambda-\caE^\Lambda(s))^{-1}C^1=0;\\ 
            X(\bar{a},a)& =Y(a,a)=B_2(I_\Lambda-\caE^\Lambda(s))^{-1}C^1=0;\\
           X(\bar{a},\bar{a}) & =Y(a,\bar{a})=B_2(I_\Lambda-\caE^\Lambda(s))^{-1}C^2=0.
        \end{split}
    \end{equation*}
    Moreover, by Theorem~\ref{thm:meroWLIT}, Fact~\ref{fact:mdl} and since $\caE^\Lambda(s)=D$, 
    \begin{equation*}
    \begin{split}
        X(a,\bar{a})=Y(\bar{a},\bar{a})& =B_1(I_\Lambda-\caE^\Lambda(s))^{-1}C^2\\
         & =(\beta+1)^{-s}\cdot\hspace{-0.5cm}\sum_{b\in o^{-1}(d)\cap E\Lambda}\hspace{-0.5cm}\omega(b)^{-s}e_b^\Lambda(I_\Lambda-\caE^\Lambda(s))^{-1}(e_d^{\Lambda})^t\\
         & =(\beta+1)^{-s}(\caZ_{\Lambda,d\to d}(s)-1).\qedhere
        \end{split}
    \end{equation*}
    The claim now follows by substitution and elementary algebraic manipulations.
\end{proof}

\begin{prop}\label{prop:Zaloop} 
    Let $(\Gamma,\omega)$ satisfy Setting~\hyperref[settG]{\emph{[$\Gamma$]}}, and consider $a\in E\Gamma$ with $o(a)=t(a)=c$. Set $\omega(a)=\alpha+1$, $\omega(\bar{a})=\beta+1$, and consider the subgraph 
 of $\Gamma$ given by $\Lambda:=\Gamma\setminus\{a,\bar{a}\}$. Then, 
    \begin{equation*}
        \caZ_{\Gamma,a\to a}(s)^{-1}=\frac{\xi_1(\alpha,\beta)\cdot \caZ_{\Lambda,c\to c}(s)^{-1}-\eta(\alpha,\beta)}{\xi_2(\alpha,\beta)\cdot \caZ_{\Lambda,c\to c}(s)^{-1}+\eta(\alpha,\beta)},
    \end{equation*}
    where 
    \begin{equation*}
        \begin{split}
            \xi_1(\alpha,\beta) & =1-\Big(\alpha^{-s}-(\alpha+1)^{-s}\Big)\Big(\beta^{-s}-(\beta+1)^{-s}\Big);\\
            \xi_2(\alpha,\beta) & =\Big(1+\alpha^{-s}-(\alpha+1)^{-s}\Big)\Big(1+\beta^{-s}-(\beta+1)^{-s}\Big);\\
            \eta(\alpha,\beta) & =(\alpha^{-s}+1)(\beta+1)^{-s}+(\alpha+1)^{-s}(\beta^{-s}+1)-2(\alpha+1)^{-s}(\beta+1)^{-s}.
        \end{split}
    \end{equation*}
\end{prop}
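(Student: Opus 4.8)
The plan is to run the linear‑algebra argument used in the proof of Proposition~\ref{prop:Zasegm}, with one structural adjustment dictated by the present geometry: since both $a$ and $\bar a$ are loops at $c$, the interaction between the edge‑pair $\{a,\bar a\}$ and the rest of $\Gamma$ passes entirely through the single vertex $c$, so there is no diagonal‑block decomposition as in Proposition~\ref{prop:Esplit}; instead one is forced to form the genuine Schur complement of the $E\Lambda$‑block. It suffices to establish the identity for $s\in\C$ avoiding a discrete set, since both sides are meromorphic in $s$ (the right‑hand side because $\caZ_{\Lambda,c\to c}(s)$ is meromorphic by Theorem~\ref{thm:meroWLIT}, $\Lambda$ with its restricted edge weight still satisfying Setting~\hyperref[settG]{[$\Gamma$]} on the connected component of $c$).

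So I would fix $s$ with $\caZ_{\Gamma,a\to a}(s)\neq 0$ and $I^\Lambda-\caE^\Lambda(s)$ invertible, pick a total order on $E\Gamma$ in which $a<\bar a<b$ for every $b\in E\Lambda$, and decompose $\caE^\Gamma(s)$ into a $2\times 2$ block $A=[\caE^\Gamma(s)(b_1,b_2)]_{b_1,b_2\in\{a,\bar a\}}$, the off‑diagonal blocks $B=[\caE^\Gamma(s)(b_1,b_2)]_{b_1\in\{a,\bar a\},\,b_2\in E\Lambda}$ and $C=[\caE^\Gamma(s)(b_1,b_2)]_{b_1\in E\Lambda,\,b_2\in\{a,\bar a\}}$, and the $E\Lambda$‑block $D=\caE^\Lambda(s)$. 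Computing $\Ne$ along the length‑two paths at $c$ (as recorded in Example~\ref{ex:1loopWLIT}) gives $\caE^\Gamma(s)(a,a)=(\alpha+1)^{-s}$, $\caE^\Gamma(s)(a,\bar a)=\beta^{-s}$, $\caE^\Gamma(s)(\bar a,a)=\alpha^{-s}$, $\caE^\Gamma(s)(\bar a,\bar a)=(\beta+1)^{-s}$. The decisive observation — and exactly the point where this case departs from the terminal‑vertex case of Proposition~\ref{prop:Zasegm}, in which one row of $B$ and one column of $C$ were zero — is that now \emph{both} rows of $B$ equal $\beta_0:=\sum_{b\in o^{-1}(c)\cap E\Lambda}\omega(b)^{-s}e_b^\Lambda$, while the two columns of $C$ are $(\alpha+1)^{-s}(e_c^\Lambda)^t$ and $(\beta+1)^{-s}(e_c^\Lambda)^t$, where $e_c^\Lambda=\sum_{b\in t^{-1}(c)\cap E\Lambda}e_b^\Lambda$.

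Consequently the Schur complement $X:=B(I^\Lambda-\caE^\Lambda(s))^{-1}C$ has every entry of the form $X(b_1,a)=(\alpha+1)^{-s}Q$, $X(b_1,\bar a)=(\beta+1)^{-s}Q$ with $Q:=\beta_0(I^\Lambda-\caE^\Lambda(s))^{-1}(e_c^\Lambda)^t$; and, just as $X(a,\bar a)$ was handled in the proof of Proposition~\ref{prop:Zasegm} — namely, apply Theorem~\ref{thm:meroWLIT} to $\caZ_{\Lambda,c\to c}(s)$ together with Fact~\ref{fact:mdl} — one identifies $Q=\caZ_{\Lambda,c\to c}(s)-1$. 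On the other side, $\epsilon_a(a)=0$ in Theorem~\ref{thm:meroWLIT}, so $\caZ_{\Gamma,a\to a}(s)^{-1}=\det(I^\Gamma-\caE^\Gamma(s))/\det(I^\Gamma-\caE^\Gamma(s)+\caU_{a,a}^\Gamma(s))$; since $\caU_{a,a}^\Gamma(s)$ is supported on the rows indexed by $a,\bar a$, where it equals $\underline 1$ times the corresponding rows of $\caE^\Gamma(s)$ ($\underline 1$ the $2\times 2$ all‑ones matrix), the block‑determinant formula~\eqref{eq:blockdet} reduces both determinants to the $\{a,\bar a\}$‑block and gives $\caZ_{\Gamma,a\to a}(s)^{-1}=\det(I_2-A-X)\big/\det\bigl(I_2+(\underline 1-I_2)(A+X)\bigr)$.

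The final step is bookkeeping: substitute the entries of $A$ and of $X$ into these two $2\times 2$ determinants, replace $1+Q$ by $\caZ_{\Lambda,c\to c}(s)$, write $z:=\caZ_{\Lambda,c\to c}(s)^{-1}$, and clear the factor $1+Q=z^{-1}$ from top and bottom. Expanding and grouping by powers of $z$ turns the numerator into $\xi_1(\alpha,\beta)\,z-\eta(\alpha,\beta)$ and the denominator into $\xi_2(\alpha,\beta)\,z+\eta(\alpha,\beta)$ with precisely the stated $\xi_1,\xi_2,\eta$; as a consistency check, the degenerate case $E\Lambda=\emptyset$ (so $z=1$) reproduces the formula for $\caZ_{\Gamma,a\to a}(s)$ in Example~\ref{ex:1loopWLIT}. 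The only genuinely delicate point is the rank‑one description of the mixed blocks $B$ and $C$ in the second paragraph — the feature that distinguishes the loop case from both Proposition~\ref{prop:Esplit} (where the matrices block‑diagonalise) and Proposition~\ref{prop:Zasegm} (where $B$ and $C$ each have a vanishing row or column); once that is in place, everything else is the routine determinant expansion just described.
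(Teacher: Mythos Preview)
Your proposal is correct and follows essentially the same approach as the paper's proof: the paper likewise carries over the block decomposition and Schur-complement argument from Proposition~\ref{prop:Zasegm}, records that now $B_1=B_2=\sum_{b\in o^{-1}(c)\cap E\Lambda}\omega(b)^{-s}e_b^\Lambda$ and $C^i=(\omega(\text{relevant edge}))^{-s}(e_c^\Lambda)^t$, observes that consequently $Y=X$, identifies the entries of $X$ as $(\alpha+1)^{-s}(\caZ_{\Lambda,c\to c}(s)-1)$ and $(\beta+1)^{-s}(\caZ_{\Lambda,c\to c}(s)-1)$, and finishes by the same $2\times 2$ determinant expansion. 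Your formulation of the denominator as $\det\bigl(I_2+(\underline{1}-I_2)(A+X)\bigr)$ is equivalent to the paper's $\det\bigl(I_2+(\underline{1}-I_2)A+Y\bigr)$ precisely because $X$ has equal rows, so $(\underline{1}-I_2)X=X=Y$.
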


The picture below sketches a possible setting for Proposition~\ref{prop:Zaloop}. The edges of~$\Lambda$ are dashed.
\begin{equation*}
{\xymatrix@C=1em{ &\bullet\ar@{-->}@/^/[ld]\ar@/^2.0pc/@{..}[dd]\\
 \bullet_c\ar@(l,u)^{a}\ar@(l,d)_{\bar{a}}\ar@{-->}@/^/[ru]\ar@{-->}@/^/[rd]&\\
 &\bullet\ar@{-->}@/^/[lu]}}
\end{equation*}

\begin{rem}\label{rem:Zaloop}
    In Proposition~\ref{prop:Zaloop}, if $\alpha=\beta$, after elementary manipulations the given formula becomes
    \begin{equation*}
        \caZ_{\Gamma,a\to a}(s)^{-1}=\frac{(1-\alpha^{-s}+(\alpha+1)^{-s})\cdot \caZ_{\Lambda,c\to c}(s)^{-1}-2(\alpha+1)^{-s}}{(1+\alpha^{-s}-(\alpha+1)^{-s})\cdot \caZ_{\Lambda,c\to c}(s)^{-1}+2(\alpha+1)^{-s}}.
    \end{equation*}
\end{rem}

\begin{proof}[Proof of Proposition~\ref{prop:Zaloop}]
  The strategy of the proof is analogous to the one of Proposition~\ref{prop:Zasegm}. Thus, we keep the same notation and proof structure, and we only specify what needs to be changed.
  First, instead of~\eqref{eq:BC12}, the rows $B_1$ and $B_2$ of $B$ and the columns $C^1$ and $C^2$ of $C$ are the following:
   \begin{equation}\label{eq:1L1}
    \begin{split}
    B_1 &=B_2=\sum_{b\in o^{-1}(c)\cap E\Lambda}\omega(b)^{-s}e_b^{\Lambda};\\
    C^1 &=\sum_{b\in t^{-1}(c)\cap E\Lambda}(\alpha+1)^{-s}(e_b^{\Lambda})^t=(\alpha+1)^{-s}(e_c^\Lambda)^t;\\
    C^2&=\sum_{b\in t^{-1}(c)\cap E\Lambda}(\beta+1)^{-s}(e_b^{\Lambda})^t=(\beta+1)^{-s}(e_c^\Lambda)^t.
    \end{split}
 \end{equation}
 Moreover, in~\eqref{eq:Z1segm} the only matrices that change are the following:
 \begin{equation}\label{eq:Z1loop.1}
 \begin{split}
     I_2-A & =\begin{bmatrix}
         1-(\alpha+1)^{-s} & -\beta^{-s}\\
         -\alpha^{-s} & 1-(\beta+1)^{-s}
     \end{bmatrix};\\
     I_2+(\underline{1}-I_2)A & =\begin{bmatrix}
         1+\alpha^{-s} & (\beta+1)^{-s}\\
         (\alpha+1)^{-s} & 1+\beta^{-s}
     \end{bmatrix}.
    \end{split}
 \end{equation}
 Analogously to~\eqref{eq:Z1segm.2}, we deduce that
 \begin{equation}\label{eq:Z1loop.2}
 \begin{split}
     \caZ_{\Gamma,a\to a}(s)^{-1} & =\frac{\det\Big(I_2-A-\overbrace{B(I_\Lambda-D)^{-1}C}^{=X}\Big)}{\det\Big(I_2+(\underline{1}-I_2)A+\underbrace{(\underline{1}-I_2)B(I_\Lambda-D)^{-1}C}_{=Y}\Big)}.
 \end{split}
 \end{equation}
 In this case $X=Y$, because $B_1=B_2$ and then $(\underline{1}-I_2)B=B$. Moreover, recalling Theorem~\ref{thm:meroWLIT} and Fact~\ref{fact:mdl}, we have
 \begin{equation}\label{eq:Z1loop.3}
     \begin{split}
         X(a,a)& =X(\bar{a},a)=B_1(I_\Lambda-D)^{-1}C^1\\
               & =(\alpha+1)^{-s}\cdot\sum_{b\in o^{-1}(c)\cap E\Lambda}\omega(b)^{-s}e_b^\Lambda(I_\Lambda-\caE^\Lambda(s))^{-1}(e_c^\Lambda)^t\\
               & =(\alpha+1)^{-s}(\caZ_{\Lambda,c\to c}(s)-1).
    \end{split}
 \end{equation}
 Similarly, 
 \begin{equation}\label{eq:Z1loop.4}
     X(a,\bar{a})=X(\bar{a},\bar{a})=B_1(I_\Lambda-D)^{-1}C^2=(\beta+1)^{-s}(\caZ_{\Lambda,c\to c}(s)-1).
 \end{equation}
 The statement now follows by~\eqref{eq:Z1loop.3}, \eqref{eq:Z1loop.4} and elementary algebraic manipulations.
\end{proof}

\section{The behaviour at $s=-1$}\label{s:euler}
The main goal of this section is to prove that the Euler--Poincaré identity~\eqref{eq:chi-1} holds for a unimodular t.d.l.c.~group $G$ having an action on a tree as prescribed by Corollary~\hyperref[corF]{F}. To achieve this, we first give a formula of the relevant Euler--Poincaré characteristic in terms of local data of the action (cf.~Proposition~\ref{prop:chitree}) and then use the splitting formulae of Section~\ref{s:split} to prove Theorem~\hyperref[thmE]{E} and Corollary~\hyperref[corF]{F}. 
Finally, given an edge-weighted graph $(\Gamma,\omega)$, we exploit the machinery introduced in this paper to relate the behaviour of $\caZ_{\Gamma,u\to u}(-1)$ with the behaviour at $1$ of a suitable weighted Ihara zeta function associated to $\Gamma$ (cf.~Section~\ref{sus:ih-1}).

\subsection{The Euler--Poincaré characteristic}\label{sus:eul}
According to~\cite[\S5]{ccw:zeta}, every unimodular t.d.l.c.~group $G$ of type $\mathrm{FP}$ (with respect to the category of discrete left $\QG$-modules) admits an \emph{Euler--Poincaré characteristic} $\tchi_G$. For every compact open subgroup $K\leq G$, this invariant is a determined rational multiple of the Haar measure $\mu_K$ on $G$ normalised with respect to $K$, written $\tchi_G=\chi(G,\mu_K)\cdot \mu_K$. By the uniqueness of the Haar measure on $G$ up to a positive real rescaling, for every Haar measure $\mu$ on $G$ there is a unique real number $\chi(G,\mu)$ such that
\begin{equation*}
    \widetilde{\chi}_G=\chi(G,\mu)\cdot \mu.
\end{equation*}
Moreover, if $\mu$ and $\mu'=c\cdot \mu$ ($c\in \R_{>0}$) are Haar measures on $G$, then
\begin{equation}\label{eq:mumu'}
    \chi(G,\mu)=c\cdot\chi(G,\mu').
\end{equation} 

In the present note, we focus on the case of t.d.l.c.~groups acting on a tree with compact open vertex stabilisers and finite quotient graph. For those groups, their unimodularity and their Euler--Poincaré characteristic can be characterised in terms of local data of the action as shown in Proposition~\ref{prop:unimod} and Proposition~\ref{prop:chitree}, respectively.

\begin{prop}[\protect{\cite[Propositions~1.2~and~3.6]{baku}, \cite[\S 3.6]{car}}]\label{prop:unimod}
    Let $G$ be a t.d.l.c. group acting on a tree $T$ with compact open vertex stabilisers and without edge inversions. Let $\Gamma$ be the quotient graph of $(G,T)$ and denote by $\omega$ its standard edge weight. Then $G$ is unimodular if and only if, for every closed path $(a_1, \ldots, a_n)$ in~$\Gamma$,
    \begin{equation}\label{eq:uni}
        \prod_{i=1}^n\omega(a_i)=\prod_{i=1}^n\omega(\bar{a}_i).
    \end{equation}
\end{prop}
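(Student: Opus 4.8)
The statement to prove is Proposition~\ref{prop:unimod}, which is attributed to \cite{baku} and \cite{car}, so the proof is really a matter of recalling the standard Bass--Serre-theoretic computation of the modular function for a group acting on a tree with compact open vertex stabilisers. Here is how I would organize it.

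\textbf{Setup and reduction to a covolume computation.} Fix a compact open vertex stabiliser $G_{v_0}$ and normalise the Haar measure $\mu$ so that $\mu(G_{v_0})=1$. For a unimodular group, $G$ is unimodular iff the modular function $\Delta_G\colon G\to\R_{>0}$ is trivial. The first step is to recall that for $g\in G$ one has $\Delta_G(g)=\mu(gKg^{-1})/\mu(K)=[K:K\cap gKg^{-1}]/[gKg^{-1}:K\cap gKg^{-1}]$ for any compact open $K$, and to apply this with $K=G_{v_0}$ and $g$ chosen to realise a given closed path in $\Gamma$. Concretely, for each $a\in E\Gamma$ pick a lift $e_a\in ET$ and an element $g_a\in G$ carrying $o(e_a)$ to a translate of $o(e_{\bar a})$ appropriately, so that $g_a$ shifts along the edge $a$; then $[G_{o(e_a)}:G_{e_a}]=\omega(a)$ and $[G_{t(e_a)}:G_{e_a}]=\omega(\bar a)$ by definition of the standard edge weight \eqref{eq:w}. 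Composing these along a closed path $(a_1,\dots,a_n)$ in $\Gamma$ produces an element $g=g_{a_n}\cdots g_{a_1}\in G_{v_0}\backslash\!\!\backslash G_{v_0}$ (after adjusting by stabiliser elements) whose modular function value is exactly $\prod_i \omega(\bar a_i)/\prod_i\omega(a_i)$.

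\textbf{The two implications.} For the ``only if'' direction, if $G$ is unimodular then $\Delta_G(g)=1$ for the element $g$ just constructed, giving \eqref{eq:uni}. For the ``if'' direction, one must show that \eqref{eq:uni} for all closed paths forces $\Delta_G\equiv 1$. This uses that $G$ is generated (topologically) by $G_{v_0}$ together with the elements $g_a$ used to move between orbit representatives of vertices and to traverse the edges of $\Gamma$ --- this is precisely the Bass--Serre presentation of $G$ as the fundamental group of the graph of groups $G\backslash\!\!\backslash T$. Since $\Delta_G$ is a continuous homomorphism to an abelian group, it kills the compact subgroup $G_{v_0}$ and is determined by its values on the $g_a$; the relations of the graph of groups, together with the hypothesis \eqref{eq:uni} applied to the fundamental closed paths, show $\Delta_G$ is trivial on all of $G$. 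Alternatively, and perhaps more cleanly, one can invoke \cite[\S3.6]{car} directly: the covolume formula there expresses $\mathrm{vol}(G\backslash\!\!\backslash T)$ and the unimodularity criterion in exactly the form \eqref{eq:uni}, and one simply transcribes it.

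\textbf{Main obstacle.} The only genuinely delicate point is bookkeeping: making the choice of edge lifts, orbit representatives, and connecting elements $g_a$ coherent enough that the product along a closed path in $\Gamma$ really returns to an element of $G_{v_0}$ (rather than merely to some conjugate), so that the ratio $[G_{v_0}:G_{v_0}\cap gG_{v_0}g^{-1}]/[gG_{v_0}g^{-1}:G_{v_0}\cap gG_{v_0}g^{-1}]$ telescopes to $\prod\omega(a_i)/\prod\omega(\bar a_i)$. This is handled exactly as in the proof of \cite[Proposition~3.6]{baku}. I would therefore keep the proof short: state the modular-function identity for compact open subgroups, recall the edge-traversal elements and the identities $[G_{o(e_a)}:G_{e_a}]=\omega(a)$, $[G_{t(e_a)}:G_{e_a}]=\omega(\bar a)$, note that $G$ is generated by vertex stabilisers and these elements, and conclude by citing \cite[Propositions~1.2 and~3.6]{baku} and \cite[\S3.6]{car} for the details of the telescoping argument. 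No new ideas are needed beyond what those references supply, so a two-paragraph write-up referencing them is appropriate here.
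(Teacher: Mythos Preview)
The paper does not give its own proof of this proposition: it is stated with attribution to \cite[Propositions~1.2~and~3.6]{baku} and \cite[\S 3.6]{car} and then used as a black box. Your proposal is a reasonable sketch of the standard argument found in those references, so there is nothing to compare against here; a short sentence deferring to the cited sources, as the paper does, is all that is required.
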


\begin{rem}\label{rem:Nuni}
    Let $\frp=(a_1,\ldots, a_m)$ and $\frq=(b_1,\ldots, b_n)$  be reduced paths in~$\Gamma$ with $o(a_1)=o(b_1)$ and $t(a_m)=t(b_n)$. Hence $\Nv(\frp)=\prod_{i=1}^m\omega(a_i)$. Moreover, $\Ne(\frp)=1$ if $m=1$ and $\Ne(\frp)=\prod_{i=2}^m\omega(a_i)$ if $m\geq 2$. Similar observations hold for $\overline{\frp}$, $\frq$ and $\overline{\frq}$.
    By~Proposition~\ref{prop:unimod}, we deduce that
    \begin{equation*}
    \Nv(\frp)\Nv(\overline{\frq})=\Nv(\frq)\Nv(\overline{\frp}).
    \end{equation*}
    If in particular $a_m=b_n$, we also have
    \begin{equation*}
    \Nv(\frp)\Ne(\overline{\frq})=\Nv(\frq)\Ne(\overline{\frp}).
    \end{equation*}
    Moreover, if $a_1=b_1$ and $a_m=b_n$ then
    \begin{equation*}
    \Ne(\frp)\Ne(\overline{\frq})=\Ne(\frq)\Ne(\overline{\frp}).
    \end{equation*}
\end{rem}

\begin{thm}[\protect{\cite[Theorem~5.6]{ccw:zeta}}]\label{thm:chitree}
    Let~$G$ be a unimodular t.d.l.c.~group acting on a tree $T$ without edge inversions, with compact open vertex stabilisers and with finite quotient graph. Let~$\caV\subseteq VT$ and~$\caE^+\subseteq ET$ be sets of representatives for the $G$-orbits on~$VT$ and on a fixed orientation~$ET^+$ in~$T$, respectively. Then, for every Haar measure $\mu$ on~$G$, 
    \begin{equation*}
        \chi(G,\mu)=\sum_{v\in \caV}\frac{1}{\mu(G_v)}-\sum_{e\in \caE^+}\frac{1}{\mu(G_e)}.
    \end{equation*}
 \end{thm}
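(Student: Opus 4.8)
The plan is to recover the formula as the standard computation of the Euler--Poincaré characteristic of a group acting on a contractible complex through the equivariant cellular chain complex, as in~\cite[\S5]{ccw:zeta}. Since $G$ acts on $T$ without inversions, the stabiliser of an edge $e$ is $G_e=G_{o(e)}\cap G_{t(e)}$, an intersection of two compact open subgroups, hence compact open; thus for every $t\in T$ the permutation module $\Q[G/G_t]=\Q[G]\otimes_{\Q[G_t]}\Q$ is a finitely generated \emph{projective} object of the category of discrete left $\QG$-modules (the trivial module of the profinite group $G_t$ is projective by a Maschke-type argument at the finite quotients of $G_t$, and $\Q[G]$ is free as a right $\Q[G_t]$-module since $G_t$ is open, so $\Q[G]\otimes_{\Q[G_t]}-$ preserves projectivity and finite generation). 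Moreover, for a fixed Haar measure $\mu$ on $G$, its Hattori--Stallings rank is the measure $\mu(G_t)^{-1}\mu$.

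Next I would feed in the geometry: as $T$ is a tree it is contractible, so its augmented simplicial chain complex over $\Q$,
\begin{equation*}
0\longrightarrow C_1(T;\Q)\overset{\partial}{\longrightarrow} C_0(T;\Q)\overset{\epsilon}{\longrightarrow}\Q\longrightarrow 0,
\end{equation*}
is exact ($\ker\epsilon=\image\partial$ because $T$ is connected, $\ker\partial=0$ because $T$ has no cycles). Since $G$ acts without inversions, the usual identification of cellular chains with permutation modules over the orbit representatives (see~\cite[\S I.2]{ser:trees}) gives, as $\QG$-modules, $C_0(T;\Q)\cong\bigoplus_{v\in\caV}\Q[G/G_v]$ and $C_1(T;\Q)\cong\bigoplus_{e\in\caE^+}\Q[G/G_e]$, where $\caE^+$ picks exactly one edge from each geometric pair in each $G$-orbit. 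Because the quotient graph $G\backslash T$ is finite, $\caV$ and $\caE^+$ are finite, so the exact sequence above is a finite-length resolution of the trivial module $\Q$ by finitely generated projective discrete $\QG$-modules; in particular $G$ is of type $\mathrm{FP}$, and since $G$ is unimodular the invariant $\tchi_G$ is defined as in~\cite[\S5]{ccw:zeta} (Proposition~\ref{prop:unimod} provides a combinatorial way to check unimodularity, but here it is assumed).

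Finally I would assemble: by~\cite[\S5]{ccw:zeta} the Euler--Poincaré characteristic equals the alternating sum of the Hattori--Stallings ranks of the terms of any finite projective resolution of $\Q$, whence
\begin{equation*}
\tchi_G=\sum_{v\in\caV}\mu(G_v)^{-1}\mu-\sum_{e\in\caE^+}\mu(G_e)^{-1}\mu=\Big(\sum_{v\in\caV}\frac{1}{\mu(G_v)}-\sum_{e\in\caE^+}\frac{1}{\mu(G_e)}\Big)\mu,
\end{equation*}
and comparing with $\tchi_G=\chi(G,\mu)\mu$ yields the claim; both sides rescale identically in $\mu$ by~\eqref{eq:mumu'}, so one normalisation suffices if convenient. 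I expect the only real work to lie in the module-theoretic bookkeeping imported from~\cite[\S5]{ccw:zeta} --- namely that the Euler--Poincaré characteristic is additive along the above short exact sequence and that the rank of $\Q[G/K]$ for $K$ compact open is $\mu(K)^{-1}\mu$ --- since the geometric ingredient (exactness of the chain complex of a tree and its decomposition into permutation modules) is elementary Bass--Serre theory.
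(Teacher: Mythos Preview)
The paper does not give its own proof of this statement: Theorem~\ref{thm:chitree} is quoted verbatim from~\cite[Theorem~5.6]{ccw:zeta} and used as a black box (the only comment the paper adds is the observation immediately following the statement that $T$ is forced to be locally finite). So there is no ``paper's own proof'' to compare against.

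That said, your outline is correct and is precisely the argument behind the cited result: the augmented cellular chain complex of $T$ is a length-one projective resolution of the trivial $\Q[G]$-module by finite direct sums of induced modules $\Q[G/G_t]$ with $G_t$ compact open, and the Euler--Poincar\'e characteristic is read off as the alternating sum of their Hattori--Stallings ranks $\mu(G_t)^{-1}\mu$. The only point worth flagging is your justification that $\Q[G/G_t]$ is projective: rather than appealing to ``Maschke at finite quotients'', the cleaner statement (and the one actually used in~\cite{ccw:zeta}) is that for a profinite group $K$ the trivial module $\Q$ is projective in the category of discrete $\Q[K]$-modules because every such module is a filtered colimit of modules inflated from finite quotients, where ordinary Maschke applies; this is a minor rewording, not a gap.
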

The hypotheses of Theorem~\ref{thm:chitree} imply that $T$ is locally finite. In particular, in this case the quantity $\chi(G,\mu)$ coincides with the Euler--Poincaré characteristic of $G$ with respect to $\mu$ as defined in~\cite[Definition~4.8]{pesato}.

\begin{prop}\label{prop:chitree}
Let $G$ be a unimodular t.d.l.c.~group acting on a tree~$T$ without edge inversions, with compact open vertex stabilisers, with finite quotient graph $\Gamma$, and such that $(G,T)$ is weakly locally $\infty$-transitive or (P)-closed. 
Let $\omega$ be the standard edge weight on $\Gamma$, and let $\Nv=\Nv^\omega$, $\Ne=\Ne^\omega$ be as in Definition~\ref{defn:N}. Let $c\in V\Gamma$ and $\Lambda\subseteq \Gamma$ be a maximal subtree, and consider an orientation $E\Lambda^+$ in $\Lambda$ such that the restricted origin map $o\colon E\Lambda^+\to V\Lambda\setminus \{c\}$ is a bijection. Let also $E\Gamma^+$ be an arbitrary orientation in $\Gamma$ such that $E\Gamma^+\cap \Lambda=E\Lambda^+$.

Then, for every $v\in VT$ with $\pi(v)=c$, we have
\begin{equation}\label{eq:chitree}
        \chi(G, \mu_{G_v})=1+\sum_{a\in E\Lambda^+}(1-\omega(a))\frac{\Nv(\frp_{c,o(a)})}{\Nv(\overline{\frp_{c,o(a)}})}-\sum_{b\in E\Gamma^+\setminus E\Lambda}\frac{\Nv(\frq_{c,b})}{\Ne(\overline{\frq_{c,b}})},
    \end{equation}
    for arbitrary reduced paths $\frp_{c,o(a)}\in\calP_{\Gamma,v}(c\to o(a))$ and $\frq_{c,b}\in\calP_{\Gamma,v}(c\to b)$, for all $a\in E\Lambda^+$ and $b\in E\Gamma^+\setminus E\Lambda$.
\end{prop}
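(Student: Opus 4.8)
The plan is to compute $\chi(G,\mu_{G_v})$ from the alternating sum of Theorem~\ref{thm:chitree} by choosing the $G$-orbit representatives obtained by lifting reduced paths of $\Gamma$ starting at $c$ to geodesics of $T$ starting at $v$. First I would fix $ET^+:=\pi^{-1}(E\Gamma^+)$, a $G$-invariant orientation of $T$; as $G\backslash ET^+\cong E\Gamma^+$, a transversal for the $G$-orbits on $ET^+$ is $\{e_a\mid a\in E\Gamma^+\}$, one lift $e_a\in\pi^{-1}(a)\cap ET^+$ per $a$. Since $\Lambda$ is spanning, $V\Gamma=V\Lambda$; for $c'\in V\Lambda\setminus\{c\}$ let $\frp_{c,c'}$ be the geodesic of $\Lambda$ from $c$ to $c'$ (a reduced, hence liftable, path of $\Gamma$ since $\omega\geq 1$; cf.\ Remarks~\ref{rem:lift}~and~\ref{rem:Nint}), and choose $v_{c'}$ so that $[v,v_{c'}]$ projects onto $\frp_{c,c'}$; together with $v$ this is a transversal for the $G$-orbits on $VT$. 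For $a\in E\Gamma^+\cap E\Lambda$ one has $\frp_{c,o(a)}=\frp_{c,t(a)}\cdot(\bar a)$, and I would take $e_a$ to be the reverse of the last edge of $[v,v_{o(a)}]$, so $o(e_a)=v_{o(a)}$; for $b\in E\Gamma^+\setminus E\Lambda$ (hence $b,\bar b\notin E\Lambda$) set $\frq_{c,b}:=\frp_{c,o(b)}\cdot(b)$, a reduced path of $\Gamma$ from $c$ with last edge $b$, lift it to a geodesic $[v,u_b]$ from $v$, and let $e_b$ be its last edge.

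The key ingredient is a multiplicativity of $K\mapsto\mu_{G_v}(K)$ along geodesics. For any $w\in VT$, $\mu_{G_v}$ and $\mu_{G_w}$ are proportional left Haar measures, so $\mu_{G_v}=\mu_{G_v}(G_w)\,\mu_{G_w}$, and iterating along a geodesic $(v=w_0,\ldots,w_n)$ gives $\mu_{G_v}(G_{w_n})=\prod_{i=1}^n\mu_{G_{w_{i-1}}}(G_{w_i})$. For adjacent $w_{i-1},w_i$ joined by $e_i$ with $o(e_i)=w_{i-1}$ one has $G_{w_{i-1}}\cap G_{w_i}=G_{e_i}$, hence $\mu_{G_{w_{i-1}}}(G_{w_i})=[G_{w_i}:G_{e_i}]/[G_{w_{i-1}}:G_{e_i}]=|G_{w_i}\cdot\bar e_i|/|G_{w_{i-1}}\cdot e_i|=\omega(\pi(\bar e_i))/\omega(\pi(e_i))$, the last step using that every vertex stabiliser acts transitively on the edges with that origin lying over a fixed edge of $\Gamma$ (for weakly locally $\infty$-transitive actions this is immediate from the definition applied to length-$1$ paths; for (P)-closed actions one first replaces $(G,T)$ by $(U(\Delta,\iota,c_0),T(\Delta,\iota,c_0))$ via Theorem~\ref{thm:UD}(iii), then uses the axioms of a local action diagram, cf.\ Remark~\ref{rem:wW}). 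Applied to $[v,v_{c'}]$, which projects onto the reduced path $\frp_{c,c'}=(a_1,\ldots,a_m)$, this yields $\mu_{G_v}(G_{v_{c'}})=\prod_{i=1}^m\omega(\bar a_i)/\omega(a_i)=\Nv(\overline{\frp_{c,c'}})/\Nv(\frp_{c,c'})$, since $\Nv$ of a reduced path is the product of the weights of all its edges.

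It remains to treat the edge stabilisers. Since $G_{e_a}\leq G_{v_{o(a)}}$ and $|G_{v_{o(a)}}\cdot e_a|=\omega(a)$, one gets $\mu_{G_v}(G_{e_a})=\mu_{G_v}(G_{v_{o(a)}})/\omega(a)$, whence
\[
\frac{1}{\mu_{G_v}(G_{v_{o(a)}})}-\frac{1}{\mu_{G_v}(G_{e_a})}=\bigl(1-\omega(a)\bigr)\,\frac{\Nv(\frp_{c,o(a)})}{\Nv(\overline{\frp_{c,o(a)}})}.
\]
Likewise, with $[v,u_b]$ projecting onto $\frq_{c,b}$ and $t(e_b)=u_b$, $\mu_{G_v}(G_{e_b})=\mu_{G_v}(G_{u_b})/\omega(\bar b)$, so, using $\Nv(\overline{\frq_{c,b}})=\omega(\bar b)\,\Ne(\overline{\frq_{c,b}})$ (as $\overline{\frq_{c,b}}$ begins with $\bar b$),
\[
\frac{1}{\mu_{G_v}(G_{e_b})}=\omega(\bar b)\,\frac{\Nv(\frq_{c,b})}{\Nv(\overline{\frq_{c,b}})}=\frac{\Nv(\frq_{c,b})}{\Ne(\overline{\frq_{c,b}})}.
\]
Separating off the term $1/\mu_{G_v}(G_v)=1$, re-indexing the vertex sum of Theorem~\ref{thm:chitree} along the bijection $o\colon E\Gamma^+\cap E\Lambda\to V\Lambda\setminus\{c\}$, and splitting the edge sum over $E\Gamma^+=(E\Gamma^+\cap E\Lambda)\sqcup(E\Gamma^+\setminus E\Lambda)$, these contributions combine into exactly \eqref{eq:chitree}. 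Independence of the right-hand side from the particular choice of reduced paths is then Remark~\ref{rem:Nuni}, which rests on unimodularity (Proposition~\ref{prop:unimod}).

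I expect the only subtle point to be the uniform treatment of the two classes of actions: in the (P)-closed case the index $[G_v:G_{[v,w]}]$ itself is controlled by the local-action-diagram weight $\caW$, not by $\Nv$, so the argument has to be arranged so that only the ratios $\mu_{G_v}(G_w)$ appear — and these factor through the elementary local ratios $\omega(\pi(\bar e_i))/\omega(\pi(e_i))$, which do not see the difference between the two settings. Everything else (existence of the lifts, and that the $e_a,e_b$ lie in $ET^+$) is automatic once $ET^+=\pi^{-1}(E\Gamma^+)$ has been fixed.
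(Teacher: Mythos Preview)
Your proof is correct and takes a genuinely different route from the paper's. The paper computes $1/\mu_{G_v}(G_t)$ as the ratio $|G_v:G_{[v,t]}|/|G_t:G_{\overline{[v,t]}}|$, expands each index as a product of orbit sizes of the form $|G_{(e_1,\ldots,e_k)}\cdot e_{k+1}|$, and then invokes the hypothesis ``weakly locally $\infty$-transitive or (P)-closed'' to reduce these pointwise-stabiliser orbits to single-edge orbits $|G_{e_k}\cdot e_{k+1}|$ (their equation~\eqref{eq:ct2'}); only after that do the ratios telescope to $\omega(\pi(e_{k+1}))/\omega(\pi(\bar e_k))$. Your approach bypasses pointwise stabilisers of long paths entirely: the telescoping identity $\mu_{G_v}(G_{w_n})=\prod_i \mu_{G_{w_{i-1}}}(G_{w_i})$ involves only adjacent vertices, and for those $\mu_{G_{w_{i-1}}}(G_{w_i})=[G_{w_i}:G_{e_i}]/[G_{w_{i-1}}:G_{e_i}]=\omega(\pi(\bar e_i))/\omega(\pi(e_i))$ directly.

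What this buys you is more than a cleaner argument. The identity $|G_w\cdot e|=\omega(\pi(e))$ for $e\in o^{-1}(w)$ holds for \emph{any} group action on a tree: it is just the statement that the $G_w$-orbits on $o^{-1}(w)$ are the fibres of $\pi$, which is the definition of the quotient. So your justification via the two hypotheses is unnecessary, and in fact your argument proves the proposition with that hypothesis removed. The paper's route genuinely needs it (to collapse $G_{(e_1,\ldots,e_k)}$ to $G_{e_k}$ on the relevant orbit), which is presumably why the hypothesis is stated; your route shows it is an artifact of that organization. One minor point: your appeal to Remark~\ref{rem:lift} for liftability of reduced paths is slightly off, since that remark assumes $\omega\geq 2$; but reduced paths lift regardless because $\Ne$ of a reduced path is $\prod_{i\geq 2}\omega(a_i)\geq 1$, as you can read off from Definition~\ref{defn:N}.
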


 \begin{rem}\label{rem:indepchiT}
In the following, we comment on the choices made in the statement of Proposition~\ref{prop:chitree}.
\begin{itemize}
    \item[(i)] Let $\Lambda$ be a finite tree. For every $c\in V\Lambda$, there is an orientation $E\Lambda^+$ for which the origin map restricts to a bijection $o\colon E\Lambda^+\to V\Lambda\setminus\{c\}$.
    
    Indeed, let $\caE^+$ be an arbitrary orientation in~$\Lambda$ and set
    \begin{equation*}
    E\Lambda^+:=\{a\mid a\in \caE^+,\,o(a)\neq c\}\sqcup\{\bar{a}\mid a\in\caE^+,\,o(a)=c\}.
    \end{equation*}
    Then $E\Lambda^+$ is an orientation. Moreover, since $o(a)\neq t(a)$ for every $a\in E\Lambda$, the origin map in~$\Lambda$ restricts to a map $o\colon E\Lambda^+\to V\Lambda\setminus\{c\}$. By~\cite[\S I.2, Proposition~12]{ser:trees} we have $|E\Lambda^+|=|V\Lambda|-1$ and then $o\colon E\Lambda^+\to V\Lambda\setminus\{c\}$ is bijective.
    \item[(ii)] By~Remark~\ref{rem:Nuni}, the right-hand side of~\eqref{eq:chitree} does not depend on the choice of specific reduced paths $\frp_{c,o(a)}$ and $\frq_{c,b}$ from $c$ to $o(a)$ and from $c$ to $b$, respectively. Being~$\Gamma$ connected, note that there is always a reduced path connecting any two vertices of~$\Gamma$. Reduced paths in~$\Gamma$ are always liftable to geodesics in~$T$ (even if~$T$ has leaves).
    \item[(iii)] A formula analogous to the one in~\eqref{eq:chitree} holds for $\chi(G,\mu_{G_e})$, $e\in ET$. Indeed, since $\mu_{G_e}=|G_v:G_e|\cdot\mu_{G_v}=\omega(\pi(e))\cdot \mu_{G_v}$, by~\eqref{eq:mumu'} we have
$$\chi(G,\mu_{G_e})=\omega(\pi(e))^{-1}\chi(G,\mu_{G_{o(e)}}).$$
   Then Proposition~\ref{prop:chitree} applies. 
\end{itemize}
\end{rem}

\begin{proof}[Proof of Proposition~\ref{prop:chitree}]
 Let $\pi\colon T\to \Gamma$ be the quotient map and consider a set of representatives $\caE^+\subseteq ET$ for $E\Gamma^+$. Up to replace elements of $E\Gamma^+\setminus E\Lambda$ with their reverse, we may assume that for every $e\in \caE^+$ with $\pi(e)\not\in E\Lambda$ the geodesic from $v$ to $e$ is defined in~$T$. Since $o\colon E\Lambda^+\to V\Gamma\setminus\{c\}$ is bijective, notice that $\caV:=\{v\}\sqcup \{o(e)\mid e\in \caE^+\text{ and }\pi(e)\in E\Lambda\}$ is a set of representatives for $V\Gamma$. 
 Moreover, $\mu_{G_v}(G_v)=1$ and, for every $e\in \caE^+$,  $$\mu_{G_v}(G_{o(e)})=|G_{o(e)}:G_e|\cdot\mu_{G_v}(G_e)=\omega(\pi(e))\cdot\mu_{G_v}(G_e).$$ 
 By Theorem~\ref{thm:chitree}, we have
 \begin{equation*}\label{eq:ct1}
 \begin{split}
    \chi(G, \mu_{G_v}) & =1+\sum_{e\in\caE^+: \atop \pi(e)\in E\Lambda^+}\Bigg(\frac{1}{\mu_{G_v}(G_{o(e)})}-\frac{1}{\mu_{G_v}(G_e)}\Bigg)-\sum_{e\in \caE^+: \atop \pi(e)\in E\Gamma^+\setminus\Lambda}\frac{1}{\mu_{G_v}(G_e)}\\
    & = 1+\sum_{e\in\caE^+: \atop \pi(e)\in E\Lambda^+}(1-\omega(\pi(e)))\frac{1}{\mu_{G_v}(G_{o(e)})}-\sum_{e\in \caE^+: \atop \pi(e)\in E\Gamma^+\setminus\Lambda}\frac{1}{\mu_{G_v}(G_e)}.
 \end{split} 
 \end{equation*}
 Let $e\in \caE^+$ and~$\pi(e)=a$. For $t\in\{o(e),e\}$, consider the geodesic $[v,t]=(e_1, \dots, e_n)$ in~$T$ lifting  $\frp_{c,o(a)}$ if $t=o(e)$, and lifting $\frq_{c,a}$ if $t=e$. 
 Then, 
 \begin{equation}\label{eq:ct2}
 \begin{split}
     \frac{1}{\mu_{G_v}(G_{t})}& =\frac{\overbrace{|G_v:G_v\cap G_t|\cdot\mu_{G_v}(G_v\cap G_t)}^{=\mu_{G_v}(G_v)=1}}{\underbrace{|G_t:G_v\cap G_t|\cdot\mu_{G_v}(G_v\cap G_t)}_{=\mu_{G_v}(G_t)}}=\frac{|G_v:G_{[v,t]}|}{|G_t: G_{\overline{[v,t]}}|}\\
     & =\frac{|G_v\cdot e_1|}{|G_t\cdot \bar{e}_n|}\prod_{k=1}^{n-1}\frac{|G_{(e_1, \ldots, e_k)}\cdot e_{k+1}|}{|G_{(\bar{e}_n, \ldots, \bar{e}_{k+1})}\cdot \bar{e}_k|}.
     \end{split}
 \end{equation}
 For the latter equality in~\eqref{eq:ct2}, see~\eqref{eq:orb1} and~\eqref{eq:orb2}.
 Note that $|G_v\cdot e_1|=\omega(\pi(e_1))$. Moreover, $|G_t\cdot \bar{e}_n|=1$ if $t=e$ (because $e_n=t$) and $|G_t\cdot \bar{e}_n|=\omega(\pi(\bar{e}_n))$ if $t=o(e)$ (because $o(\bar{e}_n)=t$). For $1\leq k\leq n-1$ we claim that 
 \begin{equation}\label{eq:ct2'}
 |G_{(e_1, \ldots, e_k)}\cdot e_{k+1}|=|G_{e_k}\cdot e_{k+1}|\quad\text{and}\quad|G_{(\bar{e}_n, \ldots, \bar{e}_{k+1})}\cdot \bar{e}_k|=|G_{\bar{e}_{k+1}}\cdot \bar{e}_k|.
 \end{equation}
 If $(G,T)$ is (P)-closed, \eqref{eq:ct2'} follows from~\eqref{eq:orbEdge}. If $(G,T)$ is weakly locally $\infty$-transitive, Remark~\ref{rem:WLIT}(ii) and Proposition~\ref{prop:subWLIT} yield
 \begin{equation*}
     |G_{(e_1,\ldots, e_k)}\cdot e_{k+1}|=\Ne(\pi(e_k),\pi(e_{k+1}))=|G_{e_k}\cdot e_{k+1}|.
 \end{equation*}
 A similar argument holds for $|G_{(\bar{e}_n, \ldots, \bar{e}_{k+1})}\cdot \bar{e}_k|$.
 
 For $1\leq k\leq n-1$, we now prove that
 \begin{equation}\label{eq:ct3}
     \frac{|G_{e_k}\cdot e_{k+1}|}{|G_{\bar{e}_{k+1}}\cdot \bar{e}_k|}=\frac{\omega(\pi(e_{k+1}))}{\omega(\pi(\bar{e}_k))}.
 \end{equation}
 To see this, set $v_k=t(e_k)$ and $H=G_{e_k}\cap G_{e_{k+1}}$. Since $G_{e_k}=G_{\bar e_k}$ and $G_{e_{k+1}}=G_{\bar{e}_{k+1}}$, we have
 \begin{equation*}
    |G_{v_k}:H|=|G_{v_k}:G_{e_{k+1}}|\cdot |G_{\bar e_{k+1}}:G_{(\bar e_{k+1},\bar{e}_k)}|=\omega(\pi(e_{k+1}))\cdot |G_{\bar e_{k+1}}\cdot \bar{e}_k|
 \end{equation*}
 and, at the same time, 
 \begin{equation*}
|G_{v_k}:H|=|G_{v_k}:G_{\bar{e}_k}|\cdot |G_{e_k}:G_{(e_k,e_{k+1})}|=\omega(\pi(\bar{e}_k))\cdot |G_{e_{k}}\cdot e_{k+1}|.
 \end{equation*}
 Combining~\eqref{eq:ct2}, \eqref{eq:ct2'} and~\eqref{eq:ct3}, we deduce that
 \begin{equation}\label{eq:ct4}
     \frac{1}{\mu_{G_v}(G_t)}=\frac{\omega(\pi(e_1))}{|G_t\cdot \bar{e}_n|}\prod_{k=1}^{n-1}\frac{\omega(\pi(e_{k+1}))}{\omega(\pi(\bar{e}_k))},
 \end{equation}
 where $|G_t\cdot \bar{e}_n|$ equals $1$ if $t=e$, and it equals $\omega(\pi(\bar{e}_n))$ if $t=o(e)$.
 By design, $\pi([v,t])=:(a_1, \ldots, a_n)$ is a reduced path in~$\Gamma$. Then
 \begin{gather}\label{eq:Ngeod}
     \Nv(\pi([v,t]))=\prod_{i=1}^n\omega(a_i)\,\text{ and }\,\Ne(\pi([v,t]))=\left\{
     \begin{array}{cl}
         1, & \text{if }n=1; \\
         \prod_{i=2}^n\omega(a_i), & \text{if }n\geq2.
     \end{array}
     \right.     
 \end{gather}

 By~\eqref{eq:ct4} and~\eqref{eq:Ngeod}, we conclude that
 \begin{equation*}
 \begin{split}
     \frac{1}{\mu_{G_v}(G_{o(e)})}& =\frac{\Nv(\frp_{c,o(a)})}{\Nv(\overline{\frp_{c,o(a)}})},\quad\forall\,e\in \caE^+\text{ with }\pi(e)\in E\Lambda;\\
     \frac{1}{\mu_{G_v}(G_{e})} &=\frac{\Nv(\frq_{c,a})}{\Ne(\overline{\frq_{c,a}})}, \quad\forall\,e\in \caE^+\text{ with }\pi(e)\not\in E\Lambda.\qedhere
    \end{split}
 \end{equation*}
\end{proof}

\begin{rem}\label{rem:chi=}
    Let $(G_1,T_1)$ and $(G_2,T_2)$ be group actions on trees that satisfy the hypotheses of Proposition~\ref{prop:chitree}. Let $(\Gamma_1,\omega_1)$ and $(\Gamma_2,\omega_2)$ be the quotient graphs of $(G_1,T_1)$ and $(G_2,T_2)$ endowed with their standard edge weights, respectively. Assume there is a graph isomorphism $\varphi\colon \Gamma_1\to\Gamma_2$ such that $\omega_2(\varphi(a))=\omega_1(a)$ for every $a\in E\Gamma_1$. Let $v_1\in VT_1$ and $v_2\in VT_2$ be vertices satisfying $G_1\cdot v_1=c_1$ and $G_2\cdot v_2=\varphi(c_1)$.
    By Proposition~\ref{prop:chitree},
    \begin{equation*}
        \chi(G_1, \mu_{(G_1)_{v_1}})=\chi(G_2,\mu_{(G_2)_{v_2}}),
    \end{equation*}
    where $\mu_{(G_i)_{v_i}}$ is the Haar measure of $G_i$ normalised with respect to~$(G_i)_{v_i}$.
\end{rem}

A notable consequence of Proposition~\ref{prop:chitree} is that the value $\chi(G,\mu_{G_v})$ 
depends only on $(\Gamma,\omega)$. 
This suggests the following definition.

\begin{defn}\label{defn:chiGamma}
    Let $\Gamma$ be a finite connected non-empty graph with an edge weight $\omega\colon E\Gamma\to \Z_{\geq 1}$. 
    Let $(T,\pi)$ be the universal cover of $(\Gamma,\omega)$, and set $G=\Aut_\pi(T)$ (cf.~Example~\ref{exintro}).
    The pair $(\Gamma,\omega)$ is said to be \emph{unimodular} if $\Aut_\pi(T)$ is unimodular. 
    
   Let $(\Gamma, \omega)$ be unimodular. For all $c\in V\Gamma$ and $a\in E\Gamma$ and given arbitrary $v\in VT$ and $e\in ET$ satisfying $\pi(v)=c$ and $\pi(e)=a$, define
    \begin{equation}\label{eq:chiG}
\chi(\Gamma,c):=\chi(G,\mu_{G_v})\quad\text{and}\quad\chi(\Gamma,a):=\chi(G,\mu_{G_e}).
    \end{equation}
\end{defn}
Since $G$ is unimodular, the assignments in~\eqref{eq:chiG} do not depend on the choice of $v\in \pi^{-1}(c)$ and of $e\in \pi^{-1}(a)$, respectively.

\begin{rem}\label{rem:chiGamma}
Let $G$ be a unimodular t.d.l.c.~group acting on a tree~$T$, without edge inversions, with compact open vertex-stabilisers and with finite quotient graph~$\Gamma$. Denote by $\omega$ the standard edge weight, and assume that $(G,T)$ is weakly locally $\infty$-transitive or (P)-closed. For every $t\in T$ with $G\cdot t=u$, from Proposition~\ref{prop:chitree} we have
\begin{equation*}
    \chi(\Gamma,u)=\chi(G,\mu_{G_{t}}).
\end{equation*}
\end{rem}

\begin{ex}\label{ex:chi}
    \begin{itemize}
        \item[(i)] Let $\Gamma$ be a $1$-segment with $E\Gamma=\{a,\bar{a}\}$. Since $\Gamma$ is a tree, $(\Gamma,\omega)$ is unimodular for every $\omega\colon  E\Gamma\to\Z_{\geq 1}$ (cf.~Proposition~\ref{prop:unimod}). Moreover, $\Gamma$ is its only maximal subtree. Consider an edge weight $\omega\colon E\Gamma\to \Z_{\geq 1}$. Set $c=o(a)$, $E\Gamma^+=\{\bar{a}\}$ and let $\frp_{c,o(\bar{a})}$ be the $1$-edge path $a$. Then Proposition~\ref{prop:chitree} implies that
        \begin{equation*}
            \chi(\Gamma,c)=1+(1-\omega(\bar{a}))\frac{\omega(a)}{\omega(\bar{a})}.
        \end{equation*}
        With a similar strategy one computes $\chi(\Gamma,t(a))$.
        \item[(ii)] Let $\Gamma$ be a $n$-bouquet of loops based on the vertex $c$. Note that the $1$-point subgraph is the only maximal subtree of $\Gamma$. By Proposition~\ref{prop:unimod}, for an edge weight $\omega\colon E\Gamma\to \Z_{\geq 1}$ the pair $(\Gamma,\omega)$ is unimodular if, and only if, $\omega(a)=\omega(\bar{a})$ for every $a\in E\Gamma$. Provided $(\Gamma,\omega)$ is unimodular and $E\Gamma=\{a_i,\bar{a}_i\mid 1\leq i\leq n\}$, from Proposition~\ref{prop:chitree} we deduce that 
        \begin{equation*}
            \chi(\Gamma,c)=1-\sum_{i=1}^n\omega(a_i).
        \end{equation*}
    \end{itemize}
\end{ex}

\begin{lem}\label{lem:chichange}
    Let $\Gamma$ be a finite connected non-empty graph, and let $\omega\colon E\Gamma\to \Z_{\geq 1}$ be such that $(\Gamma,\omega)$ is unimodular. Then, for every $a\in E\Gamma$,
    \begin{equation}\label{eq:chich1}
        \chi(\Gamma,o(a))=\omega(a)\cdot\chi(\Gamma,a).
    \end{equation}
    Moreover, for all $c,d\in V\Gamma$,
    \begin{equation}\label{eq:chichV}
        \chi(\Gamma,c)=\frac{\Nv(\frp)}{\Nv(\overline{\frp})} \chi(\Gamma,d), 
     \end{equation}
     where $\frp$ is any reduced path in~$\Gamma$ from $c$ to $d$. Similarly, for all $a,b\in E\Gamma$ for which there is a reduced path in~$\Gamma$ from $a$ to $b$, we have
    \begin{equation}\label{eq:chichE}
        \chi(\Gamma,a)=\frac{\Ne(\frq)}{\Ne(\overline{\frq})}\chi(\Gamma,b),
    \end{equation}
    where $\frq$ is any reduced path from $a$ to $b$ in $\Gamma$.
\end{lem}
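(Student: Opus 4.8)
The plan is to work inside the group $G:=\Aut_\pi(T)$, where $(T,\pi)$ is the universal cover of $(\Gamma,\omega)$, so that by Definition~\ref{defn:chiGamma} one has $\chi(\Gamma,c)=\chi(G,\mu_{G_v})$ and $\chi(\Gamma,a)=\chi(G,\mu_{G_e})$ for \emph{any} $v\in\pi^{-1}(c)$ and $e\in\pi^{-1}(a)$. Recall that $T$ is locally finite (as $\Gamma$ is finite and $\omega$ is finite-valued), that $G$ acts on $T$ with compact open vertex stabilisers and finite quotient graph $\Gamma$, that $G$ is unimodular by hypothesis, and that $(G,T)$ is weakly locally $\infty$-transitive by Example~\ref{ex:WLIT}(i). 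The computational core is the following observation, valid for arbitrary $t,t'\in T$:
\begin{equation*}
    \chi(G,\mu_{G_t})=\frac{[G_t:G_t\cap G_{t'}]}{[G_{t'}:G_t\cap G_{t'}]}\cdot\chi(G,\mu_{G_{t'}}).
\end{equation*}
Indeed, $G_t\cap G_{t'}$ is a compact open subgroup; since $\mu_{G_t}$ is normalised with respect to $G_t$ and $G$ is unimodular (so a left Haar measure is bi-invariant), one gets $\mu_{G_t}(G_t\cap G_{t'})=[G_t:G_t\cap G_{t'}]^{-1}$ and hence $\mu_{G_t}(G_{t'})=[G_{t'}:G_t\cap G_{t'}]\cdot[G_t:G_t\cap G_{t'}]^{-1}$; as $\mu_{G_{t'}}=\mu_{G_t}(G_{t'})^{-1}\mu_{G_t}$, the claim follows from~\eqref{eq:mumu'}. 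For~\eqref{eq:chich1} I would specialise this to $t=e$ and $t'=v$ with $o(e)=v$ and $\pi(e)=a$: then $G_e\leq G_v$, so $[G_e:G_e\cap G_v]=1$ and $[G_v:G_e]=|G_v\cdot e|=\omega(a)$ by Remark~\ref{rem:wW}, and the displayed identity reads $\chi(\Gamma,a)=\omega(a)^{-1}\chi(\Gamma,o(a))$.

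For~\eqref{eq:chichV}, fix $c,d\in V\Gamma$ and a reduced path $\frp$ from $c$ to $d$; pick $v\in\pi^{-1}(c)$ and, since $\omega(E\Gamma)\subseteq\Z_{\geq 2}$, lift $\frp$ to a geodesic $\widetilde{\frp}$ in $T$ starting at $v$ and ending at some $w$ with $\pi(w)=d$ (Remark~\ref{rem:lift}); note $\chi(\Gamma,d)=\chi(G,\mu_{G_w})$ for this particular $w$ by the well-definedness in Definition~\ref{defn:chiGamma}. As in the proof of Lemma~\ref{lem:subsize}, $G_v\cap G_w=G_{\widetilde{\frp}}=G_{\overline{\widetilde{\frp}}}$. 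By weak local $\infty$-transitivity at $v$, the group $G_v$ acts transitively on the set of geodesics from $v$ with $\pi$-image $\frp$, which by Remark~\ref{rem:Nint} has cardinality $\Nv(\frp)$; hence $[G_v:G_{\widetilde{\frp}}]=\Nv(\frp)$, and symmetrically (using weak local $\infty$-transitivity at $w$ and that $\overline{\widetilde{\frp}}$ is a geodesic from $w$ with image $\overline{\frp}$) $[G_w:G_{\widetilde{\frp}}]=\Nv(\overline{\frp})$. Plugging $t=v$, $t'=w$ into the displayed identity yields $\chi(\Gamma,c)=\chi(G,\mu_{G_v})=\tfrac{\Nv(\frp)}{\Nv(\overline{\frp})}\chi(G,\mu_{G_w})=\tfrac{\Nv(\frp)}{\Nv(\overline{\frp})}\chi(\Gamma,d)$; since $\frp$ was an arbitrary reduced path from $c$ to $d$ this is exactly~\eqref{eq:chichV} (the resulting consistency across different choices of $\frp$ is automatic, and also follows from Remark~\ref{rem:Nuni}).

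The proof of~\eqref{eq:chichE} is entirely parallel, with vertices replaced by edges: one lifts the reduced path $\frq$ from $a$ to $b$ to a geodesic $\widetilde{\frq}=(e_1,\dots,e_n)$ with $\pi(e_1)=a$, observes $G_{e_1}\cap G_{e_n}=G_{\widetilde{\frq}}=G_{\overline{\widetilde{\frq}}}$, uses Remark~\ref{rem:WLIT}(ii) (with $i=1$, the length-one case being trivial) together with Remark~\ref{rem:Nint} to get $[G_{e_1}:G_{\widetilde{\frq}}]=\Ne(\frq)$ and $[G_{e_n}:G_{\widetilde{\frq}}]=\Ne(\overline{\frq})$ (here $\overline{\widetilde{\frq}}$ starts at the edge $\overline{e_n}$, whose $\pi$-image $\overline{b}$ is the first edge of $\overline{\frq}$), and then applies the displayed identity with $t=e_1$, $t'=e_n$, recalling $\pi(e_1)=a$ and $\pi(e_n)=b$. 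I do not expect a genuine obstacle: once $G=\Aut_\pi(T)$ is fixed, everything reduces to Haar-measure bookkeeping combined with the orbit counts already recorded in Remark~\ref{rem:Nint}. The two points deserving attention are checking that $\Aut_\pi(T)$ satisfies the standing hypotheses under which $\chi(\Gamma,\cdot)$ is defined and orbit-independent (this is Definition~\ref{defn:chiGamma} together with the unimodularity assumption and Example~\ref{exintro}/\ref{ex:WLIT}(i)), and, in~\eqref{eq:chichE}, identifying the first edge of the reversed geodesic $\overline{\widetilde{\frq}}$ correctly so that Remark~\ref{rem:Nint} is applied to $\overline{\frq}$ rather than to $\frq$.
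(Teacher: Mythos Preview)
Your proof is correct. Both your argument and the paper's rest on the same Haar-measure rescaling identity (your displayed formula, which in the paper appears implicitly via Remark~\ref{rem:indepchiT}(iii) and~\eqref{eq:mumu'}), so for~\eqref{eq:chich1} the two proofs essentially coincide.

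Where they differ is in the path formulae~\eqref{eq:chichV} and~\eqref{eq:chichE}. The paper first observes (via Remark~\ref{rem:Nuni}) that the ratios are well-defined, then reduces to the minimal cases $\ell(\frp)=1$ and $\ell(\frq)=2$ and derives those from~\eqref{eq:chich1} by chaining: $\chi(\Gamma,o(a))=\omega(a)\chi(\Gamma,a)=\tfrac{\omega(a)}{\omega(\bar a)}\chi(\Gamma,t(a))$, and similarly for a length-$2$ reduced path of edges. You instead lift the entire reduced path in one step and compute the two indices $[G_v:G_{\widetilde{\frp}}]=\Nv(\frp)$, $[G_w:G_{\widetilde{\frp}}]=\Nv(\overline{\frp})$ (resp.\ with $\Ne$) directly using weak local $\infty$-transitivity and Remark~\ref{rem:Nint}. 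Your route is more uniform and makes the role of weak local $\infty$-transitivity explicit; the paper's route is slightly more elementary in that it never needs to invoke transitivity on long geodesics, only the one-edge identity~\eqref{eq:chich1}. A minor remark: your citation of Remark~\ref{rem:wW} for $|G_v\cdot e|=\omega(\pi(e))$ is fine (since $\Aut_\pi(T)$ is a universal group as in Example~\ref{ex:UD}(i)), but the fact already follows from the definition of the standard edge weight and of the associated local action diagram, so a lighter reference would suffice.
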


In Lemma~\ref{lem:chichange}, since $\Gamma$ is connected, replacing $a$ with $\bar{a}$ or $b$ with $\bar{b}$ if necessary, we can always find a reduced path from $a$ to $b$ in $\Gamma$.

\begin{proof}
   First, \eqref{eq:chich1} follows from Remark~\ref{rem:indepchiT}(iii).
   By~Remark~\ref{rem:Nuni}, the ratios in~\eqref{eq:chichV} and~\eqref{eq:chichE} do not depend on the choices of $\frp$ and $\frq$, respectively. 
   Moreover, if we prove~\eqref{eq:chichV} and~\eqref{eq:chichE} for $\ell(\frp)=1$ and $\ell(\frq)=2$, the general statements follow iteratively.
   It remains to observe what follows. First, for every $a\in E\Gamma$ we have
    \begin{equation}\label{eq:chichangeV}
\chi(\Gamma,o(a))=\omega(a)\cdot\chi(\Gamma,a)=\frac{\omega(a)}{\omega(\bar{a})}\chi(\Gamma,t(a)).
    \end{equation}
  Moreover, let $(a,b)$ is a length-$2$ reduced path in~$\Gamma$ and set $t(a)=c=o(b)$. Then~\eqref{eq:chich1}~and~\eqref{eq:chichangeV} imply that
   \begin{equation*}
       \chi(\Gamma,a)=\chi(\Gamma,\bar{a})=\frac{1}{\omega(\bar{a})}\chi(\Gamma,c)=\frac{\omega(b)}{\omega(\bar{a})}\chi(\Gamma,b).\qedhere
   \end{equation*}
\end{proof}

\begin{lem}\label{lem:chiProp}
    Let $\Gamma$ be a finite connected non-empty graph with an edge weight $\omega\colon E\Gamma\to \Z_{\geq 1}$ such that $(\Gamma,\omega)$ is unimodular. 
     Suppose that there are connected subgraphs $\Gamma_1$ and $\Gamma_2$ of $\Gamma$ such that $\Gamma=\Gamma_1\cup \Gamma_2$ and $\Gamma_1\cap \Gamma_2=\{c\}$, for some $c\in V\Gamma$. Then $(\Gamma_i,\omega{\vert_{E\Gamma_i}})$ is unimodular for every $i\in\{1,2\}$, and 
    \begin{equation}
        \chi(\Gamma,c)=\chi(\Gamma_1,c)+\chi(\Gamma_2,c)-1.
    \end{equation}
\end{lem}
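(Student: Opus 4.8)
The plan is to reduce both assertions to the explicit formula of Proposition~\ref{prop:chitree}, applied in turn to the universal covers of $(\Gamma,\omega)$, $(\Gamma_1,\omega|_{E\Gamma_1})$ and $(\Gamma_2,\omega|_{E\Gamma_2})$. For unimodularity: let $(T,\pi)$ be the universal cover of $(\Gamma,\omega)$, so that (as $\omega$ is finite-valued, whence $T$ is locally finite) $G:=\Aut_\pi(T)$ has compact open vertex stabilisers. By Definition~\ref{defn:chiGamma}, "$(\Gamma,\omega)$ unimodular" means $G$ is unimodular, which by Proposition~\ref{prop:unimod} is the condition $\prod_{i=1}^n\omega(a_i)=\prod_{i=1}^n\omega(\bar a_i)$ for every closed path $(a_1,\dots,a_n)$ in $\Gamma$. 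A closed path in a subgraph $\Gamma_k$ is a closed path in $\Gamma$, so the condition persists on $\Gamma_k$, and Proposition~\ref{prop:unimod} (applied to the universal cover of $(\Gamma_k,\omega|_{E\Gamma_k})$) gives that $(\Gamma_k,\omega|_{E\Gamma_k})$ is unimodular for $k\in\{1,2\}$.

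For the identity, I would first fix compatible combinatorial data. Choose a maximal subtree $\Lambda_k\subseteq\Gamma_k$ for $k\in\{1,2\}$ (so $c\in V\Lambda_k$), and set $\Lambda:=\Lambda_1\cup\Lambda_2$; since $V\Lambda_1\cap V\Lambda_2=\{c\}$ and $E\Lambda_1\cap E\Lambda_2=\emptyset$, the subgraph $\Lambda$ is the union of two trees meeting in the single vertex $c$, hence a tree, and it contains $V\Gamma=V\Gamma_1\cup V\Gamma_2$, so it is a maximal subtree of $\Gamma$. By Remark~\ref{rem:indepchiT}(i) pick orientations $E\Lambda_k^+$ with $o\colon E\Lambda_k^+\to V\Lambda_k\setminus\{c\}$ bijective, and extend each to an orientation $E\Gamma_k^+$ of $\Gamma_k$ with $E\Gamma_k^+\cap\Lambda_k=E\Lambda_k^+$. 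Disjointness of $E\Gamma_1,E\Gamma_2$ then makes $E\Lambda^+:=E\Lambda_1^+\sqcup E\Lambda_2^+$ an orientation of $\Lambda$ with $o\colon E\Lambda^+\to V\Lambda\setminus\{c\}$ bijective, $E\Gamma^+:=E\Gamma_1^+\sqcup E\Gamma_2^+$ an orientation of $\Gamma$ with $E\Gamma^+\cap\Lambda=E\Lambda^+$, and $E\Gamma^+\setminus E\Lambda=(E\Gamma_1^+\setminus E\Lambda_1)\sqcup(E\Gamma_2^+\setminus E\Lambda_2)$ — exactly the kind of data Proposition~\ref{prop:chitree} admits for $\Gamma$.

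Now I would split the formula. For each $a\in E\Lambda^+$ (resp.\ $b\in E\Gamma^+\setminus E\Lambda$) let $k$ be the index with $a\in E\Lambda_k^+$ (resp.\ $b\in E\Gamma_k^+\setminus E\Lambda_k$), and take the reduced path $\frp_{c,o(a)}$ (resp.\ $\frq_{c,b}$) demanded by Proposition~\ref{prop:chitree} to lie inside $\Gamma_k$; it exists by connectedness of $\Gamma_k$, lifts to a geodesic because it is reduced and $\omega\geq 1$ forces its $\Ne$-value to be $\geq 1$ (cf.\ Remark~\ref{rem:Nint}), and its $\Nv$- and $\Ne$-values depend only on the edges it uses, hence are unchanged in passing between $(\Gamma,\omega)$ and $(\Gamma_k,\omega|_{E\Gamma_k})$. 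Feeding $\Lambda,E\Lambda^+,E\Gamma^+$ and these paths into Proposition~\ref{prop:chitree} and grouping the two sums by $k$ gives
\begin{equation*}
\chi(\Gamma,c)=1+\sum_{k=1,2}\biggl(\sum_{a\in E\Lambda_k^+}(1-\omega(a))\frac{\Nv(\frp_{c,o(a)})}{\Nv(\overline{\frp_{c,o(a)}})}-\sum_{b\in E\Gamma_k^+\setminus E\Lambda_k}\frac{\Nv(\frq_{c,b})}{\Ne(\overline{\frq_{c,b}})}\biggr),
\end{equation*}
and applying Proposition~\ref{prop:chitree} to $(\Gamma_k,\omega|_{E\Gamma_k})$ with the data $\Lambda_k,E\Lambda_k^+,E\Gamma_k^+$ and the same paths (valid since $E\Gamma_k^+\cap E\Lambda_k=E\Lambda_k^+$, and since the value is independent of the chosen reduced paths by Remark~\ref{rem:indepchiT}(ii)/Remark~\ref{rem:Nuni}) identifies the $k$-th bracket with $\chi(\Gamma_k,c)-1$. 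Hence $\chi(\Gamma,c)=1+(\chi(\Gamma_1,c)-1)+(\chi(\Gamma_2,c)-1)=\chi(\Gamma_1,c)+\chi(\Gamma_2,c)-1$.

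The step that needs the most care — rather than a genuine obstacle — is the middle one: arranging a maximal subtree and an orientation of $\Gamma$ that restrict correctly to $\Gamma_1$ and $\Gamma_2$, verifying that $\Lambda_1\cup\Lambda_2$ really is a maximal subtree, and ensuring the reduced paths entering the three instances of Proposition~\ref{prop:chitree} coincide. Once the path-independence remarks and the locality of $\Nv$ and $\Ne$ are in hand, the bookkeeping is routine.
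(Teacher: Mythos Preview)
Your proof is correct and follows essentially the same approach as the paper: both apply Proposition~\ref{prop:chitree} with compatible maximal subtrees and orientations on $\Gamma$, $\Gamma_1$, $\Gamma_2$, then split the resulting sum according to $i\in\{1,2\}$. The only cosmetic difference is that the paper restricts a maximal subtree of $\Gamma$ to obtain maximal subtrees of the $\Gamma_i$, whereas you build upward from maximal subtrees of the pieces; you also make the unimodularity of the $(\Gamma_i,\omega|_{E\Gamma_i})$ explicit via Proposition~\ref{prop:unimod}, which the paper leaves implicit.
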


\begin{proof}
Let $\Lambda$ be a maximal subtree of $\Gamma$. We claim that $\Lambda_i:=\Gamma_i\cap \Lambda$ is a maximal subtree of $\Gamma_i$, for every $i\in\{1,2\}$. Clearly, both $\Lambda_1$ and $\Lambda_2$ are subtrees of $\Gamma_1$ and $\Gamma_2$, respectively. We prove the maximality for $i=1$, as for $i=2$ one may proceed analogously. For every subtree $\Xi_1\subseteq \Gamma_1$ with $\Xi_1\supseteq \Lambda_1$, we have $\Xi_1\cap\Lambda_2=\{c\}$ and thus $\Xi_1\cup\Lambda_2$ is a subtree of $\Gamma$ containing~$\Lambda$. Hence $\Lambda=\Xi_1\cup \Lambda_2$ and 
$$\Lambda_1=\Gamma_1\cap \Lambda=(\Gamma_1\cap \Xi_1)\cup (\Gamma_1\cap \Lambda_2)=\Xi_1\cup \{c\}=\Xi_1.$$  

Consider an orientation $E\Gamma^+$ in $E\Gamma$ such that the origin map in $\Gamma$ restricts to a bijection $o\colon E\Gamma^+\cap E\Lambda\to V\Gamma\setminus\{c\}$ (cf.~Remark~\ref{rem:indepchiT}). For every $i\in\{1,2\}$, the set $E\Gamma_i^+:=E\Gamma^+\cap E\Gamma_i$ is an orientation in $E\Gamma_i$ and the origin map in $\Gamma_i$ restricts to a bijection $o_i\colon E\Gamma_i^+\cap E\Lambda_i\to V\Gamma_i\setminus\{c\}$.
By~Proposition~\ref{prop:chitree}, we conclude that
    \begin{equation*}
    \begin{split}
        \chi(\Gamma,c) & =
    1+\sum_{i=1}^2\Bigg (\sum_{a\in E\Gamma_i^+\cap E\Lambda_i}\hspace{-0.2cm}(1-\omega(a))\frac{\Nv(\frp_{c,o(a)})}{\Nv(\overline{\frp_{c,o(a)}})}-\hspace{-0.4cm}\sum_{a\in E\Gamma_i^+\setminus E\Lambda_i}\hspace{-0.2cm}\frac{\Nv(\frq_{c,a})}{\Ne(\overline{\frq_{c,a}})}\Bigg )\\
    & =\chi(\Gamma_1,c)+\chi(\Gamma_2,c)-1.
    \end{split}
    \end{equation*}
\end{proof}

\subsection{The evaluation at $s=-1$ and the Euler--Poincaré characteristic}\label{sus:eul-1}

The goal of what follows is to prove Theorem~\hyperref[thmE]{E} and~Corollary~\hyperref[corF]{F}.
In view of Theorem~\hyperref[thmE]{E}, we first formulate a version of Lemma~\ref{lem:chichange} for $\caZ_{\Gamma,u\to u}(-1)^{-1}$. 

\begin{lem}\label{lem:chi-1VE}
    Let $(\Gamma,\omega)$ be a unimodular edge-weighted graph satisfying Setting~\hyperref[settG]{\emph{[$\Gamma$]}} and such that $\Gamma$ has no cycles of length $\geq 2$. Then, for every $a\in E\Gamma$,
    \begin{equation}\label{eq:change1}
        \caZ_{\Gamma,o(a)\to o(a)}(-1)^{-1}=\omega(a)\cdot \caZ_{\Gamma,a\to a}(-1)^{-1}.
    \end{equation}
    Moreover, for all $c,d\in V\Gamma$ and all $a,b\in E\Gamma$ such there is a reduced path from $a$ to $b$ in~$\Gamma$, we have
    \begin{gather}\label{eq:change2}
    \begin{array}{c}
         \displaystyle{\caZ_{\Gamma,c\to c}(-1)^{-1}=\frac{\Nv(\frp)}{\Nv(\overline{\frp})}\caZ_{\Gamma,d\to d}(-1)^{-1},}\\
        \displaystyle{\caZ_{\Gamma,a\to a}(-1)^{-1}=\frac{\Ne(\frq)}{\Ne(\overline{\frq})}\caZ_{\Gamma,b\to b}(-1)^{-1},}
    \end{array}
    \end{gather}
    where $\frp$ and $\frq$ are arbitrary reduced paths in~$\Gamma$ from $c$ to $d$ and from $a$ to $b$, respectively.
\end{lem}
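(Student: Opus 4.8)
The plan is to reduce all three identities to the single relation \eqref{eq:change1}, and then to establish \eqref{eq:change1} by induction on $|E\Gamma|$ using the splitting formulae of Section~\ref{s:split}. For the reduction: by Remark~\ref{rem:Nuni} the ratios $\Nv(\frp)/\Nv(\overline{\frp})$ and $\Ne(\frq)/\Ne(\overline{\frq})$ in \eqref{eq:change2} are independent of the chosen reduced path, and for a reduced path $(a_1,\dots,a_n)$ they equal $\prod_i\omega(a_i)/\omega(\bar{a}_i)$ and $\prod_i\omega(a_{i+1})/\omega(\bar{a}_i)$, hence are multiplicative along concatenations; it thus suffices to treat $\frp=(a)$ and $\frq=(a,b)$. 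Since $\caZ_{\Gamma,a\to a}(s)$ depends on $a$ only through the pair $\{a,\bar{a}\}$ (Definition~\ref{defn:Zu}), one has $\caZ_{\Gamma,a\to a}(s)=\caZ_{\Gamma,\bar{a}\to\bar{a}}(s)$; applying \eqref{eq:change1} to $a$ and then to $\bar{a}$ (using $o(\bar{a})=t(a)$) gives $\caZ_{\Gamma,o(a)\to o(a)}(-1)^{-1}=\omega(a)\caZ_{\Gamma,a\to a}(-1)^{-1}=\tfrac{\omega(a)}{\omega(\bar{a})}\caZ_{\Gamma,t(a)\to t(a)}(-1)^{-1}$, the length-$1$ vertex case of \eqref{eq:change2}; the length-$2$ edge case $\frq=(a,b)$ follows the same way after inserting the vertex $t(a)=o(b)$.

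To prove \eqref{eq:change1} I induct on $|E\Gamma|$, distinguishing cases by the local picture at $c:=o(a)$; the induction simultaneously records that every $\caZ_{\bullet,\bullet}(s)^{-1}$ occurring below is regular at $s=-1$ (each splitting formula presents it as a rational expression whose denominator evaluates at $s=-1$ to a nonzero number such as $\omega(\bar{a})$, $\omega(a)\omega(\bar{a})$ or $2\omega(a)$, and the base cases — the $1$-segment, the single-loop bouquet, the one-vertex graph — are regular by Examples~\ref{ex:1segmWLIT} and~\ref{ex:1loopWLIT} and inspection). If $a$ is a $1$-loop, write $\Gamma=B\cup\Lambda$ with $B$ the single-loop bouquet at $c$ carrying $\{a,\bar{a}\}$ and $\Lambda=\Gamma\setminus\{a,\bar{a}\}$, so $B\cap\Lambda=\{c\}$; unimodularity forces $\omega(a)=\omega(\bar{a})$, and Proposition~\ref{prop:Vsplit}, Proposition~\ref{prop:Zaloop} with Remark~\ref{rem:Zaloop}, and the value of $\caZ_{B,c\to c}(s)$ from \eqref{eq:Z1loopA=B} make, at $s=-1$, both $\caZ_{\Gamma,c\to c}(-1)^{-1}$ and $\omega(a)\caZ_{\Gamma,a\to a}(-1)^{-1}$ equal to $\caZ_{\Lambda,c\to c}(-1)^{-1}-\omega(a)$. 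If $a$ is not a loop and $c$ is terminal, Proposition~\ref{prop:Zasegm} applies directly: the two displayed formulae share a numerator $N(s)$, and their denominators equal $\omega(\bar{a})$ and $\omega(a)\omega(\bar{a})$ at $s=-1$, so $\caZ_{\Gamma,c\to c}(-1)^{-1}=N(-1)/\omega(\bar{a})$ and $\caZ_{\Gamma,a\to a}(-1)^{-1}=N(-1)/\bigl(\omega(a)\omega(\bar{a})\bigr)$, whence \eqref{eq:change1}.

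The remaining case — $a$ not a loop, $c=o(a)$ not terminal — carries the real bookkeeping. By Lemma~\ref{lem:GSplit} split $\Gamma=\Gamma_1\cup\Gamma_2$ at $c$ with $\Gamma_1\cap\Gamma_2=\{c\}$, both proper and with $o^{-1}(c)\cap E\Gamma_1=\{a\}$ (so $c$ is terminal in $\Gamma_1$); then the companion edge-decomposition $\Gamma=\Gamma_1\cup\Gamma_2'$, $\Gamma_2':=\Gamma_2\cup\{a,\bar{a},t(a)\}$, has $\Gamma_1\cap\Gamma_2'$ equal to the $1$-segment $\Sigma$ on $\{a,\bar{a}\}$ (cf.~Remark~\ref{rem:GEsplit}), with $t(a)$ terminal in $\Gamma_2'$. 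Proposition~\ref{prop:Vsplit} computes $\caZ_{\Gamma,c\to c}(s)^{-1}$ from $\caZ_{\Gamma_1,c\to c}(s)^{-1}$ and $\caZ_{\Gamma_2,c\to c}(s)^{-1}$, and Proposition~\ref{prop:Esplit} computes $\caZ_{\Gamma,a\to a}(s)^{-1}$ from $\caZ_{\Gamma_1,a\to a}(s)^{-1}$, $\caZ_{\Gamma_2',a\to a}(s)^{-1}$ and $\caZ_{\Sigma,a\to a}(s)^{-1}$. Feeding in the terminal case (Proposition~\ref{prop:Zasegm}) applied to $\Gamma_1$, which gives $\caZ_{\Gamma_1,c\to c}(-1)^{-1}=\omega(a)\caZ_{\Gamma_1,a\to a}(-1)^{-1}$, and to $\Gamma_2'$ at its terminal vertex $t(a)$, which expresses $\caZ_{\Gamma_2',a\to a}(-1)^{-1}$ through $\caZ_{\Gamma_2,c\to c}(-1)^{-1}$, together with the value of $\caZ_{\Sigma,a\to a}(-1)^{-1}$ from \eqref{eq:ZaGa}, and substituting into $\caZ_{\Gamma,c\to c}(-1)^{-1}-\omega(a)\caZ_{\Gamma,a\to a}(-1)^{-1}$, the unknowns $\caZ_{\Gamma_1,a\to a}(-1)^{-1}$ and $\caZ_{\Gamma_2,c\to c}(-1)^{-1}$ cancel and the difference reduces to $\tfrac{\beta(\alpha+1)+1-\alpha\beta}{\beta+1}-1$, which is $0$ since $\beta(\alpha+1)+1-\alpha\beta=\beta+1$, where $\omega(a)=\alpha+1$ and $\omega(\bar{a})=\beta+1$.

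The main obstacle is not any of these computations but the graph combinatorics supporting the last case: arranging the vertex-split at $c$ and the edge-split along $a$ to be simultaneously compatible, and checking that $\Gamma_1,\Gamma_2,\Gamma_2'$ are proper subgraphs still satisfying Setting~\hyperref[settG]{\emph{[$\Gamma$]}} and the unimodularity hypothesis — connectedness is built into Lemma~\ref{lem:GSplit}, the weight inequality $\omega(b)\geq 3$ or $\omega(\bar{b})\geq 3$ is inherited edge by edge, and unimodularity descends to subgraphs because a closed path there is a closed path in $\Gamma$ (Proposition~\ref{prop:unimod}). One must also treat the degenerate instances ($\Gamma$ a $1$-segment, or $\Lambda$, $\Gamma_2$ reduced to a single vertex), which are absorbed by the convention $\caZ_{\{v\},v\to v}(s)=1$; these are precisely where the need for the hypothesis "no cycles of length $\geq 2$" (which makes both types of splitting always available) is felt.
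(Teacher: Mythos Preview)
Your proof is correct and uses the same toolkit as the paper's own proof: the reduction of \eqref{eq:change2} to \eqref{eq:change1} via the argument of Lemma~\ref{lem:chichange}, the loop case via Proposition~\ref{prop:Vsplit} and Remark~\ref{rem:Zaloop}, and the terminal case via Proposition~\ref{prop:Zasegm}.

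The one genuine organisational difference is the non-terminal case. The paper splits $\Gamma$ along the edge $a$ into $\Gamma_1,\Gamma_2$ with $\Gamma_1\cap\Gamma_2$ the $1$-segment on $\{a,\bar a\}$, applies Corollary~\ref{cor:Vsplit} and Corollary~\ref{cor:Esplit} in parallel, and then invokes the \emph{induction hypothesis} on each of $\Gamma_1,\Gamma_2,\Gamma_a$; since \eqref{eq:change1} is linear in $\caZ^{-1}$, the three instances add up with no numerical computation. You instead split at the vertex $c$ (with $a$ alone in $\Gamma_1$), and then exploit that $c$ is terminal in $\Gamma_1$ and $t(a)$ is terminal in $\Gamma_2'$ to feed Proposition~\ref{prop:Zasegm} into both pieces directly, finishing with the check $\beta(\alpha+1)+1-\alpha\beta=\beta+1$. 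Your decomposition is in fact the same as the paper's up to relabelling ($\Gamma_1^{\text{you}}=\Gamma_2^{\text{paper}}$, $\Gamma_2'{}^{\text{you}}=\Gamma_1^{\text{paper}}$); the trade-off is that the paper avoids any arithmetic identity but needs the separate ``$t(a)$ terminal'' subcase for the induction to land, whereas your direct computation absorbs that subcase automatically. Both routes are short and either is acceptable.
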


\begin{proof}
    Once proved~\eqref{eq:change1} (which is analogous to~\eqref{eq:chich1}), arguing as in the proof of Lemma~\ref{lem:chichange} one can deduce~\eqref{eq:change2}.
    We first prove~\eqref{eq:change1} for every $1$-loop $a$. Namely, let $a\in E\Gamma$ with $o(a)=t(a)=c$. Since $(\Gamma,\omega)$ is unimodular, note that $\omega(a)=\omega(\bar{a})$. Let $\Lambda$ be the graph obtained from $\Gamma$ removing $a$ and $\bar{a}$, and let $L_a$ be the subgraph of $\Gamma$ with $VL_a=\{c\}$ and $EL_a=\{a,\bar{a}\}$.
    By Remark~\ref{rem:Zaloop}, we have
    \begin{equation}\label{eq:cha1}
        \caZ_{\Gamma,a\to a}(-1)^{-1}=\omega(a)^{-1}\cdot\Big(\caZ_{\Lambda,c\to c}(-1)^{-1}-\omega(a)\Big).
    \end{equation}
   Moreover, Proposition~\ref{prop:Vsplit} and~\eqref{eq:Z1loopA=B} yield
   \begin{equation}\label{eq:cha2}
       \caZ_{\Gamma,c\to c}(-1)^{-1}=\caZ_{\Lambda,c\to c}(-1)^{-1}-\omega(a).
   \end{equation}
   Combining~\eqref{eq:cha1} and~\eqref{eq:cha2}, we deduce~\eqref{eq:change1}.

   For all edges $a$ with $o(a)\neq t(a)$, the relation in~\eqref{eq:change1} is proved by induction on $|E\Gamma|/2=:k(\Gamma)\geq 1$. If $k(\Gamma)=1$, then $\Gamma$ is a $1$-segment and~\eqref{eq:change1} follows from~\eqref{eq:Z1seg}. 
    Let $k(\Gamma)\geq 2$ and assume that the claim holds for every graph $\Gamma'$ with $k(\Gamma')<k(\Gamma)$. Let $a\in E\Gamma$ be such that $o(a)=:c\neq d:=t(a)$. If $o^{-1}(c)=\{a\}$, then Proposition~\ref{prop:Zasegm} directly implies the claim. In case that $o^{-1}(d)=\{\bar{a}\}$, let $\Lambda$ be the graph obtained from $\Gamma$ by removing $a$ and $\bar{a}$. Then Proposition~\ref{prop:Zasegm} yields
    \begin{equation}\label{eq:1segm-1.1}
        \caZ_{\Gamma,a\to a}(-1)^{-1}=\caZ_{\Gamma,\bar{a}\to\bar{a}}(-1)^{-1}=\frac{\caZ_{\Lambda,c\to c}(-1)^{-1}}{\omega(a)}-\frac{\omega(\bar{a})-1}{\omega(\bar{a})}.
    \end{equation}
    On the other hand, let $\Gamma_a$ denote the $1$-segment subgraph of $\Gamma$ with $E\Gamma_a=\{a,\bar{a}\}$. By Proposition~\ref{prop:Vsplit} and~Example~\ref{ex:1segmWLIT}, 
    \begin{equation}\label{eq:1segm-1.2}
    \begin{split}
        \caZ_{\Gamma,c\to c}(-1)^{-1} & =\caZ_{\Lambda,c\to c}(-1)^{-1}+\caZ_{\Gamma_a,c\to c}(-1)^{-1}-1\\
         & =\caZ_{\Lambda,c\to c}(-1)^{-1}+\omega(a)\frac{\omega(\bar{a})-1}{\omega(\bar{a})}.
    \end{split}
    \end{equation}
    Hence~\eqref{eq:change1} follows from~\eqref{eq:1segm-1.1}~and~\eqref{eq:1segm-1.2}.
    Finally, assume that both $|o^{-1}(c)|\geq 2$ and $|o^{-1}(d)|\geq 2$. Denote by $\Xi_1$ and $\Xi_2$ the connected components of $\Gamma\setminus\{a,\bar{a}\}$ containing $c$ and $d$, respectively. There are exactly two connected components because $\Gamma$ has no cycles of length $\geq 2$. Since $|o^{-1}(c)|\geq 2$ and $|o^{-1}(d)|\geq 2$, both $E\Xi_1$
    and $E\Xi_2$ are non-empty. Moreover, $E\Gamma=E\Xi_1\sqcup \{a,\bar{a}\}\sqcup E\Xi_2$.
    For $i\in\{1,2\}$, let $\Gamma_i$ be the smallest subgraph of $\Gamma$ containing $\Xi_i\cup \{a,\bar{a}\}$, and note that $k(\Gamma_i)<k(\Gamma)$. Let also $\Gamma_a$ be the $1$-segment subgraph with edge set $\{a,\bar{a}\}$, and observe that $\Gamma_1\cap \Gamma_2=\Gamma_a$. Moreover, if $\Lambda_1:=\Xi_1$ and $\Lambda_2:=\Gamma_2$, we have $\Gamma=\Lambda_1\cup\Lambda_2$, $\Lambda_1\cap\Lambda_2=\{c\}$ and $\Lambda_i\subseteq \Gamma_i$ for every $i\in\{1,2\}$.
   Hence, Corollary~\ref{cor:Vsplit} and Corollary~\ref{cor:Esplit} imply
    \begin{equation*}
    \begin{split}
        \caZ_{\Gamma,c\to c}(-1)^{-1} & =\caZ_{\Gamma_1,c\to c}(-1)^{-1}+\caZ_{\Gamma_2,c\to c}(-1)^{-1}-\caZ_{\Gamma_a,c\to c}(-1)^{-1};\\
         \caZ_{\Gamma,a\to a}(-1)^{-1} & =\caZ_{\Gamma_1,a\to a}(-1)^{-1}+\caZ_{\Gamma_2,a\to a}(-1)^{-1}-\caZ_{\Gamma_a,a\to a}(-1)^{-1}.
    \end{split}
    \end{equation*}
    The induction hypothesis now yields~\eqref{eq:change1}.
\end{proof}

By Lemma~\ref{lem:chichange} and Lemma~\ref{lem:chi-1VE}, we deduce the following.
\begin{cor}\label{cor:Z-1indep}
    Let $(\Gamma,\omega)$ be a unimodular edge-weighted graph satisfying Setting~\hyperref[settG]{\emph{[$\Gamma$]}} and such that $\Gamma$ has no cycles of length $\geq 2$. If $\chi(\Gamma,u)=\caZ_{\Gamma,u\to u}(-1)^{-1}$ for some $u\in \Gamma$, then $\chi(\Gamma,u)=\caZ_{\Gamma,u\to u}(-1)^{-1}$ for every $u\in \Gamma$.
\end{cor}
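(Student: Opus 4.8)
The plan is to deduce Corollary~\ref{cor:Z-1indep} directly from the two preceding lemmas, Lemma~\ref{lem:chichange} and Lemma~\ref{lem:chi-1VE}, which together say that the quantities $u\mapsto\chi(\Gamma,u)$ and $u\mapsto\caZ_{\Gamma,u\to u}(-1)^{-1}$ transform in \emph{exactly the same way} under the elementary transport moves available on a connected graph without cycles of length $\geq2$. Accordingly, for $u\in\Gamma$ write $P(u)$ for the assertion ``$\chi(\Gamma,u)=\caZ_{\Gamma,u\to u}(-1)^{-1}$''. It suffices to show that if $P(u_0)$ holds for one $u_0\in\Gamma$, then $P(u)$ holds for all $u\in\Gamma$.

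First I would isolate a \textbf{vertex--edge step}: for every $a\in E\Gamma$ one has $\chi(\Gamma,o(a))=\omega(a)\,\chi(\Gamma,a)$ by~\eqref{eq:chich1} and $\caZ_{\Gamma,o(a)\to o(a)}(-1)^{-1}=\omega(a)\,\caZ_{\Gamma,a\to a}(-1)^{-1}$ by~\eqref{eq:change1}; since $\omega(a)\in\Z_{\geq2}$ is a positive (hence nonzero, finite) rational, the relation $x=\omega(a)\,y$ is equivalent to $y=\omega(a)^{-1}x$, so $P(o(a))\Leftrightarrow P(a)$. Next a \textbf{vertex--vertex step}: given $c,d\in V\Gamma$, connectedness of $\Gamma$ together with the absence of cycles of length $\geq2$ gives, by Remark~\ref{rem:Gnocyc}, a reduced path $\frp$ in~$\Gamma$ from $c$ to $d$ (the geodesic inside the unique maximal subtree). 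Then~\eqref{eq:chichV} yields $\chi(\Gamma,c)=\tfrac{\Nv(\frp)}{\Nv(\overline{\frp})}\,\chi(\Gamma,d)$ and the first identity in~\eqref{eq:change2} yields $\caZ_{\Gamma,c\to c}(-1)^{-1}=\tfrac{\Nv(\frp)}{\Nv(\overline{\frp})}\,\caZ_{\Gamma,d\to d}(-1)^{-1}$; the common factor $\Nv(\frp)/\Nv(\overline{\frp})$ is again a positive rational, so $P(c)\Leftrightarrow P(d)$.

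Finally I would assemble these: if $P(u_0)$ holds and $u_0\in E\Gamma$, the vertex--edge step gives $P(o(u_0))$, so we may assume $u_0$ is a vertex; the vertex--vertex step then propagates $P$ to every vertex of $\Gamma$, and one more application of the vertex--edge step gives $P(a)$ for every $a\in E\Gamma$. Hence $P$ holds on all of $\Gamma$, which is the claim. I do not expect any genuine obstacle: the entire content is packaged in Lemmas~\ref{lem:chichange} and~\ref{lem:chi-1VE}. The only two points deserving an explicit word are (i) the existence of a reduced path between any two vertices, which is precisely where the hypothesis ``no cycle of length $\geq2$'' enters through Remark~\ref{rem:Gnocyc}, and (ii) the fact that all transport coefficients ($\omega(a)$ and the ratios $\Nv(\frp)/\Nv(\overline{\frp})$) are finite and nonzero, so the propagation $P(c)\Leftrightarrow P(d)$ is valid irrespective of whether the common value is zero or not.
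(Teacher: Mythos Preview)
Your proof is correct and follows essentially the same approach as the paper's: both derive the corollary directly from Lemma~\ref{lem:chichange} and Lemma~\ref{lem:chi-1VE}, exploiting that $\chi(\Gamma,\cdot)$ and $\caZ_{\Gamma,\cdot\to\cdot}(-1)^{-1}$ obey identical transport relations with nonzero coefficients. One small inaccuracy in your commentary (not in the argument itself): the hypothesis ``no cycles of length $\geq 2$'' is not what guarantees reduced paths between vertices---connectedness alone does that---but is rather a standing hypothesis of Lemma~\ref{lem:chi-1VE}, which you are already invoking.
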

\begin{proof}
     Let $u\in \Gamma$. Since $\Gamma$ is connected, for all $c\in V\Gamma$ and $a\in E\Gamma$ there are reduced paths $\frp\in\calP_\Gamma(U\to c)$ and $\frq\in\calP_\Gamma(U\to A)$ (cf.~Notation~\ref{notat:capitG}). Moreover, note that $\caZ_{\Gamma,b\to b}(s)=\caZ_{\Gamma,\bar{b}\to \bar{b}}(s)$ and $\chi(\Gamma,b)=\chi(\Gamma,\bar{b})$ for every $b\in E\Gamma$. Hence we may assume that $\frp\in\calP_\Gamma(u\to c)$ and $\frq\in\calP_\Gamma(u\to a)$. The statement follows from Lemma~\ref{lem:chichange} and Lemma~\ref{lem:chi-1VE}.
\end{proof}

\begin{proof}[\protect{Proof of Theorem~\hyperref[thmE]{E}}]
    By Corollary~\ref{cor:Z-1indep}, it suffices to prove that
    \begin{equation}\label{eq:chiZ-1.1}
    \caZ_{\Gamma,c\to c}(-1)^{-1}=\chi(\Gamma,c)
    \end{equation}
    for some vertex $c\in V\Gamma$. We prove~\eqref{eq:chiZ-1.1} by induction on $|E\Gamma|/2=:k(\Gamma)\geq 1$. Let first $k(\Gamma)=1$, i.e.,~$E\Gamma=\{a,\bar{a}\}$. If $o(a)\neq t(a)$, Example~\ref{ex:1segmWLIT} and Example~\ref{ex:chi}(i) yield
    \begin{equation*}
        \caZ_{\Gamma,c\to c}(-1)^{-1}=1+(1-\omega(\bar{a}))\frac{\omega(a)}{\omega(\bar{a})}=\chi(\Gamma,c).
    \end{equation*}
    If $o(a)=t(a)$, from~\eqref{eq:Z1loopA=B} and Example~\ref{ex:chi}(ii) we deduce that
    \begin{equation*}
        \caZ_{\Gamma,c\to c}(-1)^{-1}=1-\omega(a)=\chi(\Gamma,c).
    \end{equation*}
    
    Let now $k(\Gamma)\geq 2$ and assume that the statement holds for all graphs $\Gamma'$ with $k(\Gamma')<k(\Gamma)$. Without loss of generality (cf.~Corollary~\ref{cor:Z-1indep}), we may take $c\in V\Gamma$ such that $|o^{-1}(c)|\geq 2$. Note that this vertex exists because $k(\Gamma)\geq 2$. By~Lemma~\ref{lem:GSplit}, there are proper connected subgraphs~$\Gamma_1$ and~$\Gamma_2$ of~$\Gamma$ such that $\Gamma=\Gamma_1\cup\Gamma_2$ and $\Gamma_1\cap\Gamma_2=\{c\}$. Then Proposition~\ref{prop:Vsplit} and Lemma~\ref{lem:chiProp} yield the claim.
\end{proof}

In view of the proof of Corollary~\hyperref[corF]{F}, we observe what follows.
\begin{lem}\label{lem:noUD} 
  Let $\lad$ be a local action diagram. Let $G=U(\Delta,\iota,c_0)$ and $T=T(\Delta,\iota,c_0)$ be as in Setting~\hyperref[settP]{\emph{[(P)-cl]}}.
  Let $\tilde{\Delta}=(\Gamma, (X_a), (\tilde{G}(c)))$ be a local action diagram such that $\tilde{G}:=U(\tilde{\Delta},\iota,c_0)$ acts weakly locally $\infty$-transitively on $T$. Then, for all $t_1,t_2\in T$ we have
  \begin{equation*}
       \zeta_{G, G_{t_1},G_{t_2}}(-1)=\zeta_{\tilde{G},\tilde{G}_{t_1}, \tilde{G}_{t_2}}(-1).
  \end{equation*}
\end{lem}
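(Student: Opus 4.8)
\textbf{Proof strategy for Lemma~\ref{lem:noUD}.} The plan is to compare the two actions size-by-size on the relevant (double-)coset spaces, exploiting that both $G=U(\Delta,\iota,c_0)$ and $\tilde G=U(\tilde\Delta,\iota,c_0)$ act on the \emph{same} tree $T=T(\Delta,\iota,c_0)$ with the \emph{same} quotient graph $\Gamma$, the same standard edge weight $\omega$, and the same vertex stabiliser orbits. First I would reduce to the case $t_1\in\{v_0\}\cup o^{-1}(v_0)$ using Remark~\ref{rem:redUD}, applied to both $G$ and $\tilde G$ simultaneously (the isomorphism $\phi$ there is a tree automorphism and can be chosen compatibly since it depends only on $\Delta$-tree data, not on the groups $G(c)$). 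Then I would invoke~\eqref{eq:zetas+1} to rewrite $\zeta_{G,G_{t_1},G_{t_2}}(-1)$ as a sum over cosets $gK\in G/G_{t_2}$ of $|G_{t_1}gG_{t_2}/G_{t_2}|^{0}$, i.e.\ a plain count; more precisely, by~\eqref{eq:bngeo} the value at $s=-1$ is
\begin{equation*}
  \zeta_{G,G_{t_1},G_{t_2}}(-1)=\sum_{n\geq 1}b_n(G,G_{t_1},G_{t_2})\cdot n^{0}=\bigl|\Geod_T(T_1\to G\cdot T_2)\bigr|,
\end{equation*}
whenever the meromorphic continuation indeed takes this "naive" value --- which needs justification, see below. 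The same formula holds for $\tilde G$.

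The key point is then that $G\cdot T_2=\tilde G\cdot T_2$ as subsets of $T$. Both $G$ and $\tilde G$ have quotient graph $\Gamma$ and quotient map $\pi$, so for any $t_2\in T$ the orbit $G\cdot t_2$ consists exactly of those $t\in T$ with $\pi(t)=\pi(t_2)$ --- equivalently, $G$ (being (P)-closed with local action diagram $\Delta$) acts with the maximal possible orbits compatible with $\Gamma$, and so does $\tilde G$ since it is weakly locally $\infty$-transitive with the same quotient graph (weak local $\infty$-transitivity forces transitivity of $\tilde G_v$ on each fibre $\{\tilde{\frp}\in\Geod_T(v\to T):\pi(\tilde{\frp})=\frp\}$, which by induction on $\ell(\frp)$ gives $\tilde G\cdot t=\pi^{-1}(\pi(t))$ for every $t$). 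Hence $\Geod_T(T_1\to G\cdot T_2)=\Geod_T(T_1\to\pi^{-1}(\pi(T_2)))=\Geod_T(T_1\to\tilde G\cdot T_2)$, and the two counts coincide. Equivalently and more cleanly: by Proposition~\ref{prop:cosPcl} the set $G/G_{t_2}$ is in bijection (via $\caL\circ\varphi$) with a set of reduced paths in $(\Delta,\iota)$ depending only on $\Delta$, $\iota$, $c_0$, $t_1$, $u_2=\pi(t_2)$ --- and the analogous statement for $\tilde G$ via Proposition~\ref{prop:dcgeoWLIT} (together with Remark~\ref{rem:lift}, valid since $\omega$ takes values in $\Z_{\geq 2}$) identifies $\tilde G_{t_1}\backslash\tilde G/\tilde G_{t_2}$ with $\calP_\Gamma$-paths; passing from the path count in $(\Delta,\iota)$ to the path count in $\Gamma$ is exactly the content of Remark~\ref{rem:Nint}, \eqref{eq:NintP}, which says each reduced $\Gamma$-path $\frp$ lifts to precisely $\Ne^\omega(\frp)$ (resp.\ $\Nv^\omega(\frp)$) reduced paths in $(\Delta,\iota)$. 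So $|G/G_{t_2}|=\sum_\frp \Ne^\omega(\frp)$ matches the Dirichlet coefficient bookkeeping for $\tilde G$ at $s=-1$.

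The cleanest way to package this is via the determinant formulae. By Theorem~\ref{thm:meroP}, $\zeta_{G,G_r,G_t}(s)$ is expressed through the Bass operator $\caF(s+1)$ of $(\Delta,\caW)$ and perturbation matrices $\caY$, $\kappa$; the crucial observation is that at $s=-1$ we have $\caF(0)(x,y)=\caW(x,y)^0=\mathbbm{1}[t(a)=o(b),\,y\neq\iota(x)]$ for $x\in X_a,y\in X_b$ --- i.e.\ $\caF(0)$ is a $0/1$ matrix that does \emph{not} depend on the groups $G(c)$ at all, only on $\Delta$'s incidence structure and $\iota$. Likewise $\caY_{\pi(r),u}(0)$ and $\kappa_{\pi(r)}(u)$ are independent of $(G(c))_{c\in V\Gamma}$. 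On the other side, by Theorem~\ref{thm:meroWLIT} $\zeta_{\tilde G,\tilde G_{t_1},\tilde G_{t_2}}(s)=\caZ_{\Gamma,u_1\to u_2}(s)$ is built from $\caE(s)$ with $\caE(s)(a,b)=\Ne(a,b)^{-s}$; at $s=-1$, $\caE(-1)(a,b)=\Ne(a,b)=\omega(b)-\mathbbm{1}_{\{\bar a\}}(b)$, which is precisely the number of $y\in X_b$ with $y\neq\iota(x)$ for fixed $x\in X_a$ --- i.e.\ the row-sums of $\caF(0)$ over each block $X_a\to X_b$ realise $\caE(-1)$. A short linear-algebra argument (grouping the $|X|$-dimensional matrices $\caF(0)$, $\caY(0)$, $I-\caF(0)$ into blocks indexed by $E\Gamma$ via the partition $X=\bigsqcup_a X_a$, and using that within each block the relevant quantities are constant along the "incoming" index because of the $y\neq\iota(x)$ rule applied at $s=0$) collapses the $|X|\times|X|$ determinants of Theorem~\ref{thm:meroP} at $s+1=0$ into the $|E\Gamma|\times|E\Gamma|$ determinants of Theorem~\ref{thm:meroWLIT} at $s=-1$, giving $\zeta_{G,G_{t_1},G_{t_2}}(-1)=\caZ_{\Gamma,u_1\to u_2}(-1)=\zeta_{\tilde G,\tilde G_{t_1},\tilde G_{t_2}}(-1)$.

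\textbf{Main obstacle.} The delicate step is the block-collapse of determinants at $s=-1$: one must check that the $X$-indexed matrix $I-\caF(0)+\caY(0)$ (resp.\ $I-\caF(0)$) has, after a suitable similarity transformation separating "row-averaging" directions from their complement, a block-triangular form whose only non-trivial block is the $E\Gamma$-indexed matrix $I-\caE(-1)+\caU(-1)$ (resp.\ $I-\caE(-1)$), the remaining diagonal block contributing a determinant of $1$. Concretely, for each $a\in E\Gamma$ pick the vector $f_{X_a}=\sum_{x\in X_a}x$; the span of $\{f_{X_a}\}_a$ is $\caF(0)$-invariant in the appropriate sense (because $\caF(0)f_{X_a}=\sum_{b:o(b)=t(a)}(\omega(b)-\mathbbm{1}_{\{\bar a\}}(b))f_{X_b}$, using $y\neq\iota(x)$ only removing at most one element from $X_b$, independent of $x$), and on a complementary subspace $I-\caF(0)$ restricts to something with unit determinant --- this last claim is the part requiring care, since it is not literally true that $\caF(0)$ kills the complement, only that the perturbation $\caY$ and the relevant matrix-determinant-lemma reductions do. An alternative, combinatorially cleaner route that avoids this linear algebra entirely: argue directly that the Dirichlet series $\sum_n b_n(G,G_{t_1},G_{t_2})n^{-s-1}$ and $\sum_n b_n(\tilde G,\tilde G_{t_1},\tilde G_{t_2})n^{-s-1}$, while generally \emph{different} as functions, have meromorphic continuations agreeing at $s=-1$ because both equal the generating function that counts geodesics in $T$ landing in $\pi^{-1}(u_2)$ with weight $1$; and the "weight $1$" specialisation at $s=-1$ makes the $G(c)$-dependence disappear. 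I would present the determinant-collapse version as the main proof and remark on the geometric interpretation, flagging the block-triangularisation as the one computation that must be done carefully.
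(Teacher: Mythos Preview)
Your central observation --- that at $s=-1$ the matrices $\caF(0)$, $\caY_{\pi(r),u}(0)$ and the integer $\kappa_{\pi(r)}(u)$ appearing in Theorem~\ref{thm:meroP} depend only on $(\Gamma,(X_a),\iota,\omega)$ and not on the local groups $G(c)$ --- is exactly the content of the paper's proof, and it already suffices on its own. The detour you take is the attempt to compare the $|X|$-dimensional determinant formula of Theorem~\ref{thm:meroP} (for $G$) against the $|E\Gamma|$-dimensional formula of Theorem~\ref{thm:meroWLIT} (for $\tilde G$): that is the ``block collapse'' you rightly flag as delicate, and it is entirely avoidable.

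The simplification you are missing is that $\tilde G=U(\tilde\Delta,\iota,c_0)$ is itself a universal group, so $(\tilde G,T)$ is \emph{also} (P)-closed and satisfies Setting~\hyperref[settP]{[(P)-cl]}. Hence Theorem~\ref{thm:meroP} applies to $\tilde G$ just as well, expressing $\zeta_{\tilde G,\tilde G_{t_1},\tilde G_{t_2}}(s)$ through the Bass operator $\tilde\caF(s+1)$ of $(\tilde\Delta,\tilde\caW)$ together with the analogous $\tilde\caY$, $\tilde\kappa$. Your own observation now finishes the argument: at $s+1=0$ one has $\tilde\caF(0)=\caF(0)$, $\tilde\caY(0)=\caY(0)$ and $\tilde\kappa=\kappa$, so the two determinant ratios coincide literally. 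No block-triangularisation, no passage between matrices of different sizes. This is the paper's proof in its entirety.

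A side remark: your ``naive count'' paragraph cannot be made to work as stated. The series $\sum_{n\geq 1}b_n(G,G_{t_1},G_{t_2})\,n^{-(s+1)}$ diverges at $s=-1$ (it would count the infinitely many geodesics in $\Geod_T(T_1\to G\cdot T_2)$), so $\zeta_{G,G_{t_1},G_{t_2}}(-1)$ is defined only by meromorphic continuation and is not a literal coset count. The determinant route is essential here, not merely cleaner.
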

By Example~\ref{ex:WLIT}(i), note that $\tilde\Delta$ as in Lemma~\ref{lem:noUD} exists.

\begin{proof}
    Both $(G,T)$ and $(\tilde{G},T)$ are (P)-closed actions on trees satisfying Setting~\hyperref[settP]{[(P)-cl]}. The statement now follows by applying Theorem~\ref{thm:meroP} to both $\zeta_{G,G_{t_1},G_{t_2}}(s)$ and $\zeta_{\tilde G,\tilde G_{t_1},\tilde G_{t_2}}(s)$. In detail, in both cases one checks that the matrices $\caF(0)$ and $\caY_{\pi(t_1),\pi(t_2)}(0)$ involved in the statement of Theorem~\ref{thm:meroP}, as well as the integer $\kappa_{\pi(t_1)}(\pi(t_2))$, depend only on $X=\bigsqcup_{a\in E\Gamma}X_a$, on $\Gamma$ and its standard edge weight $\omega$, and on the inversion map $\iota$. The latter quantities do not vary by passing from $\Delta$ to $\tilde{\Delta}$, and the statement follows.
\end{proof}
\begin{proof}[\protect{Proof of Corollary~\hyperref[corF]{F}}]
By~Remark~\ref{rem:redUD} and~Lemma~\ref{lem:noUD}, it suffices to prove the statement for $(G,T)$ being weakly locally $\infty$-transitive. In this case, the claim follows from Remark~\ref{rem:zetaZ}, Remark~\ref{rem:chiGamma} and~Theorem~\hyperref[thmE]{E}.
\end{proof}

\subsection{The behaviour at $s=-1$ and the Ihara zeta function of a weighted graph}\label{sus:ih-1}
In~\cite[\S3]{deit:prim}, a generalisation of the classical Ihara zeta function has been defined for every finite graph $\Gamma$ with a transition weight. Although it is not necessary here, we mention that the finiteness hypothesis on $\Gamma$ can be relaxed.
According to~\cite[Definition~3.3]{deit:prim}, a \emph{transition weight} on a finite graph $\Gamma$ is a map $W\colon E\Gamma\times E\Gamma\longrightarrow \R_{\geq 0}$ such that, whenever $W(a,b)\neq 0$, then $t(a)=o(b)$.

According to the definition of graph in~\cite{deit:prim} (cf.~\cite[Definition~3.1]{deit:prim}), every edge is supposed to be uniquely determined by its endpoints. However, for the results involved below, this hypothesis has no influence and thus we do not assume it.

\smallskip

The \emph{Ihara zeta function} $Z_{(\Gamma,W)}(x)$ of $(\Gamma,W)$ has been defined in~\cite[Definition~3.8]{deit:prim}
as a suitable infinite product of meromorphic functions on $\C$ converging for all $x\in \C$ with $|x|\ll 1$. Here we only need the following characterisation of the reciprocal of $Z_{(\Gamma,W)}(x)$, cf.~\cite[Theorem~3.11]{deit:prim}. Namely, 
\begin{equation}\label{eq:ih}
    Z_{(\Gamma,W)}(x)^{-1}=\det(I-xT),
\end{equation}
where $I$ is the identity matrix of dimension $|E\Gamma|$ and $T=[T(a,b)]_{a,b\in E\Gamma}\in \Mat_n(\R)$ is defined as $T(a,b)=W(a,b)$ for all $a,b\in E\Gamma$ (assuming to have set a total order on $E\Gamma$). The matrix $T$ is called the \emph{Bass operator} of $(\Gamma,W)$ (cf.~\cite[Definition~3.10]{deit:prim}). Note that~\eqref{eq:ih} gives a meromorphic continuation of $Z_{(\Gamma,W)}(x)$ to $\C$.

\begin{ex}\label{ex:ih}
    Let $\Gamma$ be a finite graph with an edge weight $\omega\colon E\Gamma\to\Z_{\geq 2}$. Let $\Ne=\Ne^\omega$ and $\caE(s)$ be as in Definition~\ref{defn:N} and Definition~\ref{defn:Es}, respectively. Consider the map $W=W_{(\Gamma,\omega)}\colon E\Gamma\times E\Gamma\to\Z_{\geq 0}$ defined by
    \begin{equation*}
        W(a,b):=\caE(-1)(a,b)=\left\{
        \begin{array}{cl}
           \Ne(a,b),  &  \text{if }t(a)=o(b);\\
           0,  & \text{otherwise}.
        \end{array}
        \right.
    \end{equation*}
    Then $W$ yields a transition weight on~$\Gamma$. Note that assuming that $\omega(a)\geq 2$ is necessary to have $W(\bar{a},a)\neq 0$, for all $a\in E\Gamma$.
    In particular, by~\eqref{eq:ih} we have
    \begin{equation*}
       Z_{(\Gamma,W)}(x)^{-1}=\det(I-x\caE(-1)).
    \end{equation*}
\end{ex}

\begin{thm}\label{thm:iha}
Let $(\Gamma,\omega)$ be an edge-weighted graph satisfying Setting~\emph{\hyperref[settG]{[$\Gamma$]}}. Let $\Gamma_1, \Gamma_2$ be subgraphs of~$\Gamma$ satisfying $\Gamma=\Gamma_1\cup \Gamma_2$ and such that $\Gamma_1\cap \Gamma_2$ is a $1$-segment with edge set $\{a,\bar{a}\}$. Assume also that $t(a)$ and $o(a)$ are leaves in~$\Gamma_1$ and~$\Gamma_2$, respectively.
Let $W$, $W_1$ and~$W_2$ be the transition weights defined in Example~\ref{ex:ih} on~$\Gamma$, $\Gamma_1$ and~$\Gamma_2$, respectively. Then,
\begin{equation*}
   \frac{\caZ_{\Gamma, a\to a}(-1)}{\caZ_{\Gamma_1, a\to a}(-1)\cdot \caZ_{\Gamma_2, a\to a}(-1)}=\frac{1}{\omega(a)\omega(\bar{a})}\cdot \frac{Z_{(\Gamma,W)}(1)}{Z_{(\Gamma_1,W_1)}(1)\cdot Z_{(\Gamma_2, W_2)}(1)}.
\end{equation*}
\end{thm}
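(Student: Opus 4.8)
The strategy is to compare, on both sides of the claimed identity, the determinant formulae provided by Theorem~\ref{thm:meroWLIT} and by formula~\eqref{eq:ih}, and to exploit the very same block decomposition of $I-\caE(s)$ relative to the splitting $E\Gamma = E\Lambda_1 \sqcup \{a,\bar a\} \sqcup E\Lambda_2$ that already appeared in the proof of Proposition~\ref{prop:Esplit} (here $\Lambda_i$ denotes $\Gamma_i$ with $a,\bar a$ and the appropriate terminal vertex removed). The key observation is that, since $o(a)$ is terminal in $\Gamma_2$ and $t(a)$ is terminal in $\Gamma_1$, the matrices $B,C$ recording transitions between $\{a,\bar a\}$ and $E\Lambda_1\sqcup E\Lambda_2$ have the sparse shape computed in~\eqref{eq:BC12}: only $B_1$ (transitions out of the vertex $t(a)$ into $\Lambda_1$-edges) and $C^2 = (\beta+1)^{-s}(e^{\Lambda}_{o(a)})^t$ are nonzero, where I write $\Lambda := \Lambda_1 \cup \Lambda_2$ with $o(a)=t(\bar a)$ its attaching vertex. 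Consequently $I-\caE(s)$ (over $\Gamma$) is block-triangular after ordering $E\Lambda_1 < a < \bar a < E\Lambda_2$, and by the Schur-complement identity~\eqref{eq:blockdet} one reduces every relevant $\det(I-\caE(s))$ on $\Gamma$, $\Gamma_1$, $\Gamma_2$ to determinants over $\Lambda_1$ and $\Lambda_2$ together with an explicit $2\times 2$ factor.

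**Main steps, in order.** First, I will record the block structure of $\caE^\Gamma(s)$, $\caE^{\Gamma_1}(s)$, $\caE^{\Gamma_2}(s)$ relative to the chosen ordering, reusing~\eqref{eq:blockEs}, \eqref{eq:BC12} and~\eqref{eq:1L1}-type computations; the terminality hypotheses guarantee $B_2 = \underline 0$ and $C^1 = \underline 0^t$ in the $\Gamma_1$-half and symmetrically in the $\Gamma_2$-half. Second, using~\eqref{eq:blockdet} I will compute, for each of $Z_{(\Gamma,W)}(1)^{-1} = \det(I-\caE(-1))$ and its two subgraph analogues, a factorisation of the form $\det(I-\caE^{\Lambda_1}(-1))\cdot\det(I-\caE^{\Lambda_2}(-1))\cdot(\text{explicit }2\times 2\text{ determinant})$; the Schur complements that arise are exactly the quantities $X(a,\bar a) = (\beta+1)^{-s}(\caZ_{\Lambda,d\to d}(s)-1)$ and their relatives computed at the end of the proof of Proposition~\ref{prop:Zasegm}, specialised to $s=-1$. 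Third, I will do the parallel computation for $\caZ_{\Gamma,a\to a}(-1)$, $\caZ_{\Gamma_1,a\to a}(-1)$, $\caZ_{\Gamma_2,a\to a}(-1)$ via Theorem~\ref{thm:meroWLIT}, or more conveniently via the reciprocal formula in Lemma~\ref{lem:ZWLITinv} and the splitting Proposition~\ref{prop:Esplit}: indeed Proposition~\ref{prop:Esplit} already gives $\caZ_{\Gamma,a\to a}(s)^{-1} = \caZ_{\Gamma_1,a\to a}(s)^{-1} + \caZ_{\Gamma_2,a\to a}(s)^{-1} - \caZ_{\Gamma_3,a\to a}(s)^{-1}$, so the left-hand ratio in the statement is $\bigl(\caZ_{\Gamma_1,a\to a}(-1)\caZ_{\Gamma_2,a\to a}(-1)\bigr)^{-1}\cdot\bigl(\caZ_{\Gamma_1,a\to a}(-1)^{-1}+\caZ_{\Gamma_2,a\to a}(-1)^{-1}-\caZ_{\Gamma_3,a\to a}(-1)^{-1}\bigr)^{-1}$ after inverting; I will instead keep the determinant form so that the algebra matches the Ihara side term-by-term. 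Fourth, I will match the two factorisations: the $\det(I-\caE^{\Lambda_i}(-1))$ factors cancel in the ratio on both sides, leaving a purely $2\times 2$ identity in the parameters $\alpha=\omega(a)-1$, $\beta=\omega(\bar a)-1$ and the two scalars $\caZ_{\Lambda_1,o(a)\to o(a)}(-1)$, $\caZ_{\Lambda_2,t(a)\to t(a)}(-1)$ (equivalently, in the Schur complements $X$), which I verify by direct expansion, picking up exactly the prefactor $\tfrac{1}{\omega(a)\omega(\bar a)} = \tfrac{1}{(\alpha+1)(\beta+1)}$ coming from the difference between the $\caE$-normalisation at $s=-1$ (entries $\Ne(a,b) = \omega(b)-\mathbbm 1_{\{\bar a\}}(b)$) and the "vertex" contributions $\omega(a)^{-s}$, $\omega(\bar a)^{-s}$ appearing in $\caU_{a,a}$. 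Finally, the displayed consequence~\eqref{eq:ih2} follows at once from~\eqref{eq:ih1} by substituting $\caZ_{\Gamma_i,a\to a}(-1)^{-1} = \chi(\Gamma_i,a)$ (Theorem~\hyperref[thmE]{E} together with Lemma~\ref{lem:chi-1VE} and Corollary~\ref{cor:Z-1indep}), so it needs no separate argument.

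**Main obstacle.** The genuinely delicate point is bookkeeping the discrepancy between the two Bass operators: Deitmar's $T = \caE(-1)$ has $(a,b)$-entry $\Ne(a,b)$, whereas the double-coset determinant formula in Theorem~\ref{thm:meroWLIT} mixes $\caE(s)$ with the rank-one perturbations $\caU_{a,a}(s)$ whose $\{a,\bar a\}$-rows involve the unreduced weights $\omega(a)^{-s}$, not $\Ne$-weights; at $s=-1$ this is precisely where the factor $\omega(a)\omega(\bar a)$ is born, via the identity $\omega(b) = \Ne(a,b) + \mathbbm 1_{\{\bar a\}}(b)$ used in~\eqref{eq:Gc2}. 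I will need to track this carefully through the $2\times 2$ reduction — in particular making sure that the terminality of $o(a)$ in $\Gamma_2$ and of $t(a)$ in $\Gamma_1$ is what forces the cross terms in the $2\times 2$ block to vanish, exactly as in~\eqref{eq:outdiag1}, so that both determinants factor cleanly and the $\Lambda_i$-determinants cancel. A secondary (purely technical) point is to check that $s=-1$ lies outside the pole locus of all six zeta functions involved, so that the determinant identities may be evaluated there; this follows from Setting~\hyperref[settG]{[$\Gamma$]} via Theorem~\ref{thm:meroWLIT} combined with the observation that the denominators $\det(I-\caE(-1))$ are nonzero under the hypothesis $\omega(a)\geq 3$ or $\omega(\bar a)\geq 3$ on every edge (the same non-vanishing already implicitly used in Section~\ref{s:euler}).
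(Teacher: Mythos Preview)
Your overall strategy would lead to a proof, but it is considerably more circuitous than the paper's and contains one inaccurate structural claim. The paper never separates the $\caZ$-side from the Ihara side: by Theorem~\ref{thm:meroWLIT} (with $\epsilon_a(a)=0$) and Example~\ref{ex:ih} one has \emph{directly}
\[
\caZ_{\Gamma,a\to a}(-1)=Z_{(\Gamma,W)}(1)\cdot\det\bigl(I-\caG_a^\Gamma(-1)\bigr),
\]
and likewise for $\Gamma_1,\Gamma_2$. So the claimed identity reduces to computing $\det(I-M)\big/\bigl(\det(I_1-M_1)\det(I_2-M_2)\bigr)$ with $M:=\caG_a^\Gamma(-1)$ and $M_i:=\caG_a^{\Gamma_i}(-1)$. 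The block-diagonal decomposition of $I-\caG_a$ already established in~\eqref{eq:preEsplit5} (and reproved here for the ordering $E\Lambda_1\sqcup\{a\}$ versus $\{\bar a\}\sqcup E\Lambda_2$) gives at once $\det(I-M)=\det(\tilde I_1-\tilde M_1)\det(\tilde I_2-\tilde M_2)$ and $\det(I_i-M_i)=\det(\tilde I_i-\tilde M_i)\cdot(1-M(\ast,\ast))$, where the two scalar corners are $1-M(a,a)=\omega(a)$ and $1-M(\bar a,\bar a)=\omega(\bar a)$. That is the whole proof: no Schur complements, no residual $2\times 2$ identity in $\caZ_{\Lambda_i}$-values, and no appeal to Proposition~\ref{prop:Esplit} are needed.

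Your plan instead factors $\det(I-\caE)$ and $\det(I-\caG_a)$ separately and takes their ratio only afterwards, which duplicates work and introduces the Schur-complement quantities $X(a,\bar a)$ etc.\ that must then be shown to cancel. Moreover, your assertion that $I-\caE^\Gamma(s)$ is block-\emph{triangular} for the ordering $E\Lambda_1<a<\bar a<E\Lambda_2$ is not correct: the $(a,\,\cdot\,)$-row has nonzero entries over $E\Lambda_2$ and the $(\bar a,\,\cdot\,)$-row has nonzero entries over $E\Lambda_1$, so neither off-diagonal block of the natural $2\times 2$ partition vanishes. The matrix that \emph{is} block-diagonal is $I-\caG_a^\Gamma(s)$ (cf.~\eqref{eq:outdiag1} and~\eqref{eq:preEsplit5}), and recognising this is exactly the shortcut the paper exploits.
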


\begin{proof}
Denote by $\caE(-1)$, $\caE_1(-1)$, $\caE_2(-1)$ the Bass operators of $\Gamma$, $\Gamma_1$ and~$\Gamma_2$ at~$-1$, respectively. Let also $I, I_1$ and $I_2$ denote the identity matrices with complex entries of dimension $|E\Gamma|$, $|E\Gamma_1|$ and $|E\Gamma_2|$, respectively.
    By Theorem~\ref{thm:meroWLIT} and Example~\ref{ex:ih}, 
    \begin{equation}\label{eq:ihGamma}
        \caZ_{\Gamma, a\to a}(-1)=Z_{(\Gamma,W)}(1)\cdot \det(I-M),   
    \end{equation}
    where $$M=[M(h,k)]_{h,k\in E\Gamma}:=\caE(-1)-\caU_{a,a}(-1)=(I-(e_a+e_{\bar{a}})^t (e_a+e_{\bar{a}}))\caE(-1).$$ 
     For all $h,k\in E\Gamma$, observe that
     \begin{equation}\label{eq:M}
    \begin{array}{ccl}
    M(h,k) & = & e_h\Big(I-(e_a+e_{\bar{a}})^t (e_a+e_{\bar{a}})\Big)\caE(-1) e_k^t\\
            & = & e_h \caE(-1)e_k^t - e_h(e_a^t+e_{\bar{a}}^t)\Big((e_a+e_{\bar{a}})\caE(-1) e_k^t\Big)\\
            & = & \caE(-1)(h,k)-\mathbbm{1}_{\{a,\bar{a}\}}(h)\Big(\caE(-1)(a,k)+\caE(-1)(\bar{a},k)\Big).
    \end{array}
    \end{equation}
   Similarly, for every $i\in\{1,2\}$ we have
    \begin{equation}\label{eq:ihGammai}
        \caZ_{\Gamma_i, a\to a}(-1)=Z_{(\Gamma_i,W_i)}(1)\cdot \det(I_i-M_i),   
    \end{equation}
    where $M_i=[M_i(h,k)]_{h,k\in E\Gamma_i}$ is the $|E\Gamma_i|$-dimensional given by
    \begin{equation}\label{eq:Mi}
        M_i(h,k)=\caE_i(-1)(h,k)-\mathbbm{1}_{\{a,\bar{a}\}}(h)\Big(\caE_i(-1)(a,k)+\caE_i(-1)(\bar{a},k)\Big).
    \end{equation}
    By~\eqref{eq:M}~and~\eqref{eq:Mi}, for every $i\in\{1,2\}$ we deduce that 
    \begin{equation}\label{eq:Mi'}
        M(h,k)=M_i(h,k), \quad\forall\,h,k\in E\Gamma_i.
    \end{equation}
    Let $\tilde{M}_1:=[M(h,k)]_{h,k\in E\Gamma_1\setminus\{\bar{a}\}}$ and $\tilde{M}_2:=[M(h,k)]_{h,k\in E\Gamma_2\setminus\{a\}}$. Set also $\tilde{I}_1$ and $\tilde{I}_2$ be the identity matrices in~$\Mat_{|E\Gamma_1|-1}(\C)$ and $\Mat_{|E\Gamma_2|-1}(\C)$, respectively.
    We claim that $M$, $M_1$ and $M_2$ have the following decompositions in diagonal blocks:
    \begin{gather}\label{eq:blockM}
    \begin{array}{c}
       M=\begin{bmatrix}
            \tilde{M}_1 & 0\\
            0 & \tilde{M}_2
        \end{bmatrix};\,
        M_1=\begin{bmatrix}
            \tilde{M}_1 & 0\\
            0 & M(\bar{a},\bar{a})
        \end{bmatrix};\,
        M_2=\begin{bmatrix}
            M(a,a) & 0\\
            0 & \tilde{M}_2
        \end{bmatrix}. 
    \end{array}
    \end{gather}
   Before proving~\eqref{eq:blockM}, we use it to conclude the argument. From~\eqref{eq:blockM} we deduce that
   \begin{equation}\label{eq:detM}
   \begin{split}
       \det(I-M) & = \det(\tilde{I}_1-\tilde{M}_1)\cdot \det(\tilde{I}_2-\tilde{M}_2)\\
        & =\frac{\det(I_1-M_1)\cdot \det(I_2-M_2)}{(1-M(\bar{a},\bar{a}))(1-M(a,a))}.
    \end{split}
   \end{equation}
  Moreover, \eqref{eq:M} yields $M(a,a)=-\caE(-1)(\bar{a},a)=1-\omega(a)$ and $M(\bar{a},\bar{a})=-\caE(-1)(a,\bar{a})=1-\omega(\bar{a})$. Combining \eqref{eq:ihGamma}, \eqref{eq:ihGammai} and~\eqref{eq:detM} we conclude the statement. 

   It remains to prove~\eqref{eq:blockM}. By~\eqref{eq:Mi'}, it suffices to show that $M(h,k)=0$ if either $(h,k)\in (E\Gamma_1\setminus\{\bar{a}\})\times (E\Gamma_2\setminus\{a\})$ or $(h,k)\in (E\Gamma_2\setminus\{a\})\times (E\Gamma_1\setminus\{\bar{a}\})$. 
   Recall that the only edge of $\Gamma_1$ (resp.~$\Gamma_2$) ending at $t(a)$ (resp.~$o(a)$) is $a$ (resp.~$\bar{a}$).
   Hence, if $h\in E\Gamma_1\setminus \{a,\bar{a}\}$ then $t(h)\in V\Gamma_1\setminus\{t(a)\}$ and
   every $k\in E\Gamma_2\setminus\{a\}$ satisfies $o(k)\in V\Gamma_2\setminus\{o(a)\}$. 
   Since $V\Gamma_1\setminus \{t(a)\}$ and $V\Gamma_2\setminus\{o(a)\}$ are disjoint, for such $h$ and $k$ we have $t(h)\neq o(k)$ and~\eqref{eq:M} implies that $M(h,k)=\caE(-1)(h,k)=0$. Similarly, if $h\in E\Gamma_2\setminus \{a,\bar{a}\}$ and $k\in E\Gamma_1\setminus\{\bar{a}\}$ we have $M(h,k)=\caE(-1)(h,k)=0$.
   Moreover, $M(a,k)=-\caE(-1)(\bar{a},k)=0$ for every $k\in E\Gamma_2\setminus\{a\}$ as $t(\bar{a})\neq o(k)$. Similarly, $M(\bar{a},k)=-\caE(-1)(a,k)=0$ for every $k\in E\Gamma_1\setminus\{\bar{a}\}$.
\end{proof}

\subsection*{Declaration of AI-assisted technologies in the writing process}
During the preparation of this work, the author used "Writefull" to check spelling and grammar. After using this tool, the author reviewed and edited the content as needed and takes full responsibility for the content of the published article.

\bibliography{DCzeta}
\bibliographystyle{amsplain}

\end{document}